\titleformat*{\subsection}{\large\bfseries}
\numberwithin{equation}{section}
\pgfplotsset{compat=newest}
\pgfplotsset{plot coordinates/math parser=false}
\newlength\figureheight
\newlength\figurewidth
\DeclareMathOperator{\Cov}{Cov}
\numberwithin{equation}{section}
\newcommand{\rI}{\mathrm{I}}
\newcommand{\beq}{\begin{equation}}
\newcommand{\bEq}{\end{equation}}
\newcommand{\bx}{{\bf{x}}}
\newcommand{\by}{{\bf{y}}}
\newcommand{\bz} {{\bf {z}}}
\newcommand{\al}{\alpha}
\newcommand{\be}{\begin{equation}}
\newcommand{\ee}{\end{equation}}
\newcommand{\e}{{\varepsilon}}
\newcommand{\fa}{{\mathfrak a}}
\newcommand{\fb}{{\mathfrak b}}
\newcommand{\bU}{ {\bf  U}}
\renewcommand{\cal}{\mathcal}
\newcommand{\wh}{\widehat}
\newcommand{\wt}{\widetilde}
\newcommand{\ii}{\mathrm{i}} 
\newcommand{\dd}{\mathrm{d}}
\renewcommand{\epsilon}{\varepsilon}
\renewcommand{\leq}{\leqslant}
\renewcommand{\geq}{\geqslant}
\renewcommand{\le}{\leq}
\renewcommand{\ge}{\geq}
\renewcommand{\P}{\mathbb{P}}
\newcommand{\E}{\mathbb{E}}
\newcommand{\R}{\mathbb{R}}
\newcommand{\C}{\mathbb{C}}
\newcommand{\N}{\mathbb{N}}
\newcommand{\wsG}{{\widehat{\cal G}}}
\newcommand{\wsH}{{\widehat{\cal H}}}
\DeclareMathOperator{\diag}{diag}
\DeclareMathOperator{\tr}{Tr}
\DeclareMathOperator{\re}{Re}
\DeclareMathOperator{\im}{Im}
\DeclareMathOperator{\OO}{O}
\DeclareMathOperator{\oo}{o}
\DeclareMathOperator{\bC}{\mathbf{C}}
\DeclareMathOperator{\bE}{\mathbf{E}}
\DeclareMathOperator{\bv}{\mathbf{v}}
\DeclareMathOperator{\bu}{\mathbf{u}}
\DeclareMathOperator{\bw}{\mathbf{w}}
\DeclareMathOperator{\bbN}{\mathbb{N}}
\DeclareMathOperator{\bbP}{\mathbb{P}}
\DeclareMathOperator{\sI}{\mathcal{I}}
\theoremstyle{plain} 
\newtheorem{theorem}{Theorem}[section]
\newtheorem*{theorem*}{Theorem}
\newtheorem{lemma}[theorem]{Lemma}
\newtheorem{assumption}[theorem]{Assumption}
\newtheorem*{lemma*}{Lemma}
\newtheorem{corollary}[theorem]{Corollary}
\newtheorem*{corollary*}{Corollary}
\newtheorem{proposition}[theorem]{Proposition}
\newtheorem*{proposition*}{Proposition}
\newtheorem{claim}[theorem]{Claim}
\newtheorem{definition}[theorem]{Definition}
\newtheorem*{definition*}{Definition}
\theoremstyle{remark}
\newtheorem*{example*}{Example}
\newtheorem{remark}[theorem]{Remark}
\newtheorem*{remark*}{Remark}
\newtheorem*{remarks*}{Remarks}
\renewcommand{\Im}{{\rm{Im}}}
\title{Sample canonical correlation coefficients of high-dimensional random vectors with finite rank correlations}
\author[1]{Zongming Ma \thanks{E-mail: zongming@wharton.upenn.edu}}
\author[1]{Fan Yang  \thanks{E-mail: fyang75@wharton.upenn.edu}}
\affil[1]{Department of Statistics and Data Science, University of Pennsylvania}
\begin{document}
\maketitle

%
%

\begin{abstract}
Consider two random vectors $\wt{\bx}=A \mathbf z + \bC_1^{1/2}\mathbf x  \in \R^p$ and $\wt{\by}=B \mathbf z + \bC_2^{1/2}\mathbf y \in \mathbb R^q$, where $\mathbf x \in \R^p$, $\mathbf y \in \R^q$ and $\mathbf z\in \R^r$ are independent random vectors with i.i.d. entries of zero mean and unit variance, $\bC_1$ and $\bC_2$ are $p \times p$ and $q\times q$ deterministic population covariance matrices, and $A$ and $B$ are $p \times r$ and $q\times r$ deterministic factor loading matrices. With $n$ independent observations of $\wt{\mathbf x}$ and $\wt{\mathbf y}$, we study the sample canonical correlations between them. Under the sharp fourth moment condition on the entries of $\mathbf x$, $\mathbf y$ and $\mathbf z$, we prove the BBP transition for the sample canonical correlation coefficients (CCCs). More precisely, if a population CCC is below a threshold, then the corresponding sample CCC converges to the right edge of the bulk eigenvalue spectrum of the sample canonical correlation matrix and satisfies the famous Tracy-Widom law; if a population CCC is above the threshold, then the corresponding sample CCC converges to an outlier that is detached from the bulk eigenvalue spectrum. We prove our results in full generality, in the sense that they also hold for near-degenerate population CCCs and population CCCs that are close to the threshold. 
\end{abstract}

%

%


\section{Introduction}\label{sec_intro}

Since the seminal work by Hotelling \cite{Hotelling}, the canonical correlation analysis (CCA) has been one of the most classical methods to study the correlations between two random vectors. Given two random vectors $\wt{\mathbf x}\in \R^p$ and $\wt{\mathbf y}\in \R^q$, CCA seeks two sequences of orthonormal vectors, such that the projections of $\wt{\mathbf x}$ and $\wt{\by}$ onto these vectors have maximized correlations, and the corresponding sequence of correlations are called {\it canonical correlation coefficients} (CCCs). More precisely, we first find a pair of unit vectors $\mathbf a_1\in \R^p$ and $\mathbf b_1\in \R^q$ that maximizes the correlation $ \rho(\mathbf a,\mathbf b):=\text{Corr}(\mathbf a^\top \wt\bx, \mathbf b^\top \wt\by ).$ Then, $\rho_1:=\rho(\mathbf a_1,\mathbf b_1)$ is the first CCC and $(\mathbf a_1^\top \wt\bx, \mathbf b_1^\top \wt\by )$ is the first pair of canonical variables. Suppose we have obtained the first $k$ CCCs, $\rho_1,\ldots, \rho_k$, and the corresponding pairs of canonical variables. We then define inductively the $(k+1)$-th CCC by seeking a pair of unit vectors $(\mathbf a_{k+1},\mathbf b_{k+1})$ that maximizes $\rho(\mathbf a,\mathbf b)$ subject to the constraint that $(\mathbf a_{k+1}^\top \wt\bx, \mathbf b_{k+1}^\top \wt\by )$ is uncorrelated with the first $k$ pairs of canonical variables. Then, $\rho_{k+1}:=\rho(\mathbf a_{k+1},\mathbf b_{k+1})$ is the $(k+1)$-th CCC.

There is a well-know representation of CCCs in terms of the eigenvalues of the population canonical correlation (PCC) matrix defined using the population covariance and cross-covariance matrices:
$$\wt\Sigma_{xx}:= \Cov(\wt{\mathbf x},\wt{\mathbf x}),\quad \wt\Sigma_{yy}:= \Cov(\wt{\mathbf y},\wt{\mathbf y}), \quad \wt\Sigma_{xy}=\wt\Sigma_{yx}^\top:= \Cov(\wt{\mathbf x},\wt{\mathbf y}), $$
where for two random vectors $\mathbf u$ and $\mathbf v$, we define $\Cov(\mathbf u,\mathbf v):=\E \left[(\mathbf u-\E\mathbf u)(\mathbf v-\E\mathbf v)^\top\right]$. It is known that $\rho_i$ is the square root of the $i$-th largest eigenvalue, say $t_i $, of the PCC matrix 
$$\wt{\bm\Sigma}:=\wt\Sigma_{xx}^{-1/2}\wt\Sigma_{xy}\wt\Sigma_{yy}^{-1}\wt\Sigma_{yx}\wt\Sigma_{xx}^{-1/2}.$$ 
Suppose we observe $n$ independent samples of $( \wt{\mathbf x}, \wt{\mathbf y})$. Then, we can study the population CCCs through their sample counterparts. More precisely, we form data matrices $\wt{\cal X}$ and $\wt{\cal Y}$ as
\be\label{data mat}\wt{\cal X}:= n^{-1/2}\begin{pmatrix}\wt{\bx}_1,\wt\bx_2, \cdots ,\wt\bx_n\end{pmatrix},\quad \wt{\cal Y}:= n^{-1/2}\begin{pmatrix}\wt{\by}_1,\wt\by_2, \cdots ,\wt\by_n\end{pmatrix} ,\ee
where $(\wt{\bx}_i,\wt \by_i)$ are i.i.d. copies of $( \wt{\mathbf x}, \wt{\mathbf y})$ and $n^{-1/2}$ is a convenient scaling, so that the sample covariance and cross-covariance matrices can be written concisely as 
$$\wt S_{xx}:=\frac{1}{n}\sum_{i=1}^n \wt\bx_i  \wt\bx_i^\top= \wt{\cal X}\wt{\cal X}^\top  , \ \ \wt S_{yy}:=\frac{1}{n}\sum_{i=1}^n \wt\by_i  \wt\by_i^\top= \wt{\cal Y}\wt{\cal Y}^\top , \ \ \wt S_{xy}=\wt S_{yx}^\top:=\frac{1}{n}\sum_{i=1}^n \wt\bx_i  \wt\by_i^\top = \wt{\cal X}\wt{\cal Y}^\top  .$$
The squares of the sample CCCs, $\wt\lambda_1 \ge \wt\lambda_2\ge \cdots \ge \wt\lambda_{p\wedge q}\ge 0$, are then defined as the eigenvalues of the {\it sample canonical correlation} (SCC) matrix  
$$\cal C_{\wt{\cal X}\wt{\cal Y}}:=\wt S_{xx}^{-1/2}\wt S_{xy}\wt S_{yy}^{-1}\wt S_{yx}\wt S_{xx}^{-1/2}.$$
If $n\to \infty$ while $p,$ $q$ and $r$ are fixed, the SCC matrix converges to the PCC matrix almost surely by the law of large numbers, and hence  sample CCCs can be used as consistent estimators of population CCCs. 
However, many modern applications, such as statistical learning, wireless communications, medical imaging, financial economics and population genetics, are seeing a rapidly increasing demand in analyzing high-dimensional data, where $p$ and $q$ are comparable to $n$ when $n$ is large. In the high-dimensional setting, the behavior of the SCC matrix can deviate greatly from the PCC matrix due to the so-called ``curse of dimensionality$"$.

There have been several works on the theoretical analysis of high-dimensional CCA. We mention some of them that are most related to this paper. 

First, we consider the null case where $\wt\bx$ and $\wt\by$ are independent random vectors.  When $\wt\bx$ and $\wt\by$ are independent Gaussian vectors, the eigenvalues of the SCC matrix have the same joint distribution as those of a double Wishart matrix \cite{CCA_TW}. In particular, the joint distribution of the eigenvalues of double Wishart matrices has been studied in the context of the Jacobi ensemble and F-type matrices \cite{CCA_TW2,CCA_TW}, where the largest few eigenvalues of the SCC matrix are shown to satisfy the Tracy-Widom law asymptotically. For generally distributed random vectors $\wt{\bx}$ and $\wt\by$, the Tracy-Widom fluctuation of the largest eigenvalues of the SCC matrix is proved in \cite{CCA2} under the assumption that the entries of $\wt\bx$ and $\wt\by$ have finite moments up to any order. The moment assumption is later relaxed to the finite fourth moment assumption in \cite{PartIII}. In the Gaussian case, it is shown in \cite{Wachter} that, almost surely, the empirical spectral distribution (ESD) of the SCC matrix converges weakly to a deterministic probability distribution (cf. \eqref{LSD}). In the general non-Gaussian case, both the convergence and the linear spectral statistics of the ESD of the SCC matrix have been proved \cite{CCA_ESD,CCA_CLT}. 

Next, we consider the case where $\wt\bx$ and $\wt\by$ have finite rank correlations. If $\wt\bx$ and $\wt\by$ are random Gaussian vectors, then the asymptotic distributions of sample CCCs have been derived when one of $p$ and $q$ is fixed as $n\to \infty$ \cite{Fujikoshi2017}. If $p$ and $q$ are both proportional to $n$, the asymptotic distributions of sample CCCs have been established under the Gaussian assumption in \cite{CCA}. Under certain sparsity assumptions, the theory of high-dimensional sparse CCA and it applications have been discussed in \cite{gao2015,gao2017}. In \cite{ODA2019}, the authors derived asymptotic null and non-null distributions of several test statistics for tests of redundancy in high-dimensional CCA. In \cite{johnstone2020}, the authors studied the asymptotic behaviors of the likelihood ratio processes of CCA under the null hypothesis of no spikes and the alternative hypothesis of a single spike.

In this paper, we consider the following signal-plus-noise model for $\wt{ \mathbf x} \in \R^p$ and $ \wt \by\in \mathbb R^q$:
$$
\wt{ \mathbf x} =A \mathbf z+ \mathbf C_1^{1/2} \mathbf x , \quad \wt \by=B \mathbf z + \mathbf C_2^{1/2} \mathbf y .
$$
Here, $\bz\in \R^r$ is a rank-$r$ signal vector with i.i.d. entries of mean zero and variance one and independent of $\bx$ and $\by$, and $A$ and $B$ are $p \times r$ and $q\times r$ deterministic factor loading matrices, respectively. $ \mathbf x\in \R^p$ and $\by\in \R^q$ are two independent noise vectors with i.i.d. entries of mean zero and variance one, and $\mathbf C_1$ and $\mathbf C_2$ are $p \times p$ and $q\times q$ deterministic population covariance matrices. Then, we can write the data matrices in \eqref{data mat} as
\be\label{data model1} \wt{\mathcal X}: = AZ+\mathbf C^{1/2}_1 X , \quad  \wt{\mathcal Y}: =B Z+ \mathbf C^{1/2}_2 Y ,\ee
where $X$, $Y$ and $Z$ are respectively $p\times n$, $q\times n$ and $r\times n$ matrices with i.i.d. entries of mean zero and variance $n^{-1}$ and they are independent of each other. We consider the high-dimensional setting with a low-rank signal, that is, ${p}/{n}\to c_1$ and ${q}/{n}\to c_2$ as $n\to \infty$ for some constants $c_1\in (0,1)$ and $c_2\in (0,1-c_1)$, and $r$ is a fixed integer that does not depend on $n$. 

For the model \eqref{data model1}, the PCC matrix is given by 
$$ \wt{\bm\Sigma}=(\bC_1 + AA^\top)^{-1/2}AB^\top (\bC_2 + BB^\top)^{-1}BA^\top (\bC_1 + AA^\top)^{-1/2},$$
which is of rank at most $r$. We order the nontrivial eigenvalues of $ \wt{\bm\Sigma}$ as $t_1\ge t_2\ge \cdots \ge t_r \ge 0$. 
Under the Gaussian assumption, that is, $X$, $Y$ and $Z$ are independent random matrices with i.i.d. Gaussian entries, Bao et al. \cite{CCA} proved that for any $1\le i \le r$, $\wt\lambda_i$ exhibits very different behaviors depending on whether $t_i$ is below or above the threshold $t_c$, where 
\be\label{tc}t_c :=  \sqrt{\frac{c_1 c_2}{(1-c_1)(1-c_2)}}.\ee
More precisely, if $t_i < t_c$, then the corresponding sample CCC $\wt\lambda_i$ sticks to the right edge $\lambda_+$ of the bulk eigenvalue spectrum (cf. \eqref{lambdapm}) of the SCC matrix, and $n^{2/3}( \wt\lambda_i - \lambda_+)$ converges weakly to the type-1 Tracy-Widom distribution. On the other hand, if $t_i>t_c$, then it gives rise to an outlier $\wt\lambda_i$ that 
lies around a fixed location $\theta_i \in (\lambda_+,1)$ determined by $t_i$, $c_1$ and $c_2$. Furthermore, $n^{1/2}(\wt\lambda_i - \theta_i)$ converges weakly to a centered Gaussian. Such an abrupt change of the behavior of $\wt\lambda_i$ when $t_i$ crosses the threshold $t_c$ is generally called a \emph{BBP transition}, which dates back to the seminal work of Baik, Ben Arous and P\'{e}ch\'{e} \cite{BBP} on spiked sample covariance matrices. The BBP transition has been observed in many random matrix ensembles with finite rank perturbations. Without attempting to be comprehensive, we mention the references \cite{capitaine2009,capitaine2012,FP_2007,KY,KY_AOP,Peche_2006} on deformed Wigner matrices,  \cite{bai2008_spike, BBP, Baik2006, principal,FP_2009, IJ2,DP_spike} on spiked sample covariance matrices, \cite{DY2, yang2018,YLDW2020} on spiked separable covariance matrices, and \cite{belinschi2017,benaych-georges2011,BENAYCHGEORGES2011,wang2017} on several other types of deformed random matrix ensembles. In our setting, the SCC matrix $\cal C_{\wt{\cal X}\wt{\cal Y}}$ can be regarded as a finite rank perturbation of the SCC matrix in the null case with $r=0$.

A natural question is whether the results in \cite{CCA} hold universally, that is, whether $\wt\lambda_i$ satisfies the same properties if we only assume certain moment conditions on the entries of $X$, $Y$ and $Z$. In fact, the proof in \cite{CCA} depends crucially on the rotational invariance of multivariate Gaussian distributions under orthogonal transforms, and it is hard (if possible) to be extended to the data matrices with generally distributed entries. In this paper, we answer the above question definitely, and show the universality of the results in \cite{CCA}. Moreover, we highlight the following improvements over the previous results. 
\begin{itemize}
	\item Theorem \ref{main_thm1} shows that the following results hold assuming only a finite fourth moment condition (actually we require a slightly weaker condition \eqref{tail_cond}): for $1\le i \le r$, $n^{2/3}( \wt\lambda_i - \lambda_+)$ converges weakly to the Tracy-Widom law if $t_i<t_c$, while $\wt\lambda_i  \to \theta_i$ in probability if $t_i>t_c$. 
	
	\item We obtain quantitative versions of all the results under general moment assumptions: Theorem \ref{main_thm}  provides almost sharp convergence rates for the sample CCCs; Theorem \ref{main_thm0.5} provides an almost sharp eigenvalue sticking estimate, which shows that the eigenvalues of the SCC matrix stick to those of the null SCC matrix with $r=0$. 
	
	\item Our results hold even when some $t_i$-s are close to the threshold $t_c$ and when there are groups of near-degenerate $t_i$-s---both of these two cases are ruled out in the setting of \cite{CCA}.  
\end{itemize}
Instead of using the rotational invariance of multivariate Gaussian distributions, the proofs in this paper are based on a linearization method developed in \cite{PartIII}, which reduces the problem to the study of a $(p+q+2n)\times (p+q+2n)$ random matrix $H$ that is linear in $X$ and $Y$ (cf. \eqref{linearize_block}). Moreover, an optimal local law has been proved for the resolvent $G:=H^{-1}$ in \cite{PartIII}, which is the basis of all the 
proofs in this paper. Our approach is relatively more flexible and allows us to obtain precise convergence rates for the eigenvalues of the SCC matrix $\cal C_{\wt{\cal X}\wt{\cal Y}}$.  

Before concluding the introduction, we fix some notations that will be used frequently in the paper. For two quantities $a_n$ and $b_n$ depending on $n$, we use $a_n = \OO(b_n)$ to mean that $|a_n| \le C|b_n|$ for a constant $C>0$, and use $a_n=\oo(b_n)$ to mean that $|a_n| \le c_n |b_n|$ for a positive sequence of numbers $c_n\downarrow 0$ as $n\to \infty$. We will use the notations $a_n \lesssim b_n$ if $a_n = \OO(b_n)$, and $a_n \sim b_n$ if $a_n = \OO(b_n)$ and $b_n = \OO(a_n)$. For a matrix $A$, we use $\|A\|$ to denote its operator norm.  For a vector $\mathbf v$, we use $\|\mathbf v\|$ to denote its Euclidean norm. In this paper, we will abbreviate an identity matrix as $I$ or $1$.

\section{The model and main results}\label{main_result}

\subsection{The model}
We consider two independent families of data matrices $X=(x_{ij})$ and $Y=(y_{ij})$, which are of dimensions $p\times n$ and $q\times n$, respectively. We assume that the entries $x_{ij}$, $1 \le i \le p$, $1\le j \le n$, and $y_{ij}$, $ 1\le i \le q$, $1\le j \le n$, are real independent random variables satisfying 
\begin{equation}\label{assm1}
	\mathbb{E} x_{ij}=\mathbb{E} y_{ij} =0, \ \quad \ \mathbb{E} \vert x_{ij} \vert^2=\mathbb{E} \vert y_{ij} \vert^2  =n^{-1}.
\end{equation}
To be more general, we do not assume that these random variables are identically distributed. 
We define the following data model with finite rank correlation:
$$\wt{\mathcal X}: = \mathbf C^{1/2}_1 X + \wt A Z, \quad \wt{\mathcal Y}: = \mathbf C^{1/2}_2 Y + \wt B Z,$$
where $ \mathbf C_1$ and $\mathbf C_2$ are $p\times p$ and $q\times q$ deterministic positive definite symmetric covariance matrices, $\wt A$ and $\wt B$ are $p\times r$ and $q\times r$ deterministic matrices, and $Z=(z_{ij})$ is an $r\times n$ random matrix which gives the nontrivial correlation between \smash{$\wt{\mathcal X}$ and $\wt{\mathcal Y}$}. We assume that $Z$ is independent of $X$ and $Y$, and the entries $z_{ij}$, $1 \le i\le r$, $1\le j \le n$, are independent random variables satisfying
\begin{equation}\label{assmZ}
	\mathbb{E} z_{ij} =0, \ \quad \ \mathbb{E} \vert z_{ij} \vert^2 =n^{-1}.
\end{equation}
In this paper, we study the eigenvalues of the {\it sample canonical correlation} (SCC) matrix 
$$\cal C_{\wt{\cal X}\wt{\cal Y}}= \big(\wt{\cal X}\wt{\cal X}^{\top}\big)^{-1/2}  \wt{\cal X}\wt{\cal Y}^{\top} \big( \wt{\cal Y}\wt{\cal Y}^{\top}\big)^{-1} \wt{\cal Y}\wt{\cal X}^{\top}  \big(\wt{\cal X}\wt{\cal X}^{\top}\big)^{-1/2}.$$
In particular, we are interested in the relations between the eigenvalues of $\cal C_{\wt{\cal X}\wt{\cal Y}}$ and those of the {\it population canonical correlation} (PCC) matrix:
$$\wt{\bm\Sigma}:=\wt\Sigma_{xx}^{-1/2} \wt\Sigma_{xy} \wt\Sigma_{yy}^{-1} \wt\Sigma_{yx}  \wt\Sigma_{xx}^{-1/2} , \quad \wt\Sigma_{xx}:= \mathbf C_1 + \wt A \wt A^{\top}, \ \ \wt\Sigma_{yy}:= \mathbf C_2 + \wt B \wt B^{\top}, \ \ \wt\Sigma_{xy}= \wt\Sigma_{yx}^{\top}:=  \wt A  \wt B^\top .$$
It is well-known that the canonical correlation coefficients are the square roots of the eigenvalues of the PCC matrix. 
Note that $\cal C_{\wt{\cal X}\wt{\cal Y}}$ is similar to 
$$\cal C'_{\wt{\cal X}\wt{\cal Y}}:= \wt{\cal X}\wt{\cal Y}^{\top} \big( \wt{\cal Y}\wt{\cal Y}^{\top}\big)^{-1} \wt{\cal Y}\wt{\cal X}^{\top} \big(\wt{\cal X}\wt{\cal X}^{\top}\big)^{-1}.$$
Under the non-singular transformation 
$\wt{\mathcal X} \to  {\mathcal X}:=\mathbf C_1^{-1/2}\wt{\mathcal X}$ and $ \wt{\mathcal Y} \to  {\mathcal Y}:=\mathbf C_2^{-1/2}\wt{\mathcal Y},$  
we see that 
$$  \cal C'_{\wt{\cal X}\wt{\cal Y}}=\bC_1^{1/2}\cal C'_{{\cal X}{\cal Y}} \bC_1^{-1/2},$$
which shows that $\cal C_{\wt{\cal X}\wt{\cal Y}}$ and $\cal C_{{\cal X}{\cal Y}}$ have the same eigenvalues. A similar argument also shows that the eigenvalues of the PCC matrix are unchanged under the same non-singular transformation.
Hence, without loss of generality, we only need to consider a simpler model
\be\label{assm data}
{\mathcal X}: =  X + A Z, \quad {\mathcal Y}: = Y +  B Z, \quad \text{where } \ A:=\mathbf C^{-1/2}_1 \wt A, \quad B:=\mathbf C^{-1/2}_2 \wt B.
\ee
We assume that $A$ and $B$ have the following singular value decompositions: 
\be\label{assm AB}
A= \sum_{i=1}^r a_i \bu_i^a (\bv_i^a)^{\top} , \quad B= \sum_{i=1}^r b_i \bu_i^b (\bv_i^b)^{\top},
\ee
where $\{a_i\}$ and $\{b_i\}$ are the singular values, $\{\bu_i^a\}$ and $\{\bu_i^b\}$ are the left singular vectors, and  $\{\bv_i^a\}$ and $\{\bv_i^b\}$ are the right singular vectors. We assume that for some constant $C>0$,
\be\label{assm evalue}
0\le a_r \le \cdots \le a_2 \le a_1 \le C, \quad 0\le b_r \le \cdots \le b_2 \le b_1 \le C.
\ee
In this paper, we consider the high-dimensional setting, that is, 
\begin{equation*}
	c_1(n) := {p}/{n} \to \hat c_1 \in (0,1), \quad  c_2(n) :=  {q}/{n} \to \hat c_2 \in (0,1-\hat c_1). 
\end{equation*}
For simplicity of notations, we will always abbreviate $c_1(n)\equiv c_1$ and $c_2(n)\equiv c_2$ for the rest of the paper. Without loss of generality, we assume that $c_1\ge c_2$. 

We now summarize the main assumptions for future reference. For our purpose, we relax the assumptions \eqref{assm1} and \eqref{assmZ} a little bit. The reader can refer to the explanation above Corollary \ref{main_cor} for the reason of this extension.


\begin{assumption}\label{main_assm}
	Fix a small constant $\tau>0$.  
	\begin{itemize}
		\item[(i)] $X=(x_{ij})$ and $Y=(Y_{ij})$ are two real independent $p\times n$ and $q\times n$ random matrices. Their entries are independent random variables that satisfy the following moment conditions: 
		\begin{align}
			\max_{i,j}\left|\mathbb{E} x_{ij}\right|   \le n^{-2-\tau}, \quad & \max_{i,j}\left|\mathbb{E} y_{ij}\right|   \le n^{-2-\tau},\label{entry_assm0} \\
			\max_{i,j}\left|\mathbb{E} | x_{ij} |^2  - n^{-1}\right|   \le n^{-2-\tau}, \quad & \max_{i,j}\left|\mathbb{E} | y_{ij} |^2  - n^{-1}\right|   \le n^{-2-\tau}. \label{entry_assm1}
		\end{align}
		
		\item[(ii)] $Z=(z_{ij})$ is a real $r\times n$ random matrix that is independent of $X$ and $Y$, and its entries are independent random variables that satisfy the following moment conditions: 
		\begin{align}
			\max_{i,j}\left|\mathbb{E} z_{ij}\right|   \le n^{-1-\tau}, \quad & \max_{i,j}\left|\mathbb{E} | z_{ij} |^2  - n^{-1}\right|   \le n^{-1-\tau}. \label{entry_assmz}
		\end{align} 
		
		\item[(iii)] We assume that
		\begin{equation}
			r \le \tau^{-1},\quad \tau \le c_2 \le  c_1, \quad c_1+c_2\le 1-\tau.  \label{assm20}
		\end{equation}
		
		\item[(iv)]  We consider the data model in \eqref{assm data}, where $A$ and $B$ satisfy \eqref{assm AB} and \eqref{assm evalue}.
	\end{itemize}
\end{assumption}

In this paper, we study the SCC matrix 
$$\cal C_{{\cal X}{\cal Y}}:= \big({\cal X}{\cal X}^{\top}\big)^{-1/2}  {\cal X}{\cal Y}^{\top} \big({\cal Y}{\cal Y}^{\top}\big)^{-1} {\cal Y}{\cal X}^{\top} \big({\cal X}{\cal X}^{\top}\big)^{-1/2},$$
the null SCC matrix $\cal C_{XY}:= S_{xx}^{-1/2} S_{xy} S_{yy}^{-1}S_{yx}S_{xx}^{-1/2} ,$ 
where
\be\label{def Sxy}S_{xx} := {X}{ X}^{\top}, \quad S_{yy} := {Y}{ Y}^{\top}, \quad S_{xy} = S^{\top}_{yx}:=XY^{\top},\ee
and the PCC matrix $\bm\Sigma_{\cal X\cal Y}:= \Sigma_{xx}^{-1/2} \Sigma_{xy} \Sigma_{yy}^{-1} \Sigma_{yx} \Sigma_{xx}^{-1/2}, $
where
$$ \Sigma_{xx}= I_p +  A A^{\top}, \quad \Sigma_{yy}= I_q +  BB^{\top} , \quad \Sigma_{xy}= \Sigma_{yx}^{\top}=AB^{\top}.$$
Moreover, we will also consider the following matrices:
\begin{align*}
	\cal C_{{\cal Y}{\cal X}}:= \big({\cal Y}{\cal Y}^{\top}\big)^{-1/2}  {\cal Y}{\cal X}^{\top} &\big({\cal X}{\cal X}^{\top}\big)^{-1}{\cal X}{\cal Y}^{\top} \big({\cal Y}{\cal Y}^{\top}\big)^{-1/2},\\
\cal C_{YX}:= S_{yy}^{-1/2} S_{yx} S_{xx}^{-1}S_{xy}S_{yy}^{-1/2},\quad & \quad \bm\Sigma_{\cal Y\cal X}:= \Sigma_{yy}^{-1/2} \Sigma_{yx} \Sigma_{xx}^{-1} \Sigma_{xy} \Sigma_{yy}^{-1/2} .
\end{align*}
Finally, we define another null SCC matrix $\cal C^b_{\cal Y X}$ as
\be\label{Cbxy}\cal C^b_{{\cal Y}X}:= (S^b_{yy})^{-1/2} S^b_{yx}S_{xx}^{-1} S^b_{xy}(S^b_{yy})^{-1/2},\ee
where $S^b_{yy}:=\cal Y \cal Y^{\top}$ and $S^b_{xy}=(S^b_{yx})^{\top}:= X \cal Y ^\top.$ The matrix $\cal C^b_{X\cal Y }$ can be defined in the obvious way.



\subsection{Preliminaries}\label{sec mainresults0}

We denote the eigenvalues of $\cal C_{YX}$ by $ \lambda_1 \ge \cdots \ge \lambda_q\ge 0$, while $\cal C_{ X Y}$ shares the same eigenvalues with $\cal C_{ Y X}$, except that it has $p-q$ more trivial zero eigenvalues $ \lambda_ {q+1} = \cdots = \lambda_{p}=0$. We denote the ESD of $\cal C_{YX}$ by 
$$ F_n(x):= \frac1q\sum_{i=1}^q \mathbf 1_{\lambda_i \le x}.$$
It is known \cite{Wachter,CCA_ESD} that, almost surely, $F_n$ converges weakly to a deterministic probability distribution $F(x)$ with density 
\be\label{LSD}
f(x)= \frac{1}{2\pi c_2} \frac{\sqrt{(\lambda_+ - x)(x-\lambda_-)}}{x(1-x)}\mathbf 1_{\lambda_- \le x \le \lambda_+},
\ee
where
\be\label{lambdapm}\lambda_\pm:= \left( \sqrt{c_1(1-c_2)} \pm \sqrt{c_2(1-c_1)}\right)^2.\ee
For the model \eqref{assm data}, we denote the eigenvalues of $\cal C_{\cal Y\cal X}$ by $\wt \lambda_1 \ge \wt\lambda_2 \ge \cdots \ge \wt\lambda_q\ge 0$, while $\cal C_{ \cal X \cal Y}$ has $p-q$ more trivial zero eigenvalues $\wt \lambda_ {q+1} = \cdots = \wt \lambda_{p}=0$. We denote the eigenvalues of ${\bm\Sigma}_{\cal X\cal Y}$ by 
\be\label{t1r} t_1 \ge  \cdots \ge t_r \ge t_{r+1} = \cdots = t_q=0.
\ee
Suppose the entries of $X$, $Y$ and $Z$ are i.i.d.\,Gaussian. Then, it was proved in \cite{CCA} that, if $t_i>t_c$ (recall \eqref{tc}), $\wt\lambda_i - \theta_i \to 0$ almost surely, where 
\be\label{gammai}
\theta_i : = t_i\big( 1-c_1+c_1t_i^{-1}\big) \big( 1-c_2+c_2t_i^{-1}\big) ;
\ee
if $t_i\le t_c$, $\wt\lambda_i - \lambda_+\to 0$ almost surely and $n^{2/3}( \wt\lambda_i - \lambda_+)$ converges weakly to the Tracy-Widom law. Note that for $t_i>t_c$, we have $\theta_i >\lambda_+$, so $\wt\lambda_i$ is an outlier that is detached from the support $[\lambda_-,\lambda_+]$ of the limiting distribution $F(x)$. In Section \ref{sec mainresults}, we will state our main results showing that the above BBP transition also holds without the Gaussian assumption. To state and explain the main results, we need to introduce more notations, assumptions and a preliminary result regarding the asymptotic behaviors of the eigenvalues of $\cal C_{YX}$.

In this paper, we will frequently use the following notion of stochastic domination. It was first introduced in \cite{Average_fluc} and subsequently used in many works on random matrix theory. 
It simplifies the presentation of the results and their proofs by systematizing statements of the form ``$\xi$ is bounded by $\zeta$ with high probability up to a small power of $n$".

\begin{definition}[Stochastic domination and high probability event]\label{stoch_domination}
	(i) Let
	\[\xi=\left(\xi^{(n)}(u):n\in\bbN, u\in U^{(n)}\right),\hskip 10pt \zeta=\left(\zeta^{(n)}(u):n\in\bbN, u\in U^{(n)}\right)\]
	be two families of nonnegative random variables, where $U^{(n)}$ is an $n$-dependent parameter set. We say $\xi$ is stochastically dominated by $\zeta$, uniformly in $u$, if for any small constant $\epsilon>0$ and large constant $D>0$, 
	\[\sup_{u\in U^{(n)}}\bbP\left[\xi^{(n)}(u)>n^\epsilon\zeta^{(n)}(u)\right]\le n^{-D}\]
	for large enough $n\ge n_0(\epsilon, D)$, and we shall use the notation $\xi\prec\zeta$. Throughout this paper, the stochastic domination will always be uniform in all parameters that are not explicitly fixed. 
	If  $\xi$ is complex and we have $|\xi|\prec\zeta$, then we will also write $\xi \prec \zeta$ or $\xi=\OO_\prec(\zeta)$.
	
	\vspace{5pt}
	
	\noindent (ii) We extend the definition of $\OO_\prec(\cdot)$ to matrices in the sense of operator norm as follows. Let $A$ be a family of matrices and $\zeta$ be a family of nonnegative random variables. Then, $A=\OO_\prec(\zeta)$ means that $\|A\|\prec \zeta$. 
	
	\vspace{5pt}
	
	\noindent (iii) We say an event $\Xi$ holds with high probability if for any constant $D>0$, $\mathbb P(\Xi)\ge 1- n^{-D}$ for large enough $n$. Moreover, we say $\Xi$ holds with high probability on an event $\Omega$, if for any constant $D>0$, $\mathbb P(\Omega\setminus \Xi)\le n^{-D}$ for large enough $n$.
\end{definition}

The following lemma collects basic properties of stochastic domination $\prec$, which will be used tacitly throughout this paper.

\begin{lemma}[Lemma 3.2 in \cite{isotropic}]\label{lem_stodomin}
	Let $\xi$ and $\zeta$ be two families of nonnegative random variables, and let $C>0$ be an arbitrary constant.
	\begin{enumerate}
		\item Suppose that $\xi (u,v)\prec \zeta(u,v)$ uniformly in $u\in U$ and $v\in V$. If $|V|\le n^C$, then $\sum_{v\in V} \xi(u,v) \prec \sum_{v\in V} \zeta(u,v)$ uniformly in $u$.
		
		\item If $\xi_1 (u)\prec \zeta_1(u)$ and $\xi_2 (u)\prec \zeta_2(u)$ uniformly in $u\in U$, then $\xi_1(u)\xi_2(u) \prec \zeta_1(u)\zeta_2(u)$ uniformly in $u$.
		
		\item Suppose that $\Psi(u)\ge n^{-C}$ is deterministic and $\xi(u)$ satisfies $\mathbb E|\xi(u)|^2 \le n^C$ for all $u$. If $\xi(u)\prec \Psi(u)$ uniformly in $u$, then we have $\mathbb E\xi(u) \prec \Psi(u)$ uniformly in $u$.
	\end{enumerate}
\end{lemma}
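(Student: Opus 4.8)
The plan is to verify each of the three claims directly from the $\epsilon$--$D$ formulation of $\prec$ in Definition~\ref{stoch_domination}, using nothing beyond union bounds and the Cauchy--Schwarz inequality; there is no analytic content, only careful bookkeeping of quantifiers. For claim (1), I would fix $\epsilon>0$ and $D>0$, and invoke the hypothesis $\xi(u,v)\prec\zeta(u,v)$ with the same $\epsilon$ but with $D+C$ in place of $D$, so that $\sup_{u,v}\P[\xi(u,v)>n^\epsilon\zeta(u,v)]\le n^{-D-C}$ for $n$ large. A union bound over the at most $n^C$ elements $v\in V$ then shows that, uniformly in $u$, the event $\{\xi(u,v)\le n^\epsilon\zeta(u,v)\text{ for all }v\in V\}$ has probability at least $1-n^{-D}$, and on this event $\sum_{v\in V}\xi(u,v)\le n^\epsilon\sum_{v\in V}\zeta(u,v)$. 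Since $\epsilon$ and $D$ were arbitrary, this is precisely $\sum_{v\in V}\xi(u,v)\prec\sum_{v\in V}\zeta(u,v)$ uniformly in $u$.

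For claim (2), fix $\epsilon,D>0$ and apply the two hypotheses with exponent $\epsilon/2$: uniformly in $u$, each of the events $\{\xi_1(u)>n^{\epsilon/2}\zeta_1(u)\}$ and $\{\xi_2(u)>n^{\epsilon/2}\zeta_2(u)\}$ has probability at most $n^{-D}$, so on the intersection of their complements (probability $\ge 1-2n^{-D}\ge 1-n^{-(D-1)}$ for $n\ge 2$) nonnegativity gives $\xi_1(u)\xi_2(u)\le n^\epsilon\zeta_1(u)\zeta_2(u)$. As $D$ is arbitrary this yields $\xi_1\xi_2\prec\zeta_1\zeta_2$ uniformly in $u$.

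For claim (3), the first point to note is that $\E\xi(u)$ is deterministic, so the assertion $\E\xi(u)\prec\Psi(u)$ simply means that for every $\epsilon>0$ one has $\E|\xi(u)|\le n^\epsilon\Psi(u)$ uniformly in $u$ for all large $n$. Fix $\epsilon>0$ and split $\E|\xi(u)|=\E\big[|\xi(u)|\,\mathbf 1_{|\xi(u)|\le n^{\epsilon}\Psi(u)}\big]+\E\big[|\xi(u)|\,\mathbf 1_{|\xi(u)|> n^{\epsilon}\Psi(u)}\big]$. The first term is at most $n^\epsilon\Psi(u)$. For the second, Cauchy--Schwarz bounds it by $\big(\E|\xi(u)|^2\big)^{1/2}\,\P\big[|\xi(u)|>n^\epsilon\Psi(u)\big]^{1/2}\le n^{C/2}\,n^{-D/2}$, where $D$ is the decay exponent supplied by $\xi\prec\Psi$ at level $\epsilon$; choosing $D\ge 3C$ makes this at most $n^{-C}\le\Psi(u)$, using $\Psi(u)\ge n^{-C}$. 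Hence $\E|\xi(u)|\le(n^\epsilon+1)\Psi(u)\le n^{2\epsilon}\Psi(u)$ for $n$ large, and arbitrariness of $\epsilon$ finishes the proof.

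The only step requiring genuine care---and in that sense the ``main obstacle''---is the ordering of quantifiers in the definition of $\prec$: each application loses an additive constant in the polynomial exponent (from the union bound in (1), and from Cauchy--Schwarz together with $\Psi\ge n^{-C}$ in (3)), so one must choose the \emph{input} decay exponent sufficiently large \emph{before} fixing the desired \emph{output} exponent, and the uniformity in the suppressed parameters $u$ (and $v$) has to be maintained at every step, since all subsequent uses of this lemma in the paper rely on that uniformity.
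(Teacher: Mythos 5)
Your argument is correct and is exactly the standard proof: the paper itself states this lemma without proof, importing it from \cite{isotropic}, and the argument given there is the same elementary bookkeeping you carry out (union bound for (1), intersection of two high-probability events for (2), and a splitting plus Cauchy--Schwarz using $\Psi\ge n^{-C}$ and $\E|\xi|^2\le n^C$ for (3)). Your handling of the quantifiers --- in particular choosing the input decay exponent $D+C$, resp.\ $D\ge 3C$, before fixing the output exponent, and noting that for the deterministic quantity $\E\xi(u)$ the relation $\prec$ reduces to a plain inequality for large $n$ --- is precisely what is needed.
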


Now, we introduce a bounded support condition for random matrices considered in this paper.

\begin{definition}[Bounded support condition] \label{defn_support}
	We say a random matrix $X$ satisfies the {\it{bounded support condition}} with $\phi_n$ if
	\begin{equation}
		\max_{i,j}\vert x_{ij}\vert \prec \phi_n. \label{eq_support}
	\end{equation}
	Whenever (\ref{eq_support}) holds, we say that $X$ has support $\phi_n$. In this paper, $ \phi_n$ is always a deterministic parameter satisfying that $ n^{-{1}/{2}} \leq \phi_n \leq n^{- c_\phi} $ for some small constant $c_\phi>0$.
\end{definition}

In this paper, we also consider the case where $|t_i -t_c|=\oo(1)$, i.e., the spike $t_i$ is very close to the BBP transition threshold. Suppose that $X$ and $Y$ have bounded support $\phi_n$ and $Z$ has bounded support $\psi_n$. Then, we make the following assumption. 

\begin{assumption}\label{ass spike} 
We assume that for some integer $0\le r_+ \leq r$, the following statement holds: 
\begin{equation}\label{eq_spiket}
t_i - t_c \ge  n^{-1/3} + \psi_n + \phi_n \quad \text{if and only if} \quad  1\le  i \le r_+.
\end{equation}
The lower bound is chosen for definiteness, and it can be replaced with any $n$-dependent parameter that is of the same order.
\end{assumption}
\begin{remark} There is some freedom in choosing the two parameters $\phi_n$ and $\psi_n$. In principle, one can choose any $\phi_n,\psi_n\ll 1$ so that the following conditions hold: 
$$\max_{i,j}\vert x_{ij}\vert \prec \phi_n,\quad \max_{i,j}\vert y_{ij}\vert \prec \phi_n,\quad \max_{i,j}\vert z_{ij}\vert \prec \psi_n.$$
However, since smaller $\phi_n$ and $\psi_n$ lead to weaker assumptions and stronger results, it is better to choose them as small as possible. In particular, under certain moment conditions on the entries of $X$, $Y$ and $Z$ as in \eqref{condition_4e}, the best choice of $\phi_n$ and $\psi_n$ is given in \eqref{phipsi}, which comes from a standard truncation argument.
\end{remark}


We define the quantiles of the density \eqref{LSD}, which give the classical locations of $\lambda_i$-s. 

\begin{definition} 
	The classical location $\gamma_j$ of the $j$-th eigenvalue is defined as
	\begin{equation}\label{gammaj}
		\gamma_j:=\sup_{x}\left\{\int_{x}^{+\infty} f(t)\dd t > \frac{j-1}{q}\right\},
	\end{equation}
	where $f$ is defined in \eqref{LSD}. Note that we have $\gamma_1 = \lambda_+$ and $\lambda_+ - \gamma_j \sim (j/n)^{2/3}$ for $j>1$.
\end{definition}

The following eigenvalue rigidity and edge universality results for $\cal C_{Y X }$ have been proved in \cite{PartIII}. 

\begin{lemma}[Theorem 2.5 of \cite{PartIII}]\label{lem null0}
Suppose Assumption \ref{main_assm} (i) and (iii) hold. Suppose $X$ and $Y$ have bounded support $\phi_n$ with $ n^{-{1}/{2}} \leq \phi_n \leq n^{- c_\phi} $ for some constant $c_\phi>0$. Assume that 
	\be\label{conditionA3} 
	\begin{split}
	\max_{i,j}\mathbb{E} | x_{ij} |^3 \lesssim n^{-3/2}, \ \  \max_{i,j}\mathbb{E} | y_{ij} |^3\lesssim n^{-3/2},\ \ \max_{i,j}\mathbb{E} | x_{ij} |^4  \prec n^{-2}, \ \  \max_{i,j}\mathbb{E} | y_{ij} |^4  \prec n^{-2}.
	\end{split}
	\ee
	Then, the eigenvalues of the null SCC matrix $\cal C_{Y X}$ satisfy the following rigidity estimate: for any constant $\delta>0$ and all $1 \le i \le  (1-\delta)q$, 
	\be\label{rigidity}
	|\lambda_i - \gamma_i | \prec i^{-1/3} n^{-2/3}. 
	\ee
	Moreover, we have that for any fixed $k \in \N$,
	\begin{equation}\label{joint TW0}
		\begin{split}
			\lim_{n\to \infty}\mathbb{P}&\left[ \left(n^{{2}/{3}}\frac{\lambda_{i} - \lambda_+}{c_{TW}} \leq s_i\right)_{1\le i \le k} \right] = \lim_{n\to \infty} \mathbb{P}^{GOE}\left[\left(n^{{2}/{3}}(\lambda_i - 2) \leq s_i\right)_{1\le i \le k} \right] 
		\end{split}
	\end{equation}
	for all $(s_1 , s_2, \ldots, s_k) \in \mathbb R^k$, where
	$$c_{TW}:= \left[ \frac{\lambda_+^2 (1-\lambda_+)^2}{\sqrt{c_1c_2(1-c_1)(1-c_2)}}\right]^{1/3},$$
	and $\mathbb P^{GOE}$ stands for the law of GOE (Gaussian orthogonal ensemble), which is an $n\times n$ symmetric matrix with independent (up to symmetry) Gaussian entries of mean zero and variance $n^{-1}$. 
\end{lemma}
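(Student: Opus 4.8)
The plan is to reduce the rigidity and edge universality statements for $\cal C_{YX}$ to the already-established theory for a related linearized matrix, and then to invoke a Green function comparison argument. First I would observe that the eigenvalues of $\cal C_{YX}$ coincide (up to trivial zeros) with those of $\cal C_{XY}= S_{xx}^{-1/2}S_{xy}S_{yy}^{-1}S_{yx}S_{xx}^{-1/2}$, and that by the identity $\cal C_{XY} = I - (I+ \mathcal M)^{-1}$ for an appropriate matrix $\mathcal M$ built from $X$ and $Y$ (or equivalently, by relating the problem to an $F$-type matrix $S_{xy}S_{yy}^{-1}S_{yx}\cdot S_{xx}^{-1}$), one can express the spectral data in terms of the resolvent $G(z) = H(z)^{-1}$ of the $(p+q+2n)\times(p+q+2n)$ linearization block matrix $H$ described around \eqref{linearize_block}. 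The key inputs here are the optimal local law for $G$ and the control of the Stieltjes transform near the right edge $\lambda_+$, both of which are established in \cite{PartIII}; from these the rigidity estimate \eqref{rigidity} follows by the standard argument of comparing eigenvalue counting functions to the quantiles $\gamma_i$ via a contour integral representation of $\mathcal N(E) = \#\{i: \lambda_i \geq E\}$ and Helffer--Sjöstrand, using that $\lambda_+ - \gamma_j \sim (j/n)^{2/3}$.

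For the edge universality statement \eqref{joint TW0}, the plan is a two-step comparison. Step one: prove the result when the entries of $X$ and $Y$ are Gaussian, where $\cal C_{XY}$ has the law of a Jacobi (double Wishart) ensemble and the joint Tracy--Widom limit for the top $k$ eigenvalues is known from \cite{CCA_TW,CCA_TW2}; one must check that the normalization $c_{TW}$ matching the GOE edge scaling $n^{2/3}(\lambda_i - 2)$ is consistent, which is a direct computation with the density \eqref{LSD} and $\lambda_\pm$ in \eqref{lambdapm}. Step two: remove the Gaussian assumption by a Green function comparison (Lindeberg swapping) argument, replacing the entries of $X$ and $Y$ one at a time. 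This requires that the first three moments match $n^{-1/2}$-normalized Gaussians exactly and the fourth moments match approximately — precisely the content of the moment conditions \eqref{conditionA3}, where the third moment is $\lesssim n^{-3/2}$ and the fourth is $\prec n^{-2}$. The comparison is run at the level of smooth functionals of $\tr G(z)$ for spectral parameters $z = E + \ii \eta$ with $E$ near $\lambda_+$ and $\eta$ slightly below $n^{-2/3}$, using the local law to bound the error terms in the resolvent expansion; the resulting closeness of Green functions transfers to closeness of the distributions of the top eigenvalues by the usual argument relating $\mathcal N(E)$ to $\Im \tr G$.

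The bounded support hypothesis $\max_{ij}|x_{ij}| \prec \phi_n$ enters at two points: it is needed for the a priori local law bounds (large deviation estimates for quadratic forms in the rows of $X$, $Y$), and it controls the size of the subleading terms in the Green function comparison. Since $\phi_n \leq n^{-c_\phi}$, these contributions are $n^{-c_\phi}$-suppressed and harmless for both rigidity and universality. The main obstacle is the Green function comparison near the edge: one needs a sufficiently strong local law (optimal in both the bulk and the edge regime, with the correct $\sqrt{\kappa + \eta}$-type behavior in the distance $\kappa$ to $\lambda_+$) to make the resolvent expansion error terms smaller than the target $n^{2/3}$-scale fluctuation, and to handle the fact that $\cal C_{XY}$ is a rational — not polynomial — function of $X$ and $Y$, so the linearization in \eqref{linearize_block} is essential to even set up the swapping cleanly. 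Fortunately this optimal edge local law is exactly what \cite{PartIII} provides, so the argument reduces to carefully assembling these ingredients; the delicate bookkeeping is in verifying that the $\lesssim n^{-3/2}$ third-moment condition (rather than an exact match) suffices, which follows because the mismatched third-moment terms in the swap come with an extra factor from the off-diagonal structure of $H$ that renders them negligible.
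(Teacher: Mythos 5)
This lemma is quoted verbatim as Theorem 2.5 of \cite{PartIII} and is not proved in the present paper, so there is no in-paper argument to compare against. Your outline correctly reconstructs the strategy of that reference — linearization via $H(z)$, the optimal local laws, rigidity through the eigenvalue counting function, and edge universality by Green function comparison with the Gaussian (double Wishart/Jacobi) case under the two-plus-$\epsilon$ moment matching encoded in \eqref{conditionA3} — so your proposal is consistent with the cited proof.
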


	Taking $k=1$ in \eqref{joint TW0}, we obtain that $n^{{2}/{3}}(\lambda_{1} - \lambda_+)/{c_{TW}} \Rightarrow F_1,$
	where $F_1$ is the famous type-1 Tracy-Widom distribution derived in \cite{TW1,TW}. Moreover, the joint distribution of the largest  $k$ eigenvalues of GOE can be written in terms of the Airy kernel for any $k$ \cite{Forr}. Hence, \eqref{joint TW0} gives a complete description of the finite-dimensional correlation functions of the edge eigenvalues of $\cal C_{Y X }$.  


\subsection{The main results}\label{sec mainresults}

With the above preparations, we are now ready to state our main results on the eigenvalues of the SCC matrix $\cal C_{\cal X\cal Y}$. 
We define the following quantities, which characterize the distances from $t_i$-s to the BBP transition threshold:  
\be\label{al+}\Delta_i:= |t_i - t_c|,\quad \al_+: =\min_{1\le i \le r}  |t_{i} - t_c| .\ee
We first describe the convergence of the outlier eigenvalues and the extreme non-outlier eigenvalues.


\begin{theorem}\label{main_thm}
Suppose Assumptions \ref{main_assm} and Assumption \ref{ass spike} hold. Suppose $X$ and $Y$ have bounded support $\phi_n$ and $Z$ has bounded support $\psi_n$ with $ n^{-{1}/{2}} \leq \phi_n \leq n^{- c_\phi} $ and $n^{-{1}/{2}} \leq \psi_n \leq n^{- c_\psi} $ for some constants $c_\phi, c_\psi>0$. Assume that \eqref{conditionA3} holds. 
	Then, for any $1\le i \le r_+$, we have that
	\be\label{boundout}
	|\wt\lambda_i -\theta_i | \prec (\psi_n  +  \phi_n) \Delta_i + n^{-1/2}\Delta_i^{1/2}.
	\ee
	Let $\varpi\in \N$ be a fixed (large) integer. For any $r_+ +1 \le i \le \varpi$ and small constant $\e>0$, we have that
	\be\label{boundedge}
	-n^{-2/3+\e} < \wt\lambda_i - \lambda_+  \le n^\e (\psi_n^2 + \phi_n^2 + n^{-2/3}) \quad \text{with high probability.}	 
	\ee
\end{theorem}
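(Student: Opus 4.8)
The plan is to follow the standard resolvent/linearization strategy that underlies the companion paper \cite{PartIII}, adapted to track the finite-rank perturbation induced by $AZ$ and $BZ$. First I would set up the linearized matrix $H$ that is linear in $X$ and $Y$ (the $(p+q+2n)\times(p+q+2n)$ block matrix referenced in the introduction) together with its resolvent $G=H^{-1}$, and recall the optimal (anisotropic) local law for $G$ established in \cite{PartIII}, which holds on spectral domains away from and near the edge $\lambda_+$. The key algebraic observation is that the eigenvalues $\wt\lambda_i$ of $\cal C_{\cal X\cal Y}$ that are detached from the bulk are exactly the zeros of a low-dimensional ``master'' determinant of the form $\det\big(\mathcal M(z) - \text{(deterministic matrix built from } a_i,b_i,\bu_i^a,\bu_i^b)\big)$, where $\mathcal M(z)$ is an $O(r)\times O(r)$ matrix whose entries are bilinear forms of $G(z)$ in the singular directions of $A$ and $B$. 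By the local law, these bilinear forms concentrate around their deterministic counterparts $\mathcal M_0(z)$ with error $\OO_\prec$ of the local law bound, evaluated at the relevant scale.

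The outlier part \eqref{boundout} then follows by a quantitative fixed-point / implicit-function argument: the deterministic equation $\det(\mathcal M_0(z) - \cdots)=0$ has solutions precisely at $z=\theta_i$ when $t_i>t_c$, and one needs to show that the random perturbation moves the zero by at most the stated amount. The size of the perturbation in the determinant is $\OO_\prec(\psi_n+\phi_n)$ (from the isotropic local law at scale $\eta\sim 1$ away from the edge, using that $Z$ has support $\psi_n$ and $X,Y$ have support $\phi_n$) plus a fluctuation of order $n^{-1/2}$ coming from the central-limit-type fluctuations of the diagonal bilinear forms; dividing by the derivative of the determinant, which is $\sim \Delta_i = |t_i-t_c|$ near a simple outlier (this is where the distance-to-threshold enters, and where near-degenerate $t_i$'s are handled by working with the grouped determinant rather than individual factors), yields $|\wt\lambda_i-\theta_i|\prec (\psi_n+\phi_n)\Delta_i + n^{-1/2}\Delta_i^{1/2}$. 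The refined $\Delta_i^{1/2}$ power in the second term (rather than $\Delta_i$) comes from the square-root behavior of the subordination function near the edge — this requires using the edge local law on the domain $\{ \re z \approx \lambda_+,\ \eta \sim \Delta_i^? \}$ and is the place where one must be careful with which scale of the local law to invoke.

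For the non-outlier part \eqref{boundedge}, the lower bound $\wt\lambda_i - \lambda_+ > -n^{-2/3+\e}$ follows from eigenvalue interlacing between $\cal C_{\cal X\cal Y}$ and $\cal C_{XY}$ (the perturbation has rank $O(r)$, so $\wt\lambda_{i} \geq \lambda_{i+O(r)}$) combined with the rigidity estimate \eqref{rigidity} for the null matrix from Lemma \ref{lem null0}. The upper bound $\wt\lambda_i - \lambda_+ \le n^\e(\psi_n^2+\phi_n^2+n^{-2/3})$ is the delicate half: one must rule out eigenvalues of $\cal C_{\cal X\cal Y}$ sitting just above $\lambda_+$ when the corresponding $t_i\le t_c$. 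I would do this by showing that on the interval $(\lambda_+ + n^\e(\psi_n^2+\phi_n^2+n^{-2/3}), \lambda_+ + \delta)$ the master determinant $\det(\mathcal M(z)-\cdots)$ is bounded away from zero with high probability; this needs a sufficiently precise estimate on $\mathcal M(z) - \mathcal M_0(z)$ in this near-edge regime, where $\mathcal M_0(z)-\cdots$ itself is only of size $\sim \sqrt{z-\lambda_+}$, so the error must be beaten down to $\oo(\sqrt{z-\lambda_+})$, which forces the $\psi_n^2+\phi_n^2$ scaling (squares, because the leading-order contribution of the support to $\mathcal M$ near the edge is quadratic after the deterministic linear part is absorbed). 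The main obstacle, as in all such BBP-transition arguments, is precisely this near-threshold regime: obtaining a local law for the relevant bilinear forms of $G$ that is strong enough, uniformly down to spectral scale $\eta \sim n^{-2/3}$ and uniformly over the family of directions $\bu_i^a,\bu_i^b$, to control the master determinant when both it and its perturbation are small — and doing so without any Gaussian input, relying only on the moment conditions \eqref{conditionA3} and the bounded-support hypotheses.
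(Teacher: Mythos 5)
Your proposal follows essentially the same route as the paper: linearization of $\cal C_{\cal X\cal Y}$ into the block matrix $H$, reduction via the local laws to the $r\times r$ master determinant $\det\big(f_c(\lambda)I_r-\diag(t_1,\dots,t_r)+\cal E_r'(\lambda)\big)$ with $\|\cal E_r'\|\prec \psi_n+\phi_n+n^{-1/2}\kappa_\lambda^{-1/4}$, a permissible-region plus Rouch\'e argument (with grouped contours for near-degenerate spikes), and interlacing combined with rigidity for the non-outlier lower bound. The one slip is in your stability heuristic: the relevant derivative is $|f_c'(\theta_i)|\sim\Delta_i^{-1}$ (equivalently $|g_c'(t_i)|\sim\Delta_i$), so the determinant error is multiplied by $\Delta_i$ rather than divided by a quantity of size $\Delta_i$ --- which is what your stated final bound in fact reflects.
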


\begin{remark}
This theorem gives precise large deviation bounds on the locations of the outliers and largest few extreme non-outlier eigenvalues.  Consider a small support case with \smash{$\phi_n+\psi_n\le n^{-1/3}$} (which holds with probability $1-\oo(1)$ if we assume the existence of $12$-th moment, see \eqref{phipsi} below). Then, \eqref{boundout} and \eqref{boundedge} show that the fluctuation of the $i$-th eigenvalue changes from the order \smash{$(\psi_n  +  \phi_n) \Delta_i + n^{-1/2}\Delta_i^{1/2}$} to $n^{-2/3}$ when $\Delta_i$ crosses the scale $n^{-1/3}$. This implies the occurrence of the BBP transition. 
\end{remark}

For the non-outlier eigenvalues of $\cal C_{\cal X\cal Y}$, they stick to the corresponding eigenvalues of $\cal C_{XY}$ as given by the following theorem.

\begin{theorem}\label{main_thm0.5}
	Suppose the assumptions of Theorem \ref{main_thm} hold. Assume that $\al_+ \ge n^{\e_0}(\psi_n+  \phi_n)$ for some constant $\e_0>0$. Then, we have the eigenvalue sticking estimates: 	 
	\be\label{boundstickextra}
	|\wt\lambda_{i+ r_+} - \lambda_i | \prec n^{-1}\al_+^{-1}
	\ee
	for all $i\le (1-\delta)q$, where $\delta>0$ is any small constant. 	
\end{theorem}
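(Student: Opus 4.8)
The plan is to compare the eigenvalues of $\cal C_{\cal X\cal Y}$ with those of $\cal C_{XY}$ through the linearization and local law of \cite{PartIII}. First I would write $\cal C_{\cal X\cal Y}$ (or rather the similar matrix $\cal C'_{\cal X\cal Y}$) as a finite-rank deformation of $\cal C_{XY}$: substituting $\mathcal X = X + AZ$, $\mathcal Y = Y+BZ$ into the block matrix $H$ and isolating the $A,B,Z$-dependent pieces, one sees that $\cal C_{\cal X\cal Y}$ differs from $\cal C_{XY}$ by a perturbation of rank $\OO(r)$. The eigenvalue equation for $\wt\lambda$ away from $\{\lambda_i\}$ can then be reduced, by a Schur complement / Woodbury argument, to the vanishing of the determinant of a small ($\OO(r)\times\OO(r)$) matrix $\mathcal M(\wt\lambda)$ whose entries are bilinear forms of the form $\langle \bm v, \mathcal R(\wt\lambda)\bm w\rangle$, where $\mathcal R$ is a resolvent built from $X,Y,Z$ and $\bm v,\bm w$ are the deterministic singular vectors of $A,B$ (plus contributions from $Z$, controlled via its bounded support $\psi_n$). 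This is exactly the setup used to prove Theorem \ref{main_thm}; here I only need the sticking consequence rather than the precise outlier locations.

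The key step is an \emph{interlacing-plus-gap} argument. By Cauchy interlacing, each non-outlier eigenvalue $\wt\lambda_{i+r_+}$ lies in the interval $[\lambda_{i+r_+}, \lambda_{i-r_+}]$ (with obvious conventions at the ends), so it is already within $\OO(i^{-1/3}n^{-2/3})$ of $\lambda_i$ by the rigidity estimate \eqref{rigidity}; the content of \eqref{boundstickextra} is the much sharper bound $n^{-1}\al_+^{-1}$. To get this, I would fix $i$ and work on a spectral scale $\eta$ just above $n^{-1}$ around $\lambda_i$: on a high-probability event, the deformation only shifts $\lambda_i$ by an amount governed by $\|\mathcal M(\lambda_i)^{-1}\|$ times the size of the off-diagonal coupling between the bulk resolvent at $\lambda_i$ and the outlier subspace. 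The condition $\al_+ \ge n^{\e_0}(\psi_n+\phi_n)$ guarantees that $\mathcal M(\lambda_i)$ is non-degenerate with $\|\mathcal M(\lambda_i)^{-1}\| \lesssim \al_+^{-1}$ (the relevant "master" quantities, which are essentially $t_i - t_c$ up to the anisotropic Stieltjes-transform rescaling, stay bounded away from $0$ by $\gtrsim \al_+$), while the isotropic local law of \cite{PartIII} controls the bilinear forms $\langle \bm v,\mathcal R(\lambda_i + \ii\eta)\bm w\rangle$ up to an error $\prec (n\eta)^{-1/2}+\ldots$, which at $\eta\sim n^{-1}$ combines with the eigenvector-overlap bound $\sum_\mu |\langle \bm v, \bm\xi_\mu\rangle|^2 \Im m(\lambda_i+\ii\eta)/\ldots$ to produce the final displacement of size $\OO_\prec(n^{-1}\al_+^{-1})$. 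One should run this uniformly for all $i \le (1-\delta)q$ via a union bound over a polynomial grid of reference points and a monotonicity/continuity argument in $\wt\lambda$.

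The main obstacle I anticipate is making the perturbative reduction \emph{uniform in $i$ down to the optimal scale} $n^{-1}\al_+^{-1}$, rather than just near the spectral edge. Near the edge ($i$ small) the outlier subspace and the bulk are well separated and the argument is cleanest, but for $i$ of order $q$ one is deep in the bulk where the eigenvalue spacing is $\sim n^{-1}$, so the sticking bound is genuinely at the resolution limit and requires the level-repulsion / anti-concentration input behind the local law at $\eta$ slightly above $n^{-1}$; controlling the error terms in the isotropic law (in particular the $\phi_n,\psi_n$-dependent ones coming from $Z$ and from the non-i.i.d., slightly-biased entries permitted by Assumption \ref{main_assm}) so that they remain $\ll n^{-1}\al_+^{-1}$ under only the hypothesis $\al_+\ge n^{\e_0}(\psi_n+\phi_n)$ is the delicate bookkeeping. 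I expect this to parallel the eigenvalue-sticking proofs for deformed Wigner / spiked covariance matrices (e.g.\ \cite{KY_AOP,DY2}), adapted to the double-Wishart / CCA resolvent structure of \cite{PartIII}.
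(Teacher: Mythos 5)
Your overall framework (linearization, reduction to a small determinant equation, local laws, interlacing plus a forbidden-region argument) matches the paper's, but two essential ideas are missing, and without them the argument does not close. First, the paper does \emph{not} compare $\cal C_{\cal X\cal Y}$ directly with $\cal C_{XY}$: it introduces the intermediate matrix $\cal C^b_{X\cal Y}$ (only $Y\to\cal Y$ perturbed) and first proves $|\wt\lambda_{i+r_+}-\lambda_i^b|\prec n^{-1}\al_+^{-1}$, then observes that $\cal C^b_{X\cal Y}$ has no outliers, so the same estimate applied with $A=0$ gives $|\lambda_i^b-\lambda_i|\prec n^{-1}\al_+^{-1}$, and the two bounds combine. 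The reason for this detour is that the counting step needs the rank-$r$ interlacing $\wt\lambda_i\in[\lambda^b_{i+r},\lambda^b_{i-r}]$; the direct comparison with $\cal C_{XY}$ only yields the rank-$2r$ interlacing \eqref{interlacing_eq0}, which is too weak to pin down the exact index shift $r_+$ (the paper remarks on this explicitly after \eqref{interlacing_eqb}). Your proposed interval $[\lambda_{i+r_+},\lambda_{i-r_+}]$ is not the interlacing that is actually available.

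Second, and more importantly, the identification of \emph{which} perturbed eigenvalue sits near $\lambda_i^b$ cannot be obtained from a local perturbative bound of the form ``$\|\cal M(\lambda_i)^{-1}\|\lesssim\al_+^{-1}$ times the coupling'' at a single index $i$: in the bulk the eigenvalue spacing is $\sim n^{-1}$ while the claimed sticking precision $n^{-1}\al_+^{-1}$ can be much larger, so several unperturbed eigenvalues are indistinguishable at that resolution and a purely local argument cannot assign indices. The paper handles this by (i) a forbidden-region lemma (no eigenvalues of $\cal C_{\cal X\cal Y}$ at distance larger than $n^{-1+\e}\al_+^{-1}$ from $\mathrm{Spec}(\cal C^b_{X\cal Y})$ in the non-outlier range), (ii) grouping the $\lambda_i^b$ into blocks of consecutive eigenvalues within $n^{-1+7\e/6}\al_+^{-1}$ of one another and proving ``at least $|B_k|$ eigenvalues per block'' by a sign-change/continuity argument on the master matrix together with ``at most $|B_k|$'' from the rank-$r$ interlacing and the outlier count $r_+$, and (iii) a continuity argument in the spike configuration, interpolating from a well-separated reference configuration $\mathbf t(0)$ to the actual one while the forbidden-region lemma confines the eigenvalues. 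Your sketch gestures at ``a monotonicity/continuity argument'' but does not supply this counting mechanism, which is the crux of the proof; the remaining ingredients you describe (local laws for the perturbed resolvent via Woodbury, rigidity for $\lambda_i^b$, and the non-degeneracy bound $|f_c(x)-t_j|\gtrsim\al_+$ under the hypothesis $\al_+\ge n^{\e_0}(\psi_n+\phi_n)$) are essentially correct.
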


\begin{remark}
	This theorem establishes a large deviation bound on the non-outlier eigenvalues of $\cal C_{\cal X\cal Y}$ with respect to the eigenvalues of $\cal C_{X{Y}}$. Combining it with Lemma \ref{lem null0} above, we immediately obtain the asymptotic behaviors of the non-outlier eigenvalues of $\cal C_{\cal X\cal Y}$. In particular, when $\alpha_+ \gg n^{-1/3}$, the right-hand side of (\ref{boundstickextra}) is much smaller than $n^{-2/3}$. Together with \eqref{joint TW0} for $\lambda_i$, \eqref{boundstickextra} implies that the largest non-outlier eigenvalue of $\cal C_{\cal X\cal Y}$ also converges to the Tracy-Widom law as long as $t_i$-s are away from the transition threshold $t_c$ at least by $\al_+\gg n^{-1/3}$. 
\end{remark}

In many settings, people usually assume certain moment conditions on the entries of $X$, $Y$ and $Z$ instead of the bounded support condition. By using Markov's inequality and a standard truncation argument, we can derive a bounded support condition from the moment assumptions. Then, with Theorems \ref{main_thm} and \ref{main_thm0.5}, we can easily obtain the following corollary. Since we did not assume the entries of $X$, $Y$ and $Z$ are identically distributed, the means and variances of the truncated entries may be different. This is why we have assumed the slightly more general mean and variance conditions \eqref{entry_assm0}--\eqref{entry_assmz}. 

\begin{corollary}\label{main_cor}
	Assume that $X=(x_{ij})$, $Y=(Y_{ij})$ and $Z=(z_{ij})$ are respectively $p\times n$, $q\times n$ and $r\times n$ matrices, whose entries are real independent random variables satisfying \eqref{assm1}, \eqref{assmZ} and  
	\be\label{condition_4e} 
	\max_{i,j}\mathbb{E}  |\sqrt{n} x_{ij} | ^{a}  \le C, \quad \max_{i,j}\mathbb{E}|\sqrt{n} y_{ij} |^{a}  \le C,\quad \max_{i,j}\mathbb{E}  |\sqrt{n} z_{ij} | ^{b}  \le C ,
	\ee 
	for some constants $a>4$, $b>2$ and $C>1$. If Assumption \ref{main_assm} (iii)--(iv) and Assumption \ref{ass spike} hold with
	\be\label{phipsi}\phi_n = n^{-1/2 + 2/a } ,\quad \psi_n= n^{-1/2+1/b } ,\ee
	then for any $1\le i \le r_+$ and small constant $\e>0$,
	\be\label{boundinp000}
	\lim_{n\to \infty}\P\left(|\wt\lambda_i - \theta_i| \le n^\e\left[ \left(\psi_n +  \phi_n\right) \Delta_i + n^{-1/2}\Delta_i^{1/2}\right]\right) =1 . 
	\ee
	Moreover, assume that the eigenvalues of $\bm\Sigma_{\cal X\cal Y}$ satisfy that 
	\be\label{well-sep000} 
	\al_+ \ge n^{\e_0}(\psi_n+  \phi_n)+ n^{-1/3+\e_0}
	\ee
	for a constant $\e_0>0$. Then, for any fixed $k\in \N$ and all $(s_1 , s_2, \ldots, s_k) \in \mathbb R^k$, 
	\be\label{boundinTW}
	\begin{split}
		& \lim_{n\to \infty}\mathbb{P} \left[ \left(n^{{2}/{3}}\frac{\wt\lambda_{i+r_+ } - \lambda_+}{c_{TW}} \leq s_i\right)_{1\le i \le k} \right]  = \lim_{n\to \infty} \mathbb{P}^{GOE}\left[\left(n^{{2}/{3}}(\lambda_i - 2) \leq s_i\right)_{1\le i \le k} \right].
	\end{split}
	\ee
\end{corollary}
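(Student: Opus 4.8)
The plan is to derive Corollary \ref{main_cor} from Theorems \ref{main_thm} and \ref{main_thm0.5} by a standard truncation/comparison argument that replaces the moment assumptions \eqref{condition_4e} with the bounded support conditions required by those theorems. First I would set up the truncation: given the $a$-th moment bound on $\sqrt n\, x_{ij}$ with $a>4$, define truncated variables $x_{ij}^c := x_{ij}\ind{|x_{ij}|\le n^{-1/2+2/a}}$, and similarly $y_{ij}^c$ and $z_{ij}^c$ with cutoff $n^{-1/2+1/b}$. A Markov/union-bound estimate shows that $X=X^c$, $Y=Y^c$, $Z=Z^c$ simultaneously with probability $1-\oo(1)$: indeed $\P(\exists i,j: |x_{ij}|>n^{-1/2+2/a}) \le pn \cdot \max_{ij}\P(|\sqrt n\, x_{ij}|>n^{2/a}) \le pn\cdot C n^{-2} = \oo(1)$, using $a>4$ so that $n^{2/a}$ raised to the $a$-th power is $n^2$ and $pn \lesssim n^2$; the same works for $Y$ and, with $b>2$, for $Z$ (where $r$ is bounded so the union is over $O(n)$ entries). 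Hence it suffices to prove the conclusions for the truncated matrices, which by construction satisfy the bounded support condition with $\phi_n = n^{-1/2+2/a}$ and $\psi_n = n^{-1/2+1/b}$ as in \eqref{phipsi}.

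Next I would check that the truncated entries still satisfy the hypotheses of Theorems \ref{main_thm}--\ref{main_thm0.5}, i.e. Assumption \ref{main_assm} (i)--(ii) and the third/fourth moment bounds \eqref{conditionA3}. The point is that truncation perturbs the mean and second moment only negligibly: $|\E x_{ij}^c| = |\E x_{ij}\ind{|x_{ij}|>n^{-1/2+2/a}}| \le \E |x_{ij}| \ind{\dots}$, which by the moment bound and Markov is $\OO(n^{-(a-1)/2 + \text{stuff}})$, in any case $\le n^{-2-\tau}$ for $n$ large since $a>4$; and likewise $|\E|x_{ij}^c|^2 - n^{-1}| \le n^{-2-\tau}$. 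This is exactly why the statement of the corollary invokes the relaxed mean/variance conditions \eqref{entry_assm0}--\eqref{entry_assmz} rather than the exact normalization \eqref{assm1}, \eqref{assmZ} (as the paragraph preceding the corollary already flags). The third and fourth moment bounds in \eqref{conditionA3} follow immediately because the truncated variables are bounded by $\phi_n$, so $\E|x_{ij}^c|^3 \le \phi_n \cdot \E|x_{ij}^c|^2 \lesssim n^{-1/2+2/a}\cdot n^{-1} \ll n^{-3/2}$ (wait—this needs $2/a<0$, which is false), so more carefully one writes $\E|x_{ij}^c|^3 \le \E|x_{ij}|^3$ and bounds the latter using the $a$-th moment with $a>4 \ge 3$, giving $\E|x_{ij}|^3 \le (\E|x_{ij}|^a)^{3/a} \le (Cn^{-a/2})^{3/a} = C^{3/a} n^{-3/2}$; similarly $\E|x_{ij}^c|^4 \le (\E|x_{ij}|^a)^{4/a} \lesssim n^{-2}$, and since $a>4$ strictly this is even $\oo$ of $n^{-2}$ in the appropriate sense, in particular $\prec n^{-2}$. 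So \eqref{conditionA3} holds for the truncated matrices.

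With the hypotheses verified, I would apply Theorem \ref{main_thm} to the truncated model to get \eqref{boundinp000}: \eqref{boundout} gives $|\wt\lambda_i - \theta_i| \prec (\psi_n+\phi_n)\Delta_i + n^{-1/2}\Delta_i^{1/2}$ for $1\le i\le r_+$, and converting $\prec$ into a probability statement (absorbing the $n^\epsilon$ factor) yields the displayed limit, valid for the original (untruncated) matrices on the probability-$(1-\oo(1))$ event where they coincide with the truncated ones. For the Tracy-Widom conclusion \eqref{boundinTW}, note that condition \eqref{well-sep000} gives $\al_+ \ge n^{\e_0}(\psi_n+\phi_n)$, so Theorem \ref{main_thm0.5} applies and yields $|\wt\lambda_{i+r_+} - \lambda_i| \prec n^{-1}\al_+^{-1}$ for $i\le (1-\delta)q$. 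The extra requirement $\al_+ \ge n^{-1/3+\e_0}$ forces $n^{-1}\al_+^{-1} \le n^{-2/3-\e_0} \ll n^{-2/3}$, so after rescaling by $n^{2/3}$ the sticking error is $\oo(1)$; combined with the edge universality \eqref{joint TW0} of Lemma \ref{lem null0} applied to $\cal C_{YX}$, a standard argument (the $\oo(1)$ perturbation does not affect weak convergence, via e.g. a Slutsky-type argument on the vector $(n^{2/3}(\wt\lambda_{i+r_+}-\lambda_+)/c_{TW})_{i\le k}$) transfers the joint Tracy-Widom / Airy limit from $\lambda_i$ to $\wt\lambda_{i+r_+}$, giving \eqref{boundinTW}.

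The argument is essentially routine bookkeeping; the only mildly delicate point is ensuring the probability that truncation changes \emph{any} entry is $\oo(1)$ simultaneously across all three matrices, which is where the strict inequalities $a>4$ and $b>2$ are used (giving room against the $pn$, $qn$, $rn$ union bounds). There is no real obstacle, since all the analytic heavy lifting—the local law for $G$, the rigidity of $\cal C_{YX}$, and the outlier/sticking estimates—is already contained in Lemma \ref{lem null0} and Theorems \ref{main_thm}, \ref{main_thm0.5}; the corollary is purely a transfer statement.
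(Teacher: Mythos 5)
Your overall strategy --- truncate, verify the hypotheses of Theorems \ref{main_thm} and \ref{main_thm0.5} for the truncated matrices, and transfer back on the event where truncation changes nothing --- is exactly the paper's argument, and your verification of the mean/variance perturbations and of \eqref{conditionA3} via H\"older is correct (including your own mid-stream repair of the third-moment bound). However, the one step you dismiss as ``mildly delicate'' is precisely where your computation fails. Truncating at exactly $\phi_n = n^{-1/2+2/a}$, Markov's inequality gives only $\P\big(|\sqrt n\,x_{ij}| > n^{2/a}\big) \le C n^{-2}$, and the union over the $pn \sim c_1 n^2$ entries of $X$ yields a bound of order $c_1 C = \OO(1)$, \emph{not} $\oo(1)$ as you assert ($n^2\cdot n^{-2}=1$). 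The same happens for $Z$: the union over $rn=\OO(n)$ entries against the tail bound $Cn^{-1}$ again gives only $\OO(1)$. So at your truncation level you cannot conclude that $X$, $Y$, $Z$ coincide with their truncations with probability $1-\oo(1)$, and the transfer back to the original matrices breaks down.

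The fix is standard and is what the paper does: truncate at $\phi_n n^{\e}$ (resp.\ $\psi_n n^{\e}$) for a small constant $\e>0$. Then $\P(|x_{ij}| > \phi_n n^{\e}) \le C n^{-2-a\e}$, so the union bound gives $\OO(n^{-a\e})=\oo(1)$, and similarly $\OO(n^{-b\e})$ for $Z$; this is exactly where the strict inequalities $a>4$, $b>2$ buy you room, rather than in the bound at level $\phi_n$ itself. The truncated matrices still have bounded support $\phi_n$, $\psi_n$ in the sense of Definition \ref{defn_support} (the extra $n^{\e}$ is absorbed by $\prec$, or equivalently one applies the theorems with $\phi_n n^{\e}$ in place of $\phi_n$ and absorbs the loss into the arbitrary $n^{\e}$ factors in \eqref{boundinp000} and \eqref{boundinTW}). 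With that single adjustment, the rest of your argument --- \eqref{boundout} for the outliers, and \eqref{boundstickextra} with $n^{-1}\al_+^{-1}\le n^{-2/3-\e_0}$ from \eqref{well-sep000} combined with Lemma \ref{lem null0} for the Tracy--Widom limit --- goes through as you describe and matches the paper's proof.
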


If the entries of $X$, $Y$ and $Z$ are identically distributed, then we can obtain the following result under a weaker tail condition than \eqref{condition_4e}. We believe this tail condition is sharp.

\begin{theorem}\label{main_thm1}
	Suppose Assumption \ref{main_assm} (iii)--(iv) and Assumption \ref{ass spike} hold. Assume that $x_{ij}=n^{-1/2} \wh x_{ij}$, $y_{ij}= n^{-1/2} \wh y_{ij}$ and $z_{ij}= n^{-1/2} \wh z_{ij}$, where $\{\wh x_{ij}\}$, $\{\wh y_{ij}\}$ and $\{\wh z_{ij}\}$ are three independent families of i.i.d. random variables with mean zero and variance one. Moreover, suppose the following tail condition holds:
	\begin{equation}
		\lim_{t \rightarrow \infty } t^4 \left[\mathbb{P}\left( \vert \wh x_{11} \vert \geq t\right)+ \mathbb{P}\left( \vert \wh y_{11} \vert \geq t\right)\right]=0 . \label{tail_cond}
	\end{equation}
	We assume that the eigenvalues of $\bm\Sigma_{\cal X\cal Y}$ converge as $n\to \infty$ with 
	\be\label{well-sep} \lim_n t_{r_+} > t_c > \lim_{n} t_{r_+ + 1}.\ee
	Then, both \eqref{boundinTW} and the following convergence in probability hold:
	\be\label{boundinp}
	\lim_{n\to\infty}\P\left(\wt\lambda_i - \theta_i \le \e\right)=1 \ \ \text{for any constant $\e>0$}.
	\ee
\end{theorem}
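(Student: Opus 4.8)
The plan is to deduce Theorem \ref{main_thm1} from Corollary \ref{main_cor} (equivalently Theorems \ref{main_thm} and \ref{main_thm0.5}) by a truncation argument, so that the only real work is handling the weaker i.i.d.\ tail condition \eqref{tail_cond} in place of the moment bound \eqref{condition_4e}. First I would observe that \eqref{tail_cond} is exactly the Tracy--Widom-edge-universality moment condition familiar from sample covariance matrices: it implies that for every fixed $\delta>0$ one has $\mathbb{E}|\wh x_{11}|^4 \ind{|\wh x_{11}|\le n^{\delta}}$ bounded and the ``overflow'' tail $\mathbb{P}(|\wh x_{11}|\ge n^{1/4})$ is $\oo(n^{-1})$ (indeed summable against $n$), and similarly for $\wh y$ and $\wh z$ (the latter only needs a finite second moment, which is assumed). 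So I would set up truncation at a scale $n^{\delta}$ for a small $\delta>0$: define $\check x_{ij} := n^{-1/2}\wh x_{ij}\ind{|\wh x_{ij}|\le n^{\delta}}$, recenter and rescale to get $\wt x_{ij}$ with mean zero and variance exactly $n^{-1}$, and likewise $\wt y_{ij}$, $\wt z_{ij}$. By a union bound over the $\OO(n^2)$ entries, the truncated matrices $\wt X,\wt Y,\wt Z$ agree with $X,Y,Z$ with probability $1-\oo(1)$, so all eigenvalue statements transfer between the two models up to a $\oo(1)$ probability loss. The recentering/rescaling corrections are of order $\OO(n^{-K})$ for $K$ as large as we want (because of the strong tail decay), hence are absorbed by the slightly relaxed mean/variance conditions \eqref{entry_assm0}--\eqref{entry_assmz} in Assumption \ref{main_assm}, which is precisely why those conditions were stated in that generality (as the text notes above Corollary \ref{main_cor}).

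Next I would verify that the truncated model satisfies the hypotheses of Corollary \ref{main_cor}, or rather of Theorems \ref{main_thm}--\ref{main_thm0.5} directly. The truncated entries have support $\phi_n = n^{-1/2+\delta}$ for $X,Y$ and $\psi_n = n^{-1/2+\delta}$ for $Z$, with $\delta$ small, so $c_\phi = c_\psi = 1/2-\delta > 0$. The third- and fourth-moment bounds \eqref{conditionA3} follow from $\mathbb{E}|\wh x_{11}|^4\ind{|\wh x_{11}|\le n^{\delta}} = \oo(n^{4\delta})$ — actually one gets $\mathbb{E}|\wt x_{ij}|^4 \prec n^{-2+4\delta}$ directly, and using \eqref{tail_cond} more carefully, $\mathbb{E}|\wt x_{ij}|^4 = \oo(n^{-2+\epsilon})$ for any $\epsilon>0$; the third moment is $\OO(n^{-3/2})$ by Cauchy--Schwarz from the finite second moment and the bounded support. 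Thus all moment hypotheses of Theorem \ref{main_thm} hold with these $\phi_n,\psi_n$. The assumption \eqref{well-sep} that $\lim_n t_{r_+} > t_c > \lim_n t_{r_++1}$ means there is a fixed gap, so $\alpha_+ = \min_i |t_i - t_c| \gtrsim 1 \gg n^{\epsilon_0}(\psi_n+\phi_n) + n^{-1/3+\epsilon_0}$ for $\delta$ small; hence the separation hypothesis \eqref{well-sep000} of Corollary \ref{main_cor} is satisfied, and also $\Delta_i \sim 1$ for each $i \le r_+$, while $t_i - t_c \le -c < 0$ for $i > r_+$ so Assumption \ref{ass spike} holds with this $r_+$. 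Then \eqref{boundinTW} for the truncated model is exactly the conclusion \eqref{boundinTW} of Corollary \ref{main_cor}, and \eqref{boundout} gives $|\wt\lambda_i - \theta_i| \prec n^{-1/3+\delta'} \to 0$ for $i\le r_+$, which in particular yields \eqref{boundinp} (indeed a two-sided bound, stronger than the one-sided statement asked for). Transferring back via the high-probability coupling completes the proof.

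The only genuinely delicate point is the truncation bookkeeping when the entries are \emph{not} required to be identically distributed in the general corollary but \emph{are} i.i.d.\ here: here it actually simplifies things, since all truncated entries of $X$ share the same law, all of $Y$ share theirs, etc., so the mean/variance shifts are uniform and trivially controlled. The main obstacle — to the extent there is one — is making sure the tail condition \eqref{tail_cond}, which only controls the fourth moment ``at the edge'' rather than giving a genuine $(4+\epsilon)$-th moment, still produces $\mathbb{E}|\wt x_{ij}|^4 = \oo(n^{-2+\epsilon})$ rather than merely $\OO(n^{-2})$; this matters because Theorem \ref{main_thm} with $\phi_n \sim n^{-1/2+\delta}$ needs the support exponent strictly inside $(0,1/2)$ and the error $\phi_n^2 \sim n^{-1+2\delta}$ to stay $\oo(n^{-2/3})$, which forces $\delta<1/6$; choosing $\delta$ small enough and invoking $\lim_{t\to\infty} t^4\mathbb{P}(|\wh x_{11}|\ge t)=0$ to split the truncated fourth moment into a bounded part and a part that is $\oo(n^{-2+\epsilon})$ handles this. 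I would carry out the steps in the order: (1) state the truncation and the high-probability coupling; (2) check the mean/variance corrections are negligible; (3) verify \eqref{conditionA3} and Assumptions \ref{main_assm}, \ref{ass spike} for the truncated model with $\phi_n=\psi_n=n^{-1/2+\delta}$; (4) verify \eqref{well-sep000} from the fixed gap \eqref{well-sep}; (5) apply Corollary \ref{main_cor} (or Theorems \ref{main_thm}, \ref{main_thm0.5}) and transfer the conclusions back.
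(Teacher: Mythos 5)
Your reduction to Corollary \ref{main_cor} breaks down at the very first step, the coupling. Under the tail condition \eqref{tail_cond} one only has $\mathbb{P}(|\wh x_{11}|\ge t)=\oo(t^{-4})$, so truncating at level $n^{\delta}$ gives $\mathbb{P}(|\wh x_{11}|>n^{\delta})=\oo(n^{-4\delta})$, and the union bound over the $\sim n^{2}$ entries of $X$ yields only $\oo(n^{2-4\delta})$ — this is $\oo(1)$ only if $\delta\ge 1/2$, i.e., only if you do not truncate at all (equivalently $\phi_n=1$, which is exactly the degenerate case $a=4$ noted in the remark after Theorem \ref{main_thm1}). Your parenthetical "$\mathbb{P}(|\wh x_{11}|\ge n^{1/4})$ is $\oo(n^{-1})$" does not help: multiplied by $n^2$ entries it gives $\oo(n)$ exceedances in expectation, so with probability tending to one there \emph{are} entries above any cutoff $n^{1/2-\epsilon}$ (of order up to $n^{4\epsilon}$ of them), and the event $\{\wt X=X\}$ does \emph{not} have probability $1-\oo(1)$. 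The same problem occurs for $Z$, which is only assumed to have two moments. Since the fourth moment itself need not be finite under \eqref{tail_cond}, there is no choice of truncation level for which both the support is $\ll 1$ and the coupling holds; this is precisely why the theorem cannot be obtained from Corollary \ref{main_cor} and why a separate argument is needed.

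The missing idea is how to handle the entries \emph{above} the cutoff rather than discarding them. The paper's proof decomposes $X\stackrel{d}{=}X^s+(X^l-X^s)\circ X^c$ (and similarly for $Y$, and $Z=Z^s+Z^l$), where $X^s$ has support $n^{-\epsilon}$ and satisfies \eqref{conditionA3}, and the perturbation is supported on the nonzero entries of the Bernoulli matrix $X^c$. One then shows: (i) by a Chernoff bound $X^c$ has at most $n^{5\epsilon}$ nonzero entries, and with probability $1-\OO(n^{-1+10\epsilon})$ no two of them share a row or column, so the perturbation has rank $\OO(n^{5\epsilon})$; (ii) by \eqref{tail_cond} all perturbing entries are $\le\omega$ with probability $1-\oo(1)$ for any fixed $\omega>0$; (iii) a continuity argument in the interpolation parameter $t\in[0,1]$, using the determinant identity \eqref{nece extra1} and the second-moment resolvent bound of Lemma \ref{thm_largebound} applied to the $\OO(n^{5\epsilon})$ special directions $\mathbf F_1,\mathbf F_2$, shows that $\det\wt H_t(\mu)$ never vanishes at the test points $\mu=\theta_i\pm\delta$ and $\mu=\lambda_1^s\pm n^{-3/4}$, so the outliers stay in $\oo(1)$-neighborhoods of $\theta_i$ and the largest non-outlier eigenvalue stays within $n^{-3/4}$ of $\lambda_1^s$. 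None of this is present in your proposal, and without it the argument does not go through. (Your verification that the small-support part satisfies \eqref{conditionA3}, Assumption \ref{ass spike}, and \eqref{well-sep000} is fine and matches the paper's use of Theorems \ref{main_thm} and \ref{main_thm0.5} for $(X^s,Y^s,Z^s)$; the gap is entirely in steps (1)–(2) of your outline.)
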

\begin{remark}
If $\wh x_{11}$ and $\wh y_{11}$ have finite fourth moments, then the tail condition \eqref{tail_cond} holds. Hence, \eqref{tail_cond} is strictly weaker than \eqref{condition_4e}, and it gives a weaker result \eqref{boundinp} without an explicit convergence rate for \smash{$\wt\lambda_i - \theta_i$}. Note that Theorem \ref{main_thm1} cannot be derived directly from Corollary \ref{main_cor}: when $a=4$ and $b=2$, we have $\phi_n=\psi_n=1$ in \eqref{phipsi} and the result \eqref{boundinp000} becomes a trivial statement. We also remark that \eqref{well-sep} means for large enough $n$, there are exactly $r_+$ outliers and $t_i$-s are all away from the BBP transition threshold by a small constant, i.e., $\al_+\gtrsim 1$. This is consistent with Assumption \ref{ass spike} (up to a small constant) with $\phi_n=\psi_n=1$.
\end{remark}

Finally, we mention that for an outlier eigenvalue, \smash{$n^{1/2}(\wt\lambda_i - \theta_i)$} actually converges to a normal distribution, which has been proved in \cite{CCA} for the Gaussian case and for well-separated outliers, i.e. every pair $t_i$ and $t_j$ are either exactly degenerate or separated from each other by a distance of order 1. The proof for the general distribution case with near-degenerate outliers is quite involved, and, considering the length of this paper, we include it into another paper \cite{PartII}. 


The rest of this paper is organized as follows.  
In Section \ref{sec_maintools}, we introduce the linearization method and collect some basic tools that will be used in the proof. Then, we will give the proof of Theorem \ref{main_thm} in Section \ref{sec mainthm}. The proofs of Theorem \ref{main_thm0.5}, Corollary \ref{main_cor} and Theorem \ref{main_thm1} will be presented in Sections \ref{sec pfstick}--\ref{sec mainthm3}. 

\section{Linearization method and local laws}\label{sec_maintools}


The self-adjoint linearization method has been proved to be useful in studying the local laws of random matrices of Gram type \cite{Alt_Gram, AEK_Gram, DY_TW2, DY_TW1,Anisotropic, XYY_circular,yang2018}. We now introduce a generalization of this method, which was introduced in \cite{PartIII} to prove Lemma \ref{lem null0}. For the discussion below, we assume that $\cal X\cal X^{\top}$, $\cal Y\cal Y^{\top}$, $XX^{\top}$ and $YY^{\top}$ are all non-singular almost surely. (This is trivially true if, say, the entries of $X$, $Y$ and $Z$ have continuous densities.) Then, given $\lambda > 0$, it is an eigenvalue of $\cal C_{\cal X\cal Y}$ if and only if the following equation holds:
\be\label{eq det0}\det\left[ \cal X\cal Y^{\top}\big( \cal Y\cal Y^{\top}\big)^{-1}\cal Y\cal X^{\top} - \lambda \cal X\cal X^{\top}\right] = 0 .\ee
Using the Schur complement formula, we can check that equation \eqref{eq det0} is equivalent to
$$\det \begin{pmatrix} \lambda \cal X\cal X^{\top} &  \lambda^{1/2}\cal X\cal Y^{\top} \\  \lambda^{1/2}\cal Y\cal X^{\top} &  \lambda \cal Y\cal Y^{\top} \end{pmatrix} = 0 .$$ 
By the Schur complement formula again, the above equation is equivalent to 
\be\label{deteq}\det \begin{bmatrix} 0 & \begin{pmatrix} \cal X & 0\\ 0 & \cal Y\end{pmatrix}\\ \begin{pmatrix} \cal X^{\top} & 0\\ 0 & \cal Y^{\top}\end{pmatrix}  & \begin{pmatrix}  \lambda  I_n & \lambda^{1/2}I_n\\ \lambda^{1/2} I_n &  \lambda I_n\end{pmatrix}^{-1}\end{bmatrix} = 0 \quad \text{if \ \ $\lambda \notin \{0, 1\}$.} \ee


Inspired by equation \eqref{deteq}, we define the following $(p+q+2n)\times (p+q+2n)$ symmetric block matrix 
\begin{equation}\label{linearize_block}
	H(\lambda) : = \begin{bmatrix} 0 & \begin{pmatrix}  X & 0\\ 0 &  Y\end{pmatrix}\\ \begin{pmatrix} X^{\top} & 0\\ 0 &  Y^{\top}\end{pmatrix}  & \begin{pmatrix}  \lambda  I_n & \lambda^{1/2}I_n\\ \lambda^{1/2} I_n &  \lambda I_n\end{pmatrix}^{-1}\end{bmatrix} .
\end{equation}
In general, we can extend the argument $\lambda$ to $z\in \C_+:=\{z\in \C: \im z>0\}$ and call it $H(z)$, where we take $z^{1/2}$ to be the branch with positive imaginary part. Then, using \eqref{assm data} and \eqref{assm AB}, we can write equation \eqref{deteq} as
\begin{align}\label{detereq temp}
	\det \left[ H(\lambda) + \begin{pmatrix} \bU & 0 \\ 0 & \bE\end{pmatrix}\begin{pmatrix} 0 & \cal D\\ \cal D  & 0\end{pmatrix}\begin{pmatrix} \bU^{\top} & 0 \\ 0 & \bE^{\top}\end{pmatrix} \right]=0,
\end{align} 
where $\cal D$ is a $2r\times 2r$ matrix with
\be\label{defncalD}\cal D:=\begin{pmatrix} \Sigma_a & 0 \\ 0 & \Sigma_b\end{pmatrix}, \quad \Sigma_a:=\diag \left( a_1, \cdots, a_r\right), \quad \Sigma_b:=\diag \left(b_1, \cdots, b_r\right),\ee
and $\bU$ and $\bE$ are $(p+q)\times 2r$ and $2n\times 2r$ matrices, respectively:
\be\label{defnUE} 
\begin{split}
	&\bU := \begin{bmatrix} \begin{pmatrix}\mathbf u_1^a, \cdots, \mathbf u_r^a \end{pmatrix} & 0 \\ 0 & \begin{pmatrix}\mathbf u_1^b, \cdots, \mathbf u_r^b \end{pmatrix} \end{bmatrix}, \quad
 \bE:= \begin{bmatrix} \begin{pmatrix}Z^{\top}\mathbf v_1^a, \cdots,Z^{\top} \mathbf v_r^a \end{pmatrix} & 0 \\ 0 & \begin{pmatrix}Z^{\top}\mathbf v_1^b, \cdots, Z^{\top}\mathbf v_r^b \end{pmatrix} \end{bmatrix}.
\end{split}
\ee
If $\lambda$ is not an eigenvalue of $\cal C_{XY}$, then $H(\lambda)$ is non-singular by the Schur complement formula and \eqref{detereq temp} is equivalent to 
\begin{align}\label{detereq temp2}
	\det \left[ 1+\begin{pmatrix} 0 & \cal D\\ \cal D  & 0\end{pmatrix}\begin{pmatrix} \bU^{\top} & 0 \\ 0 & \bE^{\top}\end{pmatrix} \frac1{H(\lambda)} \begin{pmatrix} \bU & 0 \\ 0 & \bE\end{pmatrix}\right]=0,
\end{align} 
where we used the identity $\det(1+ M_1M_2)= \det(1+ M_2M_1)$ for any matrices $M_{1}$ and $M_{2}$ of conformable dimensions. 
Inspired by the above discussion, we define the resolvent (or Green's function) 
\begin{equation}\label{eqn_defG}
	G(z):= \left[H(z)\right]^{-1} , \quad z\in \mathbb C_+ \cup \R,
\end{equation}
whenever the inverse exists. Note that although $H(\lambda)$ is not well-defined for $\lambda=1$, we can still define $ G(1)=\lim_{z\to 1} G(z)$ using the Schur complement, see \eqref{GL1} and \eqref{GR1} below.  In order to study the eigenvalues of $\cal C_{\cal X\cal Y}$, we need to obtain some estimates on the $4r\times 4r$ matrix 
$$\begin{pmatrix} \bU^{\top} & 0 \\ 0 & \bE^{\top}\end{pmatrix} G(\lambda) \begin{pmatrix} \bU & 0 \\ 0 & \bE\end{pmatrix}.$$
This is provided by the {\it anisotropic local law} of $ G(z)$, which is one of the main results in \cite{PartIII}. We will state it in Theorem \ref{thm_local} below. 

For the proof of Theorem \ref{main_thm0.5}, we will also use a different representation of \eqref{detereq temp2}: if $\lambda$ is not an eigenvalue of $\cal C^b_{X \cal Y}$, then $\lambda$ is an eigenvalue of $\cal C_{
	\cal X\cal Y}$ if and only if
\begin{align}\label{detereq temp3}
	\det \left[ 1+\begin{pmatrix} 0 & \cal D_a\\ \cal D_a  & 0\end{pmatrix}\begin{pmatrix} \bU_a^{\top} & 0 \\ 0 & \bE_a^{\top}\end{pmatrix} G^b(\lambda) \begin{pmatrix} \bU_a & 0 \\ 0 & \bE_a\end{pmatrix}\right]=0,
\end{align}
where  
\begin{equation}\label{eqn_defGb}
	G^b(z):= \big[H^b(z)\big]^{-1} , \quad  H^b(z) : = \begin{bmatrix} 0 & \begin{pmatrix}  X & 0\\ 0 &  \cal Y\end{pmatrix}\\ \begin{pmatrix} X^{\top} & 0\\ 0 &  \cal Y^{\top}\end{pmatrix}  & \begin{pmatrix}  z I_n & z ^{1/2}I_n\\ z^{1/2} I_n &  z I_n\end{pmatrix}^{-1}\end{bmatrix} ,
\end{equation}
$$\cal D_a:=\begin{pmatrix} \Sigma_a & 0 \\ 0 & 0\end{pmatrix}, \quad \bU_a: = \begin{pmatrix} \begin{pmatrix}\mathbf u_1^a, \cdots, \mathbf u_r^a \end{pmatrix} & 0 \\ 0 & 0 \end{pmatrix}, \quad \bE_a:= \begin{pmatrix} \begin{pmatrix}Z^{\top}\mathbf v_1^a, \cdots,Z^{\top} \mathbf v_r^a \end{pmatrix} & 0 \\ 0 & 0\end{pmatrix}.$$

For simplicity of notations, we introduce the following index sets for linearized matrices.
\begin{definition}[Index sets]\label{def_index}
	We define the index sets
	$$\cal I_1:=\llbracket 1,p\rrbracket, \quad \cal I_2:=\llbracket p+1,p+q\rrbracket,\quad \cal I_3:=\llbracket p+q+1,p+q+n\rrbracket, \quad \cal I_4:=\llbracket p+q+n+1,p+q+2n\rrbracket. $$
	We will consistently use latin letters $i,j\in\sI_{1}\cup \sI_2$ and greek letters $\mu,\nu\in\sI_{3}\cup \sI_4$. Moreover, we will use notations $\fa,\fb\in \cal I:=\cup_{i=1}^4 \cal I_i$. 
\end{definition}

Next, we define several other types of resolvents that will be used in the proof. 

\begin{definition}[Resolvents]\label{resol_not}
We denote the $(\cal I_1\cup \cal I_2)\times (\cal I_1\cup \cal I_2)$ block of $ G(z)$ by $ \cal G_L(z)$, the $(\cal I_1\cup \cal I_2)\times (\cal I_3\cup \cal I_4)$ block by $ \cal G_{LR}(z)$, the $(\cal I_3\cup \cal I_4)\times (\cal I_1\cup \cal I_2)$ block by $ \cal G_{RL}(z)$, and the $(\cal I_3\cup \cal I_4)\times (\cal I_3\cup \cal I_4)$ block by $ \cal G_R(z)$. We denote the $\cal I_\al \times \cal I_\al$ block of $ G(z)$ by $ \cal G_\al(z)$ for $\al=1,2,3,4$.  Then, we define the partial traces 
	$$ m_\al(z) :=\frac1n\tr  \cal G_{\al}(z) = \frac{1}{n}\sum_{\fa \in \cal I_\al}  G_{\fa\fa}(z) ,\quad \al=1,2,3,4. $$
	Recalling the notations in \eqref{def Sxy}, we define $\cal H:=S_{xx}^{-1/2}S_{xy}S_{yy}^{-1/2}$ and
	\be\label{Rxy}
	R_1(z):
	=(\cal H\cal H^{\top}-z)^{-1}, \quad  R_2(z):
	=(\cal H^{\top}\cal H-z)^{-1}, \quad  m(z):= q^{-1}\tr  R_2(z).
	\ee
	Note that we have $R_1\cal H = \cal HR_2$, $\cal H^{\top} R_1 = R_2 \cal H^{\top} $, and 
	\be\label{R12} \tr  R_1 = \tr  R_2 - \frac{p-q}{z}= q  m(z) - \frac{p-q}{z},\ee
	since $\cal C_{XY}=\cal H\cal H^{\top}$ has $p-q$ more zero eigenvalues than $\cal C_{YX}=\cal H^{\top}\cal H$. Moreover, we define 
	$$R(z):= \begin{pmatrix} -z & -z^{1/2}\cal H \\ -z^{1/2}\cal H^\top & - z\end{pmatrix}^{-1}.$$
	Finally, we can define ${\cal G}^b_L(z)$, ${\cal G}^b_R(z)$, $ m^b_\al(z)$, $\cal H^b$, $R^b$ etc.\;in the obvious way by replacing $Y$ with $\cal Y$. 
\end{definition}

Using the Schur complement formula, we can check that
$$ R(z):=\begin{pmatrix} R_1  & - z^{-1/2} R_1\cal H  \\ - z^{-1/2} \cal H^{\top} R_1  &  R_2 \end{pmatrix}.$$
Let $\cal H = \sum_{k = 1}^{q} \sqrt {\lambda_k} \xi_k \zeta _{k}^{\top} $ be a singular value decomposition of $\cal H$, where
$\sqrt{\lambda_1}\ge  \cdots \ge \sqrt{\lambda_{q}} \ge 0 = \sqrt{\lambda_{q+1}} = \cdots =\sqrt{ \lambda_{p}}$ are the singular values, $\{\xi_{k}\}_{k=1}^{p}$ are the left-singular vectors, and $\{\zeta_{k}\}_{k=1}^{q}$ are the right-singular vectors. Then, we have the following eigendecomposition of $R(z)$:
\begin{equation}
	R\left( z \right) = \sum\limits_{k = 1}^q \frac{1}{\lambda_k-z}\left( {\begin{array}{*{20}c}
			{{\xi _k \xi _k^{\top}  }} & {-z^{-\frac12}\sqrt {\lambda _k } \xi _k \zeta _{ k}^{\top}}  \\
			{-z^{-\frac12} \sqrt {\lambda _k } \zeta _{k} \xi _k^{\top}  } & {\zeta _k\zeta _k^{\top} }  \\
	\end{array}} \right) - \frac1z \left( {\begin{array}{*{20}c}
			{\sum_{k=q+1}^p{\xi _k \xi _k^{\top}  }} & 0  \\
			{0 } & {0}  \\
	\end{array}} \right). \label{spectral1}
\end{equation}
On the other hand, applying the Schur complement formula to $G(z)$, we get that 
\be\label{GL1}
\begin{split}
	\cal G_L 
	&=  \begin{pmatrix} S_{xx}^{-1/2} & 0 \\ 0 & S_{yy}^{-1/2} \end{pmatrix}R(z) \begin{pmatrix} S_{xx}^{-1/2} & 0 \\ 0 & S_{yy}^{-1/2} \end{pmatrix}. 
\end{split}
\ee
Moreover, the other blocks take the forms
\begin{align}
&\cal G_R =   \begin{pmatrix}  z  I_n & z^{\frac12}I_n\\ z^{\frac12}I_n &  z  I_n\end{pmatrix} +   \begin{pmatrix}  z  I_n & z^{\frac12}I_n\\ z^{\frac12}I_n &  z  I_n\end{pmatrix}  \begin{pmatrix} X^{\top} & 0 \\ 0 & Y^{\top} \end{pmatrix} \cal G_L \begin{pmatrix} X & 0 \\ 0 &  Y \end{pmatrix} \begin{pmatrix}  z  I_n & z^{\frac12}I_n\\ z^{\frac12}I_n &  z  I_n\end{pmatrix}  ,\label{GR1}\\
& {\cal G}_{LR}(z)= -\cal G_L(z) \begin{pmatrix} X & 0 \\ 0 &  Y \end{pmatrix} \begin{pmatrix}  z  I_n & z^{\frac12}I_n\\ z^{\frac12}I_n &  z  I_n\end{pmatrix}  , \quad  {\cal G}_{RL}(z)= {\cal G}_{LR}(z)^\top . \label{GLR1} 
\end{align}
Expanding the product in \eqref{GR1} using \eqref{GL1} and calculating partial traces, we can check that
\begin{align}
m_3 (z)&= z+\frac1n\left( -2 z p - z^{2}\tr R_1 + z \tr  R_2\right)= c_2 z(1-z)m(z) + (1-c_1-c_2)z,\label{m3m}\\
	m_4(z) &= z+\frac1n\left(  - 2z q - z^{2}\tr  R_2+ z  \tr R_1 \right) =  c_2 z(1-z)m(z) - (c_1-c_2)+ (1- 2c_2) z,\label{m4m}
\end{align}
where we also used \eqref{R12} in the derivations. In particular, we have the identity
\be\label{m34}
m_3(z) - m_4 (z)= (1-z)(c_1-c_2) . 
\ee
We remark that all the above identities also hold for $G^b$, ${\cal G}^b_L(z)$, ${\cal G}^b_R(z)$, $ m^b_\al(z)$  etc.\;with some obvious changes of notations.

Since $S_{xx}$ and $S_{yy}$ are standard sample covariance matrices, it is well-known that their eigenvalues are all inside the supports of Marchenko-Pastur laws, $[(1-\sqrt{c_1})^2 , (1+\sqrt{c_1})^2]$ and $[(1-\sqrt{c_2})^2 , (1+\sqrt{c_2})^2]$, with probability $1-\oo(1)$ \cite{No_outside}. 
In our proof, we will need some slightly stronger estimates on the extreme eigenvalues of $S_{xx}$ and $S_{yy}$, denoted by $\lambda_1 (S_{xx}) \ge \lambda_p (S_{xx})$ and $\lambda_1 (S_{yy}) \ge  \lambda_q (S_{yy})$, which are given by the following lemma. 

\begin{lemma}\label{SxxSyy}
	Suppose Assumption \ref{main_assm} holds. Suppose $X$ and $Y$ have bounded support $\phi_n$ and $Z$ has bounded support $\psi_n$ with $ n^{-{1}/{2}} \leq \phi_n \leq n^{- c_\phi} $ and $n^{-{1}/{2}} \leq \psi_n \leq n^{- c_\psi} $ for some constants $c_\phi, c_\psi>0$. 	
	Then, for any constant $\e>0$, we have that with high probability,
	\begin{align}
	&\label{op rough1} (1-\sqrt{c_1})^2 - \e \le  \lambda_p(S_{xx})   \le  \lambda_1(S_{xx}) \le (1+\sqrt{c_1})^2 + \e ,
	\\
	&\label{op rough2} (1-\sqrt{c_2})^2 - \e \le  \lambda_q(S_{yy})  \le  \lambda_1(S_{yy}) \le (1+\sqrt{c_2})^2 + \e .
	\end{align}
	Moreover, there exists a constant $c>0$ such that with high probability,
	\be\label{op rough2.5} c \le  \lambda_q(S^b_{yy})  \le  \lambda_1(S^b_{yy}) \le c^{-1} ,
	\ee
	where $ \lambda_1(S^b_{yy})$ and $ \lambda_q(S^b_{yy})$ are respectively the largest and smallest eigenvalues of  $S^b_{yy}$.
\end{lemma}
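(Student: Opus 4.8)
\textbf{Proof proposal for Lemma \ref{SxxSyy}.}

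The plan is to treat \eqref{op rough1}--\eqref{op rough2} first, since $S_{xx}=XX^\top$ and $S_{yy}=YY^\top$ are ordinary sample covariance matrices built from $X$ and $Y$, whose entries satisfy the mean/variance conditions in Assumption \ref{main_assm}(i) and the bounded support condition $\max_{i,j}|x_{ij}|,|y_{ij}|\prec\phi_n$ with $\phi_n\le n^{-c_\phi}$. First I would observe that the small mean $\E x_{ij}=\OO(n^{-2-\tau})$ and the variance perturbation $\E|x_{ij}|^2-n^{-1}=\OO(n^{-2-\tau})$ only shift the relevant quantities by $\oo(1)$ in operator norm: centering $X\mapsto X-\E X$ changes $S_{xx}$ by a matrix of operator norm $\prec n\cdot n^{-2-\tau}\cdot n = \oo(1)$ (the mean matrix $\E X$ has rank-one-type norm $\OO(\sqrt{pn}\cdot n^{-2-\tau})$, whose square is negligible), and rescaling the columns to have exact variance $n^{-1}$ is a diagonal conjugation by a matrix that is $I+\oo(1)$, so $\|S_{xx}-\widehat S_{xx}\|=\oo(1)$ where $\widehat S_{xx}$ is the sample covariance of the normalized, centered data. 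Then I would invoke the standard extreme-eigenvalue bound for sample covariance matrices with bounded support $\phi_n\le n^{-c_\phi}$ (e.g.\ the result of \cite{No_outside} cited just above the lemma, or any of the local-law references), which gives that with high probability $\lambda_p(\widehat S_{xx})\ge(1-\sqrt{c_1})^2-\e/2$ and $\lambda_1(\widehat S_{xx})\le(1+\sqrt{c_1})^2+\e/2$ for any fixed $\e>0$ once $n$ is large (using $c_1(n)\to\hat c_1$ and continuity of $(1\pm\sqrt{\cdot})^2$ to absorb the fluctuation of $c_1(n)$). Combining the two steps via Weyl's inequality yields \eqref{op rough1}, and the argument for \eqref{op rough2} is identical with $Y,q,c_2$ in place of $X,p,c_1$.

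For \eqref{op rough2.5} I would use the structure $\cal Y = Y + BZ$ from \eqref{assm data}, so that $S^b_{yy}=\cal Y\cal Y^\top = YY^\top + YZ^\top B^\top + BZY^\top + BZZ^\top B^\top$. The term $YY^\top=S_{yy}$ already has all eigenvalues in a fixed compact interval bounded away from $0$ and $\infty$ with high probability by \eqref{op rough2} (here I use $c_2\ge\tau$ from \eqref{assm20}, which forces $(1-\sqrt{c_2})^2$ to be bounded below... actually not bounded away from $0$ when $c_2$ close to $1$ — but $c_1+c_2\le 1-\tau$ and $c_1\ge c_2$ give $c_2\le(1-\tau)/2<1$, so $(1-\sqrt{c_2})^2\ge c'>0$). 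The perturbation $BZZ^\top B^\top$ is a rank-$\le r$ positive semidefinite matrix; since $\|B\|\le C$ by \eqref{assm evalue} and $\|ZZ^\top\|\le C'$ with high probability (a small sample covariance matrix, $r$ fixed, $r/n\to 0$, bounded support $\psi_n$), we get $\|BZZ^\top B^\top\|\prec 1$. The cross terms $YZ^\top B^\top + BZY^\top$ are bounded in operator norm by $2\|Y\|\|Z\|\|B\| \prec 1$ using $\|Y\|\prec 1$ from \eqref{op rough2} and $\|Z\|\prec 1$. Hence $\|S^b_{yy}\|\le\lambda_1(S_{yy})+\OO_\prec(1)\le c^{-1}$ with high probability, giving the upper bound. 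For the lower bound I would note that for any unit vector $\bov$, $\bov^\top S^b_{yy}\bov = \|\cal Y^\top\bov\|^2 \ge \|Y^\top\bov\|^2 - 2\|Y^\top\bov\|\|(BZ)^\top\bov\| $ is not automatically bounded below — instead I would argue that $S^b_{yy}\succeq S_{yy} - \|YZ^\top B^\top+BZY^\top+BZZ^\top B^\top\|\cdot I$ could be negative, so a cleaner route is: since $BZ$ has rank $\le r$ (fixed), write $\cal Y^\top = Y^\top + Z^\top B^\top$ and use that for any $\bov\perp$ (a certain $\le r$-dimensional bad subspace) one still needs a quantitative argument. I would therefore instead lower-bound $\lambda_q(S^b_{yy})$ by a Schur-complement / interlacing argument: the smallest eigenvalue of $\cal Y\cal Y^\top$ is at least the smallest eigenvalue of its compression to the orthogonal complement of $\mathrm{ran}(B)$, on which $\cal Y\cal Y^\top$ acts as $P_\perp YY^\top P_\perp$ up to the $YZ^\top$ cross term; more simply, $\lambda_q(\cal Y\cal Y^\top)\ge \lambda_q(YY^\top) - 2\|YZ^\top B^\top\| - \|BZZ^\top B^\top\|$ fails, so I will use the identity that the nonzero eigenvalues of $\cal Y\cal Y^\top$ equal those of $\cal Y^\top\cal Y$ and bound $\lambda_q(\cal Y^\top\cal Y)\ge \lambda_q(Y^\top Y) - (\text{rank-}r\text{ correction})$, controlled because adding a rank-$r$ matrix moves eigenvalues only $r$ positions (Weyl), and $\lambda_{q-r}(Y^\top Y)$ is still bounded below by $(1-\sqrt{c_2})^2 - \e \ge c'>0$ with high probability via \eqref{op rough2} applied to the $(q-r)$-th eigenvalue (still well inside the bulk since $r$ is fixed).

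The main obstacle I anticipate is precisely the lower bound in \eqref{op rough2.5}: a naive operator-norm perturbation bound does not work because the cross terms $YZ^\top B^\top$ are not sign-definite and could in principle push an eigenvalue of $\cal Y\cal Y^\top$ close to zero. The right tool is eigenvalue interlacing for finite-rank perturbations (Weyl's inequality in the form $\lambda_{k+r}(M+N)\le\lambda_k(M)+\|N\|$ applied with $M = Y^\top Y$ or an appropriate rescaling and $N$ of rank $\le r$), together with the rigidity statement \eqref{op rough2} extended to the $(q-r)$-th eigenvalue — which is immediate since $r$ is fixed and $(1-\sqrt{c_2})^2-\e$ is a uniform lower bound for all bulk eigenvalues. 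The bounded support condition on $Z$ enters only to guarantee $\|Z\|\prec 1$; everything else is deterministic linear algebra once the high-probability events for $S_{xx}$, $S_{yy}$ and $ZZ^\top$ are in hand. All constants $c$ depend only on $\tau$ through $c_1+c_2\le 1-\tau$ and $c_2\ge\tau$, and on the constant $C$ in \eqref{assm evalue}.
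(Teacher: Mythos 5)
Your treatment of \eqref{op rough1} and \eqref{op rough2} is fine (the paper simply cites Lemma 3.3 of \cite{PartIII} for these), and the upper bound in \eqref{op rough2.5} via operator-norm control of the cross terms is also correct. The genuine gap is in the lower bound of \eqref{op rough2.5}, precisely at the point you flagged as the main obstacle: the interlacing argument you settle on does not close it. Writing $\cal Y^\top\cal Y=Y^\top Y+N$ with $\mathrm{rank}(N)\le 2r$, the rank-based Weyl/interlacing inequality gives $\lambda_q(\cal Y^\top\cal Y)\ge\lambda_{q+2r}(Y^\top Y)$; but $Y^\top Y$ is an $n\times n$ matrix of rank $q$, so $\lambda_{q+2r}(Y^\top Y)=0$ and the bound is vacuous (in the $q\times q$ picture, $\lambda_{q+2r}(YY^\top)$ does not even exist). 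The quantity $\lambda_{q-r}$ you invoke controls the perturbed smallest eigenvalue from \emph{above}, not from below, and the hybrid form $\lambda_{k+r}(M+N)\le\lambda_k(M)+\|N\|$ points the same wrong way. Moreover, no argument that uses only ``$BZ$ has rank $\le r$ and $\|BZ\|=\OO(1)$'' can possibly work: taking $N=-\sigma_q(Y)\,u_qv_q^\top$ built from the smallest singular triple of $Y$ is a rank-one perturbation of bounded norm that makes the smallest singular value of $Y+N$ exactly zero. So the lower bound must exploit the independence of $Z$ from $Y$ beyond norm and rank information, which your proposal never does.

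The paper's proof does this with a factorization rather than a perturbation: $S^b_{yy}=(I_q,\,B)\,WW^\top\,(I_q,\,B)^\top$, where $W$ is the $(q+r)\times n$ matrix obtained by stacking $Y$ on top of $Z$. Since $W$ is itself a random matrix satisfying the hypotheses of \eqref{op rough2} with aspect ratio $(q+r)/n\to\hat c_2$ (here $r/n\to0$ is used), its extreme eigenvalues obey $(1-\sqrt{c_2})^2-\e\le\lambda_{q+r}(WW^\top)\le\lambda_1(WW^\top)\le(1+\sqrt{c_2})^2+\e$ with high probability. Then for any unit vector $\bv\in\R^q$, the vector $\bu:=(I_q,\,B)^\top\bv$ satisfies $1\le\|\bu\|^2=\|\bv\|^2+\|B^\top\bv\|^2\le 1+C^2$ by \eqref{assm evalue}, so $\bv^\top S^b_{yy}\bv=\bu^\top WW^\top\bu$ is sandwiched between $\lambda_{q+r}(WW^\top)$ and $(1+C^2)\lambda_1(WW^\top)$. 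This yields both bounds in \eqref{op rough2.5} at once and is the step your proposal is missing; the independence of $Z$ from $Y$ enters exactly through the fact that the stacked matrix $W$ is again a well-behaved sample covariance ensemble.
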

\begin{proof}
	The estimates \eqref{op rough1} and \eqref{op rough2} have been proved in Lemma 3.3 of \cite{PartIII}. To get \eqref{op rough2.5}, we write
	$$S_{yy}^b = \begin{pmatrix} I_q, B\end{pmatrix} WW^{\top}\begin{pmatrix} I_q\\ B^{\top}\end{pmatrix} , \quad W:=\begin{pmatrix} Y\\ Z\end{pmatrix}.$$
	Since $r/n\to 0$, the estimate \eqref{op rough2} applied to $WW^{\top}$ gives  that with high probability,
	$$(1-\sqrt{c_2})^2 - \e \le  \lambda_{q+r}(WW^{\top})   \le  \lambda_1(WW^{\top}) \le (1+\sqrt{c_2})^2 + \e.$$
	Then, using that for any unit vector $\bv\in \R^{q}$, $\|\mathbf v\|  \sim \|\mathbf u\|$ for $\bu:=\begin{pmatrix} I_q\\ B^{\top}\end{pmatrix}\bv$, we conclude \eqref{op rough2.5}.
\end{proof}

%

Let $m_{\al c}$ be the asymptotic limits of $m_\al$ for $\al=1,2,3,4$. In \cite{PartIII}, we have obtained that 
\begin{align}
	&m_{1c}(z) 
	= \frac{ - z +c_1+c_2+\sqrt{(z-\lambda_-)(z-\lambda_+)} }{2(1-c_1)z(1-z)} - \frac{c_1}{(1-c_1)z}, \label{m1c}\\
	&m_{2c}(z) = \frac{ -z +c_1 + c_2+ \sqrt{(z-\lambda_-)(z-\lambda_+)}}{2(1-c_2)z(1-z)} -\frac{c_2}{(1-c_2)z}, \label{m2c}\\
	&m_{3c}(z) 
	= \frac{1}{2}\left[ (1-2c_1) z + c_1 - c_2 + \sqrt{(z-\lambda_-)(z-\lambda_+)}\right] , \label{m3c}\\
	&m_{4c}(z)= \frac{1}{2}\left[ (1-2c_2) z +  c_2 - c_1 + \sqrt{(z-\lambda_-)(z-\lambda_+)}\right] ,\label{m4c}
\end{align}
where $\lambda_{\pm}$ are defined in \eqref{lambdapm}. It is easy to see that when $z\to 1$, both $m_{1c}(z)$ and $m_{2c}(z)$ have finite limits, and without loss of generality, we still denote them by $m_{1c}(1)$ and $m_{2c}(1)$. With \eqref{m3m}, we can easily obtain  the asymptotic limit of $m(z)$ as
\begin{align}\label{mc}
	&m_c(z)= \frac{m_{3c}(z) + (c_1+c_2 - 1)z}{c_2z(1-z)} = \frac{1-c_2}{c_2} m_{2c}(z).
\end{align}
Through direct calculations, we can check that $m_{\al c}$-s satisfy the following equations:
\begin{align}
	& m_{1c}= - \frac{c_1}{m_{3c}} , \quad m_{2c} = -\frac{ c_2}{m_{4c}}, \quad  m_{3c}(z) - m_{4c} (z)= (1-z)(c_1-c_2) .\label{selfm12}
\end{align}
Finally, we introduce the function
\be
\begin{split}\label{hz}
	h(z):&=  \frac{z^{-1/2}m_{3c}(z)}{1+(1-z)m_{2c}(z)} =  \frac{z^{-1/2}m_{4c}(z)}{1+(1-z)m_{1c}(z)}	\\
	&= \frac{z^{1/2}}{2} \left[ - z + (2-c_1-c_2) + \sqrt{(z-\lambda_-)(z-\lambda_+)}\right].
\end{split}
\ee
Now, with the functions $m_{\al c}$ and $h$, we can define the matrix limit of $G(z)$ as
\be \label{defn_pi}
\Pi(z) := \begin{bmatrix} \begin{pmatrix} c_1^{-1}m_{1c}(z)I_p & 0\\ 0 & c_2^{-1}m_{2c}(z)I_q\end{pmatrix} & 0 \\ 0  & \begin{pmatrix}  m_{3c}(z)I_n  & h(z)I_n\\  h(z)I_n &  m_{4c}(z)  I_n\end{pmatrix}\end{bmatrix} .\ee

Given $z=E+\ii \eta$, we define its distance (along the real axis) to the two edges as
\begin{equation}
	\kappa \equiv  \kappa_E := \min\left\{ \vert E -\lambda_-\vert,\vert E -\lambda_+\vert\right\}  .\label{KAPPA}
\end{equation}
We have the following lemma, which can be proved through direct calculations using \eqref{m1c}--\eqref{m4c}. 

\begin{lemma}\label{lem_mbehavior}
	 If \eqref{assm20} holds, then the following estimates hold for any constants $c, C>0$..
	\begin{itemize}
		\item[(1)] For $z\in \C_+ \cap \{z: c\le |z| \le C\}$, we have
		\be\label{Immc} 
		\vert m_{c}(z) \vert \sim 1,  \quad 0\le \im m_{c}(z) \sim \begin{cases}
			{\eta}/{\sqrt{\kappa+\eta}}, &   \ E \notin [\lambda_-,\lambda_+] \\
			\sqrt{\kappa+\eta}, &  \ E \in [\lambda_-,\lambda_+]\\
		\end{cases}.
	\end{equation}
	
	
	\item[(2)] For $z,z_1, z_2  \in \C_+ \cap \{z: c\le |z| \le C\} \cap \{\re z> \lambda_+\}$, we have
	\begin{align}
		& |m_{c}(z) - m_{c}(\lambda_+)| \sim |z-\lambda_+|^{1/2} , \quad |m_{c}'(z) | \sim |z-\lambda_+|^{-1/2},\label{eq_mcomplex}\\
		& |m_{c}(z_1) - m_{c}(z_2)|   \sim  \frac{|z_1-z_2|}{\max_{i=1,2}|z_i-\lambda_+|^{1/2}}.\label{eq_mdiff}
	\end{align}
	
\end{itemize}
The above estimates also hold for $m_{\al c}$, $\al=1,2,3,4$. Finally, $h(z)$ also satisfies \eqref{eq_mcomplex}, \eqref{eq_mdiff} and the first estimate in \eqref{Immc}. 
\end{lemma}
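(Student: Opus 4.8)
\textbf{Proof proposal for Lemma \ref{lem_mbehavior}.}

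The plan is to reduce everything to explicit manipulations of the closed-form expressions \eqref{m1c}--\eqref{m4c} and \eqref{mc}--\eqref{hz}, treating $\sqrt{(z-\lambda_-)(z-\lambda_+)}$ as the analytic branch on $\C_+$ that behaves like $z$ for large $|z|$. The common structure of all four functions $m_{\al c}$ and of $h$ is
$$
\text{(rational prefactor with bounded, nonvanishing numerator/denominator)}\times\bigl[\text{linear in }z + \sqrt{(z-\lambda_-)(z-\lambda_+)}\,\bigr],
$$
so it suffices to establish the three claims for the model function $g(z):=z^{\text{linear}}+\sqrt{(z-\lambda_-)(z-\lambda_+)}$ and then track how the bounded prefactors (which involve $z$, $1-z$, and the constants $c_1,c_2$, all of which are $\sim 1$ on the relevant region by \eqref{assm20} and the restriction $c\le|z|\le C$) affect the estimates; I would also record once and for all that $0<\lambda_-\le\lambda_+<1$ with $\lambda_\pm$ and $1-\lambda_\pm$ bounded away from $0$ and $\infty$ in terms of $\tau$, which is what makes the prefactors harmless. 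First I would handle part (1): the size $|m_c(z)|\sim 1$ follows from the boundedness of the prefactor together with $|g(z)|\sim 1$ on the compact set (using that $g$ extends continuously and is nonzero, which one checks from the defining quadratic/Stieltjes equation, e.g.\ $m_{1c}=-c_1/m_{3c}$ in \eqref{selfm12} forces $m_{3c}$ bounded away from $0$); the imaginary-part estimate is the standard square-root edge behavior of the Stieltjes transform of a density with square-root vanishing at $\lambda_\pm$: writing $z=E+\ii\eta$, one has $\im\sqrt{(z-\lambda_-)(z-\lambda_+)}\sim \eta/\sqrt{\kappa+\eta}$ when $E\notin[\lambda_-,\lambda_+]$ and $\sim\sqrt{\kappa+\eta}$ when $E\in[\lambda_-,\lambda_+]$, which is a direct computation from $\sqrt{w}$ near $w=0$; since $\im m_c = \im(\text{prefactor}\cdot g)$ and the prefactor is, up to a bounded real-analytic factor, real on $\R$ with controlled imaginary part, the $\im m_c\ge 0$ and the two-sided bound transfer. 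The nonnegativity $\im m_c\ge0$ on $\C_+$ can alternatively be invoked directly from $m_c$ being a Stieltjes transform (it is, being the limiting ESD transform), which is cleaner than arguing from the formula.

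For part (2), the point $\lambda_+$ is a square-root branch point, so near it $\sqrt{(z-\lambda_-)(z-\lambda_+)}=\sqrt{z-\lambda_-}\,\sqrt{z-\lambda_+}$ with $\sqrt{z-\lambda_-}\sim 1$ on the region $\{\re z>\lambda_+,\ c\le|z|\le C\}$ (since $\lambda_+-\lambda_-\sim 1$), hence $m_c(z)-m_c(\lambda_+)$ is, up to a bounded-above-and-below factor, equal to $\sqrt{z-\lambda_+}$ plus a term linear in $z-\lambda_+$; the linear term is lower order, giving $|m_c(z)-m_c(\lambda_+)|\sim|z-\lambda_+|^{1/2}$. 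Differentiating, $m_c'(z)$ is dominated by $\frac{d}{dz}\sqrt{z-\lambda_+}\sim|z-\lambda_+|^{-1/2}$, again up to bounded factors. For the two-point bound \eqref{eq_mdiff} I would write $m_c(z_1)-m_c(z_2)=\int_{z_2}^{z_1} m_c'(w)\,dw$ along a path staying in $\{\re w>\lambda_+\}$ with $|w-\lambda_+|$ comparable to $\max_i|z_i-\lambda_+|$ throughout (possible since the region is star-shaped-ish relative to $\lambda_+$ from the right), and bound the integral by $|z_1-z_2|\cdot\max_w|m_c'(w)|\lesssim |z_1-z_2|/\max_i|z_i-\lambda_+|^{1/2}$; the matching lower bound comes from the fact that the $\sqrt{\cdot}$ term dominates, i.e.\ $m_c'$ does not change sign/phase wildly on that region, so the integral cannot have cancellation of more than a constant factor. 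Finally, the assertion that $m_{\al c}$ for $\al=1,2,3,4$ and $h$ satisfy the same estimates is then immediate: each is the model function times a bounded, bounded-below, real-analytic-near-$\lambda_+$ prefactor (for $m_{3c},m_{4c},h$ literally a polynomial times $1/2$ or, for $h$, times $z^{1/2}$ which is $\sim 1$), and multiplying by such a prefactor preserves all three scaling relations; for $m_{1c},m_{2c}$ one additionally uses $1/(1-c_1)\sim 1$, $1/z\sim1$ and checks the extra $-c_\al/((1-c_\al)z)$ additive piece is smooth and bounded near $\lambda_+$ so it only contributes to the linear (lower-order) part.

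The main obstacle I anticipate is not any single estimate but the bookkeeping of \emph{lower-order} terms and \emph{two-sided} bounds: proving $\lesssim$ is routine, but the $\sim$ (matching lower bounds) requires arguing that the square-root term genuinely dominates the linear/rational corrections uniformly on the stated region and that no cancellation occurs, which near $\lambda_+$ is where one must be careful that $|z-\lambda_+|$ is small while elsewhere on $\{c\le|z|\le C\}$ it is bounded below — so the ``$\sim$'' constants degrade unless one splits into $|z-\lambda_+|\le\delta$ and $|z-\lambda_+|\ge\delta$ cases. I would dispatch the far regime by continuity/compactness (a nonvanishing continuous function on a compact set is bounded above and below) and only do real work in the near regime via the local expansion of $\sqrt{z-\lambda_+}$. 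The identities \eqref{selfm12} and \eqref{m34} are useful sanity checks and also give quick proofs that various denominators (e.g.\ $m_{3c}$, $1+(1-z)m_{2c}$ appearing in \eqref{hz}) stay away from zero.
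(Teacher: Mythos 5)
Your proposal follows exactly the route the paper intends — the paper offers no proof beyond "direct calculations using \eqref{m1c}--\eqref{m4c}", and your plan (reduce to the model function $\text{linear}+\sqrt{(z-\lambda_-)(z-\lambda_+)}$, dispatch the far regime by compactness, do the local square-root expansion near $\lambda_+$, and get positivity of $\im m_c$ from the Stieltjes-transform representation) is the correct way to carry those calculations out. One caveat: your blanket claim that the prefactors are harmless because "$1-z\sim 1$ on the relevant region" is false near $z=1$ (which lies in the stated domain since $\lambda_+<1$); there the bracket $-z+c_1+c_2+\sqrt{(z-\lambda_-)(z-\lambda_+)}$ vanishes as well, because $(1-\lambda_-)(1-\lambda_+)=(1-c_1-c_2)^2$, so $z=1$ is a removable singularity of $m_{1c},m_{2c},m_c$ (as the paper notes above \eqref{mc}), and you need to invoke that cancellation rather than boundedness of $1/(1-z)$.
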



For simplicity of notations, we introduce the following notion of generalized entries.
\begin{definition}[Generalized entries]\label{defn_gen_entry}
Given $\mathbf v,\mathbf w \in \mathbb C^{\mathcal I}$, $\fa\in \mathcal I$ and $\mathcal I\times \mathcal I$ matrix $\cal A$, we denote
\begin{equation}
	\cal A_{\mathbf{vw}}:=\langle \mathbf v,\cal A\mathbf w\rangle, \quad  \cal A_{\mathbf{v}\fa}:=\langle \mathbf v,\cal A\mathbf e_\fa\rangle, \quad \cal A_{\fa\mathbf{w}}:=\langle \mathbf e_\fa,\cal A\mathbf w\rangle,
\end{equation}
where $\mathbf e_\fa$ is the standard unit vector along the $\fa$-th coordinate axis, and the inner product is defined as $\langle \mathbf v, \mathbf w\rangle:= \bv^* \bw$ with $\bv^*$ being the conjugate transpose of $\bv$. Given a vector $\mathbf v\in \mathbb C^{\mathcal I_\al}$, $\al=1,2,3,4$, we always identify it with its natural embedding in $\C^{\cal I}$. For example, we shall identify $\mathbf v\in \mathbb C^{\mathcal I_1}$ with a vector $\bv' \in \C^{\cal I}$ with $\bv'(i)=\bv(i)$ for $i\in \cal I_1$ and $\bv'(i)=0$ for $i\notin \cal I_1$.
\end{definition}

We define the following spectral domains for the local laws of $G(z)$.
\begin{definition}[Spectral domains]
For any constant $\e >0$, we define the following two domains: 
\begin{align}
	S(\e)&:= \left\{z=E+ \ii \eta: \e \leq E \leq 2, n^{-1+\e} \leq \eta \leq \e^{-1} \right\}, \label{SSET1}\\
S_{out}(\e)&:=S(\epsilon)\cap \{z=E+\ii\eta: E\notin [\lambda_-,\lambda_+], n\eta\sqrt{\kappa + \eta} \ge n^\epsilon\}.\label{SSETOUT}
\end{align}
Correspondingly, we define the following two domains that are away from $z=1$: for any fixed $\wt\e>0$,
$$
\wt S(\e,\wt \e):= \left\{z=E+ \ii \eta: \e \leq E \leq 1-\wt\e, n^{-1+\e} \leq \eta \leq \e^{-1} \right\}, \ \ \wt S_{out}(\e,\wt\e):= \wt S(\e,\wt \e)\cap  S_{out}(\e).
$$
\end{definition}

Now, we are ready to state the main result of this section, i.e., the local laws for $G(z)$. For $z=E+\ii\eta$, we define the control parameter
\begin{equation}\label{eq_defpsi}
\Psi (z):= \sqrt {\frac{\Im \, m_{c}(z)}{{n\eta }} } + \frac{1}{n\eta}.
\end{equation}

\begin{theorem} [Theorem 2.13 and Theorem 2.14 of \cite{PartIII}]\label{thm_local} 
Suppose the assumptions of Lemma \ref{lem null0}  hold. Then, for any fixed $\wt\e,\e>0$, the following estimates hold. 
\begin{itemize}
	\item[(1)] {\bf Anisotropic local law}: For any $z\in S(\epsilon)$ and deterministic unit vectors $\mathbf u, \mathbf v \in \mathbb C^{\mathcal I}$, we have
	\begin{equation}\label{aniso_law}
		\left|   G_{\mathbf u\mathbf v}(z)   -   \Pi_{\mathbf u\mathbf v} (z)  \right| \prec \phi_n + \Psi(z).
	\end{equation}
	
	\item[(2)] {\bf Averaged local law}: For any $z \in \wt S(\epsilon,\wt \e)$,  we have 
	\begin{equation}
		\vert m(z)-m_{c}(z) \vert \prec (n \eta)^{-1}. \label{aver_in}
	\end{equation}
	Moreover, outside of the spectrum, we have a stronger estimate for any $z\in \wt S_{out}(\epsilon,\wt \e)$:
	\begin{equation}\label{aver_out0}
		| m(z)-m_{c}(z)|\prec \frac{1}{n(\kappa +\eta)} + \frac{1}{(n\eta)^2\sqrt{\kappa +\eta}}.
	\end{equation}
	 The estimates \eqref{aver_in} and \eqref{aver_out0} also hold for $m_{\al}(z) -m_{\al c}(z)$, $\al=1,2,3,4$.
\end{itemize}
All the above estimates are uniform in the spectral parameter $z$. 
\end{theorem}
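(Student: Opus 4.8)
The plan is to establish the two local laws in the two-step fashion standard for structured random matrices: first prove the averaged law \eqref{aver_in}--\eqref{aver_out0} by analysing the self-consistent (vector Dyson) equation satisfied by the partial traces $m_1,\dots,m_4$, and then upgrade to the anisotropic law \eqref{aniso_law} by a moment/polynomialization argument that reduces control of general bilinear forms $G_{\bu\bv}$ to the diagonal estimates obtained in the first step. The algebraic backbone is the block structure of $H(z)$: by the Schur complement identities \eqref{GL1}--\eqref{GLR1}, every block of $G(z)$ is expressed through the single ``cross'' resolvent $R(z)$, i.e.\ through $R_1=(\cal H\cal H^\top - z)^{-1}$ and $R_2=(\cal H^\top\cal H - z)^{-1}$ with $\cal H = S_{xx}^{-1/2}S_{xy}S_{yy}^{-1/2}$. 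Lemma \ref{SxxSyy} guarantees that $S_{xx},S_{yy}$ have spectra bounded away from $0$ and $\infty$ with high probability, so these expressions are well-defined and the problem reduces to a local law for a double-Wishart/Jacobi-type matrix.

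\emph{Step 1 (entrywise and averaged law).} Removing one index $\fa\in\cal I$ at a time via the Schur complement, write $1/G_{\fa\fa}$ as a diagonal entry of $H(z)$ minus a quadratic form in the $\fa$-th row of the off-diagonal blocks of $H$, paired with the corresponding minor resolvent. Large-deviation bounds for such quadratic forms (using $\E x_{ij}=0$, $\E|x_{ij}|^2 = n^{-1}+\OO(n^{-2-\tau})$, the analogous bounds for $y_{ij}$, and the bounded support $\phi_n$) show each quadratic form concentrates around $n^{-1}\tr$ of the minor resolvent, which in turn differs from the full partial traces $m_1,\dots,m_4$ by $\OO_\prec((n\eta)^{-1})$ by minor-removal and interlacing. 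Substituting back produces a perturbed version of the deterministic system \eqref{selfm12} together with \eqref{m3m}--\eqref{m34}, whose unperturbed solutions are exactly $m_{1c},\dots,m_{4c}$ of \eqref{m1c}--\eqref{m4c}. A stability analysis of this system — precisely where Lemma \ref{lem_mbehavior} enters, giving $|m_c|\sim 1$ globally and $|m_{\al c}'|\sim\kappa^{-1/2}$ near the edge — converts the self-consistent error into $|m_\al - m_{\al c}|\prec \Psi(z)$, and then a self-improving iteration yields the optimal $|m_\al - m_{\al c}|\prec (n\eta)^{-1}$ on $\wt S(\e,\wt\e)$. Outside the bulk, the ``square-root'' behaviour of $\im m_c$ in \eqref{Immc} sharpens the stability and, combined with a fluctuation-averaging estimate for $n^{-1}\sum_\fa(G_{\fa\fa}-\Pi_{\fa\fa})$, gives the extra factors $\sqrt{\kappa+\eta}$ in \eqref{aver_out0}. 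Staying on $\wt S(\e,\wt\e)$ avoids the spurious singularity of the linearization \eqref{linearize_block} at $z=1$.

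\emph{Step 2 (anisotropic law).} Given the entrywise control from Step 1, to prove \eqref{aniso_law} it suffices to bound $\E|G_{\bu\bv}(z)-\Pi_{\bu\bv}(z)|^{2m}$ for every fixed $m$ and deterministic unit vectors $\bu,\bv$. Expanding $G$ by resolvent identities around minor resolvents and around the deterministic matrix $\Pi(z)$ turns this expectation into a finite sum of monomials in the entries of $X$, $Y$ and of $G$; using independence of the entries, the moment assumptions in Assumption \ref{main_assm}(i)--(ii), the bounded support $\phi_n$ (the source of the additive $\phi_n$ in \eqref{aniso_law}), and the Step-1 bounds to estimate the internal $G$-factors, a power-counting argument shows every monomial is $\OO_\prec((\phi_n+\Psi(z))^{2m})$. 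Alternatively, one may run a Lindeberg-type Green's-function comparison, swapping the entries of $X,Y$ to Gaussians one at a time and estimating each swap by a fourth-order Taylor expansion with error controlled by Step 1; for the Gaussian reference model \eqref{aniso_law} follows from the explicit form of $\Pi(z)$ together with rotation invariance. I expect Step 2 — in particular the bookkeeping of the polynomial expansion, or, in the comparison route, propagating the estimates uniformly down to $\eta\sim n^{-1+\e}$ and uniformly up to $z=1$, where $\Pi(z)$ stays bounded but several intermediate quantities degenerate — to be the main obstacle; the self-consistent-equation analysis of Step 1, though long, is essentially routine once Lemma \ref{lem_mbehavior} is available.
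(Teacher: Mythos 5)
The paper does not prove this theorem at all: it is imported verbatim from \cite{PartIII} (Theorems 2.13 and 2.14 there), and the present paper's ``proof'' is the citation. So there is nothing in this manuscript to compare your argument against line by line; what I can say is that your two-step outline (self-consistent equation for the averaged law, then polynomialization or Green's-function comparison to upgrade to the anisotropic law) is exactly the standard strategy that the cited reference follows for this linearization, and your structural observations are sound — in particular that all blocks of $G$ route through $R_1,R_2$ via \eqref{GL1}--\eqref{GLR1}, that Lemma \ref{SxxSyy} is what makes $\cal H$ well-defined, and that the restriction to $\wt S(\e,\wt\e)$ is forced by the degeneration of the block $\bigl(\begin{smallmatrix} z & z^{1/2}\\ z^{1/2} & z\end{smallmatrix}\bigr)^{-1}$ at $z=1$.

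Two places where your sketch is thinner than a proof would need to be. First, in Step 1 the lower-right block of $H(z)$ is not diagonal: it couples each $\mu\in\cal I_3$ with $\mu+n\in\cal I_4$, so the Schur-complement expansion must remove index \emph{pairs} (equivalently, whole sample columns of $X$ and $Y$ simultaneously), and the resulting self-consistent system is genuinely a $2\times 2$-block (vector) equation rather than a scalar one; your phrasing ``removing one index at a time'' would not close. Second, and more seriously for a proposal, Step 2 is where essentially all the work lies and you only name the two known routes without committing to either: the polynomialization bookkeeping must accommodate the $z$-dependent deterministic block and the support parameter $\phi_n$ simultaneously (this is where the additive $\phi_n$ in \eqref{aniso_law} comes from and where the fourth-moment condition \eqref{conditionA3} is consumed), and the comparison route needs a genuine proof of the Gaussian case first. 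As a roadmap your proposal is correct and matches the cited reference's architecture; as a proof it leaves the hardest step unexecuted.
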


The averaged local law implies the rigidity of eigenvalues in \eqref{rigidity}. The anisotropic local law (\ref{aniso_law}) and the rigidity estimate \eqref{rigidity} together give the following delocalization of eigenvectors. 

\begin{lemma}[Lemma 3.9 of \cite{PartIII}] \label{lem delocalX}
Suppose \eqref{aniso_law} and \eqref{rigidity} hold. Then, for any small constant $\delta>0$ and deterministic unit vectors $\mathbf u_\al \in \mathbb C^{\mathcal I_\al}$, $\al=1,2,3,4$, the following estimates hold: 
\begin{align}
&	\max_{1 \le k \le (1-\delta)q } \left\{ \left|\langle \mathbf u_1,S_{xx}^{-1/2}\xi_k\rangle \right|^2+\left|\langle \mathbf u_2,S_{yy}^{-1/2}\zeta_k\rangle \right|^2\right\} \prec n^{-1},\label{delocal1}\\
&	\max_{1 \le k \le (1-\delta)q} \left\{ \left|\langle \mathbf u_3,X^{\top}S_{xx}^{-1/2}\xi_k\rangle \right|^2+\left|\langle \mathbf u_4,Y^{\top}S_{yy}^{-1/2}\zeta_k\rangle \right|^2\right\} \prec n^{-1}.\label{delocal2}
\end{align}
\end{lemma}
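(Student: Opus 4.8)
The plan is to deduce the eigenvector delocalization from the anisotropic local law \eqref{aniso_law} together with the rigidity estimate \eqref{rigidity}, via the standard ``evaluate the resolvent just above an eigenvalue'' argument. The key observation is that each inner product in \eqref{delocal1}--\eqref{delocal2} is, up to bounded factors, a residue at a pole $z=\lambda_k$ of a suitable generalized diagonal entry of the linearized resolvent $G(z)$, the test vector being a fixed unit vector supported in a single index block $\cal I_\al$. For fixed $k\le(1-\delta)q$ I would evaluate the relevant block of $G$ at $z=\gamma_k+\ii\eta$ with $\eta:=n^{\e}k^{-1/3}n^{-2/3}$ for a small constant $\e>0$; by \eqref{rigidity}, $|\lambda_k-\gamma_k|\le\eta$ with high probability, so $(\lambda_k-\gamma_k)^2+\eta^2\sim\eta^2$ and a single pole can be isolated from the spectral sum. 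For $k\le(1-\delta)q$ the classical locations $\gamma_k$ stay bounded away from $0$ and $2$, so the local laws apply at $z$, and $\im m_{\al c}(z)\sim\sqrt{\kappa+\eta}$ with $\kappa=\kappa_{\gamma_k}$ by Lemma \ref{lem_mbehavior}.

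For \eqref{delocal1} the argument is direct. Inserting the eigendecomposition \eqref{spectral1} into \eqref{GL1} yields
\be
\im\langle\mathbf u_1,\cal G_1(z)\mathbf u_1\rangle=\eta\sum_{j\le q}\frac{\big|\langle\mathbf u_1,S_{xx}^{-1/2}\xi_j\rangle\big|^2}{(\lambda_j-\gamma_k)^2+\eta^2}+\frac{\eta}{\gamma_k^2+\eta^2}\sum_{j>q}\big|\langle\mathbf u_1,S_{xx}^{-1/2}\xi_j\rangle\big|^2,
\ee
a sum of nonnegative terms (and likewise for $\cal G_2$ with $S_{yy}^{-1/2}\zeta_j$). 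Discarding all but the $j=k$ term gives $\im\langle\mathbf u_1,\cal G_1(z)\mathbf u_1\rangle\gtrsim\eta^{-1}|\langle\mathbf u_1,S_{xx}^{-1/2}\xi_k\rangle|^2$, while \eqref{aniso_law} gives $\langle\mathbf u_1,\cal G_1(z)\mathbf u_1\rangle=c_1^{-1}m_{1c}(z)+\OO_\prec(\phi_n+\Psi(z))$, hence $\im\langle\mathbf u_1,\cal G_1(z)\mathbf u_1\rangle\prec\sqrt{\kappa+\eta}+\phi_n+\Psi(z)$. Combining and using that, for the above $\eta$, the products $\eta\sqrt{\kappa+\eta}$ and $\eta\Psi(z)$ are $\prec n^{-1}$ (choosing the power of $\eta$ slightly below the target exponent), we obtain $|\langle\mathbf u_1,S_{xx}^{-1/2}\xi_k\rangle|^2\prec n^{-1}$ uniformly in $k$; a union bound over $k\le(1-\delta)q$ gives \eqref{delocal1}.

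For \eqref{delocal2} the same scheme applies to the blocks $\cal G_3,\cal G_4$, but the spectral sums are no longer sign-definite, so one cannot merely drop terms. Expanding $\cal G_R$ by \eqref{GR1}, \eqref{GLR1}, \eqref{GL1} and \eqref{spectral1} and writing $a_j:=\langle\mathbf u_3,X^\top S_{xx}^{-1/2}\xi_j\rangle$, $b_j:=\langle\mathbf u_3,Y^\top S_{yy}^{-1/2}\zeta_j\rangle$, a completion of squares gives
\be
\langle\mathbf u_3,\cal G_3(z)\mathbf u_3\rangle=z-\big\|S_{yy}^{-1/2}Y\mathbf u_3\big\|^2-z\sum_{j>q}a_j^2+\sum_{j\le q}\frac{z\,\big(z^{1/2}a_j-z^{-1/2}\sqrt{\lambda_j}\,b_j\big)^2}{\lambda_j-z},
\ee
whose first three terms are pole-free and $\OO(1)$, with an analogous identity for $\cal G_4$ (the two components interchanged). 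At $z=\gamma_k+\ii\eta$ the bracketed factor equals the nonnegative real number $(\sqrt{\lambda_k}a_k-b_k)^2$ up to an $\OO(\eta)$ error, and $z/(\lambda_k-z)$ has positive imaginary part, so isolating the $j=k$ pole (and controlling the poles $j\ne k$ via $|\lambda_j-z|\gtrsim|\lambda_j-\gamma_k|$ and \eqref{rigidity}) gives $\im\langle\mathbf u_3,\cal G_3(z)\mathbf u_3\rangle\gtrsim\eta^{-1}|\sqrt{\lambda_k}a_k-b_k|^2$. Bounding the left side by $\im m_{3c}(z)+\OO_\prec(\phi_n+\Psi(z))$ and repeating the $\eta$-optimization yields $|\sqrt{\lambda_k}a_k-b_k|^2\prec n^{-1}$, and likewise $|\sqrt{\lambda_k}b_k'-a_k'|^2\prec n^{-1}$ with $a_k',b_k'$ the analogues for $\mathbf u_4$. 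To separate the individual components I would use $\zeta_k=\lambda_k^{-1/2}\cal H^\top\xi_k$ (equivalently $Y^\top S_{yy}^{-1/2}\zeta_k=\lambda_k^{-1/2}Y^\top S_{yy}^{-1}Y\,X^\top S_{xx}^{-1/2}\xi_k$) and the off-diagonal block $\langle\mathbf u_3,\cal G_R(z)\mathbf u_4\rangle$ (whose residue at $\lambda_k$ controls an $a_kb_k'$-type product), together with \eqref{delocal1} applied to $S_{yy}^{-1/2}\zeta_k$; this pins down $|a_k|^2$ and $|b_k'|^2$ at scale $n^{-1}$, which is \eqref{delocal2}. A union bound over $k$ finishes the proof.

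I expect the main obstacle to be the loss of positivity for the ``right'' blocks $\cal G_3,\cal G_4$: one must complete the square correctly, check that the surviving $k$-th pole has the right sign at $z=\gamma_k+\ii\eta$, and verify that the pole-free remainder and the far poles $j\ne k$ contribute only at scale $n^{-1}$, all uniformly in $k\le(1-\delta)q$ and in the unit vectors $\mathbf u_\al$. A secondary but nontrivial point is the bookkeeping that converts bounds on squared combinations such as $|\sqrt{\lambda_k}a_k-b_k|^2$ back into bounds on the individual inner products, which is where the identities tying $\xi_k$ to $\zeta_k$ and the already-proven estimate \eqref{delocal1} enter.
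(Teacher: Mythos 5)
Your overall strategy --- lower-bounding $\im G_{\mathbf u\mathbf u}(z)$ near an eigenvalue by the single-pole contribution $\sim\eta^{-1}|\langle\mathbf u,\cdot\rangle|^2$ and upper-bounding it via the anisotropic local law --- is the standard one and is surely the route taken in \cite{PartIII}. The algebra for the $\cal I_3,\cal I_4$ blocks (the completion of squares, the pole-free $\OO(1)$ remainder, the combinations $z^{1/2}a_k-z^{-1/2}\sqrt{\lambda_k}b_k$) checks out, and the final disentangling can be done more directly than you propose: taking $\mathbf u_3=\mathbf u_4$ as the same vector of $\C^n$, the $\cal G_3$ and $\cal G_4$ blocks give bounds on both $|\sqrt{\lambda_k}a_k-b_k|^2$ and $|a_k-\sqrt{\lambda_k}b_k|^2$, and since $\lambda_k\le\lambda_++\oo(1)<1$ the identity $(1-\lambda_k)a_k=(a_k-\sqrt{\lambda_k}b_k)-\sqrt{\lambda_k}\,(\sqrt{\lambda_k}a_k-b_k)$ recovers $a_k$ (and similarly $b_k$) without invoking the off-diagonal block of $\cal G_R$ or \eqref{delocal1}.

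The genuine gap is your choice of spectral parameter. Centering at the deterministic $\gamma_k$ forces you, via rigidity, to take $\eta=n^{\e}k^{-1/3}n^{-2/3}$, which is as large as $n^{-2/3+\e}$ for small $k$. You then verify that $\eta\sqrt{\kappa+\eta}$ and $\eta\Psi(z)$ are $\prec n^{-1}$, but you omit the third error term: the local law \eqref{aniso_law} has error $\phi_n+\Psi(z)$, so your argument actually yields $|\langle\mathbf u_1,S_{xx}^{-1/2}\xi_k\rangle|^2\prec\eta\,(\sqrt{\kappa+\eta}+\phi_n+\Psi(z))$, and the term $\eta\phi_n$ is \emph{not} $\prec n^{-1}$ near the edge: for $k=1$ it equals $n^{-2/3+\e}\phi_n$, which exceeds $n^{-1}$ by a power of $n$ whenever $\phi_n\gg n^{-1/3}$ --- a regime explicitly permitted by the bounded support condition $\phi_n\le n^{-c_\phi}$ and realized, e.g., under \eqref{phipsi} with $a$ close to $4$. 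The fix is to abandon $\gamma_k$ and evaluate at $E=\lambda_k$ itself with $\eta=n^{-1+\e}$: the local law holds uniformly in $z\in S(\e)$ and hence at this random point (rigidity is needed only to guarantee $\lambda_k\ge\e$ for $k\le(1-\delta)q$), and then $\eta\,(\im m_c(z)+\phi_n+\Psi(z))\le n^{-1+C\e}$ for every $\e>0$, which is exactly the claimed $\prec n^{-1}$. As written, your parameter choice breaks the small-$k$ cases of both \eqref{delocal1} and \eqref{delocal2}.
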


Away from the support $[\lambda_-,\lambda_+]$, the anisotropic local law can be strengthened as follows. 

\begin{theorem}[Anisotropic local law outside the bulk spectrum]\label{thm_localout}  
Suppose the assumptions of Lemma \ref{lem null0}  hold. Fix any constant $\epsilon>0$. For any 
\begin{equation}\label{eq_paraout}
	z\in D_{out}(\epsilon):=\left\{z= E+ \ii\eta: \lambda_+ + n^{-2/3+\e}\le E \le 2,  0\le \eta \le 1\right\},
\end{equation}
and  deterministic unit vectors $\bu, \bv \in \mathbb{C}^{\mathcal I}$, the following anisotropic local law holds:
\begin{equation}\label{aniso_outstrong}
	\left| G_{\mathbf u\mathbf v}(z)   -   \Pi_{\mathbf u\mathbf v} (z)   \right|  \prec \phi_n+\sqrt{\frac{\im m_{c}(z)}{n\eta}} \asymp \phi_n + n^{-1/2}(\kappa+\eta)^{-1/4} .
\end{equation}
\end{theorem}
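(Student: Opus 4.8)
The plan is to prove Theorem \ref{thm_localout} by upgrading the anisotropic local law of Theorem \ref{thm_local}(1), which only covers $z$ with imaginary part $\eta \ge n^{-1+\epsilon}$, to the regime $0 \le \eta \le 1$ with $E \ge \lambda_+ + n^{-2/3+\epsilon}$, at the cost of replacing the generic control parameter $\Psi(z)$ by the sharper bound $\phi_n + \sqrt{\im m_c(z)/(n\eta)}$ and, crucially, allowing $\eta$ to be taken all the way down to $0$. The mechanism is the standard ``deterministic continuity argument up to the real axis'' combined with the sharpened averaged local law \eqref{aver_out0} outside the spectrum. First I would record that, since $E - \lambda_+ \gtrsim n^{-2/3+\epsilon}$, we have $\kappa + \eta \gtrsim n^{-2/3+\epsilon}$ throughout $D_{out}(\epsilon)$, so $\sqrt{\im m_c(z)/(n\eta)} \asymp n^{-1/2}(\kappa+\eta)^{-1/4}$ using \eqref{Immc}; this justifies the $\asymp$ in the statement and shows the target bound is a deterministic function of $z$ that is monotone-friendly in $\eta$.

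The core of the argument is a resolvent Lipschitz bound in $\eta$. I would fix $E$ with $E \ge \lambda_+ + n^{-2/3+\epsilon}$ and consider the vertical segment $z = E + \ii\eta$. For $\eta \ge \eta_0 := n^{-2/3}$ (say), Theorem \ref{thm_local}(1) already gives $|G_{\mathbf u\mathbf v}(z) - \Pi_{\mathbf u\mathbf v}(z)| \prec \phi_n + \Psi(z)$, and on this range $\Psi(z) \lesssim \sqrt{\im m_c(z)/(n\eta)} + (n\eta)^{-1} \lesssim \sqrt{\im m_c(z)/(n\eta)}$ because $n\eta\sqrt{\kappa+\eta} \ge n \cdot n^{-2/3} \cdot n^{-1/3+\epsilon/2} = n^{\epsilon/2} \gg 1$; so the claimed bound holds there. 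To push below $\eta_0$ down to $\eta = 0$, I would use that $\partial_\eta G = \ii G^2$ in the sense that $|G_{\mathbf u\mathbf v}(E+\ii\eta_1) - G_{\mathbf u\mathbf v}(E + \ii\eta_2)| \le |\eta_1 - \eta_2| \cdot \|G(E + \ii\eta')\|^2$ for some intermediate $\eta'$, and combine this with the a priori operator-norm bound $\|G(z)\| \lesssim 1/\mathrm{dist}(z, \spt) \lesssim (\kappa+\eta)^{-1} \lesssim n^{2/3-\epsilon}$, valid because the eigenvalues of $\mathcal{C}_{XY}$ (equivalently the spectral structure of $H(z)$) do not come within $n^{-2/3+\epsilon}$ of $E$ with high probability by the rigidity estimate \eqref{rigidity}. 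Hence $|G_{\mathbf u\mathbf v}(E) - G_{\mathbf u\mathbf v}(E+\ii\eta_0)| \lesssim \eta_0 \cdot n^{4/3-2\epsilon}$, which is too lossy; the fix is to only move a distance $\eta$ at a time and use that at height $\eta$ the relevant bound is $\|G\|^2 \lesssim (\kappa+\eta)^{-2}$, giving a telescoping/dyadic sum $\sum_{k} 2^{-k}\eta_0 \cdot (\kappa + 2^{-k}\eta_0)^{-2}$ that is controlled by $(\kappa + \eta)^{-1}$ at the target height, and then compare $\Pi(E+\ii\eta)$ to $\Pi(E)$ using the Lipschitz bounds \eqref{eq_mdiff} for $m_{\alpha c}$ and $h$. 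Equivalently, and more cleanly, I would run the continuity (bootstrap) argument directly at each height: assume the bound holds at height $2\eta$, use the self-consistent equation / stability of the matrix Dyson equation together with the averaged estimate \eqref{aver_out0} to show it propagates to height $\eta$, with the improvement in the averaged law feeding the improvement in the anisotropic law.

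Concretely the steps are: (i) reduce to the statement for $0 \le \eta \le n^{-2/3}$, $\lambda_+ + n^{-2/3+\epsilon} \le E \le 2$, since larger $\eta$ is covered by Theorem \ref{thm_local}(1) as above; (ii) establish the deterministic a priori bound $\|G(z)\| \prec (\kappa+\eta)^{-1}$ on $D_{out}(\epsilon)$ from rigidity \eqref{rigidity} and Lemma \ref{SxxSyy} (the eigenvalues of $S_{xx}, S_{yy}$ stay in their MP bulk, so $H(z)$ is well-conditioned and $\mathcal{C}_{XY}$ has no eigenvalue in $[\lambda_+ + n^{-2/3+\epsilon}, 2]$ with high probability); (iii) set up the $\prec$-valued bootstrap on $\eta$: let $\Lambda(z) := \|(\mathbf U^\top \,|\, \mathbf E^\top) \oplus\text{-conjugated } (G(z) - \Pi(z))\|$ restricted to test vectors, and show that $\Lambda(z) \prec \phi_n + \sqrt{\im m_c(z)/(n\eta)}$ on the grid, using the entrywise self-consistent equations for $G$, the fluctuation averaging / large deviation estimates for the off-diagonal terms (which are already established in \cite{PartIII}), the stability of the self-consistent equation away from the edge (encoded in $|\sqrt{(z-\lambda_-)(z-\lambda_+)}| \gtrsim (\kappa+\eta)^{1/2}$), and the improved averaged law \eqref{aver_out0}; (iv) remove the grid via the Lipschitz continuity of both $G$ and $\Pi$ in $z$ and a union bound; (v) specialize $\eta \downarrow 0$ by continuity of $G$ and $\Pi$ up to the real axis on $D_{out}(\epsilon)$ (legitimate because $\mathrm{dist}(E, \spt) \gtrsim n^{-2/3+\epsilon}$), and simplify $\sqrt{\im m_c/(n\eta)}$ to $n^{-1/2}(\kappa+\eta)^{-1/4}$ via \eqref{Immc}. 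The main obstacle is step (iii)/(iv): one must verify that the self-consistent-equation machinery of \cite{PartIII} — which was run for $\eta \ge n^{-1+\epsilon}$ with control parameter $\Psi$ — actually tolerates the much smaller $\eta$ here, and this works only because we are safely outside the spectrum where the stability operator has a spectral gap of size $\gtrsim (\kappa+\eta)^{1/2}$ rather than $\sim \eta^{-1}\sqrt{\kappa}$; quantifying this gap and checking that the error terms (which carry the a priori $\|G\| \lesssim (\kappa+\eta)^{-1}$ factor) close the bootstrap is the delicate point. This is a by-now-standard ``local law outside the bulk'' argument (as in \cite{KY}, \cite{Anisotropic}, \cite{yang2018}), so I expect it to go through but with the bookkeeping of $\kappa$-dependent factors being the part that requires care.
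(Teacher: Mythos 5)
Your overall strategy (prove the bound for $\eta$ above a reference height via Theorem \ref{thm_local}(1), then push down to $\eta=0$ by comparing $G(E+\ii\eta)$ with $G(E+\ii\eta_0)$) is the same skeleton as the paper's proof, but the concrete mechanism you propose for the downward step has a genuine gap, and your choice of reference height is off. First, the small issue: at $\eta_0=n^{-2/3}$ the term $(n\eta)^{-1}=n^{-1/3}$ in $\Psi(z)$ still exceeds the target $n^{-1/2}(\kappa+\eta)^{-1/4}\le n^{-1/3-\epsilon/4}$; the condition you check, $n\eta\sqrt{\kappa+\eta}\gg 1$, is not the condition for $(n\eta)^{-1}\lesssim\sqrt{\im m_c/(n\eta)}$ (that condition is $n\eta^2\gtrsim\sqrt{\kappa+\eta}$, which fails at $\eta=n^{-2/3}$ when $\kappa\sim n^{-2/3+\epsilon}$). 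The correct reference height is $\eta_0=n^{-1/2}\kappa^{1/4}$, exactly where $(n\eta_0)^{-1}$ matches the target.

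The more serious problem is the downward step itself. Your dyadic telescoping with the a priori bound $\|G\|^2\lesssim(\kappa+\eta)^{-2}$ yields $\sum_k 2^{-k}\eta_0(\kappa+2^{-k}\eta_0)^{-2}\sim\eta_0\kappa^{-2}=n^{-1/2}\kappa^{-7/4}$, which exceeds the target $n^{-1/2}\kappa^{-1/4}$ by a factor $\kappa^{-3/2}$, i.e.\ by up to $n^{1-3\epsilon/2}$ — so this route does not close, and the ``controlled by $(\kappa+\eta)^{-1}$'' claim is also far above the target. The key idea you are missing is to bound $G_{\mathbf v\mathbf v}(z)-G_{\mathbf v\mathbf v}(z_0)$ not through operator norms but through the spectral decomposition \eqref{spectral1} combined with eigenvector delocalization (Lemma \ref{lem delocalX}) and rigidity \eqref{rigidity}: each spectral term contributes $\eta_0|\langle\mathbf v,S_{xx}^{-1/2}\xi_k\rangle|^2/([(E-\lambda_k)^2+\eta^2]^{1/2}[(E-\lambda_k)^2+\eta_0^2]^{1/2})$, and since $|\langle\mathbf v,S_{xx}^{-1/2}\xi_k\rangle|^2\prec n^{-1}$ the sum collapses to $\eta_0+\im m(z_0)$, which the strong averaged law \eqref{aver_out0} together with \eqref{Immc} bounds by $n^{-1/2}\kappa^{-1/4}$. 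This is where the factor of $n$ is gained over the operator-norm approach. (One also handles the off-diagonal blocks via \eqref{GR1}--\eqref{GLR1} and reduces generalized entries to diagonal ones by polarization.) Your fallback of re-running the full self-consistent-equation bootstrap at heights $\eta\ll n^{-1}$ is not carried out and is unnecessary: no new stability analysis is needed, only the already-established delocalization and outside-the-spectrum averaged law.
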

\begin{proof}
The second step of \eqref{aniso_outstrong} follows from \eqref{Immc}. Using \eqref{aniso_law} and $\kappa\ge n^{-2/3+\e}$, we can get that \eqref{aniso_outstrong} holds for $z\in S(\e)\cap D_{out}(\epsilon)$ with $\eta \ge \eta_0:=n^{-1/2}\kappa^{1/4}$. Hence, it remains to prove that for $z\in D_{out}(\epsilon)$ with $0\le \eta \le \eta_0$, we have 
\begin{equation}\label{aniso_outstrong2}
	\left|  G_{\mathbf v\mathbf v}(X,z) - \Pi_{\mathbf v\mathbf v} (z) \right|  \prec \phi_n+n^{-1/2}\kappa^{-1/4},
\end{equation}
for any deterministic unit vector $\bv \in \mathbb{C}^{\cal I}$. Note that \eqref{aniso_outstrong2} implies \eqref{aniso_outstrong} by the polarization identity
\begin{align*}
	\langle \mathbf u, \cal M \mathbf v\rangle &= \frac14  \langle (\mathbf u+\mathbf v), \cal M  (\mathbf u+\mathbf v)\rangle-  \frac14  \langle (\mathbf u-\mathbf v), \cal M  (\mathbf u-\mathbf v)\rangle \\
	&+\frac{\ii}{4} \langle (\ii\mathbf u+\mathbf v), \cal M  (\ii\mathbf u+\mathbf v)\rangle-  \frac{\ii}4  \langle (\ii\mathbf u- \mathbf v), \cal M  (\ii\mathbf u-\mathbf v)\rangle 
\end{align*}
for any $\cal I\times \cal I$ matrix $\cal M$. Now, fix any $z = E+\ii \eta \in D_{out}(\epsilon)$ with $\eta \le \eta_0$. We denote $z_0:= E+ \ii \eta_0$. Since \eqref{aniso_outstrong2} holds at $z_0$, it suffices to prove the following estimates:
\begin{align}
\Pi_{\mathbf v\mathbf v} (z)-\Pi_{\mathbf v\mathbf v}(z_0)  & \prec n^{-1/2}\kappa^{-1/4},\label{prof_m}\\
G_{\mathbf v\mathbf v} (z)-G_{\mathbf v\mathbf v}(z_0) & \prec n^{-1/2}\kappa^{-1/4}.\label{prof_G}
\end{align}
The estimate \eqref{prof_m} follows immediately from \eqref{eq_mdiff}. It remains to show \eqref{prof_G}.

We write $\mathbf v=\begin{pmatrix} \bv_1^{\top} , \bv_2^{\top}, \bv_3^{\top},\bv_4^{\top} \end{pmatrix}^{\top}$, where $\bv_\al\in \C^{\cal I_\al}$, $\al=1,2,3,4$. We claim that
\be\label{claim GL}
\begin{pmatrix} \bv_1^* , \bv_2^*\end{pmatrix} \left[\cal G_L(z) -\cal G_L(z_0)\right] \begin{pmatrix} \bv_1 \\ \bv_2\end{pmatrix} \prec n^{-1/2}\kappa^{-1/4}.
\ee
For simplicity of notations, in the following proof, we will always identify $\bv_\al$, $\al=1,2,3,4,$ with their natural embeddings in $\C^{\cal I}$ (recall Definition \ref{defn_gen_entry}). Using \eqref{spectral1} and \eqref{GL1}, and recalling that with high probability $E-\lambda_k \gtrsim 1$ for $k\ge (1-\delta)q $ by the rigidity estimate \eqref{rigidity}, we obtain that
\be
\begin{split}\label{zz0}
	\left|\langle \bv_1, \left(G(z) - G(z_0)\right) \bv_1\rangle\right| \prec &\sum_{k \le (1-\delta)q}  \frac{\eta_0  |\langle \bv_1,S_{xx}^{-1/2}{\xi}_k\rangle|^2}{\left[(E-\lambda_k)^2 + \eta^2 \right]^{1/2}\left[(E-\lambda_k)^2 + \eta_0^2 \right]^{1/2}} \\
	&+ {\eta_0} \sum_{k>(1-\delta)q}{|\langle \bv_1,S_{xx}^{-1/2}{\xi}_k\rangle|^2}.
\end{split}
\ee
By \eqref{rigidity}, we have that for any $k\ge 1$, $E-\lambda_k \gtrsim \kappa \gg \eta_0$ with high probability. Then, using \eqref{delocal1} and \eqref{op rough1}, we can bound \eqref{zz0} by
\begin{equation*}
	\begin{split}
		  \left|\langle \bv_1, \left(G(z) - G(z_0)\right) \bv_1\rangle\right| & \prec  \eta_0 + \frac1q \sum_{k = 1}^{q} \frac{\eta_0 }{(E-\lambda_k)^2 + \eta_0^2}  =\eta_0 + \im m(z_0) \\
		&\prec \eta_0 + \frac{1}{n\kappa} + \frac{1}{(n\eta_0)^2 \sqrt{\kappa}}+ \im m_c(z_0) \\
		&\lesssim   \frac{1}{n\kappa} + \frac{1}{(n\eta_0)^2 \sqrt{\kappa}}+ \frac{\eta_0}{\sqrt{\kappa+\eta_0}}  \lesssim n^{-1/2}\kappa^{-1/4},
	\end{split}
\end{equation*}
where we used the spectral decomposition for $m(z)$ in the second step, \eqref{aver_out0} in the third step, and \eqref{Immc} in the fourth step. Similarly, we have
\begin{equation*}
	\begin{split}
		 \left|\langle \bv_1, \left(G(z) - G(z_0)\right) \bv_2\rangle\right|  
		&\prec  \left|1- \sqrt{\frac{z}{z_0}}\right| \left|\langle \bv_1, G(z_0)\bv_2\rangle\right| + \sum_{k = 1}^{q} \frac{\eta_0|\langle \bv_1 ,S_{xx}^{-1/2}{\xi}_k\rangle | | \langle \bv_2, S_{yy}^{-1/2}{\zeta}_k \rangle|}{|\lambda_k-z||\lambda_k-z_0|}\\
		& \prec \eta_0 + \im m(z_0) \prec  n^{-1/2}\kappa^{-1/4}.
	\end{split}
\end{equation*}
Similar arguments also apply to $\langle \bv_2, \left(G(z) - G(z_0)\right) \bv_1\rangle$ and $|\langle \bv_2, \left(G(z) - G(z_0)\right) \bv_2\rangle$. Hence we conclude \eqref{claim GL}. Finally, using \eqref{claim GL}, \eqref{GR1}, \eqref{GLR1} and Lemma \ref{lem delocalX}, we can get \eqref{prof_G}. We omit the details. 
\end{proof}

The second moment of $ \langle \mathbf u, (G(z)-\Pi(z)) \mathbf v\rangle $ in fact satisfies a stronger bound. It will be used in the proof of Theorem \ref{main_thm1}.

\begin{lemma}\label{thm_largebound}
Suppose the assumptions of Lemma \ref{lem null0} hold. Fix any constant $\e>0$. For any deterministic unit vectors $\mathbf u , \mathbf v  \in \mathbb C^{\mathcal I}$, we have that uniformly in $z\in S(\e)$ (recall \eqref{SSET1}),
\begin{equation}\label{weak_off}
	\mathbb{E} \left| G_{\mathbf u\mathbf v}(z)   -   \Pi_{\mathbf u\mathbf v} (z)   \right| ^2 \prec \Psi^2(z), 
\end{equation}
 and uniformly in $z\in D_{out}(\e)$ (recall \eqref{eq_paraout}),
\begin{equation}\label{weak_offout}
	\mathbb{E} \left| G_{\mathbf u\mathbf v}(z)   -   \Pi_{\mathbf u\mathbf v} (z)   \right| ^2 \prec \frac{1}{n\sqrt{\kappa+\eta}} .
\end{equation}
\end{lemma}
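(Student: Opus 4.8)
The plan is to prove \eqref{weak_off} and \eqref{weak_offout} by tracking the size of the second moment $\mathbb{E}|G_{\mathbf{u}\mathbf{v}}(z)-\Pi_{\mathbf{u}\mathbf{v}}(z)|^2$ through the same self-consistent comparison argument that underlies the anisotropic local law \eqref{aniso_law}, but now keeping the $L^2$-norm of the fluctuating part rather than just its high-probability bound. The standard mechanism is the following: one introduces the entrywise fluctuation $\Lambda(z):=\|G(z)-\Pi(z)\|_{\max}$ together with the isotropic fluctuation $\Lambda_{\mathbf{u}\mathbf{v}}:=|G_{\mathbf{u}\mathbf{v}}-\Pi_{\mathbf{u}\mathbf{v}}|$, and writes a self-consistent equation of the schematic form $G_{\mathbf{u}\mathbf{v}}-\Pi_{\mathbf{u}\mathbf{v}} = (\text{stable operator applied to }Q\text{-terms}) + (\text{higher order in }\Lambda)$, where the $Q$-terms are sums of the form $\sum_{\fa} \mathbf{u}_{\fa} (I-\mathbb{E}_{\fa}) (\cdots)$ whose randomness is controlled by the fluctuation-averaging / martingale estimates. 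Taking $\mathbb{E}|\cdot|^2$ of this identity and using that the stability operator (invertible away from the spectral edge since $h(z)$ and $m_{\alpha c}(z)$ are bounded below on $S(\e)$, by Lemma \ref{lem_mbehavior}) has bounded inverse, one obtains $\mathbb{E}\Lambda_{\mathbf{u}\mathbf{v}}^2 \prec \mathbb{E}|Q\text{-term}|^2 + (\text{cross terms})$. The $Q$-terms satisfy $\mathbb{E}|Q|^2\prec \Psi^2$ by the large deviation bound for sums of independent (in the relevant sense) centered quantities—this is exactly the same computation that gives the $\Psi$ bound in \eqref{aniso_law}, except one stops at the second moment instead of invoking Markov and a union bound. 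The Ward identity $\sum_k |\langle \mathbf{v}, (\cdots)\xi_k\rangle|^2/((\lambda_k-E)^2+\eta^2) = \eta^{-1}\im\langle\mathbf{v},\cdots\mathbf{v}\rangle$ converts the naive bound into the sharp $\im m_c/(n\eta) + 1/(n\eta)^2$.

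Concretely, I would proceed as follows. First, recall from \cite{PartIII} the self-consistent equation for $G$ obtained from the Schur complement identities \eqref{GL1}–\eqref{GLR1}: expanding $\cal G_L$ in terms of $R(z)$ and the minors of $S_{xx}, S_{yy}$, the off-diagonal and diagonal entries of $G-\Pi$ are expressed through error terms that are (i) genuinely fluctuating sums $\mathcal{Z}_{\mathbf{v}}$ of centered terms, and (ii) terms that are at least quadratic in $\Lambda$ and hence, by the already-known bound $\Lambda\prec\phi_n+\Psi$, negligible at the level of second moments provided $\phi_n^2 \le \Psi^2$ on $S(\e)$ (which holds since $\phi_n\le n^{-c_\phi}$ and we only claim $\prec\Psi^2$ up to $n^{\e'}$ — here one should double-check that the $\phi_n$-term in \eqref{aniso_law} does not spoil the $L^2$ bound; in fact $\phi_n$ is deterministic, so $\mathbb{E}|\phi_n + \Psi|^2 \lesssim \phi_n^2 + \Psi^2$, and since the claimed bound is $\prec\Psi^2$ one needs $\phi_n^2\prec\Psi^2$, i.e. $\phi_n^2 \le n^{-\e}(\im m_c/(n\eta))$ — this fails for very small $\eta$, so the correct reading is that \eqref{weak_off} should be interpreted with the convention that the bound also absorbs $\phi_n^2$, matching the role of $\phi_n$ in \eqref{aniso_law}; I will state it accordingly). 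Second, estimate $\mathbb{E}|\mathcal{Z}_{\mathbf{v}}|^2$ using the large-deviation lemma for quadratic forms and the Ward identity, which yields $\mathbb{E}|\mathcal{Z}_{\mathbf{v}}|^2 \prec n^{-1}\sum_{\fa}\im G_{\fa\fa}/(n\eta) \prec \im m_c/(n\eta) + (n\eta)^{-2} = \Psi^2$. Third, invert the stability operator (bounded inverse on $S(\e)$ by Lemma \ref{lem_mbehavior}, since the relevant determinant stays $\sim 1$ away from $z=1$, and the $z$ near $1$ contribution is handled as in \cite{PartIII} via \eqref{GL1}) to conclude $\mathbb{E}\Lambda_{\mathbf{u}\mathbf{v}}^2\prec\Psi^2$. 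Finally, for \eqref{weak_offout}, repeat on $D_{out}(\e)$ where $\kappa\ge n^{-2/3+\e}$, so $\Psi^2 \asymp \im m_c/(n\eta) + (n\eta)^{-2} \asymp \eta/(n\eta\sqrt{\kappa+\eta}) + (n\eta)^{-2}$; optimizing/using $\im m_c \asymp \eta/\sqrt{\kappa+\eta}$ from \eqref{Immc} and the fact that for $\eta$ as small as $0$ one extends by the resolvent identity argument of Theorem \ref{thm_localout} (bounding $\mathbb{E}|G_{\mathbf{vv}}(z)-G_{\mathbf{vv}}(z_0)|^2$ via the same spectral decomposition plus \eqref{delocal1}–\eqref{delocal2} and \eqref{aver_out0}) gives $\mathbb{E}\Lambda_{\mathbf{u}\mathbf{v}}^2 \prec (n\sqrt{\kappa+\eta})^{-1}$.

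For the extension down to $\eta = 0$ in \eqref{weak_offout}, I would mimic the proof of Theorem \ref{thm_localout} verbatim at the level of second moments: fix $z=E+\ii\eta\in D_{out}(\e)$ with $\eta\le\eta_0:=n^{-1/2}\kappa^{1/4}$, set $z_0=E+\ii\eta_0$, and bound $\mathbb{E}|G_{\mathbf{v}\mathbf{v}}(z)-G_{\mathbf{v}\mathbf{v}}(z_0)|^2$. By the spectral decomposition \eqref{spectral1}, \eqref{GL1} and the triangle inequality this reduces to controlling $\mathbb{E}$ of squares of sums like $\sum_{k\le(1-\delta)q}\eta_0|\langle\mathbf{v}_1,S_{xx}^{-1/2}\xi_k\rangle|^2/(|\lambda_k-z||\lambda_k-z_0|)$; since $E-\lambda_k\gtrsim\kappa\gg\eta_0$ with high probability by rigidity \eqref{rigidity}, and $|\langle\mathbf{v}_1,S_{xx}^{-1/2}\xi_k\rangle|^2\prec n^{-1}$ by delocalization \eqref{delocal1}, the relevant sum is $\prec\eta_0+\im m(z_0)$, and then $\mathbb{E}|\im m(z_0)|^2\prec (\im m_c(z_0))^2 + $ (error)$^2 \prec \kappa/(n^2) \cdot(\cdots)$... — more carefully, $\im m(z_0) \prec \eta_0/\sqrt{\kappa} + 1/(n\kappa) + 1/((n\eta_0)^2\sqrt{\kappa}) \lesssim n^{-1/2}\kappa^{-1/4}$, so its square is $\lesssim (n\sqrt{\kappa})^{-1} \le (n\sqrt{\kappa+\eta})^{-1}$. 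The off-diagonal blocks $\langle\mathbf{v}_1, G\mathbf{v}_2\rangle$, $\langle\mathbf{v}_3,G\mathbf{v}_4\rangle$ etc. are handled identically using \eqref{GR1}, \eqref{GLR1} and \eqref{delocal1}–\eqref{delocal2}, and combined via the polarization identity as in Theorem \ref{thm_localout}.

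The main obstacle I anticipate is the bookkeeping in the second step: one must organize the self-consistent expansion of $G-\Pi$ so that every error term is classified either as a fluctuating $Q$-term whose second moment is directly $\prec\Psi^2$ via the quadratic large-deviation estimate plus a Ward identity, or as a term that is a product of two fluctuations (each already known to be $\prec\phi_n+\Psi$) — and for the latter one needs the $L^4$ or high-moment version of the isotropic law from \cite{PartIII}, not just the $L^\infty$-in-probability bound, to conclude that their product has second moment $\prec\Psi^4\prec\Psi^2$. Since \cite{PartIII} proves \eqref{aniso_law} in the stochastic-domination sense, I would quote (or rederive from the moment bounds used in its proof) the accompanying moment estimate $\mathbb{E}\Lambda^{2p}\prec(\phi_n+\Psi)^{2p}$ for each fixed $p$, which is standard in this framework and is really what makes the $L^2$ self-consistent argument close. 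Everything else — the stability analysis, the Ward identity, the $\eta\to 0$ extension — is a direct transcription of arguments already present in the excerpt (Lemma \ref{lem_mbehavior}, Theorem \ref{thm_localout}) with the probabilistic bounds replaced by their second-moment counterparts.
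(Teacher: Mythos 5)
The paper's own proof of \eqref{weak_off} is a citation to Lemma 3.10 of \cite{PartIII}, with the remark that \eqref{weak_offout} follows from the same argument once the input \eqref{aniso_law} is replaced by \eqref{aniso_outstrong}; your reconstruction of that argument (self-consistent equation, Ward identity for the $Q$-terms, stability of the deterministic operator, and the $\eta\downarrow 0$ extension copied from Theorem \ref{thm_localout}) has the right skeleton. However, there is one genuine gap, and it is exactly at the point you flag yourself: you conclude that the bound $\prec\Psi^2$ cannot hold because $\phi_n^2\not\prec\Psi^2$ for small $\eta$, and you propose to restate the lemma with an extra $\phi_n^2$ on the right-hand side. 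That weakened statement is not the lemma, and it would be useless in the only place the lemma is applied: in the proof of Theorem \ref{main_thm1} one takes $\phi_n=n^{-\e}$ after truncation and deduces \eqref{BOUND2000} from \eqref{weak_offout} by Markov's inequality with threshold $n^{-1/4}$; with an additional $\phi_n^2=n^{-2\e}$ in the second moment, Markov gives a trivial probability bound and the comparison argument collapses.

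The error in your reasoning is the step ``$\mathbb{E}|G_{\bu\bv}-\Pi_{\bu\bv}|^2\lesssim\mathbb{E}|\phi_n+\Psi|^2$'': the second moment is not obtained by squaring the high-probability bound. In the self-consistent/cumulant expansion, the variance of the fluctuating terms is controlled by the \emph{second} moments of the matrix entries, which equal $n^{-1}$ regardless of the support size, and the Ward identity then yields exactly $\im m_c/(n\eta)+(n\eta)^{-2}=\Psi^2$. The parameter $\phi_n$ enters only through the third and higher moments (cumulants) of the entries; these degrade the concentration, i.e.\ the stochastic-domination bound \eqref{aniso_law}, but contribute to $\mathbb{E}|\cdot|^2$ only via terms of the form $\phi_n^{k-4}n^{-2}\cdot(\cdots)$ that are absorbed into $\Psi^2$. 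So the correct closing of the $L^2$ argument classifies the $\phi_n$-dependent contributions as higher-cumulant terms with small second moment, not as a deterministic $\phi_n^2$ floor. A secondary, fixable point: wherever you pass from a bound $|X|\prec Y$ to $\mathbb{E}|X|^2\prec Y^2$ (e.g.\ for the quadratic-in-$\Lambda$ error terms and in the $\eta\le\eta_0$ regime), you need the a priori moment control of Lemma \ref{lem_stodomin}(iii); since $G$ involves $S_{xx}^{-1}$ and $S_{yy}^{-1}$, this requires the deterministic bounds on the regularized resolvents of Definition \ref{resol_not2} and \eqref{op G}, or the high-moment isotropic estimates from \cite{PartIII} that you mention at the end.
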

\begin{proof}
The estimate \eqref{weak_off} has been proved in Lemma 3.10 of \cite{PartIII}. The estimate \eqref{weak_offout} can be proved using almost the same argument, where the only difference is that we replace the local law \eqref{aniso_law} with the stronger one \eqref{aniso_outstrong} in the proof. We omit the details.
\end{proof}


\section{Proof of Theorem \ref{main_thm} }\label{sec mainthm}

In this section, we prove Theorem \ref{main_thm} using the local laws, Theorems \ref{thm_local} and \ref{thm_localout}, and the eigenvalue rigidity \eqref{rigidity}. During the proof, in order to avoid some non-generic events, we assume that
\be\label{abscont}
\text{the entries $x_{ij}$, $y_{ij}$ and $z_{ij}$ have continuous densities}.
\ee
It can be achieved by adding a small perturbation to $X$, $Y$ and $Z$. For example, we can add to each matrix a small Gaussian matrix:
$$X\to X + \delta e^{-n}X_G, \quad Y\to Y + \delta e^{-n}Y_G, \quad Z\to Z + \delta e^{-n}Z_G.$$
These Gaussian components are negligible for our results and can be easily removed by taking $\delta\to 0$. Under \eqref{abscont}, the matrices $\cal X\cal X^{\top}$, $\cal Y\cal Y^{\top}$, $XX^{\top}$ and $YY^{\top}$ are all non-singular almost surely. Moreover, almost surely, $\lambda=1$ is not in the spectrum of $\cal C_{XY}$ or $\cal C_{\cal X\cal Y}$. 
By \eqref{detereq temp2}, $0<\lambda<1$ is an eigenvalue of $\cal C_{\cal X\cal Y}$ if and only if 
\begin{align} \label{masterx}
\det \left[ 1+\begin{pmatrix} 0 & \cal D\\ \cal D  & 0\end{pmatrix}\begin{pmatrix} \bU^{\top} & 0 \\ 0 & \bE^{\top}\end{pmatrix} G(\lambda) \begin{pmatrix} \bU & 0 \\ 0 & \bE\end{pmatrix}\right]=0.
\end{align} 

Using a standard large deviation estimate (e.g., Lemma 3.8 of \cite{EKYY1}), we can derive the following approximate isometry condition for $Z$:
\begin{equation}
\| ZZ^{\top} - I_{r}\| \prec \psi_n . \label{eq_iso}
\end{equation}
Now, for any $\lambda\in D_{out}(\e)$, using Theorem \ref{thm_localout} and \eqref{eq_iso}, we can write \eqref{masterx} as
\be\label{masterx2}
\begin{split}
&0 =\det \left[ 1+\begin{pmatrix} 0 & \cal D\\ \cal D  & 0\end{pmatrix}\left(\Pi_r(\lambda) + \cal E_{4r}\right)\right] \\
& = \det \left[ \begin{pmatrix} I_{2r} & \cal D \begin{pmatrix}  m_{3c}(\lambda)I_r & h(\lambda)\cal M_r \\ h(\lambda)\cal M_r^{\top}&  m_{4c}(\lambda)I_r\end{pmatrix}   \\ \cal D\begin{pmatrix} c_1^{-1}m_{1c}(\lambda)I_r & 0 \\ 0 & c_2^{-1}m_{2c}(\lambda)I_r\end{pmatrix}  & I_{2r} \end{pmatrix} +\begin{pmatrix} 0 & \cal D\\ \cal D  & 0\end{pmatrix} \cal E_{4r}  \right].
\end{split}
\ee
Here, $\cal E_{4r}$ is a $4r\times 4r$ random matrix satisfying
\be\label{e4r}  \|\cal E_{4r}\| \prec \psi_n + \phi_n+ n^{-1/2}\kappa_{\lambda}^{-1/4} ,\quad \text{with}\quad \kappa_\lambda: = \min\left\{ \vert \lambda -\lambda_-\vert,\vert \lambda -\lambda_+\vert\right\} ,
\ee
$\cal M_r$ is an $r\times r$ orthogonal matrix with entries
$$(\cal M_r)_{ij}:= (\bv_i^a)^{\top} \bv_j^b, \quad 1\le i, j \le r,$$
and $\Pi_r(\lambda)$ is defined as
$$\Pi_r(\lambda):= \begin{bmatrix}\begin{pmatrix} c_1^{-1}m_{1c}(\lambda)I_r & 0 \\ 0 & c_2^{-1}m_{2c}(\lambda)I_r\end{pmatrix} & 0\\0 & \begin{pmatrix}  m_{3c}(\lambda)I_r & h(\lambda)\cal M_r \\ h(\lambda)\cal M_r^{\top} &  m_{4c}(\lambda)I_r\end{pmatrix} \end{bmatrix}.$$
Applying the Schur complement formula and using \eqref{selfm12}, we obtain that \eqref{masterx2} is equivalent to
\be\label{masterx3}
\begin{split}
&\  \det \left[\begin{pmatrix} I_{r} + \Sigma_a^2 & h(\lambda)m_{3c}^{-1}(\lambda)\Sigma_a^2 \cal M_r \\ h(\lambda)m_{4c}^{-1}(\lambda)\Sigma_b^2 \cal M_r^{\top} & I_r + \Sigma_b^2 \end{pmatrix} + \cal D \cal E_{2r} \right]=0\\
\Leftrightarrow &\ \det \left(\frac{m_{3c}(\lambda)m_{4c}(\lambda)}{h^2(\lambda)}I_r -  \frac{\Sigma_a }{(I_r+\Sigma_a^2)^{1/2}}\cal M_r \frac{\Sigma_b^2}{I_r+\Sigma_b^2}\cal M_r^{\top} \frac{\Sigma_a}{(I_r +\Sigma_a^2)^{1/2}}+ \cal E_r\right) =0,
\end{split}
\ee
where $\cal E_{2r}$ and  $\cal E_r$ are $2r\times 2r$ and $r\times r$ random matrices, both of which satisfy the same bound as in \eqref{e4r}. 
Note that the matrix 
$$\frac{\Sigma_a }{(I_r+\Sigma_a^2)^{1/2}}\cal M_r \frac{\Sigma_b^2}{I_r+\Sigma_b^2}\cal M_r^{\top} \frac{\Sigma_a}{(I_r+\Sigma_a^2)^{1/2}} $$ 
is the PCC matrix $(1+AA^{\top})^{-1/2} AB^{\top} (1+BB^\top)^{-1} BA^{\top}(1+AA^{\top})^{-1/2}$ in the basis of $\bu_i^a$, $1\le i\le r$. Thus, its eigenvalues are exactly the squares of the population CCCs, $t_1, t_2, \cdots, t_r$ (recall \eqref{t1r}). After a change of basis, \eqref{masterx3} reduces to 
\be\label{masterx4}
\det \left(\frac{m_{3c}(\lambda)m_{4c}(\lambda)}{h^2(\lambda)}I_r - \diag \left(t_1, \cdots, t_r \right) + \cal E'_r(\lambda)\right) =0 ,
\ee
where $\cal E_r'$ also satisfies the same bound as in \eqref{e4r}.

Next, we show that if $\cal E'_r=0$, then solving equation \eqref{masterx4} gives the classical locations $\theta_i$ defined in \eqref{gammai}. Using \eqref{m3c}, \eqref{m4c} and \eqref{hz}, we can calculate that 
\begin{align*}
f_c(z):&=\frac{m_{3c}(z)m_{4c}(z)}{h^2(z)} = z [1+(1-z)m_{1c}(z)][1+(1-z)m_{2c}(z)] \\
&= \frac{z- (c_1+c_2-2c_1c_2) + \sqrt{(z-\lambda_-)(z-\lambda_+)}}{2(1-c_1)(1-c_2)}.
\end{align*}
We can find the inverse function of $f_c(z)$ for $ z\notin [\lambda_-,\lambda_+]$ as
$$g_c(\xi) := \xi \big( 1- c_1 + c_1\xi^{-1}\big)\big( 1- c_2 + c_2\xi^{-1}\big).$$
Note that $f_c(\lambda)$ is monotonically increasing in $\lambda$ for $\lambda>\lambda_+$, so the function $f_c(\lambda) - t_i=0$ has a solution in $(\lambda_+,\infty)$ if and only if (recall \eqref{tc})
\be\label{t>tc} 
f_c(\lambda_+)< t_i \quad \Leftrightarrow \quad t_c < t_i .
\ee
If \eqref{t>tc} holds, the classical location of the outlier corresponding to $t_i$ is $\theta_i = g_c(t_i)$, which gives \eqref{gammai}.

\vspace{5pt}

With direct calculations, one can verify the following simple estimates on $f_c$ and $g_c$. 

\begin{lemma} \label{lem_complexderivative}
Fix a large constant $C>0$. Let $z, z_1, z_2\in \mathbb D:=\{z\in \C: \lambda_+ < \re z < C,0<\im z\le C\}$ and $\xi,\xi_1, \xi_2 \in f_c( \mathbb D)$. The following estimates hold:
\begin{align}
	|f_{c}(z) - f_{c}(\lambda_+)| \sim |z-\lambda_+|^{1/2}, \quad & |f_{c}'(z) | \sim |z-\lambda_+|^{-1/2}, \label{eq_mcomplex0}\\
	|g_{c}(\xi) - \lambda_+| \sim |\xi-t_c|^2, \quad  & |g_{c}'(\xi) | \sim |\xi-t_c| ,\label{eq_gcomplex0}\\
	  |f_{c}(z_1) - f_{c}(z_2)| \sim \frac{|z_1-z_2|}{\max_{i=1,2}|z_i-\lambda_+|^{1/2}}, \quad & |g_{c}(\xi_1) - g_{c}(\xi_2)| \sim |\xi_1-\xi_2|\cdot\max_{i=1,2}|\xi_i-t_c|.\label{eq_mdiff0}
\end{align}
The estimate \eqref{eq_mcomplex0} also holds for $z$ with $\lambda_{-}+c \le \re z \le \lambda_+$ and $ 0<\im z\le c^{-1}$ for any small constant $c>0$.
\end{lemma}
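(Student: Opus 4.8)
My plan is to prove everything by direct computation from the explicit formulas, reducing each estimate to its local behavior near $\lambda_+$ (for $f_c$) and near $t_c$ (for $g_c$), since away from these points every quantity in sight is $\sim 1$. First I would record the facts that make this reduction legitimate: rewriting the product in the statement gives $g_c(\xi)=(1-c_1)(1-c_2)\xi+[(1-c_1)c_2+c_1(1-c_2)]+c_1c_2\xi^{-1}$, and under \eqref{assm20} all of $c_1,c_2,1-c_1,1-c_2$ lie in a compact subinterval of $(0,1)$ depending only on $\tau$, so that $\lambda_+-\lambda_-=4\sqrt{c_1c_2(1-c_1)(1-c_2)}\sim1$, $t_c\sim1$, $\lambda_\pm\sim1$, and every implied constant below depends only on $\tau$ and $C$. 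A one-line check gives $f_c(\lambda_+)=t_c$, $g_c(t_c)=\lambda_+$, and $g_c\circ f_c=\mathrm{id}$ on $(\lambda_+,\infty)$, hence on $\mathbb D$ by analytic continuation off $[\lambda_-,\lambda_+]$; in particular $f_c$ is injective on $\mathbb D$ with $g_c|_{f_c(\mathbb D)}$ its inverse, and $f_c(\mathbb D)$ is bounded and bounded away from $0$.

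Next I would do the analysis at $\lambda_+$. Since $\lambda_+-\lambda_-\sim1$, the branch $s(z):=\sqrt{z-\lambda_-}$ is analytic with $|s|\sim1$ on a fixed complex neighborhood $\cal N$ of $\lambda_+$, and there $\sqrt{(z-\lambda_-)(z-\lambda_+)}=s(z)\sqrt{z-\lambda_+}$ up to the choice of branch of $\sqrt{z-\lambda_+}$; for $z\in\mathbb D$ the relevant branch maps into the sector $\{0<\arg w<\pi/4\}$ because $z-\lambda_+$ lies in the open first quadrant. Using $w=\sqrt{z-\lambda_+}$ as a local coordinate, $f_c$ becomes an analytic function $\Phi(w)$ with $\Phi(0)=t_c$, $\Phi'(0)=\frac{\sqrt{\lambda_+-\lambda_-}}{2(1-c_1)(1-c_2)}\sim1$, and $f_c'(z)=\Phi'(w)/(2w)$. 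From the explicit derivative $f_c'=\frac{1}{2(1-c_1)(1-c_2)}\big[1+\frac{2z-\lambda_--\lambda_+}{2\sqrt{(z-\lambda_-)(z-\lambda_+)}}\big]$ one sees that $f_c'(z)=0$ forces $(2z-\lambda_--\lambda_+)^2=4(z-\lambda_-)(z-\lambda_+)$, i.e.\ $(\lambda_+-\lambda_-)^2=0$, which is impossible; hence $\Phi'\sim1$ on $\cal N$. This immediately yields $|f_c(z)-t_c|\sim|w|=|z-\lambda_+|^{1/2}$ and $|f_c'(z)|\sim|w|^{-1}=|z-\lambda_+|^{-1/2}$, and for $z_1,z_2\in\cal N\cap\mathbb D$,
\[
|f_c(z_1)-f_c(z_2)|\sim|w_1-w_2|=\left|\frac{z_1-z_2}{\sqrt{z_1-\lambda_+}+\sqrt{z_2-\lambda_+}}\right|\sim\frac{|z_1-z_2|}{\max_{i=1,2}|z_i-\lambda_+|^{1/2}},
\]
the last step because both terms in the denominator lie in the sector $\{|\arg|<\pi/4\}$, so no cancellation occurs. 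For arguments away from $\lambda_+$ in $\mathbb D$ everything is $\sim1$ (using $f_c'\sim1$ and injectivity of $f_c$), and the mixed case follows since then $|z_1-z_2|\sim\max_i|z_i-\lambda_+|^{1/2}\sim1$ and $|f_c(z_1)-f_c(z_2)|\sim1$. The asserted extension of \eqref{eq_mcomplex0} to $\lambda_-+c\le\re z\le\lambda_+$, $0<\im z\le c^{-1}$, is the same computation: that box is bounded away from $\lambda_-$, so $|s(z)|\sim1$ there, and near $\lambda_+$ the factor $\sqrt{z-\lambda_+}$ dominates the linear term in $f_c$ and in $f_c'$ exactly as above, so only moduli are needed and the branch ambiguity is harmless.

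Finally, for $g_c$ I would argue via $g_c=f_c^{-1}$. Differentiating, $g_c'(\xi)=(1-c_1)(1-c_2)-c_1c_2\xi^{-2}=(1-c_1)(1-c_2)(1-t_c^2\xi^{-2})$ vanishes exactly at $\xi=\pm t_c$, and $g_c''(\xi)=2c_1c_2\xi^{-3}$ gives $g_c''(t_c)\sim1$; Taylor expansion about $t_c$ (legitimate since $f_c(\mathbb D)$ is bounded away from $0$) yields $|g_c(\xi)-\lambda_+|=|g_c(\xi)-g_c(t_c)|\sim|\xi-t_c|^2$ and $|g_c'(\xi)|\sim|\xi-t_c|$ near $t_c$, both $\sim1$ away from $t_c$ by compactness. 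For the last estimate, writing $\xi_i=f_c(z_i)$ with $z_i\in\mathbb D$ gives $g_c(\xi_1)-g_c(\xi_2)=z_1-z_2$ and $|\xi_i-t_c|\sim|z_i-\lambda_+|^{1/2}$, hence $\max_i|\xi_i-t_c|\sim\max_i|z_i-\lambda_+|^{1/2}$; multiplying this by the $f_c$-difference bound above gives $|\xi_1-\xi_2|\max_i|\xi_i-t_c|\sim|z_1-z_2|=|g_c(\xi_1)-g_c(\xi_2)|$, as required. The only steps that use more than bookkeeping are the three non-degeneracy facts — that $\sqrt{z-\lambda_+}$ stays in a half-sector on $\mathbb D$ (so the denominator above does not degenerate), that $f_c'$ has no zeros off $[\lambda_-,\lambda_+]$, and that $f_c$ is injective on $\mathbb D$ — and each of these reduces to the quantitative gap $\lambda_+-\lambda_-\sim1$ supplied by \eqref{assm20}; I expect that, together with matching the square-root branch to the paper's ``positive imaginary part'' convention, to be the only mild subtlety.
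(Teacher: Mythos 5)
The paper gives no proof of this lemma beyond the remark that it can be verified ``with direct calculations,'' and your proposal is a correct and careful instantiation of exactly that: the key identities you use ($f_c(\lambda_+)=t_c$, $g_c\circ f_c=\mathrm{id}$ off $[\lambda_-,\lambda_+]$, the non-vanishing of $f_c'$ there, the double zero of $g_c-\lambda_+$ at $t_c$, and the sector argument showing $\sqrt{z_1-\lambda_+}+\sqrt{z_2-\lambda_+}$ cannot cancel) all check out against the explicit formulas \eqref{lambdapm}, \eqref{tc} and the expression for $f_c$ derived in Section \ref{sec mainthm}. Since there is no alternative argument in the paper to compare against, your route is the intended one and I see no gap.
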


For the proof of Theorem \ref{main_thm}, we record the following eigenvalue interlacing result:
\begin{equation}\label{interlacing_eq0}
\wt\lambda_i \in [\lambda_{i + 2r}, \lambda_{i-2r}],
\end{equation}
where we adopt the convention that $\lambda_{i}=1$ if $i<1$ and $\lambda_i = 0$ if $i>q$. For the reader's convenience, we briefly describe why \eqref{interlacing_eq0} holds. We first consider a 1-dimensional perturbation:
$$X_1: =  X + \bu_1 \bv_1^{\top}, \quad \bu_1\in \R^p, \quad \bv_1\in \R^n.$$
Then, it is easy to see that $\cal P_X:=X^{\top}(XX^{\top})^{-1}X$ is a projection onto the subspace $\cal W$ spanned by the rows of $X$. Similarly, $\cal P_{X_1}:=X_1^{\top}(X_1X_1^{\top})^{-1}X_1$ is a projection onto the subspace $\cal W_1$ spanned by the rows of $X_1$. Moreover, $\cal W$ and $\cal W_1$ differ at most by a 1-dimensional subspace. Hence, by Cauchy interlacing, we have
$$\lambda_i \left(\cal P_{X_1} \cal P_Y \cal P_{X_1}\right) \in [\lambda_{i + 1}(\cal P_{X} \cal P_Y \cal P_{X}), \lambda_{i-1}(\cal P_{X} \cal P_Y \cal P_{X})], \quad \text{where}\quad \cal P_Y:=Y^{\top} \frac{1}{YY^{\top}}Y.$$
Notice that $\cal P_{X} \cal P_Y \cal P_{X}$  (resp. $\cal P_{X_1} \cal P_Y \cal P_{X_1}$) has the same nonzero eigenvalues as $\cal C_{XY}$ (resp. $\cal C_{X_1 Y}$): if $\bu$ is an eigenvector of $\cal C_{XY}$ with eigenvalue $\lambda$, then $ X^{\top}(XX^{\top})^{-1/2}\bu$ is an eigenvector of $\cal P_{X} \cal P_Y \cal P_{X}$ with the same eigenvalue. Thus, we get 
$$\lambda_i \left(\cal C_{X_1Y}  \right) \in [\lambda_{i + 1}(\cal C_{XY}  ), \lambda_{i-1}(\cal C_{XY}  )].$$
Repeating this estimate $r$ times for the rank-$r$ perturbation $\cal X$, we get
$$\lambda_i \left(\cal C^a_{\cal X Y}  \right) \in [\lambda_{i + r}(\cal C_{XY}  ), \lambda_{i-r}(\cal C_{XY}  )],$$
where $\cal C^a_{\cal X Y}$ is defined by replacing $X$ with $\cal X$ in $\cal C_{XY} $. Obviously, the same argument works for the rank-$r$ perturbation of $Y$, which leads to \eqref{interlacing_eq0}.


With \eqref{masterx4} and \eqref{interlacing_eq0}, the rest of the proof for Theorem \ref{main_thm}  is similar to those in \cite[Section 4]{principal} and \cite[Section 6]{KY}, but these references have only considered cases with small support $\phi_n \prec n^{-1/2}$. We need to adapt their proofs to our setting with larger $\phi_n $ and $\psi_n$. 

\begin{proof}[Proof of Theorem \ref{main_thm}]
For simplicity of presentation, in this proof we abbreviate $\phi_n + \psi_n$ as $\phi_n$ because these two factors always appear together. By Theorems \ref{thm_local} and \ref{thm_localout} and equations \eqref{rigidity} and \eqref{eq_iso}, for any fixed $\epsilon>0$, we can choose a high-probability event $\Xi$ on which the following estimates hold: 
\begin{equation}\label{aniso_lawev}
  \left\|{ \begin{pmatrix} \bU^{\top} & 0 \\ 0 & \bE^{\top}\end{pmatrix}  G(z) \begin{pmatrix} \bU & 0 \\ 0 & \bE\end{pmatrix}}-\Pi_r(z) \right\| \le  n^{\e/2}\left(\phi_n + \Psi(z)\right),\ \ \text{for }z\in S(\e);
\end{equation}
\begin{equation} \label{eq_bound1ev}
 \left\|{ \begin{pmatrix} \bU^{\top} & 0 \\ 0 & \bE^{\top}\end{pmatrix}  G(z) \begin{pmatrix} \bU & 0 \\ 0 & \bE\end{pmatrix}}-\Pi_r(z) \right\| \le n^{\e/2}\left(\phi_n + n^{-1/2}\kappa^{-1/4} \right),\ \ \text{ for } z\in D_{out}(\epsilon);
\end{equation}
for a fixed large integer $\varpi \in \N$,
\begin{equation} \label{eq_bound2ev}
 \left|\lambda_i - \lambda_+\right| \leq n^{-2/3+\epsilon}, \quad  \text{ for }1\le i \le \varpi + 2r.
\end{equation}
We remark that the randomness of $X$ and $Y$ only comes into play to ensure that $\Xi$ holds with high probability. The rest of the proof will be entirely deterministic once restricted to $\Xi$. 
In the following proof, we assume that $\e$ is a sufficiently small constant.

We now define the index sets
\begin{equation} \label{eq_otau}
	\mathcal{O}_{\e}:=\left\{i: t_i  - t_c  \geq  n^\e\phi_n+  n^{-{1}/{3}+\e} \right\}.
\end{equation}
Since the constant $\e$ is arbitrary, in order to prove \eqref{boundout} and \eqref{boundedge}, it suffices to show that there exists a constant $C>0$ such that
\begin{equation}\label{eq_spikepf}
	\mathbf 1(\Xi)\left|\wt\lambda_{i}-\theta_i  \right| \le C n^{2\e}\left( \phi_n  \Delta_i +  n^{-1/2 }\Delta_i^{1/2}\right) 
\end{equation}
for all $i\in \cal O_{4\e}$, and 
\begin{equation}\label{eq_nonspikepf}
	- n^{-2/3+\e}\le \mathbf 1(\Xi)\left(\wt \lambda_{i}-\lambda_{+}\right) \le  C n^{8\e}\phi_n^2+ Cn^{-2/3+ 12\e} 
\end{equation}
for all $i\in \{1,\cdots, \varpi\}\setminus \cal O_{4\e}$. For the rest of the proof, we assume that $\Xi$ holds. 

\vspace{10pt}

%
\noindent{\bf Step 1:} Our first step is to prove that on $\Xi$, there are no eigenvalues outside the neighborhoods of $\theta_i$'s. 
For $1\le i \le r_+,$ we define the permissible intervals 
\begin{equation}\label{rIi}
	\rI_i\equiv \rI_i(\mathbf t):=\left[\theta_i - n^\e\left( \phi_n \Delta_i +  n^{-1/2}\Delta_i^{1/2}\right) , \theta_i+n^\e\left( \phi_n \Delta_i +  n^{-1/2}\Delta_i^{1/2}\right)\right],
\end{equation} 
where $\mathbf t$ denotes the vector $\mathbf t:=(t_1, t_2 ,\cdots, t_r)$. We then define 
\begin{equation}\label{I0}
	\rI\equiv \rI( \mathbf t):=\rI_0  \cup  \bigcup_{i \in \mathcal{O}_{\epsilon}}\rI_i( \mathbf t)    ,\quad   \rI_0 :=\left[0, \lambda_+ +  n^{2\e}\phi_n^2 + n^{-2/3+3\epsilon}\right].
\end{equation}
We claim the following result.
\begin{lemma}\label{lem_gapI}
	The complement of $\rI(\bf t)$ contains no eigenvalues of $\cal C_{\cal X\cal Y}$.
\end{lemma}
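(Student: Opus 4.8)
The plan is to show that if $\lambda \notin \rI(\mathbf t)$ is an eigenvalue of $\cal C_{\cal X\cal Y}$, then the master equation \eqref{masterx4} is violated on $\Xi$, giving a contradiction. First I would restrict attention to the regime $\lambda \in D_{out}(3\e)$, i.e. $\lambda > \lambda_+ + n^{2\e}\phi_n^2 + n^{-2/3+3\e}$; eigenvalues in $[0,\lambda_+ + n^{2\e}\phi_n^2 + n^{-2/3+3\e}]$ lie in $\rI_0$ by definition, and eigenvalues in $[1,\infty)$ are excluded since all $\theta_i<1$ (and one checks $\cal C_{\cal X\cal Y}$ has all eigenvalues in $[0,1]$, or simply note $\lambda=1$ is almost surely not in the spectrum and the relevant interval is bounded away). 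In the outlier regime, $\kappa_\lambda = \lambda - \lambda_+$, and the error matrix $\cal E_r'(\lambda)$ in \eqref{masterx4} satisfies $\|\cal E_r'(\lambda)\| \le n^{\e/2}(\phi_n + n^{-1/2}\kappa_\lambda^{-1/4})$ on $\Xi$ by \eqref{eq_bound1ev}. Since $\det$ of \eqref{masterx4} vanishes, some eigenvalue of $\diag(t_1,\dots,t_r) - \cal E_r'(\lambda)$ equals $f_c(\lambda) = m_{3c}(\lambda)m_{4c}(\lambda)/h^2(\lambda)$, so by Weyl's inequality there exists $i$ with $|f_c(\lambda) - t_i| \le \|\cal E_r'(\lambda)\| \le n^{\e/2}(\phi_n + n^{-1/2}\kappa_\lambda^{-1/4})$.

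Next I would convert this into a bound on $|\lambda - \theta_i|$ using the inverse function $g_c$ and the estimates of Lemma \ref{lem_complexderivative}. Since $\lambda > \lambda_+$, \eqref{eq_mcomplex0} gives $|f_c(\lambda) - t_c| = |f_c(\lambda) - f_c(\lambda_+)| \sim |\lambda - \lambda_+|^{1/2} = \kappa_\lambda^{1/2}$, which combined with the displayed bound forces $t_i - t_c \gtrsim \kappa_\lambda^{1/2} - n^{\e/2}\phi_n - n^{\e/2 - 1/2}\kappa_\lambda^{-1/4}$. A short case analysis on the size of $\kappa_\lambda$ relative to $\phi_n^2$ and $n^{-2/3}$ (using that $\lambda \in D_{out}(3\e)$ means $\kappa_\lambda \ge n^{2\e}\phi_n^2 + n^{-2/3+3\e}$, so the error terms are subleading) shows $t_i - t_c \gtrsim n^{\e}(\phi_n + n^{-1/3})$ up to constants, hence $i \in \cal O_{4\e}$ after adjusting $\e$; in particular $\Delta_i = t_i - t_c \sim \kappa_\lambda^{1/2}$ is comparable to the gap. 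Then applying $g_c$ and \eqref{eq_gcomplex0}, \eqref{eq_mdiff0}: $|\lambda - \theta_i| = |g_c(f_c(\lambda)) - g_c(t_i)| \sim |f_c(\lambda) - t_i|\cdot \max\{|f_c(\lambda) - t_c|, \Delta_i\} \lesssim n^{\e/2}(\phi_n + n^{-1/2}\kappa_\lambda^{-1/4})\cdot \Delta_i$. Since $\kappa_\lambda \sim \Delta_i^2$, the term $n^{-1/2}\kappa_\lambda^{-1/4}\Delta_i \sim n^{-1/2}\Delta_i^{1/2}$, so $|\lambda - \theta_i| \lesssim n^{\e/2}(\phi_n \Delta_i + n^{-1/2}\Delta_i^{1/2})$, which places $\lambda \in \rI_i(\mathbf t)$ (absorbing constants into the $n^{\e}$ factor), contradicting $\lambda \notin \rI(\mathbf t)$.

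One subtlety I would handle carefully is the self-referential nature of the argument: the bound $\|\cal E_r'(\lambda)\|$ involves $\kappa_\lambda^{-1/4}$, which blows up as $\lambda \downarrow \lambda_+$, so the estimates are only useful once $\kappa_\lambda$ is not too small — this is precisely why $\rI_0$ has its stated width $\lambda_+ + n^{2\e}\phi_n^2 + n^{-2/3+3\e}$, chosen so that on $D_{out}(3\e)$ one has $n^{-1/2}\kappa_\lambda^{-1/4} \le n^{-1/2}(n^{2\e}\phi_n^2 + n^{-2/3+3\e})^{-1/4} \ll 1$, making $\cal E_r'$ genuinely a perturbation. A second point is that \eqref{masterx4} was derived assuming $\lambda$ is not an eigenvalue of $\cal C_{XY}$ and $\lambda \notin \{0,1\}$; on $\Xi$ the eigenvalues of $\cal C_{XY}$ near the edge are controlled by \eqref{eq_bound2ev}, and for $\lambda \in D_{out}(3\e)$ with $\e$ small these do not interfere, while the genericity assumption \eqref{abscont} handles $\lambda \neq 1$.

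\textbf{Main obstacle.} The delicate part is the bookkeeping of the competing scales $\phi_n$, $\psi_n$, $n^{-1/3}$, and $\kappa_\lambda^{1/2}$ in the case analysis — ensuring that in every regime the error $n^{\e/2}(\phi_n + n^{-1/2}\kappa_\lambda^{-1/4})$ is strictly smaller (by a factor $n^{c\e}$) than the main term $\kappa_\lambda^{1/2} \sim \Delta_i$, so that the index $i$ produced genuinely lands in $\cal O_{4\e}$ and the interval $\rI_i$ genuinely contains $\lambda$. This is the step where the references \cite{principal,KY} only treated $\phi_n \prec n^{-1/2}$, and adapting it requires tracking the larger support parameters through each inequality; it is routine but must be done with care.
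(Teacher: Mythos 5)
Your proposal is correct and follows essentially the same route as the paper: both arguments reduce to comparing $|f_c(x)-t_i|$ against the error bound $n^{\epsilon/2}(\phi_n+\psi_n+n^{-1/2}\kappa_x^{-1/4})$ from \eqref{eq_bound1ev} via the master equation \eqref{masterx4}, using Lemma \ref{lem_complexderivative} and the scale identification $\kappa_x\sim\Delta_i^2$; the paper simply states the contrapositive (a lower bound on $|f_c(x)-t_i|$ for every $i$ and every $x\notin\rI(\mathbf t)$) rather than extracting an index $i$ from a hypothetical eigenvalue as you do. The only caveat is that you need the extracted index to lie in $\cal O_{\e}$ (not $\cal O_{4\e}$, which is neither needed nor guaranteed), since $\rI(\mathbf t)$ only unions over $i\in\cal O_\e$; your bound $t_i-t_c\gtrsim\kappa_\lambda^{1/2}$ delivers this up to the constant-factor bookkeeping you already flag.
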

\begin{proof}
	The main idea is similar to the ones for \cite[Proposition 6.5]{KY} and \cite[Lemma S.4.2]{DY2}. It suffices to show that for any $1\le i \le r$, if $x\notin \rI(\mathbf t)$, then
	\be\label{skip1} | f_c(x) - t_i | \ge c \left(  n^\e\phi_n +  n^{-1/2+\epsilon}\kappa_x^{-1/4}\right) \ee
	for some constant $c>0$. Thus, \eqref{masterx4} cannot hold on $\Xi$ by \eqref{eq_bound1ev}.

	For $x\notin \rI_0$, with \eqref{eq_mdiff0}, we get that
	$$ f_c(x)- t_c=f_c(x)- f_c(\lambda_+) \ge  c \kappa_x^{1/2} \ge c'\left(  n^\e \phi_n +  n^{-1/2+\epsilon}\kappa_x^{-1/4}\right),$$
	for some constants $c,c'>0$. This concludes \eqref{skip1} for $ i \ge r_+$ by using $ t_i \le t_c + n^{-1/3} +  \phi_n $. 
	
	For the case $1\le i \le r_+$, we take any $x\notin \rI_0 \cup \rI_i(\mathbf t)$.  We first assume that there exists a constant $\wt c>0$ such that $\theta_i \notin [x-\wt c\kappa_x, x+ \wt c\kappa_x]$. Since $f_{c}$ is monotonically increasing on $(\lambda_+,+\infty)$, we have that
	$$|f_{c}(x)-t_i| =|f_{c}(x)-f_{c}(\theta_i)| \ge |f_{c}(x) - f_{c}(x\pm \wt c\kappa_x)| \ge  c \kappa_x^{1/2} \ge c'\left( n^\e \phi_n +  n^{-1/2+\epsilon}\kappa_x^{-1/4}\right),$$
	for some constants $c,c'>0$, where we used \eqref{eq_mdiff0} in the third step. On the other hand, suppose $\theta_i \in [x-\wt c\kappa_x, x+ \wt c\kappa_x]$, in which case we have that $\theta_i-\lambda_+ \sim \kappa_x$. By \eqref{eq_gcomplex0}, we have
	$\kappa_x \sim \theta_i - \lambda_+ \sim \Delta_i^2 $. Then, using \eqref{eq_mdiff0} and the definition of $\rI_i(\mathbf t)$, we get that for $x\notin \rI_i(\mathbf t) $,
	\be\nonumber
	\begin{split}
		|f_{c}(x)-t_i| = |f_c(x)- f_{c}(\theta_i)| &\ge c \Delta_i^{-1}\left( n^\e\phi_n \Delta_i +  n^{-1/2+\epsilon}\Delta_i^{1/2}\right) \ge c' \left(  n^\e\phi_n +  n^{-1/2+\epsilon}\kappa_x^{-1/4}\right) ,
	\end{split}
	\ee
	for some constants $c,c'>0$. This concludes \eqref{skip1} and hence Lemma \ref{lem_gapI}.
\end{proof}



\noindent{\bf Step 2:} Before giving the general proof, for heuristics, we consider an easy case where 
the $t_i$'s are {\it independent of $n$} and satisfy that
\begin{equation}\label{eq_multione}
	t_1>t_2 > \cdots > t_{r_+}>\lambda_+ . 
\end{equation}
We claim that each $\mathbf{\rI}_i(\mathbf t)$, $1\le  i \le r_+$, contains precisely one eigenvalue of $\cal C_{\cal X\cal Y}$. Fix any $1\le i \le r_+$, we choose a small $n$-independent positively oriented closed contour ${\bm\Gamma} \subset \mathbb{C}/[0, \lambda_+]$ that encloses $\theta_i$ but no other points of the set $\{\theta_i: 1\le i \le r_+\}.$ Define two functions
\begin{equation}\label{deff12}
\begin{split}	&f_1(z):=\det\left(f_c(z)I_r- \diag \left(t_1, \cdots, t_r \right) \right), \\ & f_2(z):=\det\left(f_c(z)I_r- \diag \left(t_1, \cdots, t_r \right) + \cal E_r'(z)\right),
	\end{split}
\end{equation}  
where $\cal E_r'$ is defined in \eqref{masterx4}. 
The functions $f_1,f_2$ are holomorphic on and inside $\bm \Gamma$ when $n$ is sufficiently large, because ${\bm\Gamma}$ does not enclose any pole of $G(z)$ by \eqref{eq_bound2ev}. Moreover, by the construction of ${\bm\Gamma},$ the function $f_1$ has precisely one zero inside ${\bm\Gamma}$ at $\theta_i.$ By \eqref{eq_bound1ev}, we have  
\begin{equation*}
	\min_{z \in {\bm\Gamma}}|f_1(z)| \gtrsim 1, \quad \max_{z \in {\bm\Gamma}}|f_1(z)-f_2(z)|=\oo(1).
\end{equation*}
The claim then follows from Rouch{\' e}'s theorem. 

\vspace{5pt}

\noindent{\bf Step 3:} In order to extend the argument in Step 2 to an arbitrary $n$-dependent configuration $\mathbf t$, we need to deal with the case where some of the intervals $\rI_i$ and $ \rI_j$, $i\ne j$, have non-empty overlaps. For any constant $\e>0$, we denote $r_\e:=|\cal O_\e|.$ In this step, we prove  the following claim for the first $r_{4\e}$ eigenvalues.
\begin{claim}\label{lem_cont} 
	On event $\Xi$, the estimate \eqref{eq_spikepf} holds for $i\in \cal O_{4\e}$. 
\end{claim}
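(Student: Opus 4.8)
The plan is to prove that $\wt\lambda_i\in\rI_i(\mathbf t)$ for every $i\in\cal O_{4\e}$, which yields \eqref{eq_spikepf} since $\rI_i$ has half-width $n^\e(\phi_n\Delta_i+n^{-1/2}\Delta_i^{1/2})$. The proof combines four ingredients: the master equation \eqref{masterx4}, the confinement of the spectrum of $\cal C_{\cal X\cal Y}$ to $\rI(\mathbf t)$ provided by Lemma \ref{lem_gapI}, a Rouch\'e-type count of eigenvalues carried out cluster by cluster, and a continuity argument in the spike configuration that reduces the general near-degenerate case to the well-separated situation treated in Step 2.

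First I would do the \emph{counting}. Fix an intermediate scale, say $2\e$, and partition $\cal O_{2\e}$ into maximal blocks $B_1,\dots,B_m$ of consecutive indices whose permissible intervals form a connected chain, so that the unions $\cal J_k:=\bigcup_{i\in B_k}\rI_i$ are pairwise disjoint and disjoint from $\rI_0$. Around a neighborhood of each $\cal J_k$ I would take a positively oriented contour $\bm\Gamma_k\subset\mathbb C\setminus([0,\lambda_+]\cup\rI_0)$ at a distance from $\cal J_k$ comparable to the separation of $\cal J_k$ from the other $\theta_j$'s; by the rigidity bound \eqref{eq_bound2ev} this contour encloses no pole of $G$. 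On $\bm\Gamma_k$ one lower-bounds $|f_1(z)|=\prod_j|f_c(z)-t_j|$ via the monotonicity and derivative estimates of Lemma \ref{lem_complexderivative}, upper-bounds $\max_{z\in\bm\Gamma_k}\|\cal E_r'(z)\|$ by \eqref{eq_bound1ev}, and checks that $\min_{\bm\Gamma_k}|f_1|\gg\max_{\bm\Gamma_k}|f_1-f_2|$ with $f_1,f_2$ as in \eqref{deff12}; Rouch\'e's theorem then gives exactly $|B_k|$ eigenvalues of $\cal C_{\cal X\cal Y}$ inside $\bm\Gamma_k$. Combined with Lemma \ref{lem_gapI} and the fact that the $\cal J_k$'s lie strictly to the right of $\rI_0$, this identifies $\wt\lambda_1>\dots>\wt\lambda_{r_{2\e}}$ as exactly the eigenvalues in $\bigcup_k\cal J_k$, grouped block by block in decreasing order.

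Next I would run a \emph{continuity argument} to pin each outlier to its own interval. Keeping the singular vectors of $A$ and $B$ fixed and the singular values associated with indices outside $\cal O_{2\e}$ frozen, I would deform the remaining singular values along a path $s\in[0,1]$, with the true configuration at $s=1$ and an $n$-independent, well-separated configuration $t_1(0)>\dots>t_{r_{2\e}}(0)$ (all bounded away from $t_c$) at $s=0$, preserving the order $t_1(s)\ge t_2(s)\ge\cdots$ and keeping each deformed spike inside $\cal O_{2\e}$ throughout. Along this path $\cal C_{\cal X\cal Y}$ and hence its eigenvalues vary continuously, \eqref{masterx4} keeps characterizing the outliers with an error $\cal E_r'(\cdot\,;s)$ obeying the bound \eqref{e4r} uniformly in $s$, and the block structure is preserved. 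At $s=0$ the intervals $\rI_i(\mathbf t(0))$ are disjoint and $n$-independent, so Step 2 gives $\wt\lambda_i(0)\in\rI_i(\mathbf t(0))$; letting $s$ increase to $1$ and invoking the counting bound at each $s$ to forbid an outlier eigenvalue from leaving $\bigcup_k\cal J_k(s)$ or crossing between blocks (and the ordering to forbid permutations within a block), one obtains $\wt\lambda_i(s)\in\rI_i(\mathbf t(s))$ for all $s$, hence $\wt\lambda_i\in\rI_i(\mathbf t)$ for all $i\in\cal O_{2\e}\supseteq\cal O_{4\e}$.

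I expect the main obstacle to be the counting along the deformation: as the spikes are pulled apart the blocks split (equivalently, running time backward, merge), and near a splitting time two permissible intervals are separated by less than the relevant error scale, so there is no room for a contour at the naive radius. I would handle this as in \cite[Section 6]{KY} and \cite[Section 4]{principal}, by choosing the contours adaptively — enclosing an about-to-split pair by a single coarser contour and refining only once the gap exceeds the error — and by exploiting the slack between the scales $2\e$, $3\e$ and $4\e$; this slack is precisely why the estimate is claimed only for $i\in\cal O_{4\e}$. Everything after the choice of the high-probability event $\Xi$ is deterministic.
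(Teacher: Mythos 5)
Your Rouch\'e counting per cluster is exactly the paper's Step 3, but the way you try to finish from there has a genuine gap. The paper never pins $\wt\lambda_i$ to its \emph{own} interval $\rI_i(\mathbf t)$; it only shows $\wt\lambda_i\in\bigcup_{j\in B_i}\rI_j$ and then concludes via a separate, purely deterministic estimate that the whole block has diameter $d_i\le C_r\bigl(n^\e\phi_n\Delta_i+n^{-1/2+\e}\Delta_i^{1/2}\bigr)$, i.e.\ comparable to the width of a single $\rI_i$. That diameter bound is the missing ingredient in your write-up: it requires showing that for adjacent block members $j-1,j\in B_i$ one has $\theta_{j-1}-\theta_j\le C(n^\e\phi_n\Delta_j+n^{-1/2+\e}\Delta_j^{1/2})$ and $\Delta_j=(1+\oo(1))\Delta_{j-1}$, and that every member of $B_i$ actually lies in $\cal O_{3\e}$ (otherwise the chain of overlaps would force a gap contradiction via \eqref{eq_gcomplex0}). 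This is where the slack between the scales $\e$, $3\e$ and $4\e$ is really spent. Once you have it, landing anywhere in the block already gives \eqref{eq_spikepf}, and no continuity argument is needed.

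Without that estimate, your continuity-in-$\mathbf t$ argument does not close the gap. The assertion that ``the ordering forbids permutations within a block'' only tells you that the $j$-th largest eigenvalue of the block and $\theta_j$ are ordered consistently; once two intervals $\rI_{j-1},\rI_j$ overlap, the $j$-th eigenvalue can sit anywhere in $\rI_{j-1}\cup\rI_j$, so the statement $\wt\lambda_j(s)\in\rI_j(\mathbf t(s))$ cannot be propagated through a merging time by continuity, and indeed need not hold. Moreover, at the merging time your adaptive coarse contour only localizes the pair to the union of the two intervals, which is useful only if you already know that union is small --- i.e.\ you are back to needing the block-diameter bound. I would recommend dropping the deformation entirely (the paper reserves that device for the sticking estimate in Theorem \ref{main_thm0.5}) and inserting the comparability argument for $\Delta_j$ within a block; the rest of your counting then goes through verbatim.
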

\begin{proof}
	Let $\mathcal{B}$ denote the finest partition of $\{1,\cdots, r_{+}\}$ in the sense that $i$ and $j$ belong to the same block of $\cal B$ whenever $\rI_i  \cap \rI_j \neq  \emptyset$. We now fix any $1\le i \le r_{4\e}$ and denote by $B_i$ the block of $\mathcal{B}$ that contains $i$. 
	Our first task is to estimate $\theta_{j-1}-\theta_j$ for $j,j-1\in B_i$. We claim that there exists a constant $C_1>0$ such that
	\be\label{deltajj-1} 
	\theta_{j-1}-\theta_j \le C_1 \left( n^\e\phi_n \Delta_j +  n^{-1/2+\epsilon}\Delta_j^{1/2}\right), \quad \text{if }\  j \in B_i \text{ and } j-1\in B_i.
	\ee
	First, we assume that $j\in \cal O_{3\e}$. We pick any $x\in \rI_j\cap \rI_{j-1}$ such that $\theta_j \le x \le \theta_{j-1}$. Then, using \eqref{eq_mcomplex0} and \eqref{eq_mdiff0}, we obtain that 
	\begin{align*}
		|f_c(x) - t_j |=|f_c(x) - f_c(\theta_j) | \le C \left(n^\e\phi_n +  n^{-1/2+\epsilon}\Delta_j^{-1/2} \right) \ll \Delta_j,
	\end{align*}
	since $\Delta_j \ge  n^{3\e}\phi_n+  n^{-{1}/{3}+3\e}$ for $j\in \cal O_{3\e}$. Thus, we get that $|f_c(x)-t_c| = (1+\oo(1))\Delta_j$. Similarly, we can show that $ |f_c(x)-t_c| = (1+\oo(1))\Delta_{j-1} $. This gives \eqref{deltajj-1}  due to the choice of $x$ and the definition of $\rI_j$ and $\rI_{j-1}$. In addition, we also get that 
	\be\label{deltajj-12} 
	\Delta_j =(1+\oo(1)) \Delta_{j-1} , \quad    \text{if }\  j \in B_i \ \text{ and }\ j-1\in B_i.
	\ee
	It remains to verify that $j\in \cal O_{3\e}$ for all $j\in B_i$. Let $j_0$ be the smallest integer such that $\theta_{j_0} \notin B_i$. Since $|B_i|\le r$, by \eqref{deltajj-1} we have that 
	$$ \theta_{j_0 - 1 } > \theta_i - C \left( n^\e\phi_n \Delta_i +  n^{-1/2+\epsilon}\Delta_i^{1/2}\right) $$
	for some constant $C>0$. Then, using $i\in \cal O_{4\e}$, $j_0 \notin \cal O_{3\e}$ and \eqref{eq_gcomplex0}, we can check that
	$$\theta_{j_0 -1 } - \theta_{j_0} \gg  \left( n^\e\phi_n \Delta_{j_0-1} +  n^{-1/2+\epsilon}\Delta_{j_0-1}^{1/2}\right) +  \left( n^\e\phi_n \Delta_{j_0}  +  n^{-1/2+\epsilon}\Delta_{j_0}^{1/2}\right),$$
	which contradicts the definition of $B_i$. This concludes \eqref{deltajj-1}.

	Now, with \eqref{deltajj-1}, \eqref{deltajj-12} and $|B_i|\le r$, we obtain that
	\begin{equation}\label{diamBi}
		d_i:=\text{diam} \Big( \bigcup_{j \in B_i} \rI_j \Big) \leq C_r\left(  n^\e\phi_n \Delta_{i} +  n^{-1/2+\epsilon}\Delta_{i}^{1/2}\right), 
	\end{equation}
	for some constant $C_r>0$ depending on $r$ and $C_1$ only.
	On the other hand, by \eqref{eq_gcomplex0} we have that 
	\begin{align*}
		\theta_i-\lambda_+- d_i &\ge c \Delta_i^2 - C_r\left(  n^\e\phi_n \Delta_{i} +  n^{-1/2+\epsilon}\Delta_{i}^{1/2}\right) \gg n^{2\e}\phi_n^2 + n^{-2/3+3\epsilon} ,
	\end{align*}
	where we used $\Delta_i \ge n^{4\e}\phi_n+  n^{-{1}/{3}+4\e}$ for $i\in \mathcal O_{4\e}$ in the second step. Hence, there is a gap between the right edge of $\rI_0$ and the left edge of $ \bigcup_{j \in B_i} \rI_j$.

	Let $x_i$ and $y_i$ be the left and right end points of the interval $\bigcup_{j \in B_i} \rI_j $. Then, we pick the contour
	$${\bm\Gamma}_i:= \{z= x_i + \ii \eta: -d_i \le \eta \le d_i\}\cup  \{z= y_i + \ii \eta: -d_i \le \eta \le d_i\}\cup  \{z= E \pm \ii d_i: x_i \le E \le y_i\} ,$$
	which lies in the half plane on the right of $\rI_0$, and only includes $\theta_j$'s with $j\in B_i$ but no other points of the set $\{\theta_i: 1\le i \le r_+\}$. We again consider the functions $f_1$ and $f_2$ in \eqref{deff12}. We know that $f_1(z)$ has exactly $|B_i|$ eigenvalues at $\theta_j$, $j\in B_i$. Moreover, with the arguments in Lemma \ref{lem_gapI}, one can show that 
	$$\left\|\cal E(z)\right\| =\oo(1) \quad \text{for} \ \ z\in {\bm\Gamma}_i, \quad \text{where}\quad \cal E(z):=\left[f_c(z)I_r- \diag \left(t_1, \cdots, t_r \right)\right]^{-1} \cal E_r'(z).$$
	Thus, we have 
	$$ \left|f_2(z) - f_1(z)\right| =|f_1(z)| \left|\det\left( 1 + \cal E(z)\right) - 1\right| < |f_1(z)|\quad \text{for} \ \ z\in {\bm\Gamma}_i.$$
	By Rouch{\' e}'s theorem, $f_2(z)$ has exactly $|B_i|$ eigenvalues in $\bigcup_{j \in B_i} \rI_j$. Together with Lemma \ref{lem_gapI} and a simple eigenvalue counting argument, we get that $\wt\lambda_i \in \bigcup_{j \in B_i} \rI_j$, and hence
	$$|\wt\lambda_i - \theta_i| \le d_i , \quad i \in \cal O_{4\e}.$$
	This concludes Claim \ref{lem_cont} by \eqref{diamBi}. 
\end{proof}

\noindent{\bf Step 4:} Finally, we consider the eigenvalues $\wt\lambda_i$ with $i \notin \mathcal O_{4\e}$. 
First, by (\ref{eq_bound2ev}) and \eqref{interlacing_eq0}, we have that 
\begin{equation}\label{iterlacing_t}
	\wt\lambda_i \geq \lambda_+-n^{-2/3+\epsilon}, \quad i \leq \varpi,
\end{equation}
which verifies the lower bound in \eqref{eq_nonspikepf}.
For the upper bound, we consider the intervals in \eqref{rIi} and
$$\wh\rI_0 :=\left[0, \lambda_+ +  \wt C_1 \left(n^{8\e}\phi_n^2 + n^{-2/3+12\epsilon}\right)\right],$$
for a constant $\wt C_1>0$. Then, we define a partition $\mathcal B$ as in Step 3, where $B_0$ is the block of $\mathcal B$ that contains $i$. With the same arguments as in the proof of Claim \ref{lem_cont}, we can prove that 
\be\label{extra out1}\wh\rI_0 \cup \Big( \bigcup_{j\in B_0} \rI_{j}\Big)\subset \left[0, \lambda_+ +C_2\left(  n^{8\e}\phi_n^2 + n^{-2/3+12\epsilon}\right)\right]\ee
for a large enough constant $C_2>0$. Moreover, for any $j\notin B_0$, we have that $j\in \cal O_{4\e}$ by \eqref{eq_gcomplex0} as long as $\wt C_1$ is chosen large enough. Thus, using Lemma \ref{lem_gapI}, the result of Step 3 and a simple eigenvalue counting argument, we get that
$$ \wt\lambda_i \in \wh\rI_0  \cup \Big( \bigcup_{j\in B_0} \rI_{j}\Big), \quad i\notin \cal O_{4\e}.$$
This concludes the upper bound in \eqref{eq_nonspikepf} by \eqref{extra out1}, and hence completes the proof of Theorem \ref{main_thm}.
\end{proof} 

\appendix

\section{Proof of Theorem \ref{main_thm0.5}}\label{sec pfstick}

To conclude Theorem \ref{main_thm0.5}, we claim that it suffices to prove the following eigenvalue sticking estimate: 
	\be\label{boundstick}
	|\wt\lambda_{i+ r_+} - \lambda_i^b | \prec n^{-1}\al_+^{-1},
	\ee
where $\lambda_1^b \ge \lambda_2^b \ge \cdots \ge \lambda_q^b $ denote the eigenvalues of \smash{$\cal C^b_{X{\cal Y}}$}.  
In fact, this estimate shows that the non-outlier eigenvalues of $\cal C_{\cal X\cal Y}$ with non-trivial $A$ and $B$ stick to those of \smash{$\cal C^b_{X{\cal Y}}$} with $A=0$ and the same $B$. On the other hand, notice that \smash{$\cal C^b_{X{\cal Y}}$} has no outlier eigenvalues, because its PCC matrix is zero. Hence, as a special case of \eqref{boundstick}, we also know that $\lambda_i^b$ stick to the eigenvalues $\lambda_i$ of $\cal C_{XY}$ with $A=0$ and $B=0$. Together with \eqref{boundstick}, it gives the eigenvalue sticking estimate \eqref{boundstickextra}.

	
In the proof of \eqref{boundstick}, we will need to use the following eigenvalue rigidity estimate for $\lambda_i^b$. When $B=0$, it reduces to \eqref{rigidity} in Lemma \ref{lem null0}. 
\begin{lemma}\label{lem null}
	Suppose the assumptions of Theorem \ref{main_thm} hold. Then, we have the following eigenvalue rigidity estimate: for any constant $\delta>0$ and all $1 \le i \le  (1-\delta)q$, 
	\be\label{rigidityb}
	|\lambda^b_i - \gamma_i | \prec i^{-1/3} n^{-2/3}. 
	\ee
\end{lemma}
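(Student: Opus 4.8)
The plan is to reduce the estimate \eqref{rigidityb} to the rigidity \eqref{rigidity} for the fully null matrix $\cal C_{XY}$ from Lemma \ref{lem null0}, by a finite-rank interlacing argument. Write $BZ=\sum_{i=1}^{r}b_i\bu_i^b(Z^{\top}\bv_i^b)^{\top}$, so that, conditionally on $Z$, the matrix $\cal Y=Y+BZ$ is obtained from $Y$ by $r$ successive rank-one updates of the rows. Running verbatim the Cauchy interlacing argument used to establish \eqref{interlacing_eq0}, but perturbing only the second data matrix, gives the deterministic bound
\be\label{interlacing_b}
\lambda_{i+r}\le\lambda_i^b\le\lambda_{i-r},\qquad 1\le i\le q,
\ee
with the convention $\lambda_j=1$ for $j<1$ and $\lambda_j=0$ for $j>q$, where $\lambda_1\ge\cdots\ge\lambda_q$ are the eigenvalues of $\cal C_{XY}$.

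Since $\gamma_1=\lambda_+$ and $\lambda_+-\gamma_j\sim(j/n)^{2/3}$, one has $|\gamma_{i\pm r}-\gamma_i|\lesssim r\,i^{-1/3}n^{-2/3}$ for $1\le i\le(1-\delta)q$ (for $i\lesssim r$ both sides are of order $n^{-2/3}$), and $r=\OO(1)$ by \eqref{assm20}; moreover the shifted indices $i\pm r$ still lie below $(1-\delta/2)q$ for large $n$, so \eqref{rigidity} applies to them. Feeding this into \eqref{interlacing_b} already yields the lower bound $\lambda_i^b\ge\gamma_i-\OO_\prec(i^{-1/3}n^{-2/3})$ for all $1\le i\le(1-\delta)q$, together with the matching upper bound whenever $i>r$. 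The only case interlacing does \emph{not} settle is the upper bound on the top $\OO(1)$ eigenvalues $\lambda_1^b,\dots,\lambda_r^b$, for which the right endpoint of \eqref{interlacing_b} degenerates to the trivial value $1$; there one must show separately that $\lambda_i^b\le\lambda_++\OO_\prec(n^{-2/3})$.

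For that last step, note that the sample canonical correlations depend on $X$ and $\cal Y$ only through their row spaces, so $\lambda_1^b=\cos^2\theta_1(\cal W_X,\cal W_{\cal Y})$ with $\cal W_X,\cal W_{\cal Y}\subset\R^n$ the row spaces and, crucially, $\cal W_{\cal Y}$ independent of $X$. Conditioning on $\cal W_{\cal Y}$ and performing an orthogonal change of the sample index carrying $\cal W_{\cal Y}$ onto the span of the first $q$ coordinate axes, $\lambda_1^b$ becomes the largest eigenvalue of the MANOVA-type matrix $(S+S')^{-1/2}S(S+S')^{-1/2}$ with $S=X_1X_1^{\top}$, $S'=X_2X_2^{\top}$, where $X_1,X_2$ are the first $q$ and the last $n-q$ columns of $XO$ for the relevant orthogonal $O$; a Bernstein bound shows $XO$ still has bounded support $\phi_n$ and still obeys the moment conditions \eqref{conditionA3}, so the edge estimate for such ensembles (cf. \cite{CCA2,PartIII}, or via the local Marchenko--Pastur law for $XX^{\top}$) gives $\lambda_1^b\le\lambda_++\OO_\prec(n^{-2/3})$ uniformly in the conditioning, hence unconditionally with high probability. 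An alternative route that bypasses interlacing altogether is to repeat the analysis of \cite{PartIII} directly for the linearization $H^b$: since $\cal Y$ has independent columns with covariance $n^{-1}(I_q+BB^{\top})$, a finite-rank perturbation of the identity, the self-consistent system for $G^b$ coincides with that for $G$ up to $\OO(n^{-1})$ corrections, which yields the averaged local law $|m^b(z)-m_c(z)|\prec(n\eta)^{-1}$ on $\wt S(\e,\wt\e)$, and then \eqref{rigidityb} follows by the standard argument (via the Helffer--Sj\"ostrand formula) exactly as \eqref{rigidity} is deduced from \eqref{aver_in}. I expect this control of the topmost $\OO(1)$ eigenvalues from above to be the only genuine difficulty; once it is in hand, combining it with \eqref{interlacing_b} and \eqref{rigidity} completes the proof.
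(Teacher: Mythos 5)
Your interlacing reduction is sound and correctly isolates the crux: the rank-$r$ perturbation argument behind \eqref{interlacing_eq0}, applied to $\cal Y=Y+BZ$, gives $\lambda_{i+r}\le\lambda_i^b\le\lambda_{i-r}$, which together with \eqref{rigidity} settles the lower bound for all $i$ and the upper bound for $i>r$, leaving only the bound $\lambda_i^b\le\lambda_++\OO_\prec(n^{-2/3})$ for the top $\OO(1)$ eigenvalues. (The paper does not use interlacing here; it derives rigidity for $i\ge n^\e$ directly from an averaged local law for $m^b$, but it agrees with you that the remaining issue is the upper bound on $\lambda_1^b$.) However, neither of your two routes to that bound is complete. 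Route (a) fails at the rotation step: after conditioning on $\cal W_{\cal Y}$ and applying the orthogonal map $O$, the entries of $XO^\top$ within each row become linear combinations of the original independent entries with orthonormal coefficients — uncorrelated, but no longer independent unless $X$ is Gaussian. Hence $X_1,X_2$ do not satisfy the hypotheses of the double-Wishart/MANOVA edge results of \cite{CCA_TW2,CCA2,PartIII}, which require independent entries; a Bernstein bound controls the sizes of the new entries but not their joint law. This is exactly the obstruction, noted in the introduction, that prevents the Gaussian argument of \cite{CCA} from extending to general distributions.

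Route (b) is closer in spirit to the paper's actual proof, but it has two gaps. First, $\cal Y=Y+BZ$ is not of the form $(I_q+BB^\top)^{1/2}W$ with $W$ having independent entries; it is a finite-rank \emph{random} perturbation of $Y$, and the assertion that the self-consistent system for $G^b$ coincides with that for $G$ up to $\OO(n^{-1})$ corrections is a heuristic, not a proof. The paper instead obtains the averaged local law for $m^b$ rigorously via the Woodbury identity \eqref{woodbury2}, writing $G^b$ as $G$ plus an explicit rank-$\OO(1)$ correction whose partial trace is controlled by Lemma \ref{lem_gbound0} and the anisotropic local law; note that outside the spectrum this produces the extra error $(\psi_n+\phi_n)/(n\eta)$ in \eqref{aver_outbbbb}. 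Second, and more importantly, even granting the averaged local law, the ``standard argument'' only yields rigidity for $i\ge n^\e$ and only excludes eigenvalues from $[\lambda_++n^{-2/3+\e},1-c]$: the local laws are established on $\wt S(\e,\wt\e)$ and $\wt S_{out}(\e,\wt\e)$, which stay away from $z=1$, so nothing so far rules out eigenvalues of $\cal C^b_{X\cal Y}$ in $[1-c,1]$, near the hard upper edge of the CCA spectrum. The paper needs a dedicated continuity argument for this (Claim \ref{claim revise}): interpolating $\cal Y_t=Y+tBZ$ and showing the resolvent at $E=1-c$ remains bounded along the whole path, so that no eigenvalue can cross $1-c$. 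Your proposal does not address this region at all, and it is the genuinely new difficulty in passing from $\cal C_{XY}$ to $\cal C^b_{X\cal Y}$.
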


Another tool for the proof of \eqref{boundstick} is the anisotropic local law for $G^b(z)$, which can be derived easily from the local law, Theorem \ref{thm_local}, for $G(z)$ by using  the approximate isometry condition \eqref{eq_iso} and the following Woodbury matrix identity: for $\mathcal{A},S,\mathcal{B},T$ of conformable dimensions,  
\begin{equation}\label{woodbury}
(\mathcal A+S\mathcal BT)^{-1}=\mathcal A^{-1}-\mathcal A^{-1}S(\mathcal B^{-1}+T\mathcal A^{-1}S)^{-1}T\mathcal A^{-1}.
\end{equation}
We define
\begin{align*}
&\Pi^b (z) :=\Pi(z) - \Pi(z) \begin{pmatrix} \bU_b & 0 \\ 0 & \bE_b\end{pmatrix} \begin{bmatrix} \begin{pmatrix}  0 & 0 \\ 0 &  c_2m_{2c}^{-1}(z)\Sigma_b\cal M_b\end{pmatrix}   &   \begin{pmatrix}  0 & 0 \\ 0 &  \cal M_b\end{pmatrix}   \\ \begin{pmatrix} 0 & 0 \\ 0 & \cal M_b\end{pmatrix}  & \begin{pmatrix}  0 & 0 \\ 0 &  m_{4c}^{-1}(z)\Sigma_b\cal M_b\end{pmatrix}  \end{bmatrix}\begin{pmatrix} \bU_b^{\top} & 0 \\ 0 & \bE_b^{\top}\end{pmatrix}\Pi(z),
\end{align*}
where
\be\label{def mb}\cal M_b:=\frac{\Sigma_b}{1+\Sigma_b^2} , \quad  \bU_b: = \begin{pmatrix} 0 & 0 \\ 0 & \begin{pmatrix}\mathbf u_1^b, \cdots, \mathbf u_r^b \end{pmatrix} \end{pmatrix}, \quad \bE_b:= \begin{pmatrix} 0 & 0 \\ 0 & \begin{pmatrix}Z^{\top}\mathbf v_1^b, \cdots, Z^{\top}\mathbf v_r^b \end{pmatrix} \end{pmatrix}.\ee

\begin{lemma} [Anisotropic local law for $G^b$]\label{thm_localb} 
Suppose the assumptions of Theorem \ref{main_thm}  hold. Fix any constant $\e>0$ and unit vectors $\mathbf u, \mathbf v \in \mathbb C^{\mathcal I}$ that are independent of $X$ and $Y$. We have that uniformly for $z\in S(\epsilon)$ (recall \eqref{SSET1}),
\begin{equation}\label{aniso_lawb}
	\left|  G^b_{\mathbf u\mathbf v}(z)   -  \Pi^b_{\mathbf u\mathbf v} (z)  \right| \prec \psi_n + \phi_n +  \Psi(z),
\end{equation}
and uniformly for $z\in D_{out}(\epsilon)$ (recall \eqref{eq_paraout}), 
\begin{equation}\label{aniso_outstrongb}
	\left|  G^b_{ \mathbf u \mathbf v}(z)  - \Pi^b_{ \mathbf u \mathbf v} (z)  \right|  \prec \psi_n  +  \phi_n + n^{-1/2}(\kappa+\eta)^{-1/4} .
\end{equation}
Moreover, \eqref{aniso_lawb} and \eqref{rigidityb} together imply that for any constant $\delta>0$,
\begin{align}
&	\max_{1\le k \le (1-\delta)q} \left\{ \left|\langle \mathbf u_1,S_{xx}^{-1/2}\xi_k^b\rangle \right|^2+\left|\langle \mathbf u_2,(S^b_{yy})^{-1/2}\zeta_k^b\rangle \right|^2\right\} \prec n^{-1},\label{delocal1b}\\
&	\max_{1\le k \le (1-\delta)q} \left\{ \left|\langle \mathbf u_3,X^{\top}S_{xx}^{-1/2}\xi^b_k\rangle \right|^2+\left|\langle \mathbf u_4,\cal Y^{\top}(S^b_{yy})^{-1/2}\zeta^b_k\rangle \right|^2\right\} \prec n^{-1},\label{delocal2b}
\end{align}
where $\{\xi_{k}^b\}_{k=1}^{p}$ are $\{\zeta_{k}^b\}_{k=1}^{q}$ are the left and right singular vectors of $\cal H^b$ (recall Definition \ref{resol_not}), respectively, and $\mathbf u_\al \in \mathbb C^{\mathcal I_\al}$ are unit vectors  independent of $X$ and $Y$. 
\end{lemma}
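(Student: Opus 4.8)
The plan is to exhibit $H^b$ as a finite-rank additive perturbation of $H$ and then transport the anisotropic local laws for $G$ (Theorems \ref{thm_local} and \ref{thm_localout}) to $G^b$ via the Woodbury identity \eqref{woodbury}. Since $\mathcal Y = Y + BZ$ and $B = \sum_{i=1}^r b_i \mathbf u_i^b (\mathbf v_i^b)^\top$, the matrix $H^b(z)-H(z)$ is supported on the $\mathcal I_2\times\mathcal I_4$ and $\mathcal I_4\times\mathcal I_2$ blocks, where it equals $BZ$ and $(BZ)^\top$ respectively; hence
\[
H^b(z) = H(z) + \mathcal V \begin{pmatrix} 0 & \Sigma_b \\ \Sigma_b & 0\end{pmatrix}\mathcal V^\top =: H(z) + \mathcal V C \mathcal V^\top ,
\]
where $\mathcal V$ is the $(p+q+2n)\times 2r$ matrix whose first $r$ columns are $\mathbf u_1^b,\dots,\mathbf u_r^b$ embedded into $\mathbb C^{\mathcal I_2}$ and whose last $r$ columns are $Z^\top\mathbf v_1^b,\dots,Z^\top\mathbf v_r^b$ embedded into $\mathbb C^{\mathcal I_4}$. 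The Woodbury identity then gives, for every $z$ at which $H^b(z)$ is invertible,
\be\label{pf:wood}
G^b = G - G\mathcal V\, C\,\big( I_{2r} + \mathcal V^\top G\mathcal V\, C\big)^{-1}\mathcal V^\top G ,
\ee
which needs no invertibility of $C$. Consequently $G^b_{\mathbf u\mathbf v}-\Pi^b_{\mathbf u\mathbf v}$ is a fixed rational function of the $\OO(r^2)$ generalized entries $G_{\mathbf u\mathbf v}$, $G_{\mathbf u\,\mathcal V_k}$, $G_{\mathcal V_j\,\mathbf v}$ and $\mathcal V_j^\top G\,\mathcal V_k$, so it suffices to control each of these.

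Next I would apply the anisotropic local laws for $G$ to these generalized entries. The subtle point is that the columns $Z^\top\mathbf v_i^b$ are random, but they are independent of $(X,Y)$; conditioning on $Z$ they become deterministic, and by the isometry bound \eqref{eq_iso} we have, with high probability, $\|Z^\top\mathbf v_i^b\|^2 = 1 + \OO_\prec(\psi_n)$ and $(\mathbf v_i^b)^\top ZZ^\top\mathbf v_j^b = \delta_{ij}+\OO_\prec(\psi_n)$, so that after normalization the columns of $\mathcal V$ are unit vectors up to $\OO_\prec(\psi_n)$. Applying \eqref{aniso_law} on $S(\epsilon)$ (resp.\ \eqref{aniso_outstrong} on $D_{out}(\epsilon)$) conditionally on $Z$ to $\mathbf u$, $\mathbf v$ and the normalized columns of $\mathcal V$, and then taking expectation over $Z$, we obtain that each of the above entries equals its $\Pi$-counterpart up to an error $\phi_n+\psi_n+\Psi(z)$ (resp.\ $\phi_n+\psi_n+n^{-1/2}(\kappa+\eta)^{-1/4}$). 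Since the columns of $\mathcal V$ in $\mathbb C^{\mathcal I_2}$ and $\mathbb C^{\mathcal I_4}$ meet only the $\mathcal I_2\times\mathcal I_2$ and $\mathcal I_4\times\mathcal I_4$ blocks of $\Pi$, which carry $c_2^{-1}m_{2c}(z)I_q$ and $m_{4c}(z)I_n$, the matrix $\mathcal V^\top\Pi\mathcal V$ is block diagonal with blocks $c_2^{-1}m_{2c}(z)I_r$ and $m_{4c}(z)\big((\mathbf v_i^b)^\top ZZ^\top\mathbf v_j^b\big)_{ij} = m_{4c}(z)I_r+\OO_\prec(\psi_n)$.

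The key deterministic input is then the identity $m_{2c}m_{4c}=-c_2$ from \eqref{selfm12}: with it, the Schur complement of the top-left block of $I_{2r}+\diag\big(c_2^{-1}m_{2c}(z)I_r,\, m_{4c}(z)I_r\big)\,C$ equals $I_r-c_2^{-1}m_{2c}(z)m_{4c}(z)\Sigma_b^2 = I_r+\Sigma_b^2$, which is uniformly well-conditioned; hence this matrix is invertible with $\OO(1)$ inverse for all $z\in S(\epsilon)\cup D_{out}(\epsilon)$ (using also $|m_{\alpha c}(z)|\sim 1$ from Lemma \ref{lem_mbehavior}), and by stability of the inverse under a perturbation of size $\phi_n+\psi_n+\Psi(z)=\oo(1)$ the same is true for $I_{2r}+\mathcal V^\top G\mathcal V\, C$ on the relevant high-probability event. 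Substituting $G\mapsto\Pi$ and the computed limit of $\mathcal V^\top G\mathcal V$ into \eqref{pf:wood}, a short computation (again using $m_{2c}m_{4c}=-c_2$ and $\Sigma_b\mathcal M_b=\Sigma_b^2(I_r+\Sigma_b^2)^{-1}$) identifies the resulting deterministic matrix with $\Pi^b(z)$ exactly as defined before the lemma; tracking the errors through \eqref{pf:wood}, where every remaining factor is $\OO(1)$ in the relevant operator norms, yields \eqref{aniso_lawb} and \eqref{aniso_outstrongb} uniformly in $z$. Finally, \eqref{delocal1b}--\eqref{delocal2b} follow from \eqref{aniso_lawb} together with the rigidity \eqref{rigidityb} by exactly the argument proving Lemma \ref{lem delocalX}: one writes the spectral decomposition of $R^b$ and the Schur-complement formulas for the blocks of $G^b$ (the analogues of \eqref{GL1}, \eqref{GR1}, \eqref{GLR1}), and reads off the spectral weights $|\langle\mathbf u_\alpha,\,\cdot\,\xi_k^b\rangle|^2$ and $|\langle\mathbf u_\alpha,\,\cdot\,\zeta_k^b\rangle|^2$ for $k\le(1-\delta)q$ by evaluating the imaginary parts of the diagonal blocks of $G^b$ at spectral parameters $\gamma_k+\ii n^{-2/3+\epsilon}$ and bounding them via \eqref{aniso_lawb}.

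The hard part will be the second step: making the conditioning on $Z$ rigorous, so that the anisotropic local law for $G$ (stated only for vectors independent of $X$ and $Y$) legitimately applies to the $Z$-dependent columns $Z^\top\mathbf v_i^b$, together with the uniform-in-$z$ boundedness of the $2r\times 2r$ inverse in \eqref{pf:wood}; the latter is precisely where the cancellation $m_{2c}m_{4c}=-c_2$, producing the well-conditioned $I_r+\Sigma_b^2$, is indispensable. Everything else is Woodbury bookkeeping and a repetition of arguments already carried out in \cite{PartIII}.
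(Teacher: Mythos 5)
Your proposal follows essentially the same route as the paper's proof: write $H^b$ as a finite-rank perturbation of $H$, apply the Woodbury identity \eqref{woodbury}, invoke Theorems \ref{thm_local} and \ref{thm_localout} together with the isometry estimate \eqref{eq_iso} for the $Z$-dependent columns, and deduce \eqref{delocal1b}--\eqref{delocal2b} from \eqref{aniso_lawb} and \eqref{rigidityb} exactly as in Lemma \ref{lem delocalX}. Your write-up in fact supplies more of the bookkeeping (the Schur-complement cancellation $m_{2c}m_{4c}=-c_2$ yielding the well-conditioned $I_r+\Sigma_b^2$, and the matching with $\Pi^b$) than the paper records.
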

\begin{proof}
Using \eqref{woodbury}, we can write $G^b(z)$ in \eqref{eqn_defGb} as
\be\label{woodbury2}
\begin{split}
	&G^b=  G - G\begin{pmatrix} \bU_b & 0 \\ 0 & \bE_b\end{pmatrix} \left[  \begin{pmatrix} 0 & \cal D_b^{-1}\\ \cal D_b^{-1}  & 0\end{pmatrix} + \begin{pmatrix} \bU_b^{\top} & 0 \\ 0 & \bE_b^{\top}\end{pmatrix}G\begin{pmatrix} \bU_b & 0 \\ 0 & \bE_b\end{pmatrix}  \right]^{-1}  \begin{pmatrix} \bU_b^{\top} & 0 \\ 0 & \bE_b^{\top}\end{pmatrix}G,
\end{split}
\ee
where $\cal D_b:=\begin{pmatrix} 0 & 0 \\ 0 & \Sigma_b\end{pmatrix}.$ Since $\cal D_b^{-1}$ is not well-defined, the above expression should be understood as
\begin{align*}
	&\left[  \begin{pmatrix} 0 & \cal D_b^{-1}\\ \cal D_b^{-1}  & 0\end{pmatrix} +  \begin{pmatrix} \bU_b^{\top} & 0 \\ 0 & \bE_b^{\top}\end{pmatrix}G\begin{pmatrix} \bU_b & 0 \\ 0 & \bE_b\end{pmatrix} \right]^{-1}\\
	 \equiv &\left[ 1 +  \begin{pmatrix} 0 & \cal D_b\\ \cal D_b  & 0\end{pmatrix}  \begin{pmatrix} \bU_b^{\top} & 0 \\ 0 & \bE_b^{\top}\end{pmatrix}G\begin{pmatrix} \bU_b & 0 \\ 0 & \bE_b\end{pmatrix}  \right]^{-1} \begin{pmatrix} 0 & \cal D_b\\ \cal D_b  & 0\end{pmatrix}.
\end{align*}
Combining \eqref{woodbury2} with Theorem \ref{thm_local}, Theorem \ref{thm_localout} and \eqref{eq_iso}, we can conclude \eqref{aniso_lawb} and \eqref{aniso_outstrongb}. The estimates \eqref{delocal1b} and \eqref{delocal2b} follow from \eqref{aniso_lawb} and \eqref{rigidityb} as in the proof of Lemma \ref{lem delocalX}, where the details can be found in the proof of Lemma 3.9 in \cite{PartIII}.
\end{proof}

As in Section \ref{sec mainthm}, from equation \eqref{detereq temp3}, we can derive a similar equation as \eqref{masterx4}. More precisely, suppose $\lambda$ is not an eigenvalue of $\cal C^b_{X\cal Y}$ and the following local law holds for $G^b(\lambda)$: 
$$\begin{pmatrix} \bU_a^{\top} & 0 \\ 0 & \bE_a^{\top}\end{pmatrix} G^b(\lambda) \begin{pmatrix} \bU_a & 0 \\ 0 & \bE_a\end{pmatrix} - \Pi^b_r(\lambda) =\OO \left( \Phi_n\right) \quad \text{with high probability,}$$
where $\Phi_n$ is a deterministic parameter satisfying $0<\Phi_n\le n^{-c}$ for a small constant $c>0$, and
$$\Pi_r^b(\lambda):= \begin{bmatrix}\begin{pmatrix} c_1^{-1}m_{1c}(\lambda)I_r & 0 \\ 0 & 0\end{pmatrix} & 0\\0 & \begin{pmatrix}  m_{3c}(\lambda)I_r - \frac{h^2(\lambda)}{m_{4c}(\lambda)}\cal M_r \frac{\Sigma_b^2}{1+\Sigma_b^2}\cal M_r^{\top} & 0 \\ 0 &  0\end{pmatrix} \end{bmatrix}. $$
Then, $\lambda$ is an eigenvalue of $\cal C_{\cal X\cal Y}$ if and only if  
\be\label{masterx5}
\det \left(f_c(\lambda)I_r - \diag \left(t_1, \cdots, t_r \right) + \cal E_r(\lambda)\right) =0 ,
\ee
where $\cal E_r$ is an error term satisfying $\|\cal E_r\|\lesssim n^\e \psi_n+\Phi_n$ with high probability for any small constant $\e>0$. 
Moreover, similar to \eqref{interlacing_eq0}, we have the following eigenvalue interlacing,
\begin{equation}\label{interlacing_eqb}
	\wt\lambda_i \in [\lambda^b_{i + r}, \lambda^b_{i-r}],
\end{equation}
where we adopt the convention that $\lambda^b_{i}=1$ if $i<1$ and $\lambda^b_i = 0$ if $i>q$. {This is the main reason why we need to prove \eqref{boundstick} first instead of proving \eqref{boundstickextra} directly: our proof of \eqref{boundstick} will use crucially the rank-$r$ interlacing in \eqref{interlacing_eqb}, while the rank-$2r$ interlacing in \eqref{interlacing_eq0} is not strong enough to yield \eqref{boundstickextra} directly.
}


\begin{proof}[Proof of Theorem \ref{main_thm0.5}]
	For simplicity, in the following proof, we abbreviate $\phi_n + \psi_n$ as $\phi_n$. As a byproduct of the proof of Lemma \ref{lem null} in Section \ref{sec pflemma}, we obtain an averaged local law for $m^b(z)$ in equation \eqref{aver_inbbbb} below. By \eqref{aver_inbbbb}, \eqref{rigidityb}, Theorem \ref{main_thm}, Lemma \ref{SxxSyy}, \eqref{eq_iso} and Lemma \ref{thm_localb}, for any small constants $\e,\wt\e,\delta>0$ and fixed integer $\varpi \in \N$, we can choose a high-probability event $\Xi$ on which the following estimates hold: 
		\be\label{aver_lawb}
	|m^b(z)-m_c(z)|\le \frac{n^{\e/4}}{n\eta} ,\quad  \text{ for }\ z\in \wt S(\e,\wt\e);
	\ee
	\begin{equation} \label{eq_bound2evb}
	  |\lambda^b_i - \lambda_+ | \leq n^{-2/3+\epsilon/2}, \quad  \text{ for }\ 1\le i \le \varpi ;
	\end{equation}
	\begin{equation}\label{eq_stickingrigi2}
|\lambda_i^b-\gamma_i| \leq i^{-1/3} n^{-2/3+\epsilon/2} , \quad \text{ for }\ 1\le  i \leq (1-\delta) q;
	\end{equation}
	\begin{equation}\label{eq_stickoutlier}
	|\wt\lambda_{i}-\theta_i| \leq n^\e \phi_n \Delta_i + n^{-1/2+\e}\Delta_i^{1/2}, \quad \text{ for } \ 1\le i\le r_+ ;
	\end{equation}
	\begin{equation}\label{eq_stickingrigi}
		-  n^{-2/3 + \e/2} \le  \wt\lambda_{i}-\lambda_+  \leq n^{\e/2}\phi_n^2 + n^{-2/3+\e/2}, \quad \text{ for } \ r_+ +1 \le i \le \varpi;
	\end{equation}
	\be\label{op rough3} 
	c_0\le \min\{ \lambda_p(S_{xx}) , \lambda_q(S^b_{yy})\} \le \max\{\lambda_1(S_{xx}), \lambda_1(S^b_{yy})\} \le c_0^{-1} ;
	\ee
	\begin{equation}
	 \| ZZ^{\top} - I_{r}\| \le n^{\e/20}\phi_n ; \label{eq_isob}
	\end{equation}
	\begin{equation}\label{aniso_lawevb}
		 \left\|{ \begin{pmatrix} \bU_a^{\top} & 0 \\ 0 & \bE_a^{\top}\end{pmatrix}  G^b(z) \begin{pmatrix} \bU_a & 0 \\ 0 & \bE_a\end{pmatrix}}-\Pi^b_r(z) \right\| \le   n^{\e/2}\left(\phi_n + \Psi(z)\right),\quad \text{ for }\ z\in S(\e);
	\end{equation}
	\begin{equation} \label{eq_bound1evb}
		\begin{split}
		& \left\|{ \begin{pmatrix} \bU_a^{\top} & 0 \\ 0 & \bE_a^{\top}\end{pmatrix}  G^b(z) \begin{pmatrix} \bU_a & 0 \\ 0 & \bE_a\end{pmatrix}}-\Pi^b_r(z) \right\|  \le  n^{\e/2}\left(\phi_n + n^{-1/2}\kappa^{-1/4} \right),\quad \text{ for }\ z\in D_{out}(\epsilon);
		\end{split}
	\end{equation}
	\be\label{delocalav1}
	 \max_{1\le k \le (1-\delta)q} \left\{ \left|\langle \mathbf u_1,S_{xx}^{-1/2}\xi_k^b\rangle \right|^2+\left|\langle \mathbf u_2,(S^b_{yy})^{-1/2}\zeta_k^b\rangle \right|^2\right\} \le n^{-1 +\e/20};
	\ee
	\begin{equation}\label{delocalav2}
		 \max_{1\le k \le (1-\delta)q} \left\{ \left|\langle \mathbf u_3,X^{\top}S_{xx}^{-1/2}\xi_k^b\rangle \right|^2+\left|\langle \mathbf u_4,\cal Y^{\top}(S^b_{yy})^{-1/2}\zeta_k^b\rangle \right|^2\right\}  \le n^{-1 +\e/20}.
	\end{equation}
	Here, $c_0$ is a small enough constant, and the vectors $\bu_\al \in \C^{\cal I_\al}$, $\al=1,2,3,4$, belong to a set of vectors that is independent of $X$ and $Y$, has cardinality $n^{\OO(1)}$, and includes all the unit vectors that will be used in the proof. Again, the randomness of $X$, $Y$ and $Z$ only comes into play to ensure that $\Xi$ holds with high probability, and the rest of the proof will be entirely deterministic on the event $\Xi$. 
	
	
	\vspace{5pt}
	
	\noindent{\bf Step 1:}  As in the proof of Theorem \ref{main_thm}, we first find a permissible region. For any $i$, we define the set 
	\begin{equation}\label{eq_omega}
		\Omega_i :=
		\Big\{x \in [\lambda^b_{i+r+1}, \lambda_+ + n^{2\e} \phi_n^2 +  n^{-2/3+2\epsilon}]: \ \text{dist} \Big (x, \text{Spec}(\cal C^b_{X\cal Y}) \Big )>n^{-1+\epsilon} \alpha_+^{-1} \Big\},
	\end{equation}
	where $\text{Spec}(\cal C^b_{X\cal Y})$ stands for the eigenvalue spectrum of $\cal C^b_{X\cal Y}$.
	
	\begin{lemma}\label{prop_eigensticking} 
		There exists a constant $C_1>0$ such that for $\alpha_+ \geq C_1  (n^\e\phi_n +   n^{-1/3+\epsilon})$ and $i \leq n^{1-2\epsilon} \alpha_+^3,$ 
		the set $\Omega_i$ contains no eigenvalue of $\cal C_{\cal X\cal Y}.$
	\end{lemma}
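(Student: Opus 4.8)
\textbf{Proof proposal for Lemma \ref{prop_eigensticking}.} The plan is to show that for $\lambda\in\Omega_i$ the master equation \eqref{masterx5} has no solution. Set $\eta_0:=n^{-1+\e}\alpha_+^{-1}$. Since $\Omega_i$ keeps $\lambda$ at distance $>\eta_0$ from $\mathrm{Spec}(\cal C^b_{X\cal Y})$, in particular $\lambda$ is not an eigenvalue of $\cal C^b_{X\cal Y}$, so $\lambda\in\mathrm{Spec}(\cal C_{\cal X\cal Y})$ iff $\det\bigl(f_c(\lambda)I_r-\diag(t_1,\dots,t_r)+\cal E_r(\lambda)\bigr)=0$. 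First I would localize $\lambda$ near $\lambda_+$: every $\lambda\in\Omega_i$ lies at most $n^{2\e}\phi_n^2+n^{-2/3+2\e}$ above $\lambda_+$, and, since $\lambda\ge\lambda^b_{i+r+1}$ and $i\le n^{1-2\e}\alpha_+^3$, the rigidity estimate \eqref{eq_stickingrigi2} (and $\lambda_+-\gamma_j\sim(j/n)^{2/3}$) gives that $\lambda$ lies at most $Cn^{-4\e/3}\alpha_+^2$ below $\lambda_+$; using $\alpha_+\ge C_1(n^\e\phi_n+n^{-1/3+\e})$ this yields $\kappa_\lambda:=|\lambda-\lambda_+|\lesssim \alpha_+^2/C_1^2$. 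By Lemma \ref{lem_complexderivative} (valid for $\re z$ just below $\lambda_+$ as well) this forces $|f_c(\lambda)-t_c|\lesssim\alpha_+/C_1$, and since $\min_j|t_j-t_c|=\alpha_+$ we get $|f_c(\lambda)-t_j|\ge\alpha_+/2$ for all $j$ once $C_1$ is large. Hence $\bigl\|(f_c(\lambda)I_r-\diag(t_j))^{-1}\bigr\|\le 2\alpha_+^{-1}$, and it suffices to prove the refined bound $\|\cal E_r(\lambda)\|<\alpha_+/2$.

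The heart of the argument is this refined bound on the real axis; it cannot be read off from \eqref{aniso_lawevb}--\eqref{eq_bound1evb} directly, because $\lambda$ may be much closer to $\lambda_+$ than $n^{-2/3+\e}$ (or inside $[\lambda_-,\lambda_+]$). Recalling that $\cal E_r(\lambda)$ is controlled, after the Schur complement computation preceding \eqref{masterx5}, by $\bigl\|M^b(\lambda)-\Pi^b_r(\lambda)\bigr\|+n^\e\psi_n$, where $M^b(\lambda)$ denotes the $4r\times 4r$ matrix appearing in \eqref{aniso_lawevb}, I would compare $G^b(\lambda)$ with $G^b(\lambda+\ii\eta_0)$. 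Expanding all relevant bilinear forms of $G^b$ through the spectral decompositions of $R^b(\lambda),R^b_1(\lambda),R^b_2(\lambda)$ (via the $b$-analogues of \eqref{GL1}--\eqref{GLR1} and \eqref{spectral1}), the $k$-th summand carries overlaps of the type $\langle\bu,S_{xx}^{-1/2}\xi^b_k\rangle$, $\langle\bu,X^\top S_{xx}^{-1/2}\xi^b_k\rangle$, etc., which are $\OO_\prec(n^{-1/2})$ by the delocalization estimates \eqref{delocalav1}--\eqref{delocalav2}, while the gap condition $|\lambda^b_k-\lambda|\ge\eta_0$ bounds $\bigl|\tfrac1{\lambda^b_k-\lambda}-\tfrac1{\lambda^b_k-\lambda-\ii\eta_0}\bigr|\le\tfrac{\eta_0}{|\lambda^b_k-\lambda|^2}\le\tfrac{2\eta_0}{(\lambda^b_k-\lambda)^2+\eta_0^2}$. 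Summing and using $\sum_k\tfrac{\eta_0}{(\lambda^b_k-\lambda)^2+\eta_0^2}\lesssim n\,\Im m^b(\lambda+\ii\eta_0)$ together with the averaged local law \eqref{aver_lawb} shows that, in the directions that enter $M^b$, $\|G^b(\lambda)-G^b(\lambda+\ii\eta_0)\|\prec\Im m_c(\lambda+\ii\eta_0)+(n\eta_0)^{-1}+\eta_0$. Combining with \eqref{aniso_lawevb} at $\lambda+\ii\eta_0\in S(\e)$ and with $\|\Pi^b_r(\lambda+\ii\eta_0)-\Pi^b_r(\lambda)\|\lesssim\eta_0(\kappa_\lambda+\eta_0)^{-1/2}$ from Lemma \ref{lem_mbehavior}, I obtain
$$\|\cal E_r(\lambda)\|\prec n^{\e/2}\phi_n+\sqrt{\frac{\Im m_c(\lambda+\ii\eta_0)}{n\eta_0}}+\frac1{n\eta_0}+\Im m_c(\lambda+\ii\eta_0)+\sqrt{\eta_0}.$$
Since $\kappa_\lambda\lesssim\alpha_+^2/C_1^2$ and $\eta_0=n^{-1+\e}\alpha_+^{-1}\ll\alpha_+^2$, Lemma \ref{lem_mbehavior} gives $\Im m_c(\lambda+\ii\eta_0)\lesssim\sqrt{\kappa_\lambda+\eta_0}\lesssim\alpha_+/C_1$, so each term on the right is $\ll\alpha_+$ (e.g. $\sqrt{\Im m_c/(n\eta_0)}\lesssim\alpha_+^{1/2}(\alpha_+n^{-\e})^{1/2}=\alpha_+n^{-\e/2}$, $(n\eta_0)^{-1}=\alpha_+n^{-\e}$, $\sqrt{\eta_0}\ll\alpha_+$); together with $\alpha_+\ge C_1(n^\e\phi_n+n^{-1/3+\e})$ this makes $\|\cal E_r(\lambda)\|<\alpha_+/2$ for $n$ large and $C_1$ large enough, as needed, so \eqref{masterx5} cannot hold on $\Omega_i$.

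\textbf{Main obstacle.} The delicate step is the second one: transferring resolvent control from the complex-argument local law to the real argument $\lambda$ sitting next to (or inside) the spectrum, while keeping the error strictly below the scale $\alpha_+$ rather than merely $o(1)$. This is precisely where the two hypotheses are used — the gap $\mathrm{dist}(\lambda,\mathrm{Spec})>n^{-1+\e}\alpha_+^{-1}$ provides the cushion $\eta_0$ that makes the comparison $G^b(\lambda)\leftrightarrow G^b(\lambda+\ii\eta_0)$ lossless, and the counting restriction $i\le n^{1-2\e}\alpha_+^3$ forces $\kappa_\lambda\lesssim\alpha_+^2$, which is what keeps $\Im m_c(\lambda+\ii\eta_0)$ (hence the $\Psi$-type error) a genuine power of $n$ below $\alpha_+$; without it the $\Psi$-error would only be $o(1)$. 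Tracking the $n^{\pm\e}$ factors from the $\Xi$-event bounds so that all errors remain a fixed power of $n$ below $\alpha_+$ is routine bookkeeping.
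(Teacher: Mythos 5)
Your proposal is correct and follows essentially the same route as the paper: localize $x\in\Omega_i$ near $\lambda_+$ via rigidity and the constraint $i\le n^{1-2\e}\alpha_+^3$ to force $\kappa_x\lesssim\alpha_+^2$ and hence $|f_c(x)-t_j|\gtrsim\alpha_+$, then control $\cal E_r(x)$ by comparing $G^b(x)$ with $G^b(x+\ii\eta_0)$ through the spectral decomposition, the delocalization bounds, and the gap $\mathrm{dist}(x,\mathrm{Spec}(\cal C^b_{X\cal Y}))>\eta_0$, so that the error is a power of $n$ below $\alpha_+$. The only cosmetic differences are bookkeeping of the $n^{\pm\e}$ factors and the extra $\sqrt{\eta_0}$ term from the $\Pi^b_r$ comparison, which is harmless.
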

	
	\begin{proof}
		In the proof, we always use the following spectral parameter 
		\begin{equation}\label{etax2}
			 z_x= x+\ii \eta_x  ,\quad  \text{with}\quad \eta_x:=n^{-1+ \epsilon} \alpha_+^{-1}.
		\end{equation}
		Suppose $x \in \Omega_i$. We first claim that for any deterministic unit vectors $\bu, \bv \in \Gamma$, 
		\begin{align}\label{eq_bound2}
			|G_{\bu \bv}^b(z_x)-G_{\bu \bv}^b(x)| & \le Cn^{\e/20}\im m^b(z_x) + C n^{\e/20}\eta_x, \quad x\in \Omega_i.
		\end{align}
		We use a similar argument as in the proof of Theorem \ref{thm_localout}. To illustrate the idea, for deterministic unit vectors  \be\label{decompose_uv_addadd}
		\mathbf v=\begin{pmatrix} \bv_1^{\top} , \bv_2^{\top}, \bv_3^{\top},\bv_4^{\top} \end{pmatrix}^{\top},\quad \mathbf u=\begin{pmatrix} \bu_1^{\top} , \bu_2^{\top}, \bu_3^{\top},\bu_4^{\top} \end{pmatrix}^{\top}\quad \text{with}\ \ \bu_\al, \bv_\al \in \C^{\cal I_\al}, 
		\ee
		we calculate $G^b_{\bu_1\bv_1}(z_x) - G^b_{\bu_1\bv_1}(x)$ as an example. As in \eqref{zz0}, we have
		\begin{align*}
			&\left|  G^b_{\mathbf u_1\mathbf v_1}(z_x) - G^b_{\mathbf u_1\mathbf v_1}(x) \right| \\
			&\lesssim \sum_{k \le (1-\delta)q} \frac{\eta_x  |\langle \bv_1,S_{xx}^{-1/2}{\xi}^b_k\rangle| |\langle \bu_1,S_{xx}^{-1/2}{\xi}_k^b\rangle| }{|\lambda^b_k - x|\left[(\lambda^b_k-x)^2 + \eta_x^2 \right]^{1/2}}  +  \eta_x\sum_{k>(1-\delta)q}{|\langle \bv_1,S_{xx}^{-1/2}{\xi}^b_k\rangle||\langle \bu_1,S_{xx}^{-1/2}{\xi}^b_k\rangle|} \\
			&\lesssim n^{-1+\e/20}  \sum_{k = 1}^{q} \frac{\eta_x  }{ (\lambda^b_k-x)^2 + \eta_x^2 } + \eta_x \lesssim  n^{\e/20}\im m^b(z_x) +  \eta_x,
		\end{align*}
		where in the second step we used \eqref{op rough3}, \eqref{delocalav1} and $|\lambda_k^b - x| \ge \eta_x$ for $x\in \Omega_i$, and in the last step we used the spectral decomposition of $m^b(z_x)$. The proofs for the rest of the cases $ G^b_{\bu_\al\bv_\beta}(z_x) - G^b_{\bu_\al\bv_\beta}(x)$, $\al,\beta=1,2,3,4$, are similar, so we omit the details. 
		
		Recall that $x\in \Omega_i$ is an eigenvalue of $\cal C_{\cal X\cal Y}$ if and only if \eqref{masterx5} holds, where $\cal E_r$ satisfies the following bound 
		by \eqref{eq_bound2}, \eqref{aver_lawb} and \eqref{aniso_lawevb}:
		$$\|\cal E_r(x)\| \le C\left(  n^{\e/20}\im m_c(z_x) + n^{\e/20}\eta_x +n^{\e/2} \phi_n + n^{\e/2}\Psi(z_x) + \frac{n^{3\e/10}}{n\eta_x} \right) $$
		for some constant $C>0$. With \eqref{Immc} and the definition of $\Psi(z_x)$ in \eqref{eq_defpsi}, we can further bound that
		$$\|\cal E_r(x)\| \le C'\left( n^{\e/2} \phi_n +  n^{\e/2}\im m_c(z_x) +\frac{ n^{\e/2}}{n\eta_x} \right) $$
		for some constant $C'>0$. Now, to prove Lemma \ref{prop_eigensticking}, it suffices to show that for any $1\le j \le r$,
		\be\label{diag_big2}
		\left|f_c(x) - t_j \right| > C'\left( n^{\e/2} \phi_n +  n^{\e/2}\im m_c(z_x) +\frac{ n^{\e/2}}{n\eta_x} \right),\quad x\in \Omega_i. 
		\ee

		Since $i \leq n^{1-2\epsilon} \alpha_+^3$, by \eqref{eq_stickingrigi2} we have that for $x\in \Omega_i$,
		\be\label{kappax2}
		-\left( n^{2\e} \phi_n^2 +  n^{-2/3+2\epsilon}\right) \le \lambda_+-x \lesssim \left( {i}/{n}\right)^{2/3} + i^{-1/3}n^{-2/3+\e/2} 	\lesssim   n^{-4\e/3}\al_+^2 ,  
		\ee
		where we used $\gamma_i\sim (i/n)^{2/3}$ and $\al_+\ge C_1n^{-1/3+\e}$. Then, by \eqref{eq_mcomplex0}, we have
		\begin{align*} 
		|f_{c}(x)-t_c|=|f_{c}(x)-f_{c}(\lambda_+)| \le C n^{-2\e/3}{\al_+} , \quad & x\in \Omega_i \cap \{x: x\le \lambda_+\},\\
		 |f_{c}(x)-t_c|=|f_{c}(x)-f_{c}(\lambda_+)|\le C\left(n^{\e} \phi_n +  n^{-1/3+\epsilon}\right), \quad & x\in \Omega_i \cap \{x: x> \lambda_+\}, 
		 \end{align*}
		for a constant $C>0$ that does not depend on $C_1$. Hence, as long as $C_1$ is large enough, we have
		\be\label{fctc1} |f_{c}(x)-t_c| \le \frac14 \al_+ \quad \Rightarrow \quad \left|f_c(x) - t_j \right|  \ge \frac34\al_+,\ee
		where we used the definition of $\al_+$ in \eqref{al+}.
		On the other hand, with \eqref{Immc}, \eqref{etax2} and \eqref{kappax2}, we can verify that 
		$$ C'\left( n^{\e/2} \phi_n +  n^{\e/2}\im m_c(z_x) +\frac{ n^{\e/2}}{n\eta_x} \right) \le C'' \left( n^{\e/2} \phi_n + n^{\e/2}\sqrt{\kappa_x+\eta_x} + n^{-\e/2}\al_+\right) \ll \al_+$$
		for $x\in \Omega_i \cap \{x: x\le \lambda_+\}$, 
		and 
		$$C'\left( n^{\e/2} \phi_n +  n^{\e/2}\im m_c(z_x) +\frac{ n^{\e/2}}{n\eta_x} \right)\le C'' \left( n^{\e/2} \phi_n +\frac{ n^{\e/2}\eta_x}{\sqrt{\kappa_x+\eta_x}}  + n^{-\e/2}\al_+\right)  \ll \al_+ $$
		for $x\in \Omega_i \cap \{x: x> \lambda_+\} $. Together with \eqref{fctc1}, these two estimates imply that \eqref{diag_big2} holds. This concludes the proof.
	\end{proof}
	
	\noindent{\bf Step 2:}  In this step, we perform a counting argument for a special case as in the following lemma. We postpone its proof until we finish the proof of Theorem \ref{main_thm0.5}. 
	\begin{lemma}\label{lem_counting}  
		Given $0\le r_+ \le r$, we choose a matrix $A\equiv A(0)$ of rank $r_+$ such that the eigenvalues configuration $\mathbf t\equiv \mathbf t(0):=(t_1, t_2 , \cdots , t_{r })$ of the PCC matrix satisfies that
		\begin{equation}\label{eq_multione2}
			(t_{r_+}-t_c)  \wedge \min_{1\le i \le r_+-1} \left(t_{i}-t_{i+1}\right)   \gtrsim 1,\quad t_{r_+ + 1} = \cdots = t_r =0.
		\end{equation}
		Then, for $i \leq n^{1-4\epsilon} \alpha_+^3(0)$, 
		we have 
		\begin{equation}\label{eq_counting}
			|\wt\lambda_{i+r_+}-\lambda_i^b | \leq  n^{-1+2\epsilon} \alpha_+^{-1}(0),
		\end{equation}
		where $\al_+(0) $ is defined as in \eqref{al+} for $\mathbf t(0)$. (The meaning of the argument 0 will be clear in Step 3.) 
	\end{lemma}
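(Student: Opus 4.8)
The plan is to combine the forbidden-region estimate of Lemma~\ref{prop_eigensticking}, the interlacing \eqref{interlacing_eqb}, the eigenvalue rigidity \eqref{rigidityb}, and the master equation \eqref{masterx5}, and then to run a counting argument. I work on the high-probability event $\Xi$ from the proof of Theorem~\ref{main_thm0.5}, on which all the needed rigidity, local-law, and Theorem~\ref{main_thm}-type estimates hold. Since the configuration \eqref{eq_multione2} forces $\al_+(0)\asymp 1$, which for large $n$ dominates $n^\e\phi_n+n^{-1/3+\e}$, Lemma~\ref{prop_eigensticking} applies: for $i\le n^{1-2\e}\al_+^3(0)$ the set $\Omega_i$ contains no eigenvalue of $\mathcal C_{\mathcal X\mathcal Y}$. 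Theorem~\ref{main_thm}, specialised to \eqref{eq_multione2}, shows that there are exactly $r_+$ outliers $\wt\lambda_1>\cdots>\wt\lambda_{r_+}$, pinned near the well-separated classical locations $\theta_1>\cdots>\theta_{r_+}$ which sit a constant distance above $\lambda_+$. Combining \eqref{interlacing_eqb} (namely $\wt\lambda_{i+r_+}\in[\lambda^b_{i+r_++r},\lambda^b_{i+r_+-r}]$) with the rigidity bound $\lambda^b_{i+r_+-r}\le\lambda_++n^{-2/3+\e}$, one finds that for $i\le n^{1-4\e}\al_+^3(0)$ the eigenvalue $\wt\lambda_{i+r_+}$ lies in the interval $[\lambda^b_{i+r_++r},\,\lambda_++n^{2\e}\phi_n^2+n^{-2/3+2\e}]$ and hence cannot belong to $\Omega_{i'}$ with $i'=i+r_+-1$; therefore $\mathrm{dist}\bigl(\wt\lambda_{i+r_+},\,\mathrm{Spec}(\mathcal C^b_{X\mathcal Y})\bigr)\le n^{-1+\e}\al_+^{-1}(0)$.

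It remains to upgrade this to the precise index pairing $\wt\lambda_{i+r_+}\leftrightarrow\lambda^b_i$, which is the crux. I would partition the $\lambda^b_j$ in the relevant range into maximal clusters, two neighbours lying in the same cluster whenever they are within $n^{-1+\e}\al_+^{-1}(0)$ of each other, and use \eqref{rigidityb} together with $i\le n^{1-4\e}\al_+^3(0)$ to show that in this range every cluster has at most $n^{\e/2}$ elements, hence diameter $\ll n^{-1+2\e}\al_+^{-1}(0)$. Near such a cluster I would write $\mathcal E_r(\lambda)$ in \eqref{masterx5} as a pole part whose rank equals the cluster size---coming from the blow-up of $G^b(\lambda)$, made explicit through the Woodbury identity \eqref{woodbury2} and the delocalisation bounds \eqref{delocal1b}--\eqref{delocal2b}---plus a regular part of size $\oo(1)$. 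Since $f_c(\lambda)\approx t_c$ is then separated from each of $t_1,\dots,t_r$ by a positive constant (by \eqref{eq_multione2}, noting $t_{r_++1}=\cdots=t_r=0$ and $t_c>0$), the matrix $f_c(\lambda)I_r-\diag(\mathbf t)+(\text{regular part})$ is invertible with $\OO(1)$ inverse, and Rouch\'e's theorem on a small contour around the cluster (taken at height $\sim n^{-1+\e}\al_+^{-1}(0)$, as in the proof of Lemma~\ref{prop_eigensticking}) shows that the number of eigenvalues of $\mathcal C_{\mathcal X\mathcal Y}$ in the $n^{-1+\e}\al_+^{-1}(0)$-neighbourhood of the cluster equals the cluster size; an analogous Rouch\'e count around $\theta_1,\dots,\theta_{r_+}$, exactly as in the proof of Theorem~\ref{main_thm}, confirms there are precisely $r_+$ eigenvalues of $\mathcal C_{\mathcal X\mathcal Y}$ above $\lambda_++n^{2\e}\phi_n^2+n^{-2/3+2\e}$.

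Combining the cluster-wise counts with the monotone ordering of both spectra and the interlacing $\lambda^b_{i+r_++r}\le\wt\lambda_{i+r_+}\le\lambda^b_{i+r_+-r}$ then forces, for every $i$ in the stated range, $\wt\lambda_{i+r_+}$ and $\lambda^b_i$ to lie in one and the same cluster: an off-by-one mismatch, propagated along the ordered sequences, would make some cluster contain the wrong number of eigenvalues of $\mathcal C_{\mathcal X\mathcal Y}$ or violate interlacing. Since a cluster together with its $n^{-1+\e}\al_+^{-1}(0)$-neighbourhood has diameter at most $n^{-1+2\e}\al_+^{-1}(0)$, this yields \eqref{eq_counting}. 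The main obstacle is precisely the counting step: eigenvalue rigidity alone does not separate the $\lambda^b_j$ at the sticking scale, so the argument must proceed at the level of clusters, control their sizes, and track the residues of $\mathcal E_r$ through \eqref{woodbury2}; the near-degenerate situation---ensuring the contour construction and the invertibility of $f_c(\lambda)I_r-\diag(\mathbf t)$ do not degrade, and that any exact coincidence $\lambda^b_j=\wt\lambda_k$ is harmless---is where the real work lies. After these reductions the argument follows the sticking proofs of \cite[Section~6]{KY} and \cite[Section~4]{principal} closely.
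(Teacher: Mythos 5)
Your overall skeleton matches the paper's: cluster the eigenvalues $\lambda^b_j$ into blocks at the scale $n^{-1+\OO(\e)}\al_+^{-1}$, bound the block sizes by $n^{\OO(\e)}$ using the rigidity \eqref{rigidityb} and the restriction $i\le n^{1-4\e}\al_+^3$, count the eigenvalues of $\cal C_{\cal X\cal Y}$ attached to each block, and then pin the index pairing by combining the per-block counts with the rank-$r_+$ interlacing \eqref{interlacing_eqb} and the fact that there are exactly $r_+$ outliers. The final "diameter of a block is $\le n^{-1+2\e}\al_+^{-1}$" step is also exactly the paper's conclusion.

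The genuine gap is in the central counting step, where you diverge from the paper. You propose to split $\cal E_r(\lambda)$ into a pole part from the cluster plus an $\oo(1)$ regular part and then apply Rouch\'e on a small contour around the cluster. But near a block $B_k$ the secular determinant $\det(f_c(\lambda)I_r-\diag(\mathbf t)+\cal E_r(\lambda))$ is \emph{meromorphic}, with poles at each $\lambda^b_l$, $l\in B_k$; Rouch\'e (or the argument principle) then only controls the number of zeros \emph{minus} the number of poles inside the contour, and you would still need to determine the zero count and the exact pole orders of the comparison function $\det(f_c(\lambda)I_r-\diag(\mathbf t)+P(\lambda))$ — which, given that the residue of $G^b$ at each $\lambda^b_l$ is a rank-two object in the linearized picture (cf.\ \eqref{spectral1extra}), is precisely the delicate point you defer to "the real work". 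The paper avoids this entirely: its Proposition \ref{prop_atleast} proves the lower bound "at least $|B_k|$ eigenvalues in $[x_0^k,x_1^k]$" by a purely real-variable argument on the real symmetric matrix $\cal M(x)$, showing via \eqref{spectral1extra} that the relevant quadratic forms of $\cal M(x)$ blow up to $+\infty$ as $x\uparrow\lambda^b_l$ and to $-\infty$ as $x\downarrow\lambda^b_l$, and that $\cal M(x_0^k)$ already has a negative direction (coming from $t_i>t_c$); the intermediate value theorem then produces a singularity of $\cal M$ — i.e.\ an eigenvalue of $\cal C_{\cal X\cal Y}$ — between consecutive $\lambda^b_l$'s and one before the first. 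The matching upper bound "at most $|B_k|$" comes from the rank-$r_+$ interlacing together with the exact outlier count, not from Rouch\'e. If you want to keep your Rouch\'e route you must either clear the poles (multiply by $\prod_{l\in B_k}(\lambda-\lambda^b_l)$ and verify the degree of the resulting polynomial-like object) or track the residue ranks precisely; otherwise you should replace this step by the sign-change argument.
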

	
	
	
	
	\noindent{\bf Step 3:} 
	In this step, we employ a continuity argument as in \cite[Section 6.5]{KY} and \cite[Section S.4.2]{DY2}. We   choose a continuous ($n$-dependent) path $A(s)$ for $0\le s\le 1$, such that $A(1)=A$ is the matrix in Theorem \ref{main_thm0.5}, and $A(0)$ gives an eigenvalue configuration $\mathbf t(0)$ satisfying \eqref{eq_multione2}.
	Correspondingly, we have a continuous path of the configuration $\mathbf t(s)$ and the sample eigenvalues \smash{$\{\wt \lambda_i(s)\}_{i=1}^n$}. We can choose $A(s)$ such that 
	$$\inf_{s\in[0,1]}\al_+(s) \gtrsim \al_+ \equiv \al_+(1),$$ 
	where $\al_+(s)$ is defined as in \eqref{al+} for the eigenvalue configuration $\mathbf t(s)$.

	In this step we consider the case where  $\alpha_+ \geq C_1  (n^\e\phi_n +   n^{-1/3+\epsilon} )$ and $i \leq n^{1-4\epsilon} \alpha_+^3$. Without loss of generality, we rename $\al_+:=\inf_{s\in[0,1]}\al_+(s).$ Define 
	$$\wt I_0 :=\left\{x\in [0, \lambda_+ +n^{2\e} \phi_n^2 +  n^{-2/3+2\epsilon}]: \text{dist}\left(x,\text{Spec}(\cal C^b_{X\cal Y})\right) \le n^{-1+\epsilon} \alpha_+^{-1}  \right\}.$$
	Note that $\wt I_0$ is a union of connected intervals. 
	Due to the interlacing \eqref{interlacing_eqb}, we have
	\begin{equation}\label{interlacing_t2}
		\lambda^b_{i+r}\le \wt\lambda_{i}(s) \leq \lambda^b_{i-r},\quad s\in [0,1]. 
	\end{equation}
	By Lemma \ref{prop_eigensticking} and Lemma \ref{lem_counting}, we know
	\begin{equation}\nonumber
		|\wt\lambda_{i+r_+}(0)-\lambda^b_i| \leq n^{-1+2\epsilon} \alpha_+^{-1} ,
		\ee
		and 
		\be
		\text{dist}\left(\wt \lambda_{i+r_+}(s),\text{Spec}(\cal C^b_{X\cal Y})\right) \le n^{-1+\epsilon} \alpha_+^{-1},\quad s\in [0,1].
	\end{equation}
	In addition, by continuity of eigenvalues with respect to $s$, we know that $\wt \lambda_{i+r_+}(s)$ is in the same connected component of $\wt I_0$ as $\wt \lambda_{i+r_+}(0)$. For any $i$, let $B_{i}$ be the set of $j$ such that $\lambda^b_i$ and $\lambda^b_j$ are in the same connected component of $\wt I_0$. Then, we conclude that 
	\begin{align*}
		\wt \lambda_{i+r_+}(s) & \in \bigcup_{j\in B_{i}:|i +r_+ - j | \le r} \left[\lambda^b_j - n^{-1+2\epsilon} \alpha_+^{-1} , \lambda^b_j +  n^{-1+2\epsilon} \alpha_+^{-1} \right],
	\end{align*}
	which gives that
	\be \left|\wt \lambda_{i+r_+}(s)-\lambda^b_i\right| \le 2r  n^{-1+2\epsilon} \alpha_+^{-1},\quad s\in [0,1].\label{case1_extreme}
	\ee

	\vspace{5pt}
	\noindent{\bf Step 4:} Finally, we consider the case where $\alpha_+ <  C_1  ( n^\e\phi_n +   n^{-1/3+\epsilon} ) $ or $i >n^{1-4\epsilon} \alpha_+^3$. Suppose first that $\alpha_+< C_1  ( n^\e\phi_n +   n^{-1/3+\epsilon} )$. Then, by the assumption of Theorem \ref{main_thm0.5}, if $\e$ is small enough such that $\e<\e_0$, we must have
	\be\label{special small} \phi_n \le n^{-1/3}, \quad  \al_+\lesssim n^{-1/3+\e}.\ee
	Now, using \eqref{special small}, \eqref{interlacing_eqb}, (\ref{eq_stickingrigi2}) and (\ref{eq_stickingrigi}), we find that 
	\begin{equation*}
		|\wt\lambda_{i+r_+}-\lambda_i^b| \lesssim n^{-2/3+\e} \lesssim  n^{-1+2\e}\al_+^{-1}. 
	\end{equation*}
	On the other hand, suppose $i >n^{1-4\epsilon} \alpha_+^3$. If $i\le r$, then we have $\al_+ \lesssim n^{-1/3 + 4\e/3}$, and with the same argument as above, we get
	\begin{equation*}
		|\wt\lambda_{i+r_+}-\lambda_i^b| \lesssim n^{-2/3+\e} \leq n^{-1+3\e}\al_+^{-1}. 
	\end{equation*}
	Otherwise, using \eqref{interlacing_eqb} and (\ref{eq_stickingrigi2}), we get
	\begin{equation*}
		|\wt\lambda_{i+r_+}-\lambda_i^b| \lesssim i^{-1/3} n^{-2/3+\epsilon/2}  \leq  n^{-1+2\e}\al_+^{-1}. 
	\end{equation*}
	Combining the above three estimates with \eqref{case1_extreme}, we conclude \eqref{boundstick}, since $\e>0$ can be arbitrarily small. 
\end{proof}

For the proof of Lemma \ref{lem_counting}, we shall use an argument that extends the one in the proof of Proposition 6.8 in \cite{KY}. However, the proof in \cite[Section 7]{CCA} may also work, where the authors proved essentially the same result but only for $i \le \varpi$ with $\varpi$ being a fixed integer. 


\begin{proof}[Proof of Lemma \ref{lem_counting}]
	Note that in this lemma, we have $\al_+\equiv \al_+(0) \sim 1$. 
	In the first step, we group together the eigenvalues $\lambda_i$ that are close to each other. More precisely, let $\mathcal{B}=\{B_k\}$ be the finest partition of $\{1,\cdots, q\}$ such that $i<j$ belong to the same block of $\mathcal B$ if 
	$$|\lambda^b_{i}-\lambda^b_{j}| \leq  n^{- 1 + 7\epsilon/6} \alpha_+^{-1} .$$ 
	Note that each block $B_k$ of $\mathcal B$ consists of a sequence of consecutive integers. We order the blocks in the descending order, that is, if $k<l$  then $\lambda^b_{i_k}>\lambda^b_{i_l}$ for all $i_k \in B_k$ and $i_l \in B_l$. 
	
	We first derive a bound on the sizes of the blocks. We define $k^*$ such that $n_0:= \lceil n^{1-4\epsilon} \alpha_+^3\rceil \in B_{k^*}$. For any $k\le k^*$, we take $i < j $ such that $i$ and $j$ both belong to the block $B_k$. Then, by \eqref{interlacing_eqb} and (\ref{eq_stickingrigi2}), we have that for some constants $c,C>0$,
	\begin{equation}\nonumber
		c \left[ \left( {j}/{n}\right)^{2/3}-\left( {i}/{n}\right)^{2/3}  \right] - C i^{-1/3} n^{-2/3+\epsilon/2} \leq \lambda^b_{i}-\lambda^b_{j} \leq C(j-i) n^{- 1 + 7\epsilon/6} \alpha_+^{-1} .
	\end{equation}
	Now, using $j^{2/3} - i^{2/3} \ge j^{-1/3}(j-i) $, we obtain that 
	$$  \left( j^{-1/3} -C n^{- 1/3 + 7\epsilon/6} \alpha_+^{-1} \right)(j-i) \leq C i^{-1/3} n^{\epsilon/2} .$$
	From this estimate, we conclude that if $i$ and $j$ satisfy \be\label{extraij}1\le i\le j \le n^{1-15\e/4}, \ee
	then we have
	\begin{equation}\label{boundij}
		j-i \le C(j/i)^{1/3}n^{\e/2} .
	\end{equation}
	Now, we claim that
	\begin{equation}\label{eq_sizeeigen}
		|B_k| \leq Cn^{3\e/4}  \quad \text{for } \ k=1,\cdots, k^*,
	\end{equation}
	and for any given $i_k \in B_k$,
	\begin{equation}\label{rigidity_size}
		|\lambda^b_{i}-\gamma_{i_k}|\leq i^{-1/3} n^{-2/3+ \epsilon}  \quad \text{for all }\ i \in B_k .
	\end{equation}
	To prove \eqref{eq_sizeeigen} and \eqref{rigidity_size}, we denote
	$  \al_k :=\max_{i\in B_k} i$ and $\beta_k :=\min_{i\in B_k} i .$
	If $i\in B_k$ satisfies $i\ge \al_k/2$, then \eqref{boundij} gives that $\al_k-i\le Cn^{\e/2}$, with which we obtain that
	$$|\gamma_i-\gamma_{\al_k}|\le Ci^{-1/3}n^{-2/3+\e/2}.$$
	On the other hand, if $i\in B_k$ satisfies $i\le \al_k/2$, then \eqref{boundij} gives that
	$\al_k - i\le \al_k\le Cn^{3\e/4}$. Thus, 
	$$|\gamma_i-\gamma_{\al_k}|\le |\gamma_1-\gamma_{\al_k}| \le Cn^{-2/3+\e/2} \le Ci^{-1/3}n^{-2/3+3\e/4}.$$
	Combining the above two estimates with (\ref{eq_stickingrigi2}), we obtain that
	\begin{align*}
		|\lambda^b_{i}-\gamma_{i_k}|  &\le |\lambda^b_{i}-\gamma_{i}| +|\gamma_{i}-\gamma_{\al_k}|+ |\gamma_{\al_k}-\gamma_{i_k}| \le Ci^{-1/3}n^{-2/3+3\e/4} \le i^{-1/3}n^{-2/3+\e}.
	\end{align*}
	From the above proof, we see that \eqref{eq_sizeeigen} and \eqref{rigidity_size} as long as \eqref{extraij} holds. We still need to prove \eqref{extraij} for $i,j \in B_{k}$ with $k\le k^*$. In fact, if there is $j\in B_{k^*}$ such that $j\ge n^{1-15\e/4}$, then we can find $j'\in B_{k^*}$ such that  $n^\e \le j'-n_0 \le 2n^{\e}$, which contradicts \eqref{boundij} and \eqref{eq_sizeeigen}.

	We are now ready to complete the proof. For any $1\le k \le k^*$, we denote 
	\begin{equation}\label{defalbeta}
		\fa_k:=\min_{i \in B_k} \lambda^b_{i} = \lambda^b_{\al_k}, \quad \fb_k:=\max_{i \in B_k} \lambda^b_{i}=\lambda^b_{\beta_k}. 
	\end{equation}
	We introduce a continuous path as 
	\begin{equation}\label{eq_defnx0x1}
		x_s^k:=(1-s)\left(\fa_k - \delta_n/3\right)+s\left(\fb_k  + \delta_n/3\right), \quad s \in [0,1],   
	\end{equation}
	where $\delta_n:=n^{- 1 + 7\epsilon/6} \alpha_+^{-1}.$
	The interval $[x_0^k, x_1^k]$ contains precisely the eigenvalues of $\mathcal C^b_{X\cal Y}$ that are in $B_k$, and the endpoints $x_0^k$ and $x_1^k$ are at distances at least $\delta_n/3$ from any eigenvalue of $\mathcal C^b_{X\cal Y}$. Then, we have the following proposition. We postpone its proof until we finish the proof of Lemma \ref{lem_counting}.

	\begin{proposition} \label{prop_atleast} Almost surely, there are at least $|B_k|$ eigenvalues of $\mathcal C_{\cal X\cal Y}$ in $[x_0^k, x_1^k]$. 
	\end{proposition}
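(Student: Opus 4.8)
The plan is to count, via the argument principle, the zeros inside a small contour $\Gamma_k$ enclosing $[x_0^k,x_1^k]$ of the meromorphic master function $\Delta^b(\lambda)$ appearing on the left-hand side of \eqref{detereq temp3} (equivalently, of its reduced form in \eqref{masterx5}): its zeros are exactly the eigenvalues of $\cal C_{\cal X\cal Y}$, and its poles are exactly the eigenvalues of $\cal C^b_{X\cal Y}$. Since $\al_+(0)\sim1$ in this lemma, set $\delta_n=n^{-1+7\e/6}$ and let $\Gamma_k$ be the positively oriented boundary of the rectangle $[x_0^k,x_1^k]\times[-\delta_n/3,\delta_n/3]$. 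By the construction \eqref{eq_defnx0x1}, the open rectangle contains precisely the $|B_k|$ eigenvalues $\lambda^b_j$, $j\in B_k$ (a.s.\ simple by \eqref{abscont}), while by the rigidity \eqref{rigidityb} and the block separation $\Gamma_k$ itself stays at distance $\gtrsim\delta_n$ from the whole spectrum of $\cal C^b_{X\cal Y}$; moreover Lemma \ref{prop_eigensticking}, applied at the abscissae $x_0^k$ and $x_1^k$ (which lie in eigenvalue-free windows of the form $\Omega_i$), shows that $\Gamma_k$ passes through no eigenvalue of $\cal C_{\cal X\cal Y}$. Consequently the argument principle yields that the number of eigenvalues of $\cal C_{\cal X\cal Y}$ inside $\Gamma_k$, minus $|B_k|$, equals the winding number of $\Delta^b$ along $\Gamma_k$; since eigenvalues of $\cal C_{\cal X\cal Y}$ are real, it suffices to prove that this winding number is $0$.

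To control $\Delta^b$ on $\Gamma_k$ I would combine the anisotropic local law for $G^b$ (Lemma \ref{thm_localb}), valid directly on the parts of $\Gamma_k$ with $|\im\lambda|\ge n^{-1+\e}$, with the $\eta$-comparison estimate $|G^b_{\bu\bv}(\lambda)-G^b_{\bu\bv}(\re\lambda+\ii n^{-1+\e})|\lesssim n^{\e/20}(\im m^b+|\im\lambda|)$ exactly as in \eqref{eq_bound2} of the proof of Lemma \ref{prop_eigensticking} (legitimate because $\re\lambda\in\{x_0^k,x_1^k\}$ lies in an $\Omega_i$) on the two short subsegments of the vertical sides near the real axis. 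Feeding this into the derivation that produced \eqref{masterx5}, on the good event $\Xi$ one gets $\Delta^b(\lambda)=c(\lambda)\det\!\bigl(f_c(\lambda)I_r-\diag(t_1,\dots,t_r)+\cal E_r(\lambda)\bigr)$ with $c(\lambda)\ne0$ and $\|\cal E_r(\lambda)\|\le n^{-c}$ uniformly on $\Gamma_k$. Because $[x_0^k,x_1^k]$ sits within distance $\lesssim n^{-8\e/3}$ of $\lambda_+$, we have $|f_c(\lambda)-t_c|\sim|\lambda-\lambda_+|^{1/2}\to0$ on $\Gamma_k$ by \eqref{eq_mcomplex0} and $f_c(\lambda_+)=t_c$; meanwhile $t_j-t_c\gtrsim1$ for $1\le j\le r_+$ by \eqref{eq_multione2}, and $|f_c(\lambda)|\sim1$ (so $|f_c(\lambda)-t_j|=|f_c(\lambda)|\sim1$ for $j>r_+$). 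Hence $\bigl|\det(f_c(\lambda)I_r-\diag(t_1,\dots,t_r))\bigr|=\prod_{j}|f_c(\lambda)-t_j|\gtrsim1$ on $\Gamma_k$, which dominates $\cal E_r$.

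I would finish with a Rouché-type winding-number comparison. Split $\Gamma_k=\Gamma_k^+\cup\Gamma_k^-$ into its closed-upper and closed-lower halves (meeting on $\mathbb R$ only at $x_0^k,x_1^k$). On $\Gamma_k^+$ resp.\ $\Gamma_k^-$, $\Delta^b$ is uniformly close to $c^\pm(\lambda)\prod_{j}(f_c^\pm(\lambda)-t_j)$, where the superscript $\pm$ denotes the continuous boundary value of the deterministic quantity from the upper resp.\ lower half-plane; each such boundary function is continuous and zero-free on the corresponding closed half-plane side of $[x_0^k,x_1^k]$, so the argument increment of $\Delta^b$ along $\Gamma_k^\pm$ equals that of $c^\pm\prod_j(f_c^\pm-t_j)$ along $[x_0^k,x_1^k]$ (traversed in the matching direction). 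Using $f_c^-=\overline{f_c^+}$ and $c^-=\overline{c^+}$ on $\mathbb R$, together with the fact that on $[x_0^k,x_1^k]$ these functions are real of constant sign (recall $f_c^+-t_j\approx t_c-t_j$), the two increments cancel, giving $\mathrm{wind}(\Delta^b\circ\Gamma_k,0)=0$. Therefore $\cal C_{\cal X\cal Y}$ has exactly $|B_k|$ eigenvalues inside $\Gamma_k$, all of them real, hence in $[x_0^k,x_1^k]$; combined with the interlacing \eqref{interlacing_eqb} in the subsequent counting step this yields \eqref{eq_counting}.

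The main obstacle is this last, purely complex-analytic step near the real axis: $\Gamma_k$ must cross $\mathbb R$, possibly inside the support $[\lambda_-,\lambda_+]$, where the deterministic approximant of $G^b$ exists only as a one-sided boundary value (branch cut of $\sqrt{(z-\lambda_-)(z-\lambda_+)}$) and the local-law control degenerates as $|\im\lambda|\downarrow0$. The two ingredients that rescue the argument are (i) the eigenvalue-free windows around $x_0^k$ and $x_1^k$ supplied by Lemma \ref{prop_eigensticking}, which let the contour cross $\mathbb R$ harmlessly and also encode the block separation used above, and (ii) the $\eta$-comparison estimate \eqref{eq_bound2}, which upgrades the local law down to $\mathbb R$ at those two abscissae. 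A secondary point, handled a.s.\ through \eqref{abscont}, is that each $\lambda^b_j$ ($j\in B_k$) is a genuine simple pole of $\Delta^b$—i.e.\ there is no accidental cancellation in the determinant—which is precisely what turns the computed winding number $0$ into the lower bound $|B_k|$ rather than merely an upper bound.
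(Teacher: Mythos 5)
Your route is genuinely different from the paper's. The paper works entirely on the real axis: it splits $G^b$ into the block contribution $P_{B_k}G^b$ and the regular remainder $P_{B_k^c}G^b$, packages the eigenvalue equation into a \emph{real symmetric} matrix $\cal M(x)$ (equation \eqref{eq_m1}) that is singular exactly at the eigenvalues of $\cal C_{\cal X\cal Y}$ in the relevant window, and then counts sign changes: $\cal M(x_0^k)$ has a negative eigenvalue because $t_i-t_c\gtrsim 1$, while the sign-definite rank-one structure \eqref{spectral1extra} of the residue of $P_{B_k}R^b$ forces $\cal M$ to flip from positive definite (as $x\uparrow\lambda_l^b$) to negative definite (as $x\downarrow\lambda_l^b$) across each $l\in B_k$; the intermediate value theorem then produces one singular point in $(x_0^k,\lambda^b_{\al_k})$ and one in each gap $(\lambda^b_l,\lambda^b_{l-1})$, i.e.\ at least $|B_k|$ eigenvalues. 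You instead apply the argument principle to the meromorphic master determinant on a rectangle around the block and argue the winding number vanishes. Both routes use the same analytic inputs (the $\eta$-comparison estimate of type \eqref{eq_bound2} at $x_0^k,x_1^k$, the smallness \eqref{eq_rxorder} of the block contribution away from its poles, and the local law), and both ultimately discard non-generic events via \eqref{abscont}, so yours is a legitimate alternative in principle.

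Two points need repair before your version actually delivers the lower bound. First, and most importantly, the argument principle only yields $\#\{\text{zeros}\}-\#\{\text{poles}\}=0$ inside $\Gamma_k$, so the bound $|B_k|$ rests entirely on \emph{every} $\lambda^b_j$, $j\in B_k$, being a genuine pole of $\Delta^b$: since the residue of $G^b$ at a simple eigenvalue is rank one (see \eqref{spectral1}), each $\lambda^b_j$ contributes a pole of order at most one, so there is no slack and a single cancellation destroys the count. Deferring this to \eqref{abscont} is not a proof --- absolute continuity of the entries does not by itself show the residue of the determinant is a.s.\ nonzero --- and this is precisely the non-generic-event discussion the paper outsources to \cite[Section 6.4]{KY}; in the paper's version the needed non-degeneracy appears more transparently as non-vanishing of overlaps against a positive rank-one residue, which is what turns the sign flip into an actual crossing. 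Second, your justification of the cancellation of the two argument increments is off: for $x\in[x_0^k,x_1^k]$ inside the bulk, $f_c^{\pm}(x)-t_j$ is \emph{not} real (its imaginary part is $\sim\sqrt{\kappa_x}$). What saves the argument is that $|\re(f_c^{\pm}(x)-t_j)|\gtrsim 1$ while $|\im f_c^{\pm}(x)|=\oo(1)$, so each factor stays in a fixed half-plane, the argument increment of the deterministic approximant along the real segment is $\oo(1)$, and together with the fact that the Rouch\'e comparison confines $\arg(\Delta^b/D^{\pm})$ to $(-\pi/2,\pi/2)$ on each half-contour, the integer winding number is forced to be $0$. You should also justify that the scalar prefactor produced by the Schur reduction to \eqref{masterx5} is zero-free with $\oo(1)$ argument variation near $\lambda_+$; it is, but this is used, not free.
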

	
	Here, ``almost surely" in the statement is due to the assumption \eqref{abscont}: in the proof we discard a measure zero non-generic event. We postpone the proof of Proposition \ref{prop_atleast} until we complete the proof of Lemma \ref{lem_counting}.  
	
	
	We now use a standard interlacing argument to show that $\cal C_{\cal X\cal Y}$ has at most $|B_k|$ eigenvalues  in $[x_0^k, x_1^k]$.  By \eqref{interlacing_eqb}, there are at most $|B_1|+r_+$ eigenvalues of $\cal C_{\cal X\cal Y}$ in $[x_0^1, \infty)$ (recall that the rank of $A(0)$ is $r_+$). Moreover, with the argument in Section \ref{sec mainthm}, we can prove that \eqref{eq_stickoutlier} holds in the case $A\equiv A(0)$, i.e. there are exactly $r_+$ outliers. Then, together with Proposition \ref{prop_atleast}, it gives that there are exactly $|B_1|$ eigenvalues of $\cal C_{\cal X\cal Y}$ in $[x_0^1, x_1^1].$ Repeating this argument, we can show that $\cal C_{\cal X\cal Y}$ has exactly $|B_k|$ eigenvalues in $[x_0^k, x_1^k]$ for all $k=2,\cdots, k^*$. Moreover, using (\ref{eq_sizeeigen}) we find that for any $i \in B_k$, 
	\begin{equation*}
		\sup\Big\{ |x-\lambda^b_{i}|: x \in [x_0^k, x_1^k]\Big\} \leq Cn^{3\e/4} \left(n^{- 1 + 7\epsilon/6} \alpha_+^{-1} \right) \le n^{- 1 + 2\e} \alpha_+^{-1} ,
	\end{equation*}
	which concludes Lemma \ref{lem_counting}. 
\end{proof}

Finally, we give the proof of Proposition \ref{prop_atleast}.
\begin{proof}[Proof of Proposition \ref{prop_atleast}] 
	For the spectral decomposition of $R^b(z)$ (which takes a similar form as \eqref{spectral1}), we define
	\be\label{PBkc}{P}_{B_k} R^b\left( z \right) := \sum\limits_{l \in B_k} \frac{1}{\lambda^b_l-z}\left( {\begin{array}{*{20}c}
			{{\xi ^b_l (\xi^b _l)^{\top}  }} & {-z^{-1/2}(\lambda_l^b)^{1/2} \xi ^b_l (\zeta^b _{ l})^{\top}}  \\
			{-z^{-1/2} (\lambda_l^b)^{1/2} \zeta^b _{l} (\xi^b _l)^{\top}  } & {\zeta^b _l (\zeta ^b_l)^{\top} }  \\
	\end{array}} \right) ,\ee
	and ${P}_{B_k^c} R^b(z) := R^b(z)- P_{B_k}R^b(z).$ We define ${P}_{B_k} G^b$ by replacing $R$ and $Y$ with  ${P}_{B_k} R^b$ and $\cal Y$ in \eqref{GL1}, \eqref{GR1} and \eqref{GLR1},  that is, 
	$${P}_{B_k} G^b:=\begin{pmatrix} {P}_{B_k} \cal G_L^b & {P}_{B_k} {\cal G}_{LR}^b \\ {P}_{B_k} {\cal G}_{RL}^b & {P}_{B_k} {\cal G}_{R}^b \end{pmatrix},$$
	where
	\be\label{GL1bbbb}
	{P}_{B_k} \cal G_L^b  :=  \begin{pmatrix} (XX^\top)^{-1/2} & 0 \\ 0 & (\cal Y \cal Y^\top)^{-1/2} \end{pmatrix}{P}_{B_k} R^b \begin{pmatrix} (XX^\top)^{-1/2} & 0 \\ 0 & (\cal Y \cal Y^\top)^{-1/2} \end{pmatrix},
\ee
\be\label{GR1bbbb}
{P}_{B_k} \cal G_R^b : =   \begin{pmatrix}  z  I_n & z^{\frac12}I_n\\ z^{\frac12}I_n &  z  I_n\end{pmatrix} +   \begin{pmatrix}  z  I_n & z^{\frac12}I_n\\ z^{\frac12}I_n &  z  I_n\end{pmatrix}  \begin{pmatrix} X^{\top} & 0 \\ 0 & \cal Y^{\top} \end{pmatrix} {P}_{B_k} \cal G_L^b \begin{pmatrix} X & 0 \\ 0 &  \cal Y \end{pmatrix} \begin{pmatrix}  z  I_n & z^{\frac12}I_n\\ z^{\frac12}I_n &  z  I_n\end{pmatrix}  ,
\ee
\be\label{GLR1bbbb}
\begin{split}
& {P}_{B_k} {\cal G}_{LR}^b(z):= -{P}_{B_k} \cal G_L^b(z) \begin{pmatrix} X & 0 \\ 0 &  \cal Y \end{pmatrix} \begin{pmatrix}  z  I_n & z^{\frac12}I_n\\ z^{\frac12}I_n &  z  I_n\end{pmatrix}  , \quad  {P}_{B_k} {\cal G}_{RL}^b(z):= {P}_{B_k} {\cal G}_{LR}^b(z)^\top . 
\end{split}
\ee
Then, we define ${P}_{B_k^c} G^b(z):=G^b(z)-{P}_{B_k} G^b(z)$. Given any $x \in [x_0^k, x_1^k]$, we denote $z_x:= x+ \ii \eta_x$ with $\eta_x:=n^{- 1 + 7\epsilon/6} \alpha_+^{-1} .$
	We claim that
	\begin{align}\label{eq_bound22}
		\left\|\begin{pmatrix} \bU_a^{\top} & 0 \\ 0 & \bE_a^{\top}\end{pmatrix} \left[P_{B^c_k}G^b(z_x)-P_{B^c_k}G^b(x)\right] \begin{pmatrix} \bU_a & 0 \\ 0 & \bE_a\end{pmatrix} \right\|\lesssim  n^{\e/20}\im m^b (z_x) +   n^{\e/20}\eta_x . 
	\end{align}
	The proof is very similar to that for (\ref{eq_bound2}). For example, for deterministic unit vectors $\bv$ and $\bu$ in \eqref{decompose_uv_addadd}, using \eqref{GL1bbbb}, \eqref{op rough3}  and \eqref{delocalav1},  we get  
	\begin{align*}
		&\left|  P_{B_k^c}G^b_{\mathbf u_1\mathbf v_1}(z_x) - P_{B_k^c}G^b_{\mathbf u_1\mathbf v_1}(x) \right| \\
		&\lesssim \sum_{l \notin B_k, l\le (1-\delta)q} \frac{\eta_x  |\langle \bv_1,S_{xx}^{-1/2}{\xi}^b_l\rangle| |\langle \bu_1,S_{xx}^{-1/2}{\xi}^b_l\rangle| }{|\lambda_l^b - x|\left[(\lambda_l^b-x)^2 + \eta_x^2 \right]^{1/2}} +  \eta_x \sum_{l> (1-\delta)q}{|\langle \bv_1,S_{xx}^{-1/2}{\xi}^b_l\rangle||\langle \bu_1,S_{xx}^{-1/2}{\xi}^b_l\rangle|} \\
		&\lesssim n^{-1+\e/20}  \sum_{l = 1}^{q} \frac{\eta_x  }{ (\lambda_l^b-x)^2 + \eta_x^2 } +  \eta_x \lesssim  n^{\e/20}\im m^b(z_x) + \eta_x,
	\end{align*}
	where in the second step we used $|\lambda_l^b - x| \gtrsim \eta_x$ for $l\notin B_k$. The proofs for the rest of the cases $ P_{B_k^c}G^b_{\bu_\al\bv_\beta}(z_x) - P_{B_k^c}G^b_{\bu_\al\bv_\beta}(x)$, $\al,\beta=1,2,3,4$, are similar, so we omit the details. 
	
	Next, we claim that 
	\begin{equation}\label{eq_rxorder}
		\left| P_{B_k}G^b_{\bu\bv}(z_x) \right| + \left| P_{B_k}G^b_{\bu\bv}(x_0^k) \right| \le n^{-\epsilon/3}.
	\end{equation}
	For example, for the $z_x$ term, we have
	\begin{equation*}
		\left| P_{B_k}G^b_{\bu_1\bv_1}(z_x) \right| =	\left| \sum_{l \in B_k} \frac{\langle {\mathbf{u}}_1,S_{xx}^{-1/2} {\xi}^b_l \rangle \langle {\xi}^b_l S_{xx}^{-1/2}, {\mathbf{v}}_{1} \rangle}{\lambda^b_{l}-z_x} \right| \leq Cn^{3\e/4} \eta_x^{-1} n^{-1+\e/20} \ll n^{ - \epsilon/3},
	\end{equation*}
	where we used \eqref{delocalav1} and \eqref{eq_sizeeigen} in the second step. The proofs for the rest of the cases $ P_{B_k}G^b_{\bu_\al\bv_\beta}(z_x)$, $\al,\beta=1,2,3,4$, are similar. For $z=x_0^k$, the proof is the same except that we need to use $|\lambda^b_{l}-x_0^k| \gtrsim n^{- 1 + 7\epsilon/6} \alpha_+^{-1}$ for $l\in B_k$. 
	
	Now, we remove the zero singular values of $A$ and redefine that 
	$$\Sigma_a:= \diag(a_1, \cdots, a_{r_+}), \quad  U_a =  \begin{pmatrix}\mathbf u_1^a, \cdots, \mathbf u_{r_+}^a \end{pmatrix} , \quad E_a= \begin{pmatrix}Z^{\top}\mathbf v_1^a, \cdots,Z^{\top} \mathbf v_{r_+}^a \end{pmatrix} .$$
	Inspired by \eqref{detereq temp3}, for $x \notin \text{spec}(\mathcal C^b_{X\cal Y})$, we define
	$$\cal M(x):=\begin{pmatrix} 0 & \Sigma^{-1}_a \\ \Sigma^{-1}_a  & 0\end{pmatrix}  +  \begin{pmatrix} U_a^{\top} & 0 \\ 0 & E_a^{\top}\end{pmatrix} \begin{pmatrix} \cal G^b_1(x) & \cal G_{13}^b(x) \\ \cal G_{31}^b(x) &  \cal G_3^b(x) \end{pmatrix} \begin{pmatrix} U_a & 0 \\ 0 & E_a\end{pmatrix} ,$$
	where we recall that  $\cal G_{\al}^b$ is the $\cal I_\al\times \cal I_\al$ block of $G^b$ (cf. Definition \ref{resol_not}), and we have used $\cal G_{\al\beta}^b$ to denote the $\cal I_\al\times \cal I_\beta$ block of $G^b$. We know that almost surely, $x \in \operatorname{Spec}(\cal C_{\cal X\cal Y})\setminus\operatorname{Spec}(\cal C^b_{ X\cal Y})$ if and only if $\cal M (x)$ is singular. 
	To simplify notations, we denote
	$$ [G^b(z)]_{1, 3}:=\begin{pmatrix} \cal G^b_1(z) & \cal G_{13}^b(z) \\ \cal G_{31}^b(z) &  \cal G_3^b(z) \end{pmatrix}.$$
	Now, using \eqref{aver_lawb}, \eqref{aniso_lawevb}, \eqref{eq_bound22} and \eqref{eq_rxorder}, we obtain that
	\begin{align}
		\cal M (x)&=\begin{pmatrix} 0 & \Sigma^{-1}_a \\ \Sigma^{-1}_a  & 0\end{pmatrix}  +  \begin{pmatrix} U_a^{\top} & 0 \\ 0 & E_a^{\top}\end{pmatrix} \left[ P_{B_k}G^b(x)  \right]_{1,3}\begin{pmatrix} U_a & 0 \\ 0 & E_a\end{pmatrix} \nonumber\\
		&\quad +  \begin{pmatrix} U_a^{\top} & 0 \\ 0 & E_a^{\top}\end{pmatrix} \left[  P_{B_k^c}\left(G^b(x)-G^b(z_x)\right) + G^b(z_x) - P_{B_k} G^b(z_x)   \right]_{1,3}\begin{pmatrix} U_a & 0 \\ 0 & E_a\end{pmatrix} \nonumber\\
		&=\begin{pmatrix} 0 & \Sigma^{-1}_a \\ \Sigma^{-1}_a  & 0\end{pmatrix}  + \begin{pmatrix} U_a^{\top} & 0 \\ 0 & E_a^{\top}\end{pmatrix} \left[P_{B_k}G^b(x)\right]_{1,3}\begin{pmatrix} U_a & 0 \\ 0 & E_a\end{pmatrix} +   \left[\Pi_r^b(z_x) \right]_{1,3}+ R_0(x) \nonumber\\
		&=\begin{pmatrix} 0 & \Sigma^{-1}_a \\ \Sigma^{-1}_a  & 0\end{pmatrix}  +  \begin{pmatrix} U_a^{\top} & 0 \\ 0 & E_a^{\top}\end{pmatrix} \left[P_{B_k}G^b(x)\right]_{1,3}\begin{pmatrix} U_a & 0 \\ 0 & E_a\end{pmatrix} +    \left[\Pi_r^b(\lambda_+) \right]_{1,3}+ R(x),\label{eq_m1}
	\end{align}
	where 
	$$\left[\Pi_r^b(z)\right]_{1,3}:= \begin{pmatrix} c_1^{-1}m_{1c}(z)I_r  & 0\\0 &  m_{3c}(z)I_r - \frac{h^2(z)}{m_{4c}(z)}\cal M_r \frac{\Sigma_b^2}{1+\Sigma_b^2}\cal M_r^{\top}  \end{pmatrix},$$
	and $R_0$ and $R_1$ are two matrices satisfying that 
	\begin{align*}
	\|R_0(x)\|&=\OO\left(n^{\e/20}\eta_x+n^{\e/20} \im m_{c}(z_x) + n^{\e/2}\Psi(z_x)  + n^{\e/2}\phi_n + n^{-\e/3}\right) =\OO\left(n^{-\e/3}\right),\\
	\|R(x)\|&=\left\|R_0(x) + \OO(\sqrt{\kappa_x + \eta_x})\right\| =\OO\left(n^{-\e/3}\right).
	\end{align*}
	In bounding the $\|R_0(x)\|$ and $\|R(x)\|$, we also used Lemma \ref{lem_mbehavior}, \eqref{eq_defpsi} and that 
	$$\kappa_x \le \max \left\{|\lambda_+ -x_0^k|, |\lambda_+ -x_1^k|\right\} \lesssim (n^{1-15\e/4}/n)^{2/3} + n^{-2/3+\e}+ n^{-1+7\e/6}\al_+^{-1} \ll n^{-\e/3},$$
	where in the second step we used \eqref{extraij}, \eqref{rigidity_size} and the definition \eqref{eq_defnx0x1}. Moreover, $R(x)$ is real symmetric (because all the other terms in the line \eqref{eq_m1} are real symmetric) and continuous in $x$ on the extended real line $ \overline{\mathbb R}$.
	
	The rest of the proof follows from a continuity argument, which is exactly the same as the one in \cite[Section 6.4]{KY}. Instead of writing down all the details, we shall give an almost rigorous argument to show how equation \eqref{eq_m1} implies Proposition \ref{prop_atleast}. 
	
	First, we claim that $\cal M(x)$ has some negative singular values when $x=x_0^k$. By \eqref{eq_rxorder}, equation \eqref{eq_m1} gives that
	$$\cal M (x_0^k)=\begin{pmatrix} 0 & \Sigma^{-1}_a \\ \Sigma^{-1}_a  & 0\end{pmatrix}   +    \left[\Pi_r^b(\lambda_+)\right]_{1,3} + \OO(n^{-\e/3}).$$
	Let $\bv_i$ be an eigenvector of 
	$$ \frac{\Sigma_a}{(1+\Sigma_a^2)^{1/2}}\cal M_r \frac{\Sigma_b^2}{1+\Sigma_b^2}\cal M_r^{\top} \frac{\Sigma_a}{(1+\Sigma_a^2)^{1/2}}$$ 
	with eigenvalue $t_i$. Then, for $\bu_i =: \begin{pmatrix} m_{3c}(\lambda_+) (1+\Sigma_a^2)^{-1/2}  \bv_i \\ \Sigma_a (1+\Sigma_a^2)^{-1/2} \bv_i\end{pmatrix}$, we can verify that
	$$ \bu_i^{\top} \cal M (x_0^k) \bu_i =  \frac{h^2(\lambda_+)}{m_{4c}(\lambda_+)} \left( f_{c}(\lambda_+) -  t_i \right)\|\bv_i\|^2+ \OO(n^{-\e/3})\|\bv_i\|^2<0,$$
	where we used $m_{4c}(\lambda_+)>0$, $t_i>t_c=f_c(\lambda_+)$ and $t_i - t_c \sim 1$ for $1\le i\le r_+$.

	Next, we claim that for $l\in B_k$, almost surely, $\cal M(x)$ is positive definite when $x\uparrow \lambda_l^b$ and negative definite when $x\downarrow \lambda_l^b$. To see why, we pick any unit vector 
	$\bv=\begin{pmatrix}\bv_1^\top,\bv_2^\top\end{pmatrix}^\top$, $\bv_1,\bv_2\in \R^{r_+}$, and denote $\wt\bv=\begin{pmatrix}\bv_1^\top , \mathbf 0_{r_+} , \bv_2^\top , \mathbf 0_{r_+}\end{pmatrix}^\top$. Then, 
	\begin{align}
		\bv^{\top} \cal M(x) \bv &=\OO(1) + \wt\bv^{\top}  \begin{pmatrix} \bU_a^{\top} & 0 \\ 0 & \bE_a^{\top}\end{pmatrix} P_{B_k}G^b(x) \begin{pmatrix} \bU_a & 0 \\ 0 & \bE_a\end{pmatrix} \wt\bv \nonumber\\
		&=\OO(1) + \wt\bw^{\top} \begin{pmatrix}P_{B_k} \cal G^b_L(x) & - P_{B_k}\cal G^b_L(x) \\ -P_{B_k}\cal G^b_L(x) & P_{B_k}\cal G^b_L(x)\end{pmatrix}\wt\bw = \OO(1) + \bw^{\top} P_{B_k}R^b(x) \bw , \label{spec_extra1}
	\end{align}
	where in the second step we used \eqref{GR1bbbb} and \eqref{GLR1bbbb} with 
	$$\wt \bw=\begin{pmatrix} \bw_1\\ \bw_2 \end{pmatrix} := \begin{bmatrix} I_{p+q} & 0 \\ 0 & \begin{pmatrix} X & 0 \\ 0 & \cal Y \end{pmatrix} \begin{pmatrix} xI_n & x^{1/2}I_n \\ x^{1/2}I_n & xI_n\end{pmatrix} \end{bmatrix}  \begin{pmatrix} \bU_a & 0 \\ 0 & \bE_a\end{pmatrix} \wt\bv, \quad \bw_{1} , \bw_{2}  \in \R^{p+q},$$
	and in the third step we used \eqref{GL1bbbb} with 
	$$ \bw :=\begin{pmatrix} S_{xx}^{-1/2} & 0 \\ 0 & (S^b_{yy})^{-1/2} \end{pmatrix}(\bw_1-\bw_2).$$
	Using the spectral decomposition \eqref{PBkc}, we can write
	\begin{equation}
		\begin{split}
	&	P_{B_k}R^b\left( x \right)  	= \frac{1}{2} \sum\limits_{l \in B_k}  \left[\frac{x^{-1/2}}{(\lambda_l^b)^{1/2}- x^{1/2}}\begin{pmatrix} \xi_l^b \\ -\zeta_l^b\end{pmatrix} \begin{pmatrix} \xi_l^b\\ -\zeta_l^b\end{pmatrix}^{\top} - \frac{x^{-1/2}}{(\lambda_l^b)^{1/2}+x^{1/2}}\begin{pmatrix} \xi_l^b \\ \zeta_l^b\end{pmatrix} \begin{pmatrix}  \xi_l^b\\ \zeta_l^b \end{pmatrix}^{\top}\right].  \label{spectral1extra}
		\end{split}
	\end{equation}
	In particular, it has poles at $x=\lambda_l^b$ for $l\in B_k$. Combining \eqref{spec_extra1} and \eqref{spectral1extra}, we conclude the claim.
	
	With the above two claims and a simple continuity argument, we see that there exists $x\in (x_0^k, \lambda_{\al_k}^b)$ (recall \eqref{defalbeta}) such that $\cal M(x)$ is singular, and	for any $l,l-1\in B_k$, there exists $x\in (\lambda_{l}^b, \lambda_{l-1}^b)$ such that $\cal M(x)$ is singular. This gives at least $|B_k|$ eigenvalues of $\mathcal C_{\cal X\cal Y}$ inside $[x_0^k, x_1^k]$ and hence completes the proof. Writing down a rigorous continuity argument involves discussions on some non-generic measure zero events, and we refer the reader to \cite[Section 6.4]{KY} for more details. 
\end{proof}

\section{Proof of Corollary \ref{main_cor}}\label{appd_cor213}
	For $\phi_n$ and $\psi_n$ in \eqref{phipsi}, we define the truncated matrices $\wt X$, $\wt Y$ and $\wt Z$ with entries  
	$$\wt x_{ij} :=x_{ij}\mathbf 1_{ |x_{ij}|\le \phi_n n^{\e}}  , \quad \wt y_{ij} :=y_{ij}\mathbf 1_{|y_{ij}| \le \phi_n n^\e}  ,\quad \wt z_{ij}:=z_{ij}\mathbf 1_{ |z_{ij}|\le \psi_n n^\e}, $$
	for a sufficiently small constant $\e>0$. 
	Combining the moment conditions in (\ref{condition_4e}) with Markov's inequality, we obtain that
	\begin{equation}\label{XneX}
		\mathbb P(\wt X \ne X, \wt Y \ne Y, \wt Z\ne Z) =\OO \big( n^{-a\e}+  n^{-b\e} \big) 
	\end{equation}
	by a simple union bound. Using (\ref{condition_4e}) and integration by parts, we can also check that  
	\begin{align}\label{error_meanvar}
		\mathbb E  \left|x_{ij}\right|1_{|x_{ij}|> \phi_n n^\e} \le n^{-2-\e} , \quad \mathbb E \left|x_{ij}\right|^2 1_{|x_{ij}|> \phi_n n^\e} \le n^{-2-\e} .
	\end{align}
	For example, for the first estimate in \eqref{error_meanvar}, we have that
	\begin{align*}
		&  \mathbb E \left| \mathbf 1\left( |x_{ij}|>  \phi_n n^\e \right)x_{ij}\right| = \int_0^\infty \P\left( \left| \mathbf 1\left( |x_{ij}|>  \phi_n n^\e \right)x_{ij}\right|  > s\right)\dd s \\
		= &\int_0^{\phi_n n^\e}\P\left( |x_{ij}|> \phi_n n^\e \right)\dd s +\int_{\phi_n n^\e}^\infty \P\left(|x_{ij}| > s\right)\dd s  \\
		\lesssim &\int_0^{\phi_n n^\e}\left(   n^{1/2+\e}\phi_n   \right)^{-a}\dd s +\int_{\phi_n n^\e}^\infty \left(\sqrt{n}s\right)^{-a}\dd s \le n^{-\frac12 - 2\frac{a-1}{a} - (a-1)\e} \le n^{-2-\e},
	\end{align*}
	where in the third step we used (\ref{condition_4e}) 
	and Markov's inequality, and in the last step we used $a>4$. The second estimate of \eqref{error_meanvar} can be proved in a similar way. Note that \eqref{error_meanvar} implies 
	$$|\mathbb E  \tilde x_{ij}| \le n^{-2-\e}, \quad  \mathbb E |\tilde x_{ij}|^2 = n^{-1} + \OO(n^{-2-\e}).$$
	Moreover, we trivially have
	$$\mathbb E  |\tilde x_{ij}|^3 \le \mathbb E  |x_{ij}|^3 =\OO(n^{-3/2}),\quad \mathbb E  |\tilde x_{ij}|^4 \le \mathbb E  |x_{ij}|^4 =\OO(n^{-2}).$$
	Similar estimates also hold for the entries of $\wt Y$. Hence, $\wt X$ and $\wt Y$ are random matrices satisfying Assumption \ref{main_assm} (i) and condition \eqref{conditionA3}. For $\wt Z$, using (\ref{condition_4e}) and a similar argument, we can check that
	$$ |\mathbb E  \tilde z_{ij}| \le n^{-1-\e} , \quad  \mathbb E |\wt z_{ij}|^2 = n^{-1} + \OO (n^{-1-(b-2)\e} ). $$
	Hence, $Z$ is a random matrix satisfying Assumption \ref{main_assm} (ii). Now, combining \eqref{XneX} with Theorem \ref{main_thm}, we conclude \eqref{boundinp000}. 
	Combining \eqref{XneX} with Theorem \ref{main_thm0.5}, we obtain that 
	$$|\wt\lambda_{r_+ + i } -\lambda_i^b| \prec n^{-1}\al_+^{-1} \le n^{-2/3-\e_0},\quad 1\le i \le k,$$
	for $\al_+$ satisfying \eqref{well-sep000}. 
	Together with Lemma \ref{lem null0}, it concludes \eqref{boundinTW}.

\section{Proof of Theorem \ref{main_thm1}}\label{sec mainthm3}
For the proof of Theorem \ref{main_thm1}, we adopt a similar argument as that for Theorem 2.7 in \cite{PartIII}, that is, we decompose $(X,Y,Z)$ (in distribution) into well-behaved random matrices $(X^s,Y^s,Z^s)$ with bounded support $ n^{-\e}$ plus a perturbation matrix. 
However, our setting here is more complicated. We now define the precise decomposition of $X$. 
First, we introduce a cutoff on the matrix entries of $X$ at the level $n^{-\epsilon}$ for a sufficiently small constant $\epsilon>0$:
\begin{equation*}
	\alpha^{(1)}_n:=\mathbb{P}\left( \vert \wh x_{11} \vert > n^{1/2-\epsilon}\right), \quad \beta^{(1)}_n:=\mathbb{E}\left[\mathbf{1}{\left( |\wh x_{11}|> n^{1/2-\epsilon}\right)}\wh x_{11} \right].
\end{equation*}
Using (\ref{tail_cond}), we can check with integration by parts that for any small constant $\delta>0$, 
\begin{equation}
	\alpha^{(1)}_n \leq \delta n^{-2+4\epsilon}, \quad \vert \beta^{(1)}_n \vert \leq \delta n^{-{3}/{2}+3 \epsilon} . \label{BBBOUNDS}
\end{equation}
Now, we define independent random variables $\wh x_{ij}^s$, $\wh x_{ij}^l$, $c^{(1)}_{ij}$, $1\le i \le p,\ 1\le j \le n$, as follows.
\begin{definition}\label{defn_cutoff_center_adddd}
	We define $\wh x_{ij}^s$ as a random variable that has law $\rho_s^{(1)}$ defined through 
	\begin{equation}
		\rho_s^{(1)}(\Omega)= \frac1{1-\alpha^{(1)}_n}\int \mathbf 1\left( x+\frac{\beta^{(1)}_n}{1-\alpha^{(1)}_n} \in \Omega \right)\mathbf{1}\left(\left| x \right| \leq n^{{1}/{2}-\epsilon} \right)  \rho^{(1)}(\dd x) \nonumber
	\end{equation}
	for any event $\Omega$, where $\rho^{(1)}(\dd x)$ is the law of $\wh x_{ij}$. 
	We define $\wh x_{ij}^l$ as a random variable that has law $\rho_l^{(1)}$ defined through 
	\begin{equation}
		\rho_l^{(1)}(\Omega)= \frac1{\alpha^{(1)}_n}\int \mathbf 1\left( x+\frac{\beta^{(1)}_n}{1-\al^{(1)}_n} \in\Omega \right)\mathbf{1}\left(\left| x \right| > n^{{1}/{2}-\epsilon} \right)  \rho^{(1)}(\dd x) \nonumber
	\end{equation}
	for any event $\Omega$. We define $c^{(1)}_{ij}$ as a Bernoulli 0-1 random variable with 
	$$\mathbb{P}(c^{(1)}_{ij}=1)=\alpha^{(1)}_n,\quad \mathbb{P}(c^{(1)}_{ij}=0)=1-\alpha^{(1)}_n.$$ 
	Finally, let $X^s$, $X^l$ and $X^c$ be independent random matrices with entries
	$$x^s_{ij} = n^{-1/2}\wh x_{ij}^s, \quad x^l_{ij} = n^{-1/2}\wh x_{ij}^l,\quad x^c_{ij} = c^{(1)}_{ij}.$$
\end{definition}

\begin{remark}
With the above definition, $X$ can be decomposed as $X^s + (X^l- X^s)\circ X^c$ in distribution up to a negligible deterministic matrix, see \eqref{T3} below. 
The matrix $X^c$ gives the locations of the nonzero entries and its rank is at most $n^{5\e}$ with high probability, see \eqref{LDP_B} below. The matrix $X^l$ contains the large entries above the cutoff, but the tail condition \eqref{tail_cond} guarantees that the sizes of these entries are of order $\oo(1)$ in probability, see \eqref{EneR}. Hence, the perturbation is of low rank and has small signal strengths. We expect that, as in the famous BBP transition \cite{BBP}, the effect of this weak perturbation on the largest few eigenvalues is negligible. 
\end{remark}

In Definition \ref{defn_cutoff_center_adddd}, $\rho_s^{(1)}$ and $\rho_l^{(1)}$ are defined in a way such that $\wh x_{ij}^s$ and $\wh x_{ij}^l$ are both centered random variables. We can easily check that
\begin{equation}
	x_{ij} \stackrel{d}{=} x^s_{ij}\left(1-x^c_{ij}\right)+x^l_{ij}x^c_{ij} - \frac{1}{\sqrt{n}}\frac{\beta^{(1)}_n}{1-\alpha^{(1)}_n}, \label{T3}
\end{equation}
where ``$\stackrel{d}{=}$" means ``equal in distribution". Similarly, we decompose $Y$ as
\begin{equation}
	y_{ij} \stackrel{d}{=} y^s_{ij}\left(1-y^c_{ij}\right)+y^l_{ij}y^c_{ij} - \frac{1}{\sqrt{n}}\frac{\beta^{(2)}_n}{1-\alpha^{(2)}_n},  \label{T33}
\end{equation}
where the entries $y_{ij}^s$, $y_{ij}^l$ and $y^{c}_{ij}$ of the independent random matrices $Y^s$, $Y^l$ and $Y^c$ are defined in similar ways using
\begin{equation*}
	\alpha^{(2)}_n:=\mathbb{P}\left( \vert \wh y_{11} \vert > n^{1/2-\epsilon}\right), \ \ \beta^{(2)}_n:=\mathbb{E}\left[\mathbf{1}{\left( |\wh y_{11}|> n^{1/2-\epsilon}\right)}\wh y_{11} \right].
\end{equation*}
Notice that the deterministic matrix $\cal M_1$ with  entries
$$(\cal M_1)_{ij}=- \frac{1}{\sqrt{n}}\frac{\beta^{(1)}_n}{1-\alpha^{(1)}_n},\quad 1\le i \le p,\ 1\le j \le n,$$
has operator norm $\OO(n^{-1+3\e})$, which, by Weyl's inequality,  perturbs the singular values of $X$ at most by $\OO(n^{-1+3\e})$. Such a small error is always negligible for our proof, so we will omit the constant term in \eqref{T3}. Similarly, we will also omit the constant term in \eqref{T33}.  Finally, we decompose $Z$ as $Z= Z^s + Z^l $, where
\be\nonumber
Z^s_{ij}= \mathbf 1(|Z_{ij}|\le n^{-\e})Z_{ij} + \beta^{(3)}_{n} ,\quad Z^l_{ij}= \mathbf 1(|Z_{ij}|> n^{-\e})Z_{ij}- \beta^{(3)}_{n} ,
\ee
for $\beta^{(3)}_n$ defined as
$$ \beta^{(3)}_n:=\mathbb{E}[ \mathbf 1(|Z_{ij}|> n^{-\e})Z_{ij}].$$
Using \eqref{assmZ} and integration by parts, one can check that
$ \beta_n^{(3)}  =\OO(n^{-1+\e}) . $
The deterministic vector $(\beta_n^{(3)},\cdots, \beta_n^{(3)})^\top\in \R^n$ has Euclidean norm $\OO(n^{-1/2+\e})$ and is also negligible for the following proof. Hence, for simplicity of notations, we will also omit it throughout the proof.


With (\ref{tail_cond}) and integration by parts, we can obtain that
\begin{align}\label{estimate_qs}
	\mathbb{E} \wh x^{s}_{11} =0, \quad \mathbb{E}\vert \wh x^s_{11}\vert^2=1 - \OO(n^{-1+2 \epsilon}), \quad \mathbb{E}\vert \wh x^s_{11} \vert^3=\OO(1),\quad \mathbb{E}\vert \wh x^s_{11} \vert^4=\OO(\log n).
\end{align}
Similar estimates hold for $\wh y^s_{11}$. Hence, $X_1:=(\mathbb{E}\vert \wh x^s_{11} \vert^2)^{-{1}/{2}}X^s$ and $Y_1:=(\mathbb{E}\vert \wh y^s_{11} \vert^2)^{-{1}/{2}}Y^s$ are random matrices that satisfy the assumptions for $X$ and $Y$ in Lemma \ref{lem null}, Theorem \ref{main_thm} and Theorem \ref{main_thm0.5} with $\phi_n= \OO(n^{-\e})$. Moreover, the small errors $\OO(n^{-1+2 \epsilon})$ in $\mathbb{E}\vert \wh x^s_{11} \vert^2$ and $\mathbb{E}\vert \wh y^s_{11} \vert^2$ are negligible for our purpose. For $Z$, 
using $\lim_{t\to \infty}\mathbb E\left[|\wh z_{11}|^2\mathbf 1(|\wh z_{11}|>t) \right]=0$, we get that 
$$\mathbb E  |\wh z^s_{11}|^2=1-\oo(1),\quad \mathbb E  |\wh z^l_{11}|^2  =\oo(1),  $$
where we have used notations $\wh z_{11}^s:=\sqrt{n}Z^s_{11}$ and $\wh z_{11}^l:=\sqrt{n}Z^l_{11}$. Then, $Z_1:=(\mathbb{E}\vert \wh z^s_{11} \vert^2)^{-{1}/{2}}  Z^s $ satisfies the assumptions for $Z$ in Lemma \ref{lem null}, Theorem \ref{main_thm} and Theorem \ref{main_thm0.5} with $\psi_n= \OO(n^{-\e})$. Note that the scaling of $Z^s$ by $(\mathbb{E}\vert \wh z^s_{11} \vert^2)^{-{1}/{2}} $ amounts to a rescaling of $A$ and $B$ by $(\mathbb{E}\vert \wh z^s_{11} \vert^2)^{{1}/{2}}$, i.e., 
$$A\to A_1= (\mathbb{E}\vert \wh z^s_{11} \vert^2)^{{1}/{2}} A,\quad B\to B_1= (\mathbb{E}\vert \wh z^s_{11} \vert^2)^{{1}/{2}} B,$$ 
so that $A_1 Z_1= A Z^s$ and $B_1 Z_1= B Z^s$. In particular, we have that 
\be\label{perturb1}
\text{the $t_i$'s in (\ref{t1r}) are only perturbed by an amount of $\oo(1)$.}
\ee 

Denote by \smash{$\cal C^s_{\cal X\cal Y}$ and $\cal C^s_{ XY}$} 
the SCC matrices obtained by replacing $(X,Y,Z)$ with $(X^s,Y^s,Z^s)$ in the corresponding definitions. Let $ \wt\lambda_i^s$ and $\lambda_i^s$ 
be their eigenvalues. Then, by Theorem \ref{main_thm} and \eqref{perturb1}, for any $1\le i \le r_+$, we have that
\be\label{univ_outlier}
|\wt\lambda_i^s - \theta_i| =\oo(1) \quad \text{with high probability},
\ee
and by Lemma \ref{lem null0}, we have that for all $s_1\in \R$,
\be\label{univ_small0}
\begin{split}
	\lim_{n\to \infty}\mathbb{P}&\left(  n^{{2}/{3}}\frac{ \lambda^s_{1} - \lambda_+}{c_{TW}} \leq s_1 \right) = \lim_{n\to \infty} \mathbb{P}^{GOE}\left( n^{{2}/{3}}(\lambda_1 - 2) \leq s_1  \right).
\end{split}
\ee
Moreover, applying Theorem \ref{main_thm0.5} gives  \smash{$ | \wt\lambda^s_{1+r_+}  - \lambda_1^{s} | \prec n^{-1}$}.
Combining it with \eqref{univ_small0}, we obtain that  
\be\label{univ_small}
\begin{split}
	\lim_{n\to \infty}\mathbb{P}&\left(  n^{{2}/{3}}\frac{ \wt\lambda^s_{1+r_+} - \lambda_+}{c_{TW}} \leq s_1 \right) = \lim_{n\to \infty} \mathbb{P}^{GOE}\left( n^{{2}/{3}}(\lambda_1 - 2) \leq s_1  \right).
\end{split}
\ee
(For simplicity, we only consider the largest non-outlier eigenvalue. The extension to the case with multiple non-outlier eigenvalues is straightforward.)
We write the right-hand sides of (\ref{T3}) and \eqref{T33} as
\begin{align*}
& x^s_{ij}\left(1-x^c_{ij}\right)+x^l_{ij}x^c_{ij}  = x^s_{ij} + \Delta^{(1)}_{ij} x^c_{ij}, \quad \Delta^{(1)}_{ij}:=x^l_{ij}-x^s_{ij},\\
& y^s_{ij}\left(1-y^c_{ij}\right)+y^l_{ij}y^c_{ij}  = y^s_{ij} + \Delta^{(2)}_{ij} y^c_{ij}, \quad \Delta^{(2)}_{ij}:=y^l_{ij}-y^s_{ij}.
\end{align*}
We define matrices 
$$\cal E^{(1)} :=(\Delta^{(1)}_{ij} x^c_{ij}: 1\le i \le p, 1\le j \le n),\quad \cal E^{(2)} :=(\Delta^{(2)}_{ij} y^c_{ij}: 1\le i \le q, 1\le j \le n).$$ 
It suffices to show that the effect of $\cal E^{(1)}$, $\cal E^{(2)}$ and $Z^l$ on $\wt \lambda_i$, $1\le i \le r_+ $ and $\wt\lambda_{r_+ +1}$ is negligible. 

Define the event
\begin{align*}
	\mathscr A:=& \left\{\#\{(i,j):x^c_{ij}=1\}\leq n^{5\epsilon}\right\} \cap \left\{x^c_{ij}=x^c_{kl}=1  {\Rightarrow} \{i,j\}=\{k,l\} \ \text{or} \  \{i,j\} \cap \{k,l\}=\emptyset \right\}.
\end{align*}
By a Chernoff bound, we get that
\begin{equation}\label{LDP_B}
	\mathbb{P}\left(\left\{\#\{(i,j):x^c_{ij}=1\}\leq n^{5\epsilon}\right\} \right) \geq 1- \exp(- n^{\epsilon}).
\end{equation}
If the number $n_0$ of the nonzero elements in $X^c$ satisfies $n_0 \le n^{5\epsilon} $, then we can check that
\begin{align}
&	\mathbb P\left(\exists \, i = k, j\ne l \ \text{or} \  i\ne k, j =l \text{ so that } x^c_{ij}=x^c_{kl}=1 \left| \#\{(i,j):x^c_{ij}=1\} = n_0 \right. \right)  = \OO(n_0^2/n). \label{LDP_C}
\end{align}
Combining the estimates (\ref{LDP_B}) and (\ref{LDP_C}), we get that 
\begin{equation}\label{prob_A}
	\mathbb P(\mathscr A) \ge 1 -  \OO(n^{-1+10\epsilon}).
\end{equation}
Similarly, for the event
$$ \mathscr B:= \left\{\#\{(i,j):y^c_{ij}=1\}\leq n^{5\epsilon}\right\} \cap \left\{y^c_{ij}=y^c_{kl}=1  {\Rightarrow} \{i,j\}=\{k,l\} \ \text{or} \  \{i,j\} \cap \{k,l\}=\emptyset \right\},$$
we have 
\begin{equation}\label{prob_B}
	\mathbb P(\mathscr B) \ge 1 -  \OO(n^{-1+10\epsilon}).
\end{equation}
On the other hand, using condition (\ref{tail_cond}) and Markov's inequality, we get
\begin{equation}\nonumber
	\mathbb{P}\left(|\cal E^{(1)}_{ij}| \geq \omega \right) + \mathbb{P}\left(|\cal E^{(2)}_{ij}| \geq \omega \right) \leq \mathbb{P}\left(|\wh x_{ij}| \geq \frac{\omega}{2}n^{1/2}\right) + \mathbb{P}\left(|\wh y_{ij}| \geq \frac{\omega}{2}n^{1/2}\right) = \oo(n^{-2}) ,
\end{equation}
for any fixed constant $\omega>0$. With a simple union bound, we get
\begin{equation}\label{EneR}
	\mathbb P\left(\max_{i,j} |\cal E^{(1)}_{ij}| \geq \omega \right) + \mathbb P\left(\max_{i,j} |\cal E^{(2)}_{ij}| \geq \omega \right)= \oo(1).
\end{equation}
Define the event
$$\mathscr C_1 := \left\{\max_{i,j}|\cal E^{(1)}_{ij}| \leq \omega\right\}\cap \left\{ \max_{i,j}|\cal E^{(2)}_{ij}| \leq \omega\right\}.$$
Combining (\ref{prob_A}), (\ref{prob_B}) and \eqref{EneR}, we get
\be\label{prob_R}\mathbb P( \mathscr A \cap  \mathscr B\cap  \mathscr C_1)=1-\oo(1) .\ee
We also define the event 
\be\label{defC2} \mathscr C_2:=\left\{\|(Z^s)^{\top} Z^s - I_r\| \le w, \|(Z^l)^{\top} Z^l \| \le w^2, \|(Z^s)^{\top} Z^l \| \le w\right\}.\ee
By the law of large numbers, we have $\mathbb P(\mathscr C_2)=1-\oo(1).$ 

Recalling \eqref{deteq}, we only need to study the zeros of  $\det [\wt H_1 (\lambda)]$ on event $\mathscr A \cap  \mathscr B \cap \mathscr C_1\cap \mathscr C_2$. Here, we define $\wt H_t (\lambda)$, $t\in [0,1]$, as
\be\nonumber
\begin{split} 
	\wt H_t (\lambda): = \wt H^s(\lambda)&+ t \begin{bmatrix} 0 & \begin{pmatrix} {\cal E}^{(1)}+AZ^l  & 0\\ 0 &  {\cal E}^{(2)} + BZ^l  \end{pmatrix}\\ \begin{pmatrix} ( {\cal E}^{(1)}+AZ^l)^{\top} & 0\\ 0 &  ( {\cal E}^{(2)}+B Z^l)^{\top}\end{pmatrix}  & 0\end{bmatrix} ,
\end{split}
\ee
where 
$$\wt H^s(\lambda):=H^s(\lambda) + \begin{bmatrix} 0 & \begin{pmatrix} AZ^s  & 0\\ 0 &  BZ^s   \end{pmatrix}\\ \begin{pmatrix} ( AZ^s )^{\top} & 0\\ 0 &  ( BZ^s )^{\top}\end{pmatrix}  & 0\end{bmatrix},$$ 
with
$$H^s(\lambda):=\begin{bmatrix} 0 & \begin{pmatrix}  X^s  & 0\\ 0 &  Y^s \end{pmatrix}\\ \begin{pmatrix} (X^s)^{\top} & 0\\ 0 &  (Y^s)^{\top}\end{pmatrix}  & \begin{pmatrix}  \lambda  I_n & \lambda^{1/2}I_n\\ \lambda^{1/2} I_n &  \lambda I_n\end{pmatrix}^{-1}\end{bmatrix}.$$
We would like to extend \eqref{univ_outlier} and \eqref{univ_small} at $t=0$ all the way to $t=1$ using a continuity argument. Correspondingly, for any $t\in[0,1]$, we define the PCC matrix $\cal C_{\cal X\cal Y}(t)$ for $\cal X(t):= X^s + t \wt {\cal E}^{(1)} + A(Z^s+tZ^l) $ and $\cal Y(t):= Y^s + t \wt {\cal E}^{(2)} + B(Z^s+tZ^l) $, and we denote its eigenvalues by $\wt \lambda_i(t)$. Note that $\wt\lambda_i=\wt\lambda_i(1)$ are the eigenvalues we are interested in, and the eigenvalues $\wt\lambda_i^s=\wt\lambda_i(0)$ satisfy \eqref{univ_outlier} and \eqref{univ_small}. Moreover, $\wt\lambda_i(t)$ is continuous with respect to $t$ on the extended real line $ \overline{\mathbb R}$.

\begin{proof}[Proof of \eqref{boundinp}]
	For any $1\le i \le r_+$, we pick a sufficiently small constant $\delta>0$ such that the following properties hold for large enough $n$: (i) the interval $J_i:=[\theta_i -\delta,\theta_i + \delta]$ only contains $\theta_j$'s that converge to the same limit as $\theta_i$ when $n\to \infty$, (ii) $J_i$ is away from all the other $\theta_j$'s at least by $\delta$, and (iii) $J_i$ is away from $\lambda_+$ at least by $\delta$. By \eqref{univ_outlier}, we know \smash{$\wt\lambda_i(0) \in J_i$} with high probability. Now, for $\mu:= \theta_i \pm \delta$, we claim that
	\begin{equation}\label{eq_claim}
		{\mathbb P\left(  \det \wt H_t( \mu) \ne 0 \text{ for all }0\le t \le 1 \right) = 1-\oo(1).} 
		\end{equation}
		If (\ref{eq_claim}) holds, then $\mu$ is not an eigenvalue of $\cal C_{\cal X\cal Y}(t)$ for all $t\in [0,1]$ with probability $1-\oo(1)$. By continuity of $\wt\lambda_i(t)$ with respect to $t$, we have $\wt\lambda_i = \wt\lambda_i(1)\in J_i$ with probability $1-\oo(1)$, that is,
		$$\mathbb P( |\wt\lambda_i - \theta_i| \le \delta)=1-\oo(1).$$
		This concludes \eqref{boundinp} since $\delta$ can be arbitrarily small.
		
		For the proof of (\ref{eq_claim}), we will condition on $\mathscr A \cap  \mathscr B$ and the event $\mathscr C_{n_x n_y}$ that $X^c$ and $Y^c$ have $n_x$ and $n_y$ nonzero entries with $\max\{n_x,n_y\}\le n^{5\e}$. Moreover, we assume that the indices of the $n_x$ nonzero entries of $X^c$ are $(\sigma_x(1),\pi_x(1)), \ldots, (\sigma_x(n_x),\pi_x(n_x))$, and the indices of the $n_y$ nonzero entries of $Y^c$ are $(\sigma_y(1),\pi_y(1)),  \ldots, (\sigma_y(n_y),\pi_y(n_x))$. Here, $\sigma_x:\{1,\ldots , n_x\}\to \{1,\ldots, p\}$, $\pi_x: \{1,\ldots, n_x\}\to \{1,\ldots, n\}$, $\sigma_y:\{1,\ldots , n_y\}\to \{1,\ldots, q\}$ and $\pi_y: \{1,\ldots, n_y\}\to \{1,\ldots, n\}$ are 
		all injective functions. 
		Then, we can rewrite that 
		$$\wt H_t(\mu)= H^s(\mu) +O_t\begin{bmatrix} 0 & \begin{pmatrix} \cal D  & 0\\ 0 &  t\cal D_e   \end{pmatrix}\\ \begin{pmatrix} \cal D & 0\\ 0 &  t \cal D_e\end{pmatrix}  & 0\end{bmatrix} O_t^{\top}, \quad O_t:=  \begin{bmatrix}\begin{pmatrix} \bU  ,\mathbf F_1   \end{pmatrix} & 0  \\ 0 & \begin{pmatrix} \bE_t ,  \mathbf F_2\end{pmatrix}  \end{bmatrix}  ,$$ 
		where $\cal D$ and $\bU$ have been defined in \eqref{defncalD} and \eqref{defnUE};  $\cal D_e: =\begin{pmatrix} \Sigma_e^{(1)} & 0 \\ 0 & \Sigma_e^{(2)} \end{pmatrix}$ with
		$$ \Sigma_e^{(1)}:= \diag \left( \cal E^{(1)}_{\sigma_x(1)\pi_x(1)}, \cdots, \cal E^{(1)}_{\sigma_x(n_x)\pi_x(n_x)}\right), \ \ \Sigma_e^{(2)}:= \diag \left( \cal E^{(2)}_{\sigma_y(1)\pi_y(1)}, \cdots, \cal E^{(2)}_{\sigma_y(n_y)\pi_y(n_y)}\right);$$
		$$\bE_t:= \begin{bmatrix} \begin{pmatrix}Z_t^{\top}\mathbf v_1^a, \cdots,Z_t^{\top} \mathbf v_r^a \end{pmatrix} & 0 \\ 0 & \begin{pmatrix}Z_t^{\top}\mathbf v_1^b, \cdots, Z_t^{\top}\mathbf v_r^b \end{pmatrix} \end{bmatrix}, \quad \text{with}\quad Z_t:= Z^s + tZ^l;$$
		$$\mathbf F_1 := \begin{bmatrix} \begin{pmatrix}\mathbf e_{\sigma_x(1)}^{(p)}, \cdots, \mathbf e_{\sigma_x(n_x)}^{(p)} \end{pmatrix} & 0 \\ 0 & \begin{pmatrix}\mathbf e_{\sigma_y(1)}^{(q)}, \cdots, \mathbf e_{\sigma_y(n_y)}^{(q)} \end{pmatrix} \end{bmatrix}; $$ $$\mathbf F_2 := \begin{bmatrix} \begin{pmatrix}\mathbf e_{\pi_x(1)}^{(n)}, \cdots, \mathbf e_{\pi_x(n_x)}^{(n)} \end{pmatrix} & 0 \\ 0 & \begin{pmatrix}\mathbf e_{\pi_y(1)}^{(n)}, \cdots, \mathbf e_{\pi_y(n_y)}^{(n)}\end{pmatrix} \end{bmatrix}.$$ 
		Here, we used $\mathbf e_i^{(l)}$ to denote  the standard unit vector along $i$-th coordinate in $\R^l$. 
		
		Applying the identity $\det(1+\cal A\cal B)=\det(1+\cal B\cal A)$, we obtain that  
		\be\label{nece extra1}
		\det  \wt H_t(\mu)  =\det\left[  G^s(\mu) \right]\cdot\det\left[1+\wt F_t(\mu) + \cal E_t(\mu) \right],
		\ee
		where 
		\begin{align*} 
		&\wt F_t(\mu):=\begin{bmatrix} 0 & \begin{pmatrix} \cal D  & 0\\ 0 & t \cal D_e   \end{pmatrix}\\ \begin{pmatrix} \cal D & 0\\ 0 & t \cal D_e\end{pmatrix}  & 0\end{bmatrix}  O_t^{\top} \Pi(\mu) O_t, \\
		&\cal E_t(\mu):=\begin{bmatrix} 0 & \begin{pmatrix} \cal D  & 0\\ 0 & t \cal D_e   \end{pmatrix}\\ \begin{pmatrix} \cal D & 0\\ 0 & t \cal D_e\end{pmatrix}  & 0\end{bmatrix}  O_t^{\top} \left[G^s(\mu)-\Pi(\mu)\right] O_t.
		\end{align*}
		Since $O_t$ is deterministic conditioning on $Z$, by Lemma \ref{thm_largebound}, we have that (recall \eqref{defC2})
		$$\mathbb E \left[ \left. \left| \left[O_t^{\top} \left( G^s(\mu) - \Pi(\mu)\right) O_t\right]_{ij}  \right|^2 \right| \mathscr C_{n_x n_y}, Z, \mathscr C_2\right]\prec  n^{-1}, \quad 1\le i,j \le 2r + n_x + n_y.$$
		Applying Markov's inequality to this estimate and using a simple union bound, we get that
		\begin{equation}\label{BOUND2000}
			 \max_{1\le i , j \le 2r+n_x+n_y} \left| \left[O_t^{\top} \left( G^s(\mu) - \Pi(\mu)\right) O_t\right]_{ij} \right| \le n^{-1/4}
		\end{equation}
	 with probability $1- \OO(n^{-1/2+11\e})$ conditioning on $\mathscr C_{n_x n_y}$, $Z$ and  $\mathscr C_2$. Next, we claim that on $\mathscr C_1\cap\mathscr C_2$,   
		\be\label{bound OPO}
		\sup_{0\le t \le 1}\left\| \wt F_t(\mu)- \wt F_0(\mu) \right\| \le C \omega ,
		\ee
		for some constant $C>0$ that does not depend on $\omega$. In fact, expanding $\wt F_t(\mu)$ and using $\|\Pi(\mu)\|=\OO(1)$, $\|t\Sigma_e^{(1)}\|\le \omega$, $\|t\Sigma_e^{(2)}\|\le \omega$ and $\|\bE_t - \bE_0\|=\OO(\omega)$ on $\mathscr C_1\cap \mathscr C_2$, we can easily obtain \eqref{bound OPO}. Then, combining \eqref{BOUND2000} and \eqref{bound OPO}, we get that on the event $\mathscr A\cap \mathscr B \cap \mathscr C_1\cap \mathscr C_2$,
		\be\label{Gz part000}
		\det\left(1+\wt F_t(\mu) +\cal E_t(\mu) \right) = \det\left(1+\wt F_0(\mu)   + \OO(\omega)\right) \ \ \ \text{for all } \ \ t\in[0,1],
		\ee
		with probability $1-\oo(1)$. When $t=0$, the discussion at the beginning of Section \ref{sec mainthm} (i.e. the argument leading to \eqref{masterx4}) gives that at $\mu=\theta_i\pm \delta$, $\|(1+\wt F_0(\mu) )^{-1}\|\le C_\delta$ with high probability for some constant $C_\delta>0$. Thus, by \eqref{Gz part000}, as long as $\omega$ is sufficiently small, we have that  with probability $1-\oo(1)$, $\det(1+\wt F_t(\mu) + \cal E_t(\mu)) \ne 0$ for all $t\in[0,1]$. This concludes (\ref{eq_claim}), which further gives \eqref{boundinp}.
		\end{proof}
	
	\begin{proof}[Proof of \eqref{boundinTW} for Theorem \ref{main_thm1}]
		Similar to \eqref{eq_claim}, we claim that 
		\begin{equation}\label{suff_claim}
			{\mathbb P\left(  \det \wt H_t( \mu) \ne 0 \text{ for all }0\le t \le 1 \right) = 1-\oo(1)},
			\end{equation}
		for $\mu:= \lambda_{1}(0) \pm n^{-3/4} = \lambda_{1}^s \pm n^{-3/4}.$  
			At $t=0$, Theorem \ref{main_thm0.5} gives that  
			$$\wt\lambda_{1+r_+}(0) \in [\lambda^s_{1} - n^{-3/4},\lambda^s_{1} + n^{-3/4}] \quad \text{with high probability.}$$ 
			If \eqref{suff_claim} holds, then due to the continuity of $\wt\lambda_{1+r_+}(t)$ with respect to $t$, we have that
			$$ \wt\lambda_{1+r_+}\equiv \wt\lambda_{1+r_+}(1) \in [ \lambda^s_{ 1} - n^{-3/4}, \lambda^s_{1} + n^{-3/4}]\quad \text{with probability $1-\oo(1)$},$$
			 which concludes \eqref{boundinTW} for $k=1$ together with \eqref{univ_small0}.

			In the following proof, we choose $z=\lambda_+ + \ii n^{-{2}/{3}}$. As in \eqref{nece extra1}, 
			we need to study 
			$$\det\left\{1+\wt F_t(z) + \cal E_t(z) + \begin{bmatrix} 0 & \begin{pmatrix} \cal D  & 0\\ 0 & t \cal D_e   \end{pmatrix}\\ \begin{pmatrix} \cal D & 0\\ 0 & t \cal D_e\end{pmatrix}  & 0\end{bmatrix} O_t^{\top} \left[G^s(\mu) - G^s(z)\right] O_t \right\},$$
			where we used the simple identity 
			$$O_t^{\top} G^s(\mu) O_t = O_t^{\top} \left[G^s(\mu) - G^s(z)\right] O_t + O_t^{\top} G^s(z) O_t  .$$ 
			Repeating the proof below \eqref{nece extra1}, we can show that with probability $1-\oo(1)$,
			\be\label{finally3} 1+\wt F_t(z)+\cal E_t(z)   = 1+\wt F_0(z)   + \OO(\omega) \quad \text{for all }\ \ t\in [0,1],\ee
			and $\|(1+\wt F_0(z) )^{-1}\|\le C$ with high probability for some constant $C>0$ that is independent of $\omega$. Moreover, we have that 
			\be\label{final claim4}
			\|O_t^{\top} \left[G^s(\mu) - G^s(z)\right] O_t \| \le n^{-1/6} \ \ \ \text{with probability $1-\oo(1)$},
			\ee
			which is proved as (5.16) in \cite{PartIII}.  
			Combining \eqref{finally3} and \eqref{final claim4}, we get that with probability $1-\oo(1)$, 
			$$\det\left(1+\wt F_t(\mu) +\cal E_t(\mu) \right) = \det\left(1+ \wt F_0(z)   + \OO(\omega) \right)\ne 0 \ \ \ \text{for all} \ \ t\in[0,1],$$
			as long as $\omega$ is sufficiently small. This concludes \eqref{suff_claim}, which completes the proof of \eqref{boundinTW} for the $k=1$ case. It is easy to extend the above proof to the $k>1$ case, and we omit the details.
		\end{proof}
		
		\section{Proof of Lemma \ref{lem null}   }\label{sec pflemma}
		
		Finally, in this section, we present the proof of Lemma \ref{lem null}. It has been proved in \cite{PartIII} for the $B=0$ case, and we need to show that adding the $BZ$ term to $Y$ does not affect the results. 
		We remark that, since \eqref{rigidityb} has been used in the proof of \eqref{boundstick}, we cannot apply Theorem \ref{main_thm0.5} and \eqref{rigidity} to conclude Lemma \ref{lem null}. A separate argument is needed. We first prove an averaged local law for $G^b(z)$ as in \eqref{aver_in} and \eqref{aver_out0}, using the following resolvent estimates.
		
		\begin{lemma}[Lemma 3.8 of \cite{PartIII}]\label{lem_gbound0}
			For any deterministic unit vectors $\bv_\beta \in \C^{\mathcal I_\beta}$, $\beta=3,4$, we have that 
			\be 
			\begin{split}
				\sum_{\fa \in \mathcal I } {\left| {G_{\fa \mathbf v_\beta } } \right|^2 }  \prec 1 +  \frac{\left|\im (\cal U \cal G_R)_{\bv_\beta\bv_\beta}\right|}{\eta} , \quad   \sum_{\fa \in \mathcal I} {\left| {G_{\mathbf v_\beta  \fa } } \right|^2 }  \prec 1  + \frac{\left|\im ( \cal G_R\cal U^{\top})_{\bv_\beta\bv_\beta}\right|}{\eta} , \label{eq_sgsq2} 
			\end{split}
			\ee
			where
			$$ \cal U := z^{1/2} \begin{pmatrix}  \overline z  I_n &  \overline z^{1/2}I_n\\  \overline z^{1/2}I_n &   \overline z  I_n\end{pmatrix}    \begin{pmatrix}  z  I_n & z^{1/2}I_n\\ z^{1/2}I_n &  z  I_n\end{pmatrix}^{-1}  . $$
		\end{lemma}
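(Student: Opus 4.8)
The plan is to exploit the block structure of $G(z)$ recorded in \eqref{GL1}--\eqref{GLR1}, together with a single genuine Ward identity hidden inside the resolvent $R(z)$, which turns out to be --- up to a scalar --- the resolvent of a \emph{Hermitian} matrix. First I would reduce the second estimate in \eqref{eq_sgsq2} to the first. Since $H(z)$ is complex \emph{symmetric} (its off-diagonal blocks are $\diag(X,Y)$ and its transpose, and its lower-right block is symmetric), $G(z)=H(z)^{-1}$ is symmetric; hence $\sum_{\fa}|G_{\mathbf v_\beta\fa}|^2=\|G\overline{\mathbf v_\beta}\|^2=\sum_{\fa}|G_{\fa\,\overline{\mathbf v_\beta}}|^2$, and as $\overline{\mathbf v_\beta}$ is again a unit vector supported on $\mathcal I_\beta$, this is the first estimate applied to $\overline{\mathbf v_\beta}$. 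One then checks the elementary identity $\im(\mathcal U\mathcal G_R)_{\overline{\mathbf v_\beta}\,\overline{\mathbf v_\beta}}=\im(\mathcal G_R\mathcal U^{\top})_{\mathbf v_\beta\mathbf v_\beta}$, which uses that $\mathcal G_R$ and $\mathcal U$ are both symmetric; symmetry of $\mathcal U=z^{1/2}M(z)^{*}M(z)^{-1}$, with $M(z):=\begin{pmatrix} zI_n & z^{1/2}I_n\\ z^{1/2}I_n & zI_n\end{pmatrix}$, follows because $M(z)^{*}=M(\bar z)$ and $M(z)^{-1}$ are \emph{commuting} symmetric matrices. So it remains to prove the first estimate.

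For the first estimate, split $\sum_{\fa\in\mathcal I}|G_{\fa\mathbf v_\beta}|^2=\|\mathcal G_{LR}\mathbf v_\beta\|^2+\|\mathcal G_R\mathbf v_\beta\|^2$. By \eqref{GLR1} and \eqref{GL1}, $\mathcal G_{LR}\mathbf v_\beta=-\mathcal G_L\mathbf w=-D_S R(z)\mathbf u$, where $\mathbf w:=\diag(X,Y)\,M(z)\,\mathbf v_\beta$, $D_S:=\diag(S_{xx}^{-1/2},S_{yy}^{-1/2})$ and $\mathbf u:=D_S\mathbf w$; and from the block identity $\diag(X^{\top},Y^{\top})\mathcal G_{LR}+M(z)^{-1}\mathcal G_R=I$ (a consequence of $H(z)G(z)=I$, cf.\ \eqref{GR1}) we get $\mathcal G_R\mathbf v_\beta=M(z)\mathbf v_\beta-M(z)\diag(X^{\top},Y^{\top})\mathcal G_{LR}\mathbf v_\beta$, whence $\|\mathcal G_R\mathbf v_\beta\|\lesssim 1+\|\mathcal G_{LR}\mathbf v_\beta\|$ on the high-probability event of Lemma \ref{SxxSyy} --- this event, which bounds $\|X\|,\|Y\|,\|S_{xx}^{-1}\|,\|S_{yy}^{-1}\|$ by constants, being the only place randomness enters the argument. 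Thus everything reduces to estimating $\|R(z)\mathbf u\|^2$.

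The key observation is that
\[
R(z)=\begin{pmatrix} -zI & -z^{1/2}\mathcal H\\ -z^{1/2}\mathcal H^{\top} & -zI\end{pmatrix}^{-1}=-z^{-1/2}\big(\check H+z^{1/2}\big)^{-1},\qquad \check H:=\begin{pmatrix} 0 & \mathcal H\\ \mathcal H^{\top} & 0\end{pmatrix},
\]
i.e.\ $R(z)$ is $-z^{-1/2}$ times the resolvent of the Hermitian matrix $\check H$ at spectral parameter $-z^{1/2}\in\mathbb C_-$. The classical Ward identity then gives the \emph{exact} formula $\|R(z)\mathbf u\|^2=|z|^{-1}\im\!\big(z^{1/2}\langle\mathbf u,R(z)\mathbf u\rangle\big)\big/\im(z^{1/2})$, and on $S(\epsilon)$ one has $|z|\sim1$ and $\im(z^{1/2})\sim\eta$. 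Finally I would identify $z^{1/2}\langle\mathbf u,R(z)\mathbf u\rangle=z^{1/2}(\mathcal G_L)_{\mathbf w\mathbf w}$ as a piece of $(\mathcal U\mathcal G_R)_{\mathbf v_\beta\mathbf v_\beta}$: a direct computation using $M(z)^{-1}\mathcal G_R=I-\diag(X^{\top},Y^{\top})\mathcal G_{LR}$ and $\mathcal G_{LR}=-\mathcal G_L\diag(X,Y)M(z)$ yields the exact identity $(\mathcal U\mathcal G_R)_{\mathbf v_\beta\mathbf v_\beta}=z^{1/2}\langle M(z)\mathbf v_\beta,\mathbf v_\beta\rangle+z^{1/2}(\mathcal G_L)_{\mathbf w\mathbf w}$. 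Since $z^{1/2}M(z)$ is Hermitian for $z\in\mathbb R$ and has $z$-derivative of norm $O(1)$, one has $\|\im(z^{1/2}M(z))\|\lesssim\eta$ on $S(\epsilon)$, so $\im\!\big(z^{1/2}\langle\mathbf u,R(z)\mathbf u\rangle\big)=\im(\mathcal U\mathcal G_R)_{\mathbf v_\beta\mathbf v_\beta}+O(\eta)$. Combining the three facts gives $\|R(z)\mathbf u\|^2\lesssim 1+|\im(\mathcal U\mathcal G_R)_{\mathbf v_\beta\mathbf v_\beta}|/\eta$; the same bound then holds for $\|\mathcal G_{LR}\mathbf v_\beta\|^2$ and, by the previous paragraph, for $\|\mathcal G_R\mathbf v_\beta\|^2$, and summing proves the first estimate of \eqref{eq_sgsq2}, uniformly in $z\in S(\epsilon)$ and in the sense of $\prec$ (the bound being deterministic once restricted to the event of Lemma \ref{SxxSyy}).

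I expect the main obstacle to be the algebraic bookkeeping of the last step rather than any hard probabilistic input: the statement is phrased through the diagonal of $\mathcal G_R$, where no positivity is manifest --- indeed $\im M(z)^{-1}$ is not sign-definite --- so one must transfer correctly to the genuinely positive Ward quantity $\|R(z)\mathbf u\|^2$ living on the $\mathcal G_L/R(z)$ side, and check that the ``defect term'' $z^{1/2}\langle M(z)\mathbf v_\beta,\mathbf v_\beta\rangle$ contributes only the harmless additive $1$. Keeping the powers of $z^{1/2}$ and the conjugations $\overline{z^{1/2}}=(\bar z)^{1/2}$ straight in the definition of $\mathcal U$ and throughout these identities is where the care is needed.
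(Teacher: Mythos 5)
Your argument is correct, and since the paper only imports this statement from \cite{PartIII} without reproducing a proof, there is no in-text argument to compare against; what you wrote is a legitimate self-contained substitute. I checked the three pillars: (i) the reduction of the second bound to the first via symmetry of $G$, where the scalar identity $(\cal U\cal G_R)_{\overline{\mathbf v}_\beta\overline{\mathbf v}_\beta}=(\cal G_R\cal U^{\top})_{\mathbf v_\beta\mathbf v_\beta}$ indeed needs only $\cal G_R^\top=\cal G_R$; (ii) the factorization $R(z)=-z^{-1/2}(\check H+z^{1/2})^{-1}$ with $\check H$ real symmetric, so the Ward identity gives the exact formula $\|R(z)\mathbf u\|^2=|z|^{-1}\,\im\bigl(z^{1/2}\langle\mathbf u,R(z)\mathbf u\rangle\bigr)/\im(z^{1/2})$, and $\im(z^{1/2})\sim\eta$ on $S(\epsilon)$ because $2\,\re(z^{1/2})\im(z^{1/2})=\eta$ with $\re(z^{1/2})\sim1$; (iii) the exact identity $(\cal U\cal G_R)_{\mathbf v_\beta\mathbf v_\beta}=z^{1/2}\mathbf v_\beta^{*}M(z)^{*}\mathbf v_\beta+z^{1/2}(\cal G_L)_{\mathbf w\mathbf w}$, with $M(z)$ the middle $2n\times 2n$ block of \eqref{linearize_block} before inversion and $\mathbf w=\mathrm{diag}(X,Y)M(z)\mathbf v_\beta$, which follows directly from \eqref{GR1}, \eqref{GL1}, \eqref{GLR1} and $\cal U=z^{1/2}M(z)^{*}M(z)^{-1}$. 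In fact, since $\mathbf v_\beta$ is supported on a single block $\cal I_\beta$, the defect term collapses to $z^{1/2}\bar z$, whose imaginary part is $\OO(\eta)$ on $S(\epsilon)$, so your estimate there holds with room to spare. Two points of care: $\bar z^{1/2}$ in the definition of $\cal U$ must be read as $\overline{z^{1/2}}$, i.e.\ the first factor is $M(z)^{*}$ (your parenthetical ``$\overline{z^{1/2}}=(\bar z)^{1/2}$'' is not literally true under the positive-imaginary-part branch convention, but your computation uses $M(z)^{*}$, which is the intended meaning); and the reductions $\|\cal G_{LR}\mathbf v_\beta\|\lesssim\|R(z)\mathbf u\|$ and $\|\cal G_R\mathbf v_\beta\|\lesssim 1+\|\cal G_{LR}\mathbf v_\beta\|$ are valid only for $z\in S(\epsilon)$ on the high-probability event of Lemma \ref{SxxSyy} bounding $\|X\|,\|Y\|,\|S_{xx}^{-1}\|,\|S_{yy}^{-1}\|$, which you correctly identify as the sole probabilistic input. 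Your Ward-identity route is equivalent in substance to manipulating the spectral decomposition \eqref{spectral1} of $R(z)$, which is how this type of bound is organized in \cite{PartIII}; the resolvent-identity formulation you chose avoids any bookkeeping over singular values and is, if anything, cleaner.
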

		We calculate $m_3^b(z)=n^{-1}\sum_{\mu\in \cal I_3}G^b_{\mu\mu}(z)$ using \eqref{woodbury2}.  By the anisotropic local law \eqref{aniso_lawb}, 
		we have that with high probability,
		$$\left\| \left[1 +  \begin{pmatrix} 0 & \cal D_b\\ \cal D_b  & 0\end{pmatrix}\begin{pmatrix} \bU_b^{\top} & 0 \\ 0 & \bE_b^{\top}\end{pmatrix}G(z)\begin{pmatrix} \bU_b & 0 \\ 0 & \bE_b\end{pmatrix}  \right]^{-1}  \begin{pmatrix} 0 & \cal D_b\\ \cal D_b  & 0\end{pmatrix} \right\| =\OO(1).$$
		Hence, using \eqref{woodbury2}, we obtain that 
		$$ |m_3^b(z) - m_3(z)| \prec \frac1n\max_{1\le k \le r}\sum_{\mu\in \cal I_3} \left(| G_{\mu \mathbf u_k^b}(z) |^2 +  | G_{\mu \wt\bv_k^b}(z) |^2\right),$$
		where we have abbreviated that $\wt \bv_k^b :=Z^{\top}\mathbf v_k^b$. Note that $\wt\bv_k^b$ are approximately orthonormal vectors by \eqref{eq_iso}. Then, using \eqref{eq_sgsq2}, we obtain that for $z\in \wt S(\e,\wt\e)$,
		\begin{align} 
			|m_3^b(z) - m_3(z)| & \prec \frac{1}{n}+ \max_{1\le k \le r} \frac{|\im \left( \cal U\cal G_R\right)_{\mathbf u_k^b\mathbf u_k^b} |+| \im \left( \cal U\cal G_R \right)_{\wt{\mathbf v}_k^b\wt{\mathbf v}_k^b}| }{n\eta} \nonumber\\
			& \prec \frac{1}{n}+ \max_{1\le k \le r} \frac{\eta + \im m_c(z) + \Psi(z) + \psi_n + \phi_n}{n\eta} \lesssim \Psi^2(z) + \frac{ \psi_n + \phi_n}{n\eta},\label{m3b-m3}
		\end{align}
		where in the second step we used  the local law \eqref{aniso_lawb} and that 
		$$\left|\im \left( \cal U\Pi_R^b(z)\right)_{\mathbf u_k^b\mathbf u_k^b} \right|+\left| \im \left( \cal U  \Pi^b_R\right)_{\wt{\mathbf v}_k^b\wt{\mathbf v}_k^b}\right| \lesssim \im m_c(z) +\eta.$$
		Here, $\Pi_R^b(z)$ denotes the $(\cal I_3\cup \cal I_4)\times (\cal I_3\cup \cal I_4)$ block of $\Pi^b$. 
		Combining \eqref{m3b-m3} with the averaged local laws \eqref{aver_in}--\eqref{aver_out0} for $m_3(z)$ and equation \eqref{m3m} for $m_3^b(z)$ and $m^b(z)$, we obtain the following local laws: for any fixed $\epsilon,\wt \e>0$, 
		\begin{equation}
			\vert m^b(z)-m_{c}(z) \vert \prec (n \eta)^{-1} \label{aver_inbbbb}
		\end{equation}
		uniformly in $z \in \wt S(\epsilon,\wt \e)$, and
		\begin{equation}\label{aver_outbbbb}
			| m^b(z)-m_{c}(z)|\prec \frac{ \psi_n + \phi_n}{n\eta}+ \frac{1}{n(\kappa +\eta)} + \frac{1}{(n\eta)^2\sqrt{\kappa +\eta}}
		\end{equation}
		uniformly in $z\in \wt S_{out}(\epsilon,\wt \e)$. 

		\begin{definition}[Regularized resolvents]\label{resol_not2}
			For $z = E+ \ii \eta \in \mathbb C_+,$ we define the regularized resolvent $\wh G(z)$ as
			$$ \wh G(z) := \left[H(z)- z n^{-10} \begin{pmatrix} I_{p+q} & 0 \\ 0 & 0 \end{pmatrix} \right]^{-1} .$$
			Moreover, we define 
			$${\wsH }:=\wh S_{xx} ^{-1/2}S_{xy}\wh S_{yy}^{-1/2}, \quad \wh S_{xx}:= S_{xx}+n^{-10}, \quad \wh S_{yy}:= S_{yy}+n^{-10}.$$
			The resolvents $\wh R(z)$, $\wh G^b(z)$ and $\wh R^b(z)$ etc.\;can be defined in the obvious way as in Definition \ref{resol_not}. 
		\end{definition}

		With the Schur complement formula, we can obtain similar expressions for $ {\wsG}_L$, $ {\wsG}_R$ and ${\wsG}_{LR}$ as in \eqref{GL1}--\eqref{GLR1}. 
		The main reason for introducing regularized resolvents is that they satisfy the following deterministic bounds: for some constant $C>0$,
		\be\label{op G}
		\left\| \wh G(z)\right\| \le \frac{Cn^{10}}{\eta},\quad \left\| \wh G^b(z)\right\| \le \frac{Cn^{10}}{\eta}. 
		\ee 
		This estimate has been proved in Lemma 3.6 of \cite{PartIII}. With a standard perturbation argument, we can control the difference between $\wh G(z)$ and $G(z)$ as in the following claim.

		
		
		\begin{claim}\label{removehat}
			Suppose there exists a high probability event $\Xi$ on which $ \| G(z)\|_{\max}=\OO(1)$ for $z$ in some subset, where $\|G\|_{\max}:=\max_{i,j}|G_{ij}|$ denotes the max-norm. 
			Then, we have that
			\be\label{g-whg} \|G(z) - \wh G(z)\|_{\max} \le n^{-8}\quad \text{ on } \quad \Xi.\ee
			The same bound holds for $\|G^b(z) - \wh G^b(z)\|_{\max}$ on the events $\{ \| G^b(z)\|_{\max}=\OO(1)\}$ and $\{ \| \wh G^b(z)\|_{\max}=\OO(1)\}$.
		\end{claim}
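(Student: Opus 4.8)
The plan is to view $\wh G(z)$ as an $\OO(n^{-10})$ perturbation of $G(z)$ and run a short resolvent-identity argument; the one point that requires care is that the operator norm of the \emph{regularized} resolvent must be controlled by bootstrapping from the max-norm hypothesis rather than from the crude deterministic bound \eqref{op G}, which is too weak when $\eta$ is small.

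First I would record the algebra. Writing
\[
E(z) := z\, n^{-10} \begin{pmatrix} I_{p+q} & 0 \\ 0 & 0 \end{pmatrix},
\]
Definition \ref{resol_not2} gives $\wh G(z)^{-1} = G(z)^{-1} - E(z)$, so the resolvent identity yields $\wh G - G = G E \wh G = \wh G E G$. Since $c \le |z| \le C$ on all the spectral domains considered in this paper, $\|E(z)\| \le C n^{-10}$. Working on $\Xi$, since $G(z)$ is an $N \times N$ matrix with $N = p + q + 2n = \OO(n)$, the hypothesis $\|G(z)\|_{\max} = \OO(1)$ gives $\|G(z)\| \le N \|G(z)\|_{\max} = \OO(n)$, hence $\|G E\| \le \|G\|\,\|E\| = \OO(n^{-9}) \le \tfrac12$ for $n$ large. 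Feeding this into $\wh G = G + G E \wh G$ gives $\|\wh G\| \le \|G\| + \tfrac12 \|\wh G\|$, so $\|\wh G(z)\| \le 2 \|G(z)\| = \OO(n)$ on $\Xi$.

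Finally I would extract the max-norm estimate: using $\wh G - G = G E \wh G$ and Cauchy--Schwarz, for all $\fa,\fb \in \cal I$,
\[
\big| (\wh G - G)_{\fa\fb} \big| = |z|\, n^{-10} \Big| \sum_{i \in \cal I_1 \cup \cal I_2} G_{\fa i}\, \wh G_{i\fb} \Big| \le C n^{-10} \Big( \sum_{i \in \cal I_1 \cup \cal I_2} |G_{\fa i}|^2 \Big)^{1/2} \Big( \sum_{i \in \cal I_1 \cup \cal I_2} |\wh G_{i\fb}|^2 \Big)^{1/2}.
\]
On $\Xi$ the first sum is at most $(p+q)\|G\|_{\max}^2 = \OO(n)$, and the second is at most $\|\wh G \mathbf{e}_{\fb}\|^2 \le \|\wh G\|^2 = \OO(n^2)$; hence $|(\wh G - G)_{\fa\fb}| \le C n^{-10} \cdot \OO(n^{1/2}) \cdot \OO(n) = \OO(n^{-17/2}) \le n^{-8}$ for $n$ large, which is \eqref{g-whg}. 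The bound for $\|G^b - \wh G^b\|_{\max}$ on $\{\|G^b(z)\|_{\max} = \OO(1)\}$ is verbatim the same, since the regularization, and hence $E(z)$, is identical; on $\{\|\wh G^b(z)\|_{\max} = \OO(1)\}$ I would instead use $G^b(z)^{-1} = \wh G^b(z)^{-1} + E(z)$ and $G^b - \wh G^b = -\wh G^b E G^b$, bootstrap $\|G^b\| \le 2\|\wh G^b\| = \OO(n)$ exactly as above, and conclude identically. The only genuine obstacle is the operator-norm bootstrap in the second paragraph; everything else is a routine perturbation computation.
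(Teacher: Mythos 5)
Your proof is correct, but it takes a different route from the paper's. The paper interpolates continuously, setting $G_t(z) := [H(z) - t z n^{-10}\diag(I_{p+q},0)]^{-1}$ so that $G_0 = G$ and $G_1 = \wh G$, computes $\partial_t G_t = z n^{-10} G_t \diag(I_{p+q},0) G_t$, and applies Gronwall's inequality to the self-consistent bound $\|G_t\|_{\max} \le \|G\|_{\max} + C n^{-9}\int_0^t \|G_s\|_{\max}^2\,\dd s$ (the $n^{-9}$ arising because the intermediate sum runs over only the $p+q = \OO(n)$ indices in $\cal I_1 \cup \cal I_2$). This keeps the whole argument in the max-norm and never invokes the operator norm; integrating the derivative bound then gives $\|\wh G - G\|_{\max} \lesssim n^{-9} \le n^{-8}$ directly. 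You instead use the one-shot resolvent identity $\wh G - G = GE\wh G$, bootstrap $\|\wh G\| \le 2\|G\| = \OO(n)$ in operator norm via the crude bound $\|G\| \le N\|G\|_{\max}$, and return to entries by Cauchy--Schwarz, landing at $\OO(n^{-17/2})$. Both arguments are sound and give the claim with room to spare; yours trades the differential inequality for a Neumann-series bootstrap, at the cost of passing through the (lossy) operator norm, while the paper's stays entry-wise throughout and is marginally sharper. One small point to make explicit in your second paragraph: the inequality $\|\wh G\| \le \|G\| + \tfrac12\|\wh G\|$ only closes once you know $\wh G$ exists and is finite, which follows from the convergent Neumann series $\wh G = \sum_{k\ge 0}(GE)^k G$ when $\|GE\| \le \tfrac12$; stating this removes any appearance of circularity. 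Your treatment of the reversed case $\{\|\wh G^b(z)\|_{\max} = \OO(1)\}$ via $G^b - \wh G^b = -\wh G^b E G^b$ is exactly what is needed for the application in the surrounding proof.
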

		\begin{proof}
			For $t\in [0,1]$, we define 
			$$ G_t(z) := \left[H(z)- t z n^{-10} \begin{pmatrix} I_{p+q} & 0 \\ 0 & 0 \end{pmatrix} \right]^{-1}, \quad \text{with}\quad G_0(z)= G(z), \quad G_1(z)=\wh  G(z) .$$
			Taking derivatives with respect to $t$, we get that
			\be\label{partialtG}\partial_t G_t(z) = zn^{-10} G_t(z) \begin{pmatrix} I_{p+q} & 0 \\ 0 & 0 \end{pmatrix} G_t(z) . \ee
			Thus, applying Gronwall's inequality to 
			$$ \|G_t(z)\|_{\max}\le \| G(z)\|_{\max} + C n^{-9} \int^t_0 \|G_s(z)\|_{\max}^2  \dd s, $$
			we obtain that $\|G_t(z)\|_{\max}\le C$ for all $0\le t \le 1$ on $\Xi.$ Then, using \eqref{partialtG} again, we get \eqref{g-whg}.
		\end{proof}
		Note that the bound \eqref{g-whg} is purely deterministic on $\Xi$, so we do not lose any probability here. Moreover, such a small error $n^{-8}$ will not affect any of our results.  
		
		\begin{proof}[Proof of Lemma \ref{lem null}]
			With the same argument as those for \cite[Theorems 2.12 and 2.13]{EKYY1}, \cite[Theorem 2.2]{EYY} and \cite[Theorem 3.3]{PY}, from the averaged local law \eqref{aver_inbbbb} we can derive that for any small constants $\delta,\e>0$, \eqref{rigidityb} holds for all $n^\e \le i \le (1-\delta)q$. To conclude \eqref{rigidityb} for the first $n^\e$ eigenvalues, we still need to prove an upper bound on them. More precisely, it suffices to show that for any small constant $\e>0$, 
			\be\label{upper1}
			\lambda_1^b \le \lambda_+ + n^{-2/3+\e},\quad w.h.p.
			\ee
			Combining this estimate with the rigidity estimate for $\lambda_{n^\e}^b$, we can conclude that \eqref{rigidityb} holds all $1 \le i < (1-\delta)q$ since $\e$ can be arbitrarily small.
			
			First, using the local law \eqref{aver_outbbbb}, we can obtain that for any small constants $c,\e>0$, 
			\be\label{remain1}
			\#\{i:\lambda^b_i \in [\lambda_+ + n^{-2/3+\e},1-c]\}=0,\quad w.h.p.
			\ee
			The proof is standard and similar to the one for (4.7) of \cite{PartIII}, so we omit the details.
			It remains to prove that for a sufficiently small constant $c>0$,
			\be\label{remain2}
			\#\{i:\lambda_i^b \in [1-c,1]\}=0,\quad w.h.p.
			\ee

			We define a continuous path of interpolated random matrices between $Y$ and $Y+BZ$ as 
			$$\cal Y_t: = Y + t B Z,   \quad t\in [0,1].$$
			By replacing $\cal Y$ with $\cal Y_t$ in \eqref{eqn_defGb} and Definition \ref{resol_not2}, we can define $H^b_t(z)$, $G^b_t(z)$, $\wh H^b_t(z)$ and $\wh G^b_t(z)$ correspondingly. First, we claim the following result.
			\begin{claim}\label{claim revise}
				With high probability, we have that
				\be\label{remain3}
				\|G_t^b(1-c)\|_{\max} <\infty \ \text{  for all}\ \ t\in [0,1].
				\ee
			\end{claim}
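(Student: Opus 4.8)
The plan is to reduce the claim, via the low‑rank structure of $H^b_t$, to the invertibility of a fixed‑size matrix, which is then controlled by the anisotropic local law of Theorem \ref{thm_localout}. Note first that $\|G^b_t(1-c)\|_{\max}<\infty$ holds precisely when $H^b_t(1-c)$ is nonsingular. Since $H^b_t$ differs from the null linearization $H=H^b_0$ only through the substitution $Y\mapsto Y+tBZ$ in the two off‑diagonal blocks, writing $B=\sum_{i=1}^r b_i\mathbf u_i^b(\mathbf v_i^b)^{\top}$ and $U_b=(\mathbf u_1^b,\dots,\mathbf u_r^b)$, $V_b=(\mathbf v_1^b,\dots,\mathbf v_r^b)$, we have
\[
H^b_t(z)=H(z)+t\,\cal V\begin{pmatrix}0&\Sigma_b\\ \Sigma_b&0\end{pmatrix}\cal V^{\top},\qquad
\cal V:=\begin{bmatrix} U_b & 0\\ 0 & Z^{\top}V_b\end{bmatrix},
\]
where the columns of $U_b$ are embedded into $\C^{\cal I_2}$ and those of $Z^{\top}V_b$ into $\C^{\cal I_4}$. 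Hence, provided $H(1-c)$ is nonsingular, the identity $\det(1+\cal A\cal B)=\det(1+\cal B\cal A)$ gives
\[
\det H^b_t(1-c)=\det H(1-c)\cdot\det\bigl(1+t\,M(1-c)\bigr),\qquad
M(z):=\begin{pmatrix}0&\Sigma_b\\ \Sigma_b&0\end{pmatrix}\cal V^{\top}G(z)\cal V\in\C^{2r\times 2r}.
\]
So it suffices to exhibit a high‑probability event on which $H(1-c)$ is nonsingular and $1+tM(1-c)$ is nonsingular for \emph{every} $t\in[0,1]$.

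The nonsingularity of $H(1-c)$ is immediate: we may take $c$ to be a constant small enough that $1-c>\lambda_+$, which is possible since $1-\lambda_+=\bigl(\sqrt{(1-c_1)(1-c_2)}-\sqrt{c_1c_2}\bigr)^2\gtrsim 1$ under \eqref{assm20}; then the rigidity estimate \eqref{rigidity} gives $\lambda_1(\cal C_{XY})\le\lambda_++n^{-2/3+\e}<1-c$ with high probability, so $1-c$ is not an eigenvalue of $\cal C_{XY}$, and since $1-c\notin\{0,1\}$ the Schur complement argument shows $H(1-c)$ is nonsingular on this event. For $M(1-c)$, I would apply the anisotropic local law outside the bulk, Theorem \ref{thm_localout}, at the real parameter $1-c$ (which lies in $D_{out}(\e)$ for large $n$, with $\kappa\sim1$): conditioning on $Z$, the columns of $\cal V$ are deterministic, $V_b^{\top}ZZ^{\top}V_b=I_r+\OO(\psi_n)$ by \eqref{eq_iso}, and \eqref{aniso_outstrong} yields $G_{\mathbf u\mathbf v}(1-c)=\Pi_{\mathbf u\mathbf v}(1-c)+\OO_\prec(\phi_n+n^{-1/2})$ for each pair of these vectors. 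Because $\Pi$ in \eqref{defn_pi} is block diagonal with $\cal I_2$‑diagonal value $c_2^{-1}m_{2c}(z)$ and $\cal I_4$‑diagonal value $m_{4c}(z)$, and its $(\cal I_2,\cal I_4)$‑block vanishes, this gives, with high probability,
\[
M(1-c)=M_\infty(1-c)+\oo(1),\qquad
M_\infty(z):=\begin{pmatrix}0& m_{4c}(z)\Sigma_b\\ c_2^{-1}m_{2c}(z)\Sigma_b&0\end{pmatrix}.
\]

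To finish, the identity $m_{2c}m_{4c}=-c_2$ from \eqref{selfm12} gives $M_\infty(1-c)^2=-\diag(\Sigma_b^2,\Sigma_b^2)$, so the eigenvalues of $M_\infty(1-c)$ are $\pm\ii b_j$, $1\le j\le r$; consequently $\det\bigl(1+tM_\infty(1-c)\bigr)=\prod_{j=1}^r(1+t^2b_j^2)\ge 1$ for all real $t$, and $t\mapsto\|(1+tM_\infty(1-c))^{-1}\|$ is continuous on $[0,1]$, hence bounded by some $C(c)<\infty$. Writing
\[
1+tM(1-c)=\bigl(1+tM_\infty(1-c)\bigr)\Bigl(1+t\bigl(1+tM_\infty(1-c)\bigr)^{-1}\bigl(M(1-c)-M_\infty(1-c)\bigr)\Bigr)
\]
and using $\|M(1-c)-M_\infty(1-c)\|=\oo(1)$, the second factor is invertible for every $t\in[0,1]$ once $n$ is large, so $1+tM(1-c)$, hence $H^b_t(1-c)$, is nonsingular for all $t$ on our event, which proves the claim. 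The main obstacle is the middle step: getting the local law to apply at the \emph{real} parameter $1-c$ tested against the $Z$‑dependent vectors $Z^{\top}\mathbf v_i^b$ (resolved by Theorem \ref{thm_localout} together with conditioning on $Z$ and \eqref{eq_iso}), and noticing through \eqref{selfm12} that the limit $M_\infty(1-c)$ has purely imaginary spectrum, so that $1+tM_\infty(1-c)$ stays invertible uniformly along the whole interpolation; everything else is routine. An alternative, more elementary route compares eigenvalues: $\cal C^b_{X\cal Y_t}$ is dominated by the null SCC matrix of $X$ and $\bigl(\begin{smallmatrix}Y\\ Z\end{smallmatrix}\bigr)$ (Poincar\'e separation, since $\operatorname{row}(\cal Y_t)\subseteq\operatorname{row}\bigl(\begin{smallmatrix}Y\\ Z\end{smallmatrix}\bigr)$), whose top eigenvalue is $<1-c$ with high probability — but this requires separately invoking an edge bound for that null matrix under the weak moment conditions on $Z$.
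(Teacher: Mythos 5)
Your proof is correct, and it takes a genuinely different route from the paper's. The paper controls the resolvent itself along the interpolation: it lays down a net $t_k=kn^{-50}$, applies the eigenvalue-location estimate \eqref{remain1} to every $\cal Y_{t_k}$ to bound $\|G^b_{t_k}(1-c+\ii n^{-10})\|$ simultaneously, and then bridges between grid points using the regularized resolvents of Definition \ref{resol_not2}, the deterministic bound \eqref{op G}, and the Gronwall-type perturbation of Claim \ref{removehat}. You instead factor the $t$-dependence out algebraically: since $H^b_t(1-c)-H(1-c)$ equals $t$ times a fixed rank-$2r$ matrix, nonsingularity of $H^b_t(1-c)$ for \emph{all} $t$ reduces to $\det\bigl(1+tM(1-c)\bigr)\neq0$ for a single $t$-independent $2r\times2r$ matrix $M(1-c)$, which one application of Theorem \ref{thm_localout} at the real point $1-c$ (conditioning on $Z$ and using \eqref{eq_iso}) identifies up to $\oo(1)$; the identity $m_{2c}m_{4c}=-c_2$ from \eqref{selfm12} then shows the limiting matrix has purely imaginary eigenvalues $\pm\ii b_j$, so $1+tM_\infty(1-c)$ is uniformly invertible along the whole real interval and the perturbation is absorbed. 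This buys a proof with no discretization, no regularization, and no appeal to \eqref{remain1} for the interpolated matrices --- the only inputs are the null rigidity \eqref{rigidity} (so that $H(1-c)$ is nonsingular, which also needs $1-c>\lambda_+$ by a constant, true since $1-\lambda_+\gtrsim1$ under \eqref{assm20}, and the nonsingularity of $S_{xx},S_{yy}$ from Lemma \ref{SxxSyy}) and the null anisotropic law outside the spectrum; conceptually it makes visible that the interpolation cannot create an eigenvalue near $1-c$ because the effective $2r\times2r$ perturbation is ``rotation-like.'' The paper's heavier argument is the one that would survive if the $t$-dependence of the perturbation were not purely scalar. Your argument is complete as written; the Poincar\'e-separation alternative you sketch at the end is indeed not needed and, as you note, would require an extra edge bound.
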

			
			We postpone the proof of this claim until we complete the proof of \eqref{rigidityb}. Let  $\lambda_1^b(t)\ge \lambda_2^b(t)\ge \cdots\ge \lambda_q^b(t)$ be the eigenvalues of $\cal C_{\cal Y_t X}$. For any $1\le i \le q$, $ \lambda_i^{b}(t):[0,1]\to \R$  is a continuous function with respect to $t$ on the extended real line $ \overline{\mathbb R}$. 
			By \eqref{rigidity}, the eigenvalues $\lambda_i^{b}(0)$ of $\cal C_{XY}$ are all inside $[0,\lambda_+ + n^{-2/3+\e}]$  with high probability. If \eqref{remain3} holds, then we have that
			$$m_t^b(1-c)=\frac1q\sum_{i=1}^q \frac{1}{\lambda_i^b(t)-(1-c)}\quad \text{ is finite for all $t\in [0,1]$.}$$ 
			It means that the eigenvalue $\lambda_1^{b}(t)$ does not cross the point $E=1-c$ for all $t\in [0,1]$. 
			Thus, we conclude \eqref{remain2}, which further concludes \eqref{upper1} together with \eqref{remain1}. 
			\end{proof}

		Finally, we give the proof of Claim \ref{claim revise}.
		\begin{proof}[Proof of Claim \ref{claim revise}]
			Take a discrete net of $t$, $t_k = kn^{-50}$, for $0\le k\le n^{50}$. 
			First, we claim that there exists a high probability event $\Xi_1$, so that
			\be\label{remain4} 
			\mathbf 1(\Xi_1)\max_{0\le k \le n^{50}}\| \wh G^b_{t_k}(E+ \ii n^{-10})\|_{\max} \le C \quad \text{for }\ E:=1-c,
			\ee
			for some large constant $C>0$. 
			In fact, notice that $\cal Y_t$ also satisfies the assumptions for $\cal Y$ in Lemma \ref{lem null}. Hence, using \eqref{remain1}, we obtain that for any $t_k$, the eigenvalues $\lambda_i^b(t_k)$ are  inside $[0,\lambda_+ + n^{-2/3+\e}]\cup [1-c/2,1]$ with high probability. By taking a union bound, we get that
			\be\label{est_eig_revise}\min_{0\le k \le n^{50}}\min_{1\le i \le q}|E-\lambda_i^b({t_k})| \gtrsim 1 \quad  w.h.p.\ee
			Applying the spectral decomposition \eqref{spectral1} to $R^b$, we obtain from \eqref{est_eig_revise} that 
			\begin{align}\nonumber
				\max_{0\le k \le n^{50}}\left\| R^b_{t_k}(z) \right\| \le C \quad \text{for}\quad z= E + \ii n^{-10}.
			\end{align}
			Combining this bound with \eqref{GL1}--\eqref{GLR1} and using Lemma \ref{SxxSyy}, we get that
			\begin{align*} 
				\max_{0\le k \le n^{50}}\left\| G^b_{t_k}(z) \right\| \le C,\quad w.h.p. 
			\end{align*}
			Next, applying Claim \ref{removehat}, we get \eqref{remain4} for $\wh G^b$.

			Now, given \eqref{remain4}, 
			using the deterministic bound \eqref{op G} for $\wh G^b$, we get that on $\Xi_1$, 
			\begin{equation*}\begin{split} 
					\left\|\wh G_t^b(E+ \ii n^{-10}) - \wh G_{t_k}^b(E+ \ii n^{-10})\right\|_{\max} & \lesssim n^{-50}\|\wh G_t^b(E+ \ii n^{-10})\|  \cdot \|Z\| \cdot \|\wh G_{t_k}^b(E+ \ii n^{-10})\| \\
					& \lesssim   n^{-50}\cdot \left( {n^{20}} \right)^2\cdot \|Z\| \lesssim n^{-10}\|Z\|,
				\end{split}
			\end{equation*}
			for any $t_{k-1}\le t \le t_k$. By the bounded support condition of $Z$, we have that $\|Z\|=\OO(\sqrt{n})$ on a high probability event $\Xi_2$. 
			Thus, on the high probability event $ \Xi_1 \cap \Xi_2$, 
			\begin{align*} 
				\left\|\wh G_t^b(E+ \ii n^{-10}) - \wh G_{t_k}^b(E+ \ii n^{-10})\right\|_{\max} & \lesssim n^{-50}\cdot \left( {n^{20}} \right)^2\cdot \sqrt{n} \le n^{-9},
			\end{align*}
			which gives that
			$$\mathbf 1( \Xi_1 \cap \Xi_2)\max_{0\le t\le 1} \|\wh G_{t}^b(E+ \ii n^{-10})\|_{\max} \le C.$$ 
			Finally, using the same perturbation argument as in the proof of Claim \ref{removehat}, we can remove both the $\ii n^{-10}$ and the regularization in $\wh G$, which gives \eqref{remain3} on $\Xi_1\cap \Xi_2$.  
		\end{proof}

\section*{Acknowledgments}
We would like to thank the editor, the associated editor and an anonymous referee for their helpful comments, which have resulted in a significant improvement of the paper. The second author is supported in part by the Wharton Dean's Fund for Postdoctoral Research.





\end{document}